\definecolor{green}{rgb}{0.834580, 0.893016, 0.718697}
\DeclareRobustCommand\check[1]{{\mathpalette\@widecheck{#1}}}
\def\@widecheck#1#2{%
    \setbox\z@\hbox{\m@th$#1#2$}%
    \setbox\tw@\hbox{\m@th$#1%
       \widehat{%
          \vrule\@width\z@\@height\ht\z@
          \vrule\@height\z@\@width\wd\z@}$}%
    \dp\tw@-\ht\z@
    \@tempdima\ht\z@ \advance\@tempdima2\ht\tw@ \divide\@tempdima\thr@@
    \setbox\tw@\hbox{%
       \raise\@tempdima\hbox{\scalebox{1}[-1]{\lower\@tempdima\box
\tw@}}}%
    {\ooalign{\box\tw@ \cr \box\z@}}}
\definecolor{cornellred}{rgb}{0.7, 0.11, 0.11}
\renewcommand{\baselinestretch}{1.5}
\numberwithin{equation}{section}
\newtheorem{theorem}{Theorem}[section]
\newtheorem{lemma}{Lemma}[section]
\theoremstyle{definition}
\newtheorem{remark}{Remark}[section]
\newtheorem{assumption}{Assumption}
\newcommand{\cov}{{\rm cov}}
\newcommand{\E}{{\rm E}}
\newcommand{\I}{\mathcal{I}}
\renewcommand{\H}{\mathcal{H}}
\newcommand{\as}{{\rm a.s.}}
\newcommand{\one}{\mathbbm{1}}
\def\trans{^{\mkern-1mu\mathsf{T}\mkern-1mu}}
\newcommand\G{\mathbb{G}}
\newcommand\TT{{\mathbb T}}
\newcommand\VV{{\mathbb V}}
\newcommand\WW{{\mathbb W}}
\newcommand\BB{{\mathbb B}}
\newcommand{\var}{{\rm var}}
\newcommand{\e}{\epsilon}
\newcommand{\EE}{\mathcal{E}}
\newcommand{\converged}{\overset{d}{\longrightarrow}}
\newcommand{\weakconverge}{\rightsquigarrow}
\newcommand{\iidsim}{\overset{\small{\text{iid}}}{\sim}}
\renewcommand{\l}{\langle}
\renewcommand{\r}{\rangle}
\renewcommand{\phi}{\varphi}
\renewcommand{\tilde}{\widetilde}
\renewcommand{\hat}{\widehat}
\renewcommand{\epsilon}{\varepsilon}
\renewcommand{\P}{{\rm P}}
\def\boxit#1{\vbox{\hrule\hbox{\vrule\kern6pt  \vbox{\kern6pt#1\kern6pt}\kern6pt\vrule}\hrule}}
\def\boxit#1{\vbox{\hrule\hbox{\vrule\kern6pt
          \vbox{\kern6pt#1\kern6pt}\kern6pt\vrule}\hrule}}
\providecommand{\customgenericname}{}
\newcommand{\newcustomtheorem}[2]{%
  \newenvironment{#1}[1]
  {%
   \renewcommand\customgenericname{#2}%
   \renewcommand\theinnercustomgeneric{##1}%
   \innercustomgeneric
  }
  {\endinnercustomgeneric}
}
\renewcommand{\baselinestretch}{1.4}
\def\singlespace{\deltaf\baselinestretch{1}\@normalsize}
  \renewenvironment{thebibliography}[1]{%
    \begin{oldthebibliography}{#1}%
      \setlength{\parskip}{0.3ex}%
      \setlength{\itemsep}{0ex}%
  }%
  {%
    \end{oldthebibliography}%
  }
\begin{document}

\begin{center}
{\bf \Large 

%A self-normalization approach to testing relevant hypotheses for functional linear regression via RKHS
An  RKHS approach for pivotal  inference in functional linear regression 
%Self-normalization tests of relevant hypotheses for functional linear regression via RKHS
%Statistical inference for the slope function in functional linear regression via self-normalization
} \\[1cm]
\end{center}

\baselineskip=13pt
\begin{center}

Holger Dette, Jiajun Tang\\
Fakult\"at f\"ur Mathematik, Ruhr-Universit\"at Bochum, Bochum, Germany
\end{center}

\baselineskip=20pt
\vspace{.5cm}

\noindent {\bf Abstract:} 
We develop methodology for testing hypotheses regarding  the slope function in functional linear regression for time series via a reproducing kernel Hilbert space approach. In contrast to most of the literature, which
considers tests for the exact nullity of the slope function, we are interested in the null hypothesis
that the slope function vanishes only approximately, where deviations are measured with respect to 
the $L^2$-norm.  An asymptotically pivotal  test is proposed, 
which does not require the estimation  of nuisance parameters and long-run covariances. 
 The key technical tools to prove the validity of our approach include a uniform   Bahadur representation and a weak invariance principle  for a sequential process of estimates of the slope function. Both scalar-on-function and function-on-function linear regression are considered and finite-sample methods for implementing our methodology are provided. We also illustrate the potential of our methods by means of  a small simulation study and a data example.\\

\noindent {\bf Keywords:} 
Self-normalization, functional linear regression, functional time series, relevant hypotheses, reproducing kernel Hilbert space, $m$-approximability, weak invariance principle
\smallskip

\noindent {\bf AMS Subject Classification:}  62R10, 62M10, 62F03,  46E22

\section{Introduction}

Statistical methods for analysing functional data have been extensively developed in the past decades, as reviewed in the  monographs \cite{ramsay2005}, \cite{FerratyVieu2010}, \cite{horvath2012}, \cite{hsingeubank2015} and the survey article by  \cite{wang2016}. Because of its good interpretability, the functional linear regression model
\begin{align}\label{model0}
Y_i=\int_0^1X_i(s)\,\beta_0(s)\,ds+\epsilon_i\,,\qquad i\in\mathbb Z\,,
\end{align}
has become a useful toolbox for functional data analysis and has
gained considerable attention \citep[see, for exemple,][among many others]{cardot1999,
% cardot2003,
muller2005,yao2005,hall2007,yuancai}. In this paper  $\{(X_i,\epsilon_i)\}_{i\in\mathbb Z}$ denotes a strictly stationary time series, where the $X_i$'s are mean zero square-integrable random functions on the interval $[0,1]$, and the $\e_i$'s are centred random noise. 

As the slope function $\beta_0$ characterizes the dependence between the predictor and the response, many authors have worked on its estimation  and corresponding statistical inference.  
A popular method for analysing the 
slope function in  model \eqref{model0}
is through the functional principle components (FPC) \citep[see, for example,][among many others]{yao2005,hall2007,horvath2012,hilgert2013}.  
Other authors considered  a reproducing kernel Hilbert space (RKHS) approach to develop inference tools 
for $\beta_0$ and  corresponding theoretical results regarding consistency and optimality. 
\cite{yuancai}  and \cite{caiyuan2012} studied  an RKHS estimator  and   its  prediction risk in the  scalar-on-function linear regression model.
\cite{shang2015} proposed an RKHS framework of inference for the generalized functional linear regression and \cite{hao2021} considered the functional Cox model.  These authors additionally suggested tests for the nullity of the slope function 
\citep[see also][for an alternative approach
in
the scalar-on-function linear regression]{qu2017}.
Recently,  \cite{dettetang} used  an RKHS approach to develop  statistical inference methodology  in the function-on-function linear model. A common feature of all these references consists
in the fact that the proposed methodology
depends on the knowledge of  nuisance parameters appearing in the asymptotic variance of the estimators of the slope function. 
%\HDM{{\bf better?}
These parameters describe the behaviour of the sequence of solutions of  a system of 
estimated integro-differential equations 
induced by the covariance operator of the predictor,
and therefore their 
 estimation  is not an easy problem.
 %} 
In  the case of independent data 
(as considered in all references using the RKHS approach),  several estimators 
have been proposed  and studied. On the other hand, for time series  data these nuisance parameters  would be  of an even  more complicated structure 
because of the  dependencies in the data, which 
would make its estimation an extremely 
difficult problem.

The purpose of the present paper is to develop  pivotal 
statistical  inference tools for  the slope function $\beta_0$  in the
functional linear regression model \eqref{model0}  using an RKHS approach, which avoids the estimation
of nuisance parameters.
Most of the literature with a focus on testing considers 
 hypotheses of the  form 
%  \HDM{\bf I propose to start directly with $\beta^* \equiv 0$ and then mention the general hypothesis in Remark \ref{beta*}, where we then mention the following hypothesis. We can add there a remark regarding a confidence interval  for $\| \beta ^*\| $: We discuss this tomorrow!} \textcolor{blue}{\bf Yes I agree}
\begin{align}\label{ch}
H_0:\,\int_0^1|\beta_0(s)|^2\,ds=0\quad&\text{versus}\quad H_1:\, \int_0^1|\beta_0(s)|^2\,ds\neq0\,,
\end{align}
which is the classical hypothesis 
of the null effect ($\beta_0\equiv0$) of the functional covariate
\citep[see, for example][among many others]{cardot2003,garcia2014,lei2014,kong2016,su2017,Tekbudak}.
Following \cite{berger1987} we  argue that it  is rare, and perhaps impossible, to have a null
hypothesis that can be exactly modeled as $\beta_0\equiv0$. More precisely, in most applications
the covariate $X$ has {\it some} effect on the response $Y$,   and the ``real'' question is, if this effect 
is small and negligible. As an alternative  we will therefore 
consider  the hypotheses
\begin{align}\label{rh}
H_0:\,\int_0^1|\beta_0(s)|^2\,ds\leq\Delta\quad&\text{versus}\quad H_1:\, \int_0^1|\beta_0(s)|^2\,ds>\Delta\,,
\end{align}
for some (small) pre-specified threshold $\Delta > 0$
 that represents the maximal acceptable deviation (measured with respect to the $L^2$ distance) of  $\beta_0$ from the null-function.
Note that in contrast to \eqref{ch} the formulation of the hypotheses in \eqref{rh} is  symmetric, in the sense
that the null and the alternative can be interchanged. 
% In the case $\beta^*\equiv 0 $ t
This allows us
to test at a controlled type I error that the effect of the covariate on the response is negligible, that is
$\int_0^1|\beta_0(s)|^2\,ds\leq\Delta$.
Throughout this paper we will call hypotheses of the form \eqref{ch}  and \eqref{rh} ``classical'' and ``relevant'' hypotheses, respectively.
% \textcolor{blue}{\bf rather: ``Throughout this paper we will call hypotheses of the form \eqref{rh} ``relevant'' hypotheses."?}
We refer to \cite{hodges1954,berger1987} for a theoretical discussion  of  relevant hypotheses,
% \textcolor{blue}{\bf change ``hypotheses of this type" to ``"?}
and \cite{chow1992,wellek2010} for applications  in biostatistics, where these references concentrate on 
real valued (or finite dimensional) parameters. In the context of functional data, relevant hypotheses have only found recent   attention in the literature 
\citep[see][among others]{fogarty2014,detteaos2020, dettejrssb2020}.

The aim of this article is the development of pivotal methodology for testing relevant hypotheses \eqref{rh} with no need to estimate nuisance parameters. Our approach is based on a novel self-normalization technique, which has recently been introduced by \cite{dettejrssb2020} in the context of testing relevant hypotheses regarding the mean and
 covariance operator of stationary time series and differs substantially from the  the common  self-normalization
approach proposed 
for testing classical hypotheses regarding finite dimensional parameters 
\citep[see][among many others]{lobato2001,shao2010,shaozhang2010,zhang2011,zhang2015,zhang2018}.
 In Section~\ref{sec:sf} we consider scalar-on-function linear regression
and introduce the reproducing kernel Hilbert space estimator (see Section~\ref{sec:rkhs}).
 Section~\ref{sec:one} is devoted to the development of 
 our self-normalization methodology for the relevant hypotheses \eqref{rh}. As a by-product we also 
 construct (asymptotically)  pivotal
 confidence intervals for the $L^2$-norm of the slope function. Here the crucial
 result is a weak  invariance principle for the process of estimators   $\{ \hat \beta ( \nu ) \}_{\nu \in [\nu_0,1]}$,
 where $\nu_0\in(0,1]$ is a constant and $\hat \beta ( \nu ) $ denotes the estimator of $\beta_0$ calculated from the 
 data $\{(X_i,Y_i)\}_{i= 1, \ldots ,\lfloor n \nu \rfloor }$  (see Theorem \ref{thm:wip} and the discussion in the following paragraph).
%  ; \textcolor{blue}{\bf for $a\in\mathbb R$, $\lfloor a\rfloor$ denotes the largest integer smaller than or equal to $a$}
 Moreover, we also  consider the problem  of comparing the slope functions from two samples  in Section~\ref{sec:two}.
 In Section~\ref{sec:ff} we extend our methodology to relevant hypotheses for function-on-function linear regression.
Finite sample properties are studied in Section~\ref{sec:finitesample}, where in Section~\ref{sec:implementation} we provide 
details for the numerical implementation of our approach, and simulated data experiments and a real data example is included in Sections~\ref{sec:simulation} and \ref{sec:realdata}, respectively.
% Section~\ref{sec:conclusion} concludes this article and discusses the possible directions for future work. 
In addition, technical details containing the proofs of our theoretical results  and several auxiliary lemmas are included in the online supplementary material. 

% \HDM{{\bf Please check !} 
To our best knowledge, testing relevant  hypotheses
regarding the slope function has only been considered by  \cite{kutta2021}. Roughly speaking, these authors investigated  a normal equation corresponding to the linear model \eqref{model0}, which is then solved  by an application of a regularized inverse based on
 a spectral-cut-off series estimator.  Although such an approach has some theoretical advantages,
its practical usefulness is limited by the fact that it  requires the estimation   of the spectral decomposition of the regularized inverse. 
% of the operator ${\cal C}_X$.
In contrast, the estimator considered in this paper is defined as the minimizer of a regularized loss function in an appropriate reproducing kernel Hilbert space.
As a consequence, our approach also provides 
an easy solution of the estimation problem in 
the function-on-function version of the linear model \eqref{model0}.
% }

\section{Scalar-on-function linear regression}\label{sec:sf}

We begin by introducing   some notations which are  used throughout this article. Let $L^2([0,1])$ and $L^2([0,1]^2)$ denote the Hilbert space of square-integrable functions on $[0,1]$ and $[0,1]^2$, respectively, equipped with the usual $L^2$ inner product $\l\cdot,\cdot\r_{L^2}$ and the corresponding $L^2$ norm $\|\cdot\|_{L^2}$. Let $\ell^\infty([0,1])$ denote the set of all bounded real functions on $[0,1]$, and define $\| f \|_\infty := \sup_{t \in [0,1]}
|f (t)| $
as the sup-norm of the function $f$. Let
``$\weakconverge$" denote  weak convergence in $\ell^\infty([0,1])$, and ``$\converged$" denotes the usual  convergence in distribution in $\mathbb{R}^k$ (for some positive integer $k$). 
For $a\in\mathbb R$, let $\lfloor a\rfloor$ denote the largest integer smaller than or equal to $a$.

% In order to construct a test for the relevant hypothesis \eqref{rh}, we start by considering the special case where $\beta_*\equiv0$.
% The general case where $\beta_*\not\equiv0$ can be dealt with using similar ideas and is postponed to Remark~\ref{beta*} below. We therefore first derive our methodology for the following relevant hypothesis
% \begin{align}\label{hypothesis}
% H_0:\,\int_0^1|\beta_0(s)|^2\,ds\leq\Delta\quad&\text{versus}\quad H_1:\, \int_0^1|\beta_0(s)|^2\,ds>\Delta\,,
% \end{align}
% for some pre-specified threshold $\Delta>0$. We start by proposing a RKHS approach for estimating the slope function $\beta_0$ in  model \eqref{model0}. 

\subsection{The reproducing kernel Hilbert space approach}\label{sec:rkhs}

Suppose a sample generated by the scalar-on-function linear regression model \eqref{model0} is available and consists of $n$ observations $(X_1,Y_1),\ldots,(X_n,Y_n)$. Let $\nu_0\in(0,1]$ be an arbitrary but fixed parameter. For any $\nu\in[\nu_0,1]$, we first define the RKHS estimator for the slope function $\beta_0$ based on the first $\lfloor n\nu\rfloor$ observations $(X_1,Y_1),\ldots,(X_{\lfloor n\nu\rfloor},Y_{\lfloor n\nu\rfloor})$.  For this purpose, let
\begin{align}\label{H}
\H&=\Big\{\beta:[0,1]\to\mathbb{R}\,|\,\partial^{(\theta)}\beta\text{ is absolutely continuous, for }0\leq\theta\leq m-1\,;\partial^{(m)}\beta\in L^2([0,1])\Big\}
\end{align}
denote the Sobolev space on $[0,1]$ of order $m>1/2$ (see, for example, \citealp{whaba1990}), and 
define for $\nu\in[\nu_0,1]$ 
%\begin{align}\label{hatbeta}
%\hat\beta_{n,\lambda}(\cdot,\nu)&=\underset{\beta\in \H}{\arg\min}\ \big\{ L_{n,\nu}(\beta)+(\lambda\nu/2) J(\beta,\beta)\big\}\notag\\
%%
%&=\underset{\beta\in \H}{\arg\min}\ \Bigg[ \frac{1}{2n}\sum_{i=1}^{\lfloor n\nu\rfloor}\left\{Y_i-\int_0^1X_i(s)\,\beta(s)\,ds\right\}^2+\frac{\nu\lambda}{2} J(\beta,\beta)\Bigg]\,.
%\end{align}
\begin{align}\label{hatbeta}
\hat\beta_{n,\lambda}(\cdot,\nu)
%&=\underset{\beta\in \H}{\arg\min}\ \big\{ L_{n,\nu}(\beta)+(\lambda\nu/2) J(\beta,\beta)\big\}\notag\\
%
&=\underset{\beta\in \H}{\arg\min}\ \Bigg[ \frac{1}{2\lfloor n\nu\rfloor}\sum_{i=1}^{\lfloor n\nu\rfloor}\left\{Y_i-\int_0^1X_i(s)\,\beta(s)\,ds\right\}^2+\frac{\lambda}{2} J(\beta,\beta)\Bigg]\,.
\end{align}
Here, $\lambda>0$ is a regularization parameter and for $\beta_1,\beta_2\in\H$
\begin{align}\label{jm}
J(\beta_1,\beta_2)=\int_0^1\beta_1^{(m)}(s)\, \beta_2^{(m)}(s)\,ds
\end{align}
defines the penalty functional. In \eqref{hatbeta}, we use the notation  $\hat\beta_{n,\lambda}(\cdot,\nu)$ 
for the estimator of $\beta_0$
to reflect its dependence on the parameters $\lambda$ and $\nu$. We emphasize again that $\hat\beta_{n,\lambda}(\cdot,\nu)$ is the RKHS estimator based on the first $\lfloor n\nu\rfloor$ observations $(X_1,Y_1),\ldots , (X_{\lfloor n\nu\rfloor},Y_{\lfloor n\nu\rfloor})$, and that the parameter $\nu\in[\nu_0,1]$ stands for the (approximate) proportion of the sample $\{(X_i,Y_i)\}_{i=1}^n$ used to obtain $\hat\beta_{n,\lambda}(\cdot,\nu)$. The case where $\nu=1$ corresponds to the scenario where we use the whole sample $\{(X_i,Y_i)\}_{i=1}^n$.

For $\nu\in[\nu_0,1]$, let $L_{n,\lambda,\nu}(\beta)$ denote the objective functional in \eqref{hatbeta}, that is 
\begin{align*}% \label{elln}
L_{n,\lambda,\nu}(\beta)=\frac{1}{2\lfloor n\nu\rfloor}\sum_{i=1}^{\lfloor n\nu\rfloor}\left\{Y_i-\int_0^1X_i(s)\,\beta(s)\,ds\right\}^2+\frac{\lambda}{2} J(\beta,\beta)\,,
\end{align*}
and note that the Fr\'echet derivatives of $L_{n,\lambda,\nu}(\beta)$ are given by
\begin{align}\label{dl}
&\mathcal D L_{n,\lambda,\nu}(\beta)\beta_1=-\frac{1}{\lfloor n\nu\rfloor}\sum_{i=1}^{\lfloor n\nu\rfloor}\left\{Y_i-\int_0^1X_i(s_1)\,\beta(s_1)\,ds_1\right\}\int_0^1X_i(s_2)\,\beta_1(s_2)\,ds_2+\lambda J(\beta,\beta_1)\,;\notag\\
&\mathcal D^2 L_{n,\lambda,\nu}(\beta)\beta_1\beta_2=\frac{1}{\lfloor n\nu\rfloor}\sum_{i=1}^{\lfloor n\nu\rfloor}\int_0^1X_i(s_1)\,\beta_1(s)\,ds_1\times\int_0^1X_i(s_2)\,\beta_2(s_2)\,ds_2+\lambda J(\beta_1,\beta_2)\,,
\end{align}
and $\mathcal D^3 L_{n,\lambda,\nu}(\beta)\equiv0$. Let 
\begin{equation}
 \label{det2}   
C_X(s,t)=\cov\{X_1(s),X_1(t)\}
\end{equation}
denote the covariance kernel of the predictor, for $s,t\in[0,1]$. Then, we have
\begin{align*}
\E\{\mathcal D^2 L_{n,\lambda,\nu}(\beta)\beta_1\beta_2\}=\int_0^1\int_0^1C_X(s,t)\,\beta_1(s)\,\beta_2(t)\,ds\,dt+\lambda J(\beta_1,\beta_2)\,.
\end{align*}
This motivates the consideration of the following map $\l\cdot,\cdot\r_K:\H\times\H\to\mathbb{R}$ defined by
\begin{align}\label{inner}
\langle\beta_1,\beta_2\rangle_K=V(\beta_1,\beta_2)+\lambda J(\beta_1,\beta_2)\,,\qquad \beta_1,\beta_2\in\H\,,
\end{align}
where the function $J$ is defined in \eqref{jm} and
\begin{align}\label{v}
&V(\beta_1,\beta_2)=\int_0^1\int_0^1C_X(s,t)\,\beta_1(s)\,\beta_2(t)\,ds\,dt\,.
\end{align}
In order to facilitate our theoretical analysis, we first make the following mild assumption on the covariance function $C_X$.
\begin{assumption}\label{a1}
The covariance kernel $C_X$  in \eqref{det2} is continuous on $[0,1]^2$. For any $\gamma\in L^2([0,1])$, $\int_0^1 C_X(s,t)\gamma(s)ds=0$ for any $t\in[0,1]$ implies that $\gamma\equiv0$.  
\end{assumption}

Under Assumption~\ref{a201}, it is known (see, for example, \citealp{yuancai,shang2015}) that the mapping $\l\cdot,\cdot\r_K$ in \eqref{inner} defines an inner product on $\H$, and we  denote by  $\|\cdot\|_K$   its corresponding norm. In addition, $\H$ is a reproducing kernel Hilbert space (RKHS) equipped with the inner product $\l\cdot,\cdot\r_K$. We follow \cite{shang2015} and assume   that there exists a sequence of functions in $\H$ that diagonalizes the  operators $V$ in  \eqref{v} and $J$ in  \eqref{jm}  simultaneously.

\begin{assumption}[Simultaneous diagonalization]\label{a201}
There exists a sequence of functions $\{\phi_{k}\}_{k\geq1}$ in $\H$, such that $\Vert \phi_{k}\Vert_{\infty}\leq c\, k^{a}$ for any $k\geq 1$, and
\begin{align*} % \label{diag}
V(\phi_{k},\phi_{k'})=\delta_{kk'}\,,\qquad J(\phi_{k},\phi_{k'})=\rho_{k}\,\delta_{kk'}\,,\qquad \text{for any }k,k'\geq1\,,
\end{align*}
where $a\geq 0$, $c>0$ are constants, $\delta_{kk'}$ is the Kronecker delta and  the sequence $\{ \rho_{k} \}_{k\geq1}$ satisfies $\rho_{k}\asymp k^{2D}$ for some constant $D>a+1/2$. Furthermore, any $\beta\in\H$ admits the expansion $\beta=\sum_{k=1}^\infty V(\beta,\phi_{k})\phi_{k}$ with convergence in $\H$ with respect to the norm $\Vert\cdot\Vert_K$.
\end{assumption}

It is shown in \cite{shang2015} that, under suitable conditions, Assumption~\ref{a201} is satisfied if we take $\{(\rho_{k},\phi_{k})\}_{k\geq1}$ to be
the eigenvalue-eigenfunction pairs  of the following integro-differential equations with boundary conditions.
\begin{align}\label{id}
\left\{
\begin{aligned}
&\displaystyle\rho\int_0^1 C_X(s,t)\, x(t)\,dt=(-1)^{m}x^{(2m)}(s)\,,\\
&x^{(\theta)}(0)=x^{(\theta)}(1)=0\,,\qquad \quad\text{ for }m\leq\theta\leq 2m-1\,.
\end{aligned}\right.
\end{align}
For the inner product $\l\cdot,\cdot\r_K$ in \eqref{inner}, under Assumption~\ref{a201}, we have
\begin{align*}
\l\phi_{k},\phi_{k'}\r_K=V(\phi_{k},\phi_{k'})+\lambda J(\phi_{k},\phi_{k'})=(1+\lambda\rho_{k})\,\delta_{kk'}\qquad(k,k'\geq 1)\,.
\end{align*}
Therefore, it follows  that  $\l\beta,\phi_{k}\r_K=\sum_{k'=1}^\infty V(\beta,\phi_{k'})\l\phi_{k},\phi_{k'}\r_K=(1+\lambda\rho_{k})V(\beta,\phi_{k})$  for any $\beta\in\H$,
which implies the representation 
\begin{align}\label{expansion}
\beta
%=\sum_{k=1}^\infty V(\beta,\phi_{k})\phi_{k}
=\sum_{k=1}^\infty\frac{\l\beta,\phi_{k}\r_K}{1+\lambda\rho_{k}}\,\phi_{k}\,.
\end{align}
For any $\beta_1,\beta_2\in\mathcal{H}$ and $J$ defined in \eqref{jm}, let $W_\lambda:\H\to\H$ denote the operator such that $\l W_\lambda(\beta_1),\beta_2\r_K=\lambda J(\beta_1,\beta_2)$. By definition, for the eigenfunctions $\{\phi_{k}\}_{k\geq 1}$ in Assumption~\ref{a201}, we have $\l W_\lambda(\phi_{k}),\phi_{k'}\r_K=\lambda J(\phi_{k},\phi_{k'})=\lambda\rho_{k}\,\delta_{kk'}$, for any $k,k'\geq1$, so that in view of \eqref{expansion},
\begin{align}\label{wphi}
W_\lambda(\phi_{k})=\sum_{k'=1}^\infty\frac{\l W_\lambda(\phi_{k}),\phi_{k'}\r_K}{1+\lambda\rho_{k'}}\phi_{k'}=\frac{\lambda\,\rho_{k}\,\phi_{k}}{1+\lambda\rho_{k}}\,.
\end{align}
In addition, note that $\mathfrak G_{z}(\beta)=\int_0^1\beta(s)z(s)ds$ is a bounded linear functional on $\H$, for any $z\in L^2([0,1])$ and $\beta\in\H$. By the Riesz representation theorem, there exists a unique element $\tau_{\lambda}(z)\in\mathcal{H}$ such that
\begin{align*}% \label{tau0}
\l\tau_{\lambda}(z),\beta\r_K=\mathfrak G_{z}(\beta)=\int_0^1\beta(s)\,z(s)\,ds\,.
\end{align*}
In particular, $\l\tau_{\lambda}(z),\phi_{k}\r_K=\l z,\phi_{k}\r_{L^2}$, so that in view of \eqref{expansion},
% \HDM{\bf I propose to use the notation $\tau_\lambda$ throughout this paper!}
\begin{align}\label{tau}
\tau_{\lambda}(z)=\sum_{k=1}^\infty \frac{\l z,\phi_{k}\r_{L^2}}{1+\lambda\rho_{k}}\,\phi_{k}\,.
\end{align}
Now, for any $\beta,\beta_1,\beta_2\in\H$, in view of \eqref{dl}, define
\begin{equation}\label{sn}
\begin{split}
&S_{n,\lambda,\nu}(\beta)=-\frac{1}{\lfloor n\nu\rfloor}\sum_{i=1}^{\lfloor n\nu\rfloor}\tau_{\lambda}(X_i)\left\{Y_i-\int_0^1X_i(s)\,\beta(s)\,ds\right\}+ W_\lambda(\beta)\,,\\
&\mathcal D S_{n,\lambda,\nu}(\beta)\beta_1=\frac{1}{\lfloor n\nu\rfloor}\sum_{i=1}^{\lfloor n\nu\rfloor}\tau_{\lambda}(X_i)\int_0^1X_i(s)\,\beta_1(s)\,ds+W_\lambda(\beta_1)\,,
\end{split}
\end{equation}
so that $\mathcal D L_{n,\lambda,\nu}(\beta)\beta_1=\l S_{n,\lambda,\nu}(\beta),\beta_1\r_K$ and $\mathcal D^2 L_{n,\lambda,\nu}(\beta)\beta_1\beta_2=\l\mathcal D S_{n,\lambda,\nu}(\beta)\beta_1,\beta_2\r_K$.
Note that, by definition,
\begin{align}\label{snlambdanubeta0}
S_{n,\lambda,\nu}(\beta_0)=-\frac{1}{\lfloor n\nu\rfloor}\sum_{i=1}^{\lfloor n\nu \rfloor}\e_i\,\tau_{\lambda}(X_i)+ W_\lambda(\beta_0)\,.
\end{align}

Recall  the definition of the estimator $\hat\beta_{n,\lambda}(\cdot,\nu)$ 
  defined in \eqref{hatbeta} and consider the statistic
 \begin{align}\label{tn}
\hat{\mathbb{T}}_n=\int_0^1|\hat\beta_{n,\lambda}(s,1)|^2\,ds \,.
\end{align}
It can be shown that, under suitable conditions, the statistic
$\hat{\mathbb{T}}_n$  defines a consistent estimator of
\begin{align}\label{d0}
d_0=\int_0^1|\beta_0(s)|^2\,ds\,,
\end{align}
so that the null hypothesis in  \eqref{rh} should be rejected for large values of $\hat\TT_n$. In fact, it is  a direct consequence of Theorem~\ref{thm:2.1} below, that
\begin{align*}
\sqrt{n}\lambda^{(2a+1)/(2D)}(\hat\TT_n-d_0)\converged N(0,4\sigma_d^2)\,,
\end{align*}
where 
\begin{align}\label{sigmad2}
\sigma_d^2=\lim_{\lambda\downarrow0}\int_0^1\int_0^1C_{U,\lambda}(s,t)\,\beta_0(s)\,\beta_0(t)\,ds\,dt\, , 
\end{align}
the quantity   $C_{U,\lambda}$
is defined by 
\begin{align}\label{cu}
C_{U,\lambda}(s,t)=\lambda^{(2a+1)/D}\sum_{\ell=-\infty}^{+\infty}\cov\big\{\e_0\, \tau_{\lambda}(X_0)(s)\,,\e_\ell \,\tau_{\lambda}(X_\ell)(t)\big\}\,,
\end{align}
and the operator $\tau_\lambda$ is given  in \eqref{tau}.
Unfortunately, in practice, the long-run covariance  $C_{U,\lambda}$ in \eqref{cu} and the asymptotic variance $\sigma_d^2$ in \eqref{sigmad2} is often either intractable or difficult to estimate. This is due to the fact that $\sigma_d^2$ is defined as the limit of a series, which in turns relies on the operator $\tau_\lambda$ in \eqref{tau} and therefore depends on the eigen-system  $\{(\rho_k,\phi_k)\}_{k\geq1}$
of the integro-differential equations in \eqref{id}.
%of the integro-differential equations \eqref{id}. 
Moreover, $C_{U,\lambda}$ defined in \eqref{cu} depends on the unknown nuisance parameters $a$ and $D$ in Assumption~\ref{a201}, which makes its estimation even more challenging.  These difficulties motivate us to propose a self-normalization approach so that pivotal  tests can be constructed for the relevant hypotheses \eqref{rh} even without the knowledge of $\sigma_d^2$ in \eqref{sigmad2} and the nuisance parameters $a$ and $D$.

\subsection{Self-normalization}\label{sec:one}

In order to establish our self-normalization methodology, we first list below several technical assumptions.

\begin{assumption}[Regularity conditions]\label{a:subg}~
%There exists a constant $\varpi>0$ such that $\E\{\exp(\varpi\|X_0\|_{L^2}^2)\}<\infty$; there exists a constant $c_0>0$ such that for any $\beta\in\H$, $\E\big(\l X_0,\beta\r_{L^2}^4\big)\leq c_0\big\{\E\big(\l X_0,\beta\r_{L^2}^2\big)\big\}^2$.

%There exist constants $\varpi>0$ and $c_0>0$ such that
\begin{enumerate}[label={\rm(\ref*{a:subg}.\arabic*)},ref={\rm\ref*{a:subg}.\arabic*},series=a3,nolistsep,leftmargin=1.4cm]
\item\label{a3.1} There exists a constant $\varpi>0$ such that $\E\{\exp(\varpi\|X_0\|_{L^2}^2)\}<\infty$.

\item\label{a3.2} 
For any $\beta\in\H$, $\E\big(\l X_0,\beta\r_{L^2}^4\big)\leq c_0\big\{\E\big(\l X_0,\beta\r_{L^2}^2\big)\big\}^2$, for some constant $c_0>0$.

\item\label{a3.3}The true slope function $\beta_0$ is such that $\sum_{k=1}^\infty \rho_k^2\,V^2(\beta_0,\phi_k)<\infty$.

\item\label{a34} For $(s,t)\in[0,1]^2$ and $C_{U,\lambda}$ in \eqref{cu}, the limit $C_U(s,t)=\lim_{\lambda\downarrow0}C_{U,\lambda}(s,t)$ exists.
\end{enumerate}
%\begin{align*}
%\E\big(\l X_0,\beta\r_{L^2}^4\big)\leq c_0\big\{\E\big(\l X_0,\beta\r_{L^2}^2\big)\big\}^2
%\end{align*}
%\begin{align*}
%\E\{\exp(\varpi\|X_0\|_{L^2}^2)\}<\infty\,.
%\end{align*}

\end{assumption}

\begin{assumption}\label{a:rate}
The constants $a$ and $D$ in Assumption~\ref{a201}
and the regularization parameter 
$\lambda$ in \eqref{hatbeta} satisfy  $\lambda=o(1)$, $n^{-1}\lambda^{-(2a+1)/D}=o(1)$, $ n\lambda^{2+(2a+1)/(2D)}=o(1)$ as $n\to\infty$. In addition, $n^{-1}\lambda^{-2\varsigma}\log n=o(1)$ and $\lambda^{-2\varsigma+(2D+2a+1)/(2D)}\log n=o(1)$  as $n\to\infty$, where $\varsigma=(2D-2a-1)/(4Dm)+(a+1)/(2D)>0$.
\end{assumption}

%\begin{assumption}\label{a6}
%
%\end{assumption}

\begin{remark}
%Comments on the assumptions are in order. 
Assumption~\ref{a3.1} requires an  exponential tail of $\|X_0\|_{L^2}$, which can be satisfied for a variety of processes; see, for example, \cite{shang2015}. Assumption~\ref{a3.2} is a common condition in linear regression models for functional data; see, for example \cite{caiyuan2012} and \cite{shang2015}. Assumption~\ref{a3.3} corresponds to the so-called undersmoothing scenario in \cite{shang2015}; see their Remark 3.2. Assumption~\ref{a:rate} specifies the conditions for the convergence rates for the regularization parameter $\lambda$ in \eqref{hatbeta}.

\end{remark}

In order to characterize the dependence structures of the functional time series, we use  the concept of $m$-approximability (see, for example, \citealp{potscher,hormann2010,berkes2013}). 
%To be precise, we make the following assumption.
\begin{assumption}\label{a:m}~
%[$m$-approximability]
For $i\in\mathbb Z$, $(X_i,Y_i)$ is generated by the model \eqref{model0} and satisfies the following assumptions.
\begin{enumerate}[label={\rm(\ref*{a:m}.\arabic*)},ref={\rm\ref*{a:m}.\arabic*},series=Qbeta,nolistsep,leftmargin=1.4cm]

\item\label{a5.1} 
$X_i=g(\ldots,\xi_{i-1},\xi_i)$ and $\e_i=h(\ldots,\eta_{i-1},\eta_i)$, for $i\in\mathbb Z$ and some deterministic measurable functions $g:\mathcal S^{\infty}\to L^2([0,1])$ and $h:\mathbb{R}^\infty\to\mathbb{R}$, where $\mathcal S$ is some measurable space and  $\xi_i=\xi_i(t,\upomega)$ is jointly measurable in $(t,\upomega)$. The $\xi_i$'s and the $\eta_i$'s are independent and identically distributed (i.i.d).

\item\label{a5.2} For any $s\in[0,1]$, $\E\{X_0(s)\}=\E(\e_0)=0$. For some $\delta\in(0,1)$, $\E|\e_0|^{2+\delta}<\infty$.

%\item For some $\delta\in(0,1)$, $\E\|X_0\|_{L^2}^{2+\delta}<\infty$ and . 

\item The sequences $ \{X_i\}_{i\in\mathbb Z}$ and $\{\e_i\}_{i\in\mathbb Z}$ can be approximated by $\ell$-dependent sequences $\{X_{i,\ell}\}_{i,\ell\in\mathbb Z}$ and $\{\e_{i,\ell}\}_{i,\ell\in\mathbb Z}$, respectively, in the sense that, for some $\kappa>2+\delta$,
% $\kappa>1+\delta/2$,
\begin{align*} % \label{a0}
\sum_{\ell=1}^\infty\big(\E\|X_i-X_{i,\ell}\|_{L^2}^{2+\delta}\big)^{1/\kappa}<\infty\,,\qquad\sum_{\ell=1}^\infty\big(\E|\e_i-\e_{i,\ell}|^{2+\delta}\big)^{1/\kappa}<\infty\,.
\end{align*}
Here, $X_{i,\ell}=g(\xi_i,\xi_{i-1},\ldots,\xi_{i-\ell+1},\boldsymbol{\xi}_{i,\ell}^*)$ and $\e_{i,\ell}=h(\eta_i,\eta_{i-1},\ldots,\eta_{i-\ell+1},\boldsymbol{\eta}_{i,\ell}^*)$, where $\boldsymbol{\xi}_{i,\ell}^*=(\xi^*_{i,\ell,i-\ell},\xi^*_{i,\ell,i-\ell-1},\ldots)$ and $\boldsymbol{\eta}_{i,\ell}^*=(\eta^*_{i,\ell,i-\ell},\eta^*_{i,\ell,i-\ell-1},\ldots)$, and where the $\xi^*_{i,\ell,k}$'s and the $\eta^*_{i,\ell,k}$'s are independent copies of $\xi_0$ and $\eta_0$, and are independent of $\{\xi_i\}_{i\in\mathbb Z}$ and $\{\eta_i\}_{i\in\mathbb Z}$, respectively.

\end{enumerate}

\end{assumption}

\begin{remark}
Assumption~\ref{a5.1} implies that the error process $\{\e_i\}_{i\in\mathbb Z}$
% are i.i.d.~and 
is independent of the predictor $\{X_i\}_{i\in\mathbb Z}$, which is
% $\E(\e_i|X_i)=0$ almost surely.
% Such assumptions are 
a common assumption in the literature (see, for example, Section~6 in \citealp{hormann2010}).
\end{remark}

%Recalling  $S_{n,\lambda,\nu}(\beta_0)$ in \eqref{snlambdanubeta0}, we expect
%\begin{align*}
%\nu\{\hat\beta_{n,\lambda}(\cdot,\nu)-\beta_0+W_\lambda(\beta_0)\}\approx S_{n,\lambda,\nu}(\beta_0)= -\frac{1}{n}\sum_{i=1}^{\lfloor n\nu \rfloor}\e_i\tau_{\lambda}(X_i)+W_\lambda(\beta_0)\,.
%\end{align*}
%
We first establish a uniform Bahadur representation of the slope function. 
Observing the definition of 
$S_{n,\lambda,\nu}$  in \eqref{sn} and for the operator $W_\lambda(\cdot)$   below equation \eqref{expansion}, we expect that
\begin{align*}
\hat\beta_{n,\lambda}(\cdot,\nu)-\beta_0\approx-S_{n,\lambda,\nu}(\beta_0)=\frac{1}{\lfloor n\nu\rfloor}\sum_{i=1}^{\lfloor n\nu \rfloor}\e_i\,\tau_{\lambda}(X_i)- W_\lambda(\beta_0)\,.
\end{align*}
%so that
%\begin{align*}
%\nu\big\{\hat\beta_{n,\lambda}(\cdot,\nu)-\beta_0+W_\lambda(\beta_0)\big\}\approx\frac{1}{n}\sum_{i=1}^{\lfloo%r n\nu \rfloor}\e_i\tau_{\lambda}(X_i)\,.
%\end{align*}
This is justified by the following theorem proved in Section~\ref{app:thm:bahadur} of the online supplementary material.

\begin{theorem}[Uniform Bahadur representation]\label{thm:bahadur}
Suppose Assumptions~\ref{a1}--\ref{a:m} are satisfied. Then, for any fixed (but arbitrary) $\nu_0\in(0,1]$,
\begin{align}\label{bahadur}
\sup_{\nu\in[\nu_0,1]}\,\bigg\Vert\nu\big\{\hat\beta_{n,\lambda}(\cdot,\nu)-\beta_0+W_\lambda(\beta_0)\big\}-\frac{1}{n}\sum_{i=1}^{\lfloor n\nu \rfloor}\e_i\,\tau_{\lambda}(X_i) \bigg\Vert_K=O_p(v_n)\,, 
\end{align}
%\begin{align*}
%\sup_{\nu\in[\nu_0,1]}\,\Big\Vert\hat\beta_{n,\lambda}(\cdot,\nu)-\beta_0+W_\lambda(\beta_0)-\frac{n}{\lfloor n\nu\rfloor}S_{n,\lambda,\nu}(\beta_0)\Big\Vert_K=O_p(v_n), 
%\end{align*}
where for the constant $\varsigma>0$ in Assumption~\ref{a:rate},
\begin{align*} %\label{vnn}
v_n = n^{-1/2}\lambda^{-\varsigma}\big(\lambda^{1/2}+n^{-1/2}\lambda^{-(2a+1)/(4D)}\big)(\log n)^{1/2} \,.
\end{align*}
\end{theorem}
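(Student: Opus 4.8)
The plan is to exploit the fact that the penalized objective $L_{n,\lambda,\nu}$ is quadratic, so that $\mathcal D^3 L_{n,\lambda,\nu}\equiv0$ and the score $S_{n,\lambda,\nu}$ is affine. Since $\hat\beta_{n,\lambda}(\cdot,\nu)$ solves the normal equation $S_{n,\lambda,\nu}(\hat\beta_{n,\lambda}(\cdot,\nu))=0$, an \emph{exact} first-order Taylor expansion of $S_{n,\lambda,\nu}$ about $\beta_0$ gives the identity
\[
\mathcal D S_{n,\lambda,\nu}(\beta_0)\big\{\hat\beta_{n,\lambda}(\cdot,\nu)-\beta_0\big\}=-S_{n,\lambda,\nu}(\beta_0).
\]
Writing $\hat\Sigma_{n,\nu}:=\mathcal D S_{n,\lambda,\nu}(\beta_0)$, which by \eqref{sn} is the self-adjoint operator $\beta_1\mapsto\lfloor n\nu\rfloor^{-1}\sum_{i}\tau_\lambda(X_i)\langle\tau_\lambda(X_i),\beta_1\rangle_K+W_\lambda(\beta_1)$, I would first use Assumption~\ref{a201} and the definitions \eqref{inner}--\eqref{v} of $V$ and $J$ to verify $\langle\E\hat\Sigma_{n,\nu}\beta_1,\beta_2\rangle_K=V(\beta_1,\beta_2)+\lambda J(\beta_1,\beta_2)=\langle\beta_1,\beta_2\rangle_K$, so that $\E\hat\Sigma_{n,\nu}$ is the identity $\mathrm{Id}$ on $\H$ (stationarity suffices here). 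Hence $\hat\Sigma_{n,\nu}=\mathrm{Id}+R_{n,\nu}$ with a centred, self-adjoint fluctuation $R_{n,\nu}$.

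On the event $\{\sup_{\nu\in[\nu_0,1]}\|R_{n,\nu}\|_{\mathrm{op}}\le1/2\}$, the operator $\hat\Sigma_{n,\nu}$ is boundedly invertible with $\|\hat\Sigma_{n,\nu}^{-1}\|_{\mathrm{op}}\le2$, and the resolvent identity $\hat\Sigma_{n,\nu}^{-1}=\mathrm{Id}-\hat\Sigma_{n,\nu}^{-1}R_{n,\nu}$ yields
\[
\hat\beta_{n,\lambda}(\cdot,\nu)-\beta_0=-S_{n,\lambda,\nu}(\beta_0)-\hat\Sigma_{n,\nu}^{-1}R_{n,\nu}\big\{-S_{n,\lambda,\nu}(\beta_0)\big\}.
\]
By \eqref{snlambdanubeta0}, $-S_{n,\lambda,\nu}(\beta_0)=\lfloor n\nu\rfloor^{-1}\sum_{i=1}^{\lfloor n\nu\rfloor}\e_i\tau_\lambda(X_i)-W_\lambda(\beta_0)$, which is exactly the leading term. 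Multiplying the display by $\nu$ and replacing $\nu/\lfloor n\nu\rfloor$ by $1/n$ (the floor discrepancy is $O(n^{-2}\nu_0^{-1})$ and, after multiplication by the score, negligible against $v_n$) produces the centred sum $n^{-1}\sum_{i=1}^{\lfloor n\nu\rfloor}\e_i\tau_\lambda(X_i)$ of the statement, up to a remainder of $K$-norm at most $2\nu\|R_{n,\nu}\|_{\mathrm{op}}\|S_{n,\lambda,\nu}(\beta_0)\|_K$.

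It then remains to bound the two factors uniformly in $\nu$. For the score, $\|W_\lambda(\beta_0)\|_K=O(\lambda^{1/2})$ follows from $\beta_0\in\H$ (finiteness of $J(\beta_0,\beta_0)$) and \eqref{wphi}, while a maximal inequality for the weakly dependent sum $\lfloor n\nu\rfloor^{-1}\sum_i\e_i\tau_\lambda(X_i)$ gives $\sup_\nu\|S_{n,\lambda,\nu}(\beta_0)\|_K=O_p(\lambda^{1/2}+n^{-1/2}\lambda^{-(2a+1)/(4D)})$; here the exponent $(2a+1)/(4D)$ is the sup-norm ``effective dimension'' $\sup_{\|\beta\|_K\le1}\|\beta\|_\infty\asymp\lambda^{-(2a+1)/(4D)}$, obtained from $\rho_k\asymp k^{2D}$ and $\|\phi_k\|_\infty\le ck^a$. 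For the operator fluctuation I would prove the uniform bound $\sup_\nu\|R_{n,\nu}\|_{\mathrm{op}}=O_p\big(n^{-1/2}\lambda^{-\varsigma}(\log n)^{1/2}\big)$; this makes the good event above asymptotically certain precisely because Assumption~\ref{a:rate} forces $n^{-1}\lambda^{-2\varsigma}\log n=o(1)$, and its product with the score bound reproduces $v_n=n^{-1/2}\lambda^{-\varsigma}(\lambda^{1/2}+n^{-1/2}\lambda^{-(2a+1)/(4D)})(\log n)^{1/2}$.

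The main obstacle is this last uniform operator bound. By self-adjointness $\|R_{n,\nu}\|_{\mathrm{op}}=\sup_{\|\beta\|_K=1}\big|\lfloor n\nu\rfloor^{-1}\sum_i(\langle X_i,\beta\rangle_{L^2}^2-\E\langle X_0,\beta\rangle_{L^2}^2)\big|$, a centred empirical process indexed simultaneously by the unit ball of $\H$ and by the sample fraction $\nu$. I would control it by (i) expanding $\beta$ in the orthonormal basis $\{\phi_k/\sqrt{1+\lambda\rho_k}\}$ and bounding the process through the eigenvalue decay $\rho_k\asymp k^{2D}$, the eigenfunction growth $\|\phi_k\|_\infty\le ck^a$ and the Sobolev order $m$ of $\H$ --- the balance of these three being exactly what produces the exponent $\varsigma=(2D-2a-1)/(4Dm)+(a+1)/(2D)$; (ii) reducing the temporal dependence to the independent case by replacing $X_i$ with its $\ell$-dependent approximation $X_{i,\ell}$ from Assumption~\ref{a:m}, whose summable approximation error permits a blocking argument with a Rosenthal/Bernstein-type moment inequality, the exponential moment $\E\exp(\varpi\|X_0\|_{L^2}^2)<\infty$ of Assumption~\ref{a3.1} supplying the sub-exponential tails and the $(\log n)^{1/2}$ factor; and (iii) upgrading the resulting pointwise-in-$\nu$ estimates to the supremum over $\nu$ by a maximal/chaining inequality over the partial-sum index $\lfloor n\nu\rfloor$. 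Carrying out (i)--(iii) simultaneously, with only a logarithmic loss and the sharp $\lambda$-exponent $\varsigma$, is the technical heart of the argument.
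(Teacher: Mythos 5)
Your proposal is correct and arrives at the stated rate, but it organizes the argument differently from the paper. The paper proceeds in two stages: it first proves the crude uniform rate $\sup_{\nu}\Vert\hat\beta_{n,\lambda}(\cdot,\nu)-\beta_0\Vert_K=O_p(\lambda^{1/2}+n^{-1/2}\lambda^{-(2a+1)/(4D)})$ by a Banach contraction-mapping argument (Lemma~\ref{lem:hatbeta}), and then, using that the score is affine, writes $\hat\beta_{n,\lambda}(\cdot,\nu)-\beta_0+S_{n,\lambda,\nu}(\beta_0)$ as the centred empirical process of \eqref{sn} evaluated at the estimation error $\Delta_\nu\beta$, which it controls by rescaling $\Delta_\nu\beta$ into the fixed class $\mathcal F_{p_n}$ and invoking the maximal inequality of Lemma~\ref{lem:hnbeta}. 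You instead invert $\hat\Sigma_{n,\nu}={\rm Id}+R_{n,\nu}$ outright and bound the remainder by $2\Vert R_{n,\nu}\Vert_{\rm op}\Vert S_{n,\lambda,\nu}(\beta_0)\Vert_K$; this bypasses the preliminary consistency lemma and the fixed-point argument entirely, since invertibility of $\hat\Sigma_{n,\nu}$ on your good event already gives existence and uniqueness of the minimizer. The two routes are rate-identical here because the paper's localization buys nothing beyond linearity: the same class $\mathcal F_{p_n}$ (with the same $p_n$) is used both globally inside Lemma~\ref{lem:hatbeta} --- where \eqref{i1nrate} is precisely your operator bound $\sup_\nu\Vert R_{n,\nu}\Vert_{\rm op}=O_p(n^{-1/2}\lambda^{-\varsigma}(\log n)^{1/2})$ --- and locally at $\Delta_\nu\beta$, the only difference being a rescaling factor that enters linearly, so ``operator norm times score norm'' coincides with the localized bound. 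Your bookkeeping of the bias is also exactly right: in your decomposition the $W_\lambda(\beta_0)$ contributions cancel via \eqref{snlambdanubeta0}, whereas the paper's displays \eqref{b} and \eqref{cc} carry an extra $\Vert W_\lambda(\beta_0)\Vert_K$ that does not appear in what \eqref{coo} actually establishes and would not be $O_p(v_n)$ under Assumption~\ref{a:rate} alone. What you leave as ``the technical heart'' --- the uniform-in-$\nu$ operator bound with the exponent $\varsigma$ and the $(\log n)^{1/2}$ factor --- is indeed where all the work lies; the paper proves it by truncating on $\max_i\Vert X_i\Vert_{L^2}\le c\log n$, applying Pinelis' Banach-space martingale inequality conditionally and peeling over $\Vert\beta\Vert_{L^2}$, rather than your $\ell$-dependent blocking route, but your sketch correctly locates the sources of each exponent, so this is a difference of technique rather than a gap. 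The one thing the paper's two-stage scheme buys that yours does not is robustness to a nonvanishing third derivative of the objective, where exact inversion is unavailable and genuine localization would be needed; for the present quadratic problem your shortcut is sound.
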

We define for  $i\in\mathbb Z$ and $\tau_{\lambda}(\cdot)$ in \eqref{tau} the random variables
\begin{equation}\label{u}
\begin{split}
&U_i=\lambda^{(2a+1)/(2D)}\,\e_i\,\tau_{\lambda}(X_i)=\lambda^{(2a+1)/(2D)}\e_i\sum_{k=1}^\infty\frac{\l X_i,\phi_k\r_{L^2}}{1+\lambda\rho_k}\phi_k\,.
%
%&U_{i,\ell}=\lambda^{(2a+1)/(2D)}\,\e_i\,\tau_{\lambda}(X_i)=\lambda^{(2a+1)/(2D)}\e_{i,\ell}\sum_{k=1}^\infty\frac{\l X_{i,\ell},\phi_k\r_{L^2}}{1+\lambda\rho_k}\phi_k\,.
\end{split}
\end{equation}
Theorem~\ref{thm:bahadur} shows that, under suitable conditions, the following approximation
%in the $\|\cdot\|_K$-norm:
\begin{align*}
\nu\big\{\hat\beta_{n,\lambda}(\cdot,\nu)-\beta_0+W_\lambda(\beta_0)\big\}\approx\frac{1}{n}\sum_{i=1}^{\lfloor n\nu \rfloor}\e_i\tau_{\lambda}(X_i)=\frac{1}{n\lambda^{(2a+1)/(2D)}}\sum_{i=1}^{\lfloor n\nu\rfloor}U_i\,,
\end{align*}
holds  uniformly in $\nu\in[\nu_0,1]$ with respect to  the $\|\cdot\|_K$-norm, where $\nu_0\in(0,1]$ is an arbitrary but fixed value. 
% \HDM{{\bf I am not sure if this sentence fits here very well?}
% When  $\nu$ is very close to zero,
% the number of observations $\lfloor n\nu\rfloor$
% used to obtain $\hat\beta_{n,\lambda}(\cdot,\nu)$ in \eqref{hatbeta} can be too small, which leads to large variances of the estimator.}
%It can be shown that (see \eqref{11} of the online supplementary material), under Assumptions~\ref{a201},
%\begin{align*}
%\sup_{\nu\in[0,1]}\,\bigg\|\frac{1}{n}\sum_{i=1}^{\lfloor n\nu\rfloor}\e_i\tau_{\lambda}(X_i)\bigg\|_K^2=O_p(n^{-1}\lambda^{-(2a+1)/(2D)})\,,
%\end{align*}
%so that by Assumption~\ref{a:rate},
%\begin{align*}
%\frac{1}{n}\sum_{i=1}^{\lfloor n\nu \rfloor}\e_i\tau_{\lambda}(X_i)=\frac{1}{n\lambda^{(2a+1)/(2D)}}\sum_{i=1}^{\lfloor n\nu\rfloor}U_i
%\end{align*} 
%is the dominating term in $\nu\big\{\hat\beta_{n,\lambda}(\cdot,\nu)-\beta_0+W_\lambda(\beta_0)\big\}$, w.r.t.~the $\|\cdot\|_K$-norm.
Next, we verify the weak invariance principle of the process $\{n^{-1/2}\sum_{i=1}^{\lfloor n\nu\rfloor}U_i\}_{n\in\mathbb N}$.
%\begin{align*}
%\bigg\{\frac{1}{\sqrt n}\sum_{i=1}^{\lfloor n\nu\rfloor}U_i\bigg\}_{n\in\mathbb N}=\bigg\{n^{-1/2}\lambda^{(2a+1)/(2D)}\sum_{i=1}^{\lfloor n\nu \rfloor}\e_i\tau_{\lambda}(X_i)\bigg\}_{n\in\mathbb N}\,.
%\end{align*}
%Recall from \eqref{cu} that
%\begin{align*}
%C_{U,\lambda}(s,t)=\lambda^{(2a+1)/D}\sum_{\ell=-\infty}^{+\infty}\cov\big\{\e_0\, \tau_{\lambda}(X_0)(s)\,,\e_\ell \,\tau_{\lambda}(X_\ell)(t)\big\}=\sum_{\ell=-\infty}^{+\infty}\cov\{U_0(s),U_\ell(t)\}\,.
%\end{align*}
For this purpose we define the function class
\begin{align}\label{f}
\mathcal F=\bigg\{g:[0,1]\times[0,1]\to\mathbb{R}\,\Big|\sup_{\nu\in[0,1]}\int_0^1|g(s,\nu)|^2\,ds<\infty\bigg\}\,.
\end{align}
The following theorem is proved in Section~\ref{proof:thm:wip} of the online supplementary material.
\begin{theorem}[Weak invariance principle]\label{thm:wip}
Under Assumptions~\ref{a1}--\ref{a:subg} and \ref{a:m}, there exists a mean zero Gaussian process $\{\Gamma(s,\nu)\}_{s,\nu\in[0,1]}$  in $\mathcal F$ defined in \eqref{f},
with covariance function
\begin{align*}
\cov\big\{\Gamma(s_1,\nu_1),\Gamma(s_2,\nu_2)\big\}=(\nu_1\wedge\nu_2)\,C_{U}(s_1,s_2)\,,
\end{align*}
such that, as $n\to\infty$,
\begin{align*}
\sup_{\nu\in[0,1]}\int_0^1\Bigg\{\frac{1}{\sqrt n}\sum_{i=1}^{\lfloor n\nu\rfloor}U_i(s)-\Gamma(s,\nu)\Bigg\}^2\, ds=o_p(1)\,.
\end{align*}
\end{theorem}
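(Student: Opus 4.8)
\emph{Proof strategy.} Write $S_n(s,\nu)=n^{-1/2}\sum_{i=1}^{\lfloor n\nu\rfloor}U_i(s)$ for the rescaled sequential partial-sum process, with $U_i$ as in \eqref{u}, and view it as a random element of the space $(\F,d)$ of \eqref{f} equipped with the metric $d(g,h)=\sup_{\nu\in[0,1]}\big(\int_0^1|g(s,\nu)-h(s,\nu)|^2\,ds\big)^{1/2}$, so that the asserted conclusion reads $d(S_n,\Gamma)^2=o_p(1)$. The plan is to first establish weak convergence $S_n\weakconverge\Gamma$ in $(\F,d)$ through the two standard ingredients, convergence of the finite-dimensional distributions and tightness, and then to upgrade this to the stated uniform-in-$\nu$, $L^2$-in-$s$ approximation by a coupling argument. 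The difficulty to keep in mind throughout is that, since $\lambda=\lambda_n\downarrow 0$, the $U_i$ form a \emph{triangular array}: the operator $\tau_\lambda$ in \eqref{tau} acts as a low-pass filter whose number of active coordinates $\phi_k$ (those with $\lambda\rho_k\lesssim 1$, i.e.\ $k\lesssim\lambda^{-1/(2D)}$) diverges, and the normalisation $\lambda^{(2a+1)/(2D)}$ is calibrated precisely to keep $\E\|U_i\|_{L^2}^2$ bounded. Every estimate below must therefore be shown to hold uniformly as $\lambda\downarrow 0$.

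For the finite-dimensional distributions I would fix points $s_1,\dots,s_p$ and times $\nu_1,\dots,\nu_q$ and apply the Cram\'er--Wold device to the vector $(S_n(s_j,\nu_r))_{j,r}$. Since Assumption~\ref{a:m} makes $\{(X_i,\e_i)\}$, and hence $\{U_i\}$ for each fixed $\lambda$, an $L^2$ $m$-approximable stationary sequence, I would replace $U_i$ by its $\ell$-dependent approximation, control the error in $L^2$ via the summability condition in \ref{a:m} together with \ref{a3.1}--\ref{a3.2}, and invoke a central limit theorem for $\ell$-dependent triangular arrays via a big-block/small-block decomposition; the Lindeberg condition for the big blocks follows from the sub-Gaussian tail \ref{a3.1} and the fourth-moment bound \ref{a3.2}, which force negligibility of individual summands despite the growing effective dimension. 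The limiting covariance is identified by showing $\Cov\{S_n(s_1,\nu_1),S_n(s_2,\nu_2)\}\to(\nu_1\wedge\nu_2)\,C_{U,\lambda}(s_1,s_2)$ with $C_{U,\lambda}$ as in \eqref{cu}, and then passing to the limit $\lambda\downarrow 0$ using Assumption~\ref{a34} to obtain $(\nu_1\wedge\nu_2)\,C_U$; the factor $\nu_1\wedge\nu_2$ arises from the nested ranges of summation, exactly as in Donsker's theorem.

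Tightness is where the infinite-dimensional and triangular-array features bite hardest. I would establish it through an increment bound of the form $\E\|S_n(\cdot,\nu_2)-S_n(\cdot,\nu_1)\|_{L^2}^{2}\lesssim|\nu_2-\nu_1|$, together with a higher-order ($2+\delta$ or fourth-order) version controlling fluctuations, obtained by expanding $\|\cdot\|_{L^2}^2$, using stationarity and $m$-approximability to bound the resulting sum of cross-covariances, and verifying that these bounds are uniform in $\lambda$ thanks to the normalisation $\lambda^{(2a+1)/(2D)}$ and the spectral decay $\rho_k\asymp k^{2D}$ of Assumption~\ref{a201}. A complementary estimate controls the high-frequency $L^2$ tail $\sum_{k>K}$ uniformly in $\nu$ and $\lambda$, ensuring the process concentrates on an essentially finite-dimensional subspace; combined with a maximal inequality this yields asymptotic equicontinuity of $\nu\mapsto S_n(\cdot,\nu)$ in the $L^2$ norm, hence tightness in $(\F,d)$.

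Finite-dimensional convergence and tightness together give $S_n\weakconverge\Gamma$, where $\Gamma$ is the mean-zero Gaussian process with the asserted covariance; since $(\nu_1\wedge\nu_2)\,C_U(s_1,s_2)$ is the covariance of an $L^2$-valued Brownian motion, $\Gamma$ has $d$-continuous sample paths and the limit lives in a separable subset of $(\F,d)$. To pass from weak convergence to the stated approximation I would then invoke the Skorokhod/Dudley representation theorem on this separable subset to construct, on a suitably enriched probability space carrying the data, a version of the limit $\Gamma$ with $d(S_n,\Gamma)\to 0$ almost surely; convergence in distribution of $d(S_n,\Gamma)$ to the constant $0$ is equivalent to convergence in probability, which is precisely $\sup_{\nu\in[0,1]}\int_0^1\{S_n(s,\nu)-\Gamma(s,\nu)\}^2\,ds=o_p(1)$. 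The main obstacle, as flagged above, is carrying out the two central limit and tightness steps \emph{uniformly as} $\lambda\downarrow 0$: the $m$-approximation constants, the Lindeberg quantities and the increment moments all involve the filtered predictor $\tau_\lambda(X_i)$, and showing that the chosen normalisation keeps them bounded, so that the limits $n\to\infty$ and $\lambda\downarrow 0$ may be interchanged (as justified by Assumption~\ref{a34}), is the technical heart of the argument.
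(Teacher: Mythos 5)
Your architecture differs from the paper's in a substantive way. The paper does not prove the invariance principle from scratch: it reduces Theorem~\ref{thm:wip} to Theorem~1.1 of \cite{berkes2013} by verifying that theorem's hypotheses for the triangular array $\{U_i\}$, namely (i) $L^2$-$m$-approximability of $U_i$ by $U_{i,\ell}$ with summability constants bounded \emph{uniformly in} $\lambda$ (Lemma~\ref{lem:m}, which rests on the bound $\Vert\tau_{\lambda}(X_i)\Vert_{L^2}\leq c\,\lambda^{-(2a+1)/(2D)}\Vert X_i\Vert_{L^2}$, so that the normalisation in \eqref{u} exactly compensates the blow-up of $\tau_\lambda$), and (ii) $\sup_{\lambda>0}\Vert C_{U,\lambda}\Vert_{L^2}<\infty$ for the long-run covariance \eqref{cu} (Lemma~\ref{lem:l2}); Assumption~\ref{a34} then supplies the limit $C_{U,\lambda}\to C_U$. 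You correctly identify the uniformity in $\lambda$ as the technical heart, and your fidi-plus-tightness programme is a legitimate, if much longer, route to the weak convergence $S_n\weakconverge\Gamma$.

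The genuine gap is in your final step. The theorem is not a weak convergence statement but a coupling: it asserts the existence of a Gaussian process $\Gamma$ on the same (possibly enriched) probability space as the data with $\sup_{\nu}\Vert S_n(\cdot,\nu)-\Gamma(\cdot,\nu)\Vert_{L^2}\to 0$ in probability, and it is used as such later (see \eqref{gamma} and \eqref{main}). The Skorokhod--Dudley representation theorem does not deliver this: it produces versions $\tilde S_n\stackrel{d}{=}S_n$ and $\tilde\Gamma\stackrel{d}{=}\Gamma$ on a \emph{new} space with $d(\tilde S_n,\tilde\Gamma)\to0$ a.s.; it does not let you keep the original partial-sum process and merely adjoin $\Gamma$ to it. For a single fixed $\Gamma$ the implication genuinely fails (take $S_n=(-1)^nZ$ with $Z$ standard normal: $S_n$ converges weakly to $N(0,1)$ but no $\Gamma$ satisfies $|S_n-\Gamma|\to0$ in probability), and your sentence ``convergence in distribution of $d(S_n,\Gamma)$ to $0$ is equivalent to convergence in probability'' presupposes that $\Gamma$ already lives on the same space as $S_n$, which is precisely what has to be constructed. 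A repair requires either a Strassen/transfer argument (couple version-wise via the Prokhorov metric, then pull the coupling back through a regular conditional distribution --- at which point the non-separability of $(\mathcal F,d)$ and of the space of $L^2$-valued step paths becomes a real obstruction), or, as in \cite{berkes2013} and hence in the paper, a direct construction: project $S_n$ onto finitely many eigenfunctions of $C_U$, apply a finite-dimensional strong approximation to the projected partial sums, and control the residual uniformly in $\nu$ and $\lambda$. Without one of these, your argument stops at weak convergence and does not prove the stated conclusion.
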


Theorem~\ref{thm:wip} shows that the partial sum $n^{-1/2}\sum_{i=1}^{\lfloor n\nu\rfloor}U_i$ can be approximated by a Gaussian process $\Gamma$ in the $L^2$ sense, uniformly in $\nu\in[0,1]$.  As a consequence, we obtain from Theorem~\ref{thm:bahadur} the approximation
\begin{align*} % \label{det1}
\sup_{\nu\in[\nu_0,1]}\,\int_0^1\Big[\sqrt n\lambda^{(2a+1)/(2D)}\nu\big\{\hat\beta_{n,\lambda}(s,\nu)-\beta_0(s)+W_\lambda(\beta_0)\big\}
-\Gamma(s,\nu)\Big]^2\,ds=o_p(1)\,.
\end{align*}
Next, in order to propose our self-normalization methodology, we define a useful quantity regarding the difference between the $L^2$-norms of the estimator  $\hat\beta_{n,\lambda}(\cdot,\nu)$ defined in \eqref{hatbeta} and the true slope function $\beta_0$. For $\nu\in[\nu_0,1]$, let
\begin{align}\label{gnnu}
\hat\G_n(\nu)=\sqrt n\lambda^{(2a+1)/(2D)}\,\nu^2\int_0^1\big\{\hat\beta^{\,2}_{n,\lambda}(s,\nu)-\beta_0^2(s)\big\}\,ds\,.
\end{align}
Theorems~\ref{thm:bahadur} and \ref{thm:wip} allow us to establish in the following theorem the weak convergence of the process $\{\hat\G_n(\nu)\}_{\nu\in[\nu_0,1]}$, which is proved in Section~\ref{proof:thm:2.1} of the online supplementary material.

\begin{theorem}\label{thm:2.1}
Suppose Assumptions~\ref{a1}--\ref{a:m} are satisfied. Then, for the $\hat\G_n$ defined in \eqref{gnnu}, we have
\begin{align*}
\big\{\hat\G_n(\nu)\big\}_{\nu\in[\nu_0,1]}\weakconverge\big\{2\sigma_d\,\nu\BB(\nu)\big\}_{\nu\in[\nu_0,1]}
\qquad \text{in }\ell^\infty([\nu_0,1])\,,
\end{align*}
where $\BB$ denotes the standard Brownian motion and  $\sigma_d$ is defined in \eqref{sigmad2}.

\end{theorem}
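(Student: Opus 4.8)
The plan is to linearise the difference of squares in \eqref{gnnu} and then import the uniform $L^2$ expansion obtained by combining Theorems~\ref{thm:bahadur} and~\ref{thm:wip} (the unnumbered display preceding \eqref{gnnu}). Writing $r_n=\sqrt n\,\lambda^{(2a+1)/(2D)}$ and $A_n(s,\nu)=r_n\nu\{\hat\beta_{n,\lambda}(s,\nu)-\beta_0(s)+W_\lambda(\beta_0)(s)\}$, that expansion reads $\sup_{\nu\in[\nu_0,1]}\|A_n(\cdot,\nu)-\Gamma(\cdot,\nu)\|_{L^2}^2=o_p(1)$, with $\Gamma$ the Gaussian process of Theorem~\ref{thm:wip}. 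I would start from the elementary identity $\hat\beta_{n,\lambda}^2-\beta_0^2=2\beta_0(\hat\beta_{n,\lambda}-\beta_0)+(\hat\beta_{n,\lambda}-\beta_0)^2$ and substitute $\hat\beta_{n,\lambda}-\beta_0=A_n/(r_n\nu)-W_\lambda(\beta_0)$, which gives
\[\hat\G_n(\nu)=2\nu\int_0^1\beta_0(s)\,A_n(s,\nu)\,ds-2r_n\nu^2\int_0^1\beta_0(s)\,W_\lambda(\beta_0)(s)\,ds+r_n\nu^2\int_0^1\{\hat\beta_{n,\lambda}(s,\nu)-\beta_0(s)\}^2\,ds.\]

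The next step is to show the last two summands are uniformly negligible. For the quadratic self-term I would bound $\int_0^1\{\hat\beta_{n,\lambda}-\beta_0\}^2\,ds\le 2\|A_n(\cdot,\nu)\|_{L^2}^2/(r_n^2\nu^2)+2\|W_\lambda(\beta_0)\|_{L^2}^2$; after multiplication by $r_n\nu^2$ the powers of $\nu$ cancel and the term is $O_p(r_n^{-1}\sup_\nu\|A_n(\cdot,\nu)\|_{L^2}^2)+O(r_n\|W_\lambda(\beta_0)\|_{L^2}^2)$, which is $o_p(1)$ since $r_n\to\infty$ (guaranteed by $n^{-1}\lambda^{-(2a+1)/D}=o(1)$ in Assumption~\ref{a:rate}) and $\sup_\nu\|A_n(\cdot,\nu)\|_{L^2}=O_p(1)$, the latter following from $\Gamma\in\mathcal F$ and the expansion. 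For the regularisation-bias term I would use $W_\lambda(\beta_0)=\sum_k V(\beta_0,\phi_k)\{\lambda\rho_k/(1+\lambda\rho_k)\}\phi_k$ together with $\|\phi_k\|_\infty\le ck^{a}$, $\rho_k\asymp k^{2D}$ and Assumption~\ref{a3.3} to establish $\|W_\lambda(\beta_0)\|_{L^2}=O(\lambda^{1-(2a+1)/(4D)})$, whence $r_n\|W_\lambda(\beta_0)\|_{L^2}=O\big((n\lambda^{2+(2a+1)/(2D)})^{1/2}\big)=o(1)$ by Assumption~\ref{a:rate}; Cauchy--Schwarz then yields $\sup_\nu 2r_n\nu^2\,|\int_0^1\beta_0 W_\lambda(\beta_0)|\le 2\|\beta_0\|_{L^2}\,r_n\|W_\lambda(\beta_0)\|_{L^2}=o(1)$.

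It remains to treat the leading term. By Cauchy--Schwarz $\sup_\nu 2\nu\,|\int_0^1\beta_0(A_n-\Gamma)|\le 2\|\beta_0\|_{L^2}\sup_\nu\|A_n(\cdot,\nu)-\Gamma(\cdot,\nu)\|_{L^2}=o_p(1)$, so that $\sup_{\nu\in[\nu_0,1]}|\hat\G_n(\nu)-\tilde\G(\nu)|=o_p(1)$ with $\tilde\G(\nu):=2\nu\int_0^1\beta_0(s)\Gamma(s,\nu)\,ds$. As a continuous linear functional of the centred Gaussian process $\Gamma$, $\tilde\G$ is itself centred Gaussian, and using the covariance in Theorem~\ref{thm:wip},
\[\cov\{\tilde\G(\nu_1),\tilde\G(\nu_2)\}=4\nu_1\nu_2(\nu_1\wedge\nu_2)\int_0^1\int_0^1\beta_0(s_1)\beta_0(s_2)\,C_U(s_1,s_2)\,ds_1\,ds_2=4\sigma_d^2\,\nu_1\nu_2(\nu_1\wedge\nu_2),\]
where the last equality invokes \eqref{sigmad2} and $C_U=\lim_{\lambda\downarrow0}C_{U,\lambda}$ from Assumption~\ref{a34} (an interchange of limit and integral justified by dominated convergence). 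Since this is exactly the covariance of $\{2\sigma_d\nu\BB(\nu)\}$, the two centred Gaussian processes coincide in law; combined with $\hat\G_n=\tilde\G+o_p(1)$ uniformly on $[\nu_0,1]$, this delivers the asserted weak convergence in $\ell^\infty([\nu_0,1])$.

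The main obstacle will be the sharp control of the regularisation bias $\|W_\lambda(\beta_0)\|_{L^2}$. In the $\|\cdot\|_K$-norm $W_\lambda$ acts diagonally and the bound $O(\lambda)$ is immediate from Assumption~\ref{a3.3}, but the $L^2$-norm mixes the merely $V$-orthonormal eigenfunctions $\phi_k$, so the exponent $1-(2a+1)/(4D)$ must be extracted by balancing the factors $k^{2a}$, $\rho_k^{-2}\asymp k^{-4D}$ and $\{\lambda\rho_k/(1+\lambda\rho_k)\}^2$ against the summability from Assumption~\ref{a3.3} (e.g.\ via an integral comparison that converges precisely because $D>a+1/2$); this is exactly the exponent that pairs with the rate condition $n\lambda^{2+(2a+1)/(2D)}=o(1)$. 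A secondary technical point is the justification of the limit--integral interchange identifying $\int_0^1\int_0^1\beta_0\beta_0\,C_U$ with $\sigma_d^2$.
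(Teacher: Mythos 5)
Your argument is correct and follows essentially the same route as the paper: linearise $\hat\beta_{n,\lambda}^2-\beta_0^2$, show the quadratic remainder and the regularisation bias are uniformly negligible, approximate the leading linear term by $2\nu\int_0^1\beta_0\,\Gamma(\cdot,\nu)\,ds$ via Theorems~\ref{thm:bahadur} and~\ref{thm:wip}, and identify the covariance with $4\sigma_d^2\nu_1\nu_2(\nu_1\wedge\nu_2)$ (the paper additionally spells out finite-dimensional convergence plus asymptotic equicontinuity, whereas you conclude directly from the uniform coupling, which is equally valid). The one point you flag as the ``main obstacle'' is in fact immediate: the paper obtains $\|W_\lambda(\beta_0)\|_K=O(\lambda)$ from Assumption~\ref{a3.3} and then converts to $\|W_\lambda(\beta_0)\|_{L^2}=O(\lambda^{1-(2a+1)/(4D)})$ in one line via Lemma~\ref{lem:3.1}, with no need to balance the eigenfunction growth by hand.
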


Let $\omega$ denote a probability measure on the interval $[\nu_0,1]$, and define
\begin{align}\label{vn}
&\hat{\mathbb{V}}_{n}=\Bigg[\int_{\nu_0}^1\bigg|\nu^2\int_0^1\big\{\hat\beta^{\,2}_{n,\lambda}(s,\nu)-\hat\beta^{\,2}_{n,\lambda}(s,1)\big\}\,ds\bigg|^2\,\omega(d\nu)\Bigg]^{1/2}\,.
%,\GG_1
%=\bigg\{\int_0^1|\hat\G_n(\nu)-\nu^2\hat\G_n(1)|^2\,\omega(d\nu)\bigg\}^{1/2}\notag\\
%
%&\hat{\mathbb{V}}_{n,\GG_2}=\int_0^1|\hat\G_n(\nu)-\nu^2\hat\G_n(1)|\,\omega(d\nu)\notag\\
%
%&\hat{\mathbb{V}}_{n,\GG_3}=\sup_{\nu\in[\nu_0,1]}|\hat\G_n(\nu)-\nu^2\hat\G_n(1)|\,.\notag
\end{align}
Then, for the statistic $\hat\TT_n$ and  the $L^2$-norm $d_0$ in \eqref{tn} and \eqref{d0}, respectively, by the continuous mapping theorem and Theorem~\ref{thm:2.1}, we find
\begin{align}
\nonumber
\sqrt{n}\lambda^{(2a+1)/(2D)}\Big(\big(\hat\TT_n-d_0\big)\,,\hat\VV_n\Big)&=\Bigg(\hat\G_n(1)\,, \bigg\{\int_{\nu_0}^1\big|\hat\G_n(\nu)-\nu^2\hat\G_n(1)\big|^2\,\omega(d\nu)\bigg\}^{1/2}\Bigg)\\
\converged & \Bigg(2\sigma_d\,\BB(1)\,,2\sigma_d\bigg\{\int_{\nu_0}^1|\nu\,\BB(\nu)-\nu^2\BB(1)|^2\,\omega(d\nu)\bigg\}^{1/2}\Bigg)\,. \label{det9}
\end{align}
In particular, the ratio $(\hat\TT_n-d_0)/\hat\VV_{n}$ will be  asymptotically 
free of the nuisance parameters $a,D$ and $\sigma_d$, provided that $\sigma_d^2 > 0$. The following theorem formalizes this idea of self-normalization, and is proved in Section~\ref{proof:thm:pivot} of the online supplementary material.

\begin{theorem}\label{thm:pivot}
Suppose Assumptions~\ref{a1}--\ref{a:m} are satisfied and assume that $\sigma_d^2 > 0$. For the $\hat\TT_n$, $d_0$ and $\hat\VV_n$ defined in \eqref{tn}, \eqref{d0} and \eqref{vn}, respectively, 
%and for the $\nu_0$ and $\omega$ in $\hat\VV_n$ defined in \eqref{vn}, 
we have
%\begin{align}\label{core}
%%\frac{\hat\TT_n-\|\beta_0\|_{L^2}^2}{\hat\VV_{n}}
%\frac{\hat\TT_n-d_0}{\hat\VV_{n}}\converged\WW=\frac{\BB(1)}{\scaleobj{1.2}{\big\{}\int_{\nu_0}^1\big|\nu\,\BB(\nu)-\nu^2\,\BB(1)\big|^2\,\omega(d\nu)\scaleobj{1.2}{\big\}^{\scaleobj{0.83}{1/2}}}}\,.
%\end{align}
\begin{align}\label{core}
%\frac{\hat\TT_n-\|\beta_0\|_{L^2}^2}{\hat\VV_{n}}
\frac{\hat\TT_n-d_0}{\hat\VV_{n}}\converged\WW=\frac{\BB(1)}{\scaleobj{1.2}{\big\{}\int_{\nu_0}^1|\nu\,\BB(\nu)-\nu^2\,\BB(1)|^2\,\omega(d\nu)\scaleobj{1.2}{\big\}^{\scaleobj{0.83}{1/2}}}}\,.
\end{align}
\end{theorem}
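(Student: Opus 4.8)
The plan is to read off the result from the joint weak convergence already recorded in \eqref{det9} and an application of the continuous mapping theorem to the ratio map $(x,y)\mapsto x/y$; essentially all of the genuine work is in checking that the limiting denominator is almost surely nonzero, so that this map is continuous on a set of full limiting probability.

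First I would note that the deterministic normalising factor cancels, so that
\[
\frac{\hat\TT_n-d_0}{\hat\VV_n}
=\frac{\sqrt{n}\lambda^{(2a+1)/(2D)}\big(\hat\TT_n-d_0\big)}{\sqrt{n}\lambda^{(2a+1)/(2D)}\hat\VV_n}
=\frac{\hat\G_n(1)}{\big\{\int_{\nu_0}^1|\hat\G_n(\nu)-\nu^2\hat\G_n(1)|^2\,\omega(d\nu)\big\}^{1/2}}\,,
\]
where the second identity uses the definitions \eqref{gnnu}, \eqref{tn} and \eqref{vn}. By \eqref{det9} — which follows from the weak convergence in Theorem~\ref{thm:2.1} together with the continuous mapping theorem applied to the two continuous functionals $x\mapsto x(1)$ and $x\mapsto\{\int_{\nu_0}^1|x(\nu)-\nu^2 x(1)|^2\,\omega(d\nu)\}^{1/2}$ on $\ell^\infty([\nu_0,1])$ — the pair $\sqrt{n}\lambda^{(2a+1)/(2D)}\big((\hat\TT_n-d_0),\hat\VV_n\big)$ converges jointly in distribution to $\big(2\sigma_d\BB(1),\,D_\infty\big)$, where $D_\infty:=2\sigma_d\{\int_{\nu_0}^1|\nu\BB(\nu)-\nu^2\BB(1)|^2\,\omega(d\nu)\}^{1/2}$.

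Next I would apply the continuous mapping theorem to $\psi(x,y)=x/y$, which is continuous at every point with $y\neq0$. If $D_\infty>0$ almost surely, then $\psi$ is continuous on a set carrying the full mass of the limit law, the common factor $2\sigma_d$ cancels between numerator and denominator, and the limit equals $\WW$ as claimed. Since $\sigma_d^2>0$ gives $\sigma_d\neq0$, the positivity of $D_\infty$ reduces to the positivity of $I:=\int_{\nu_0}^1|\nu\BB(\nu)-\nu^2\BB(1)|^2\,\omega(d\nu)$.

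The hard part is the almost sure positivity of $I$. Writing $Z(\nu)=\BB(\nu)-\nu\BB(1)$ for the Brownian bridge on $[0,1]$ and using $\nu\geq\nu_0>0$, one has $|\nu\BB(\nu)-\nu^2\BB(1)|^2=\nu^2 Z^2(\nu)$, so $I=\int_{\nu_0}^1\nu^2 Z^2(\nu)\,\omega(d\nu)$. For each fixed $\nu\in[\nu_0,1)$ the variable $Z(\nu)$ is a nondegenerate centred Gaussian with variance $\nu(1-\nu)>0$, hence $\P(Z(\nu)=0)=0$; by Fubini's theorem $\E\int_{\nu_0}^1\one\{Z(\nu)=0\}\,\omega(d\nu)=\omega(\{1\})$, whereas this random variable is almost surely at least $\omega(\{1\})$ because $Z(1)\equiv0$. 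A nonnegative random variable bounded below by $\omega(\{1\})$ with expectation $\omega(\{1\})$ must equal $\omega(\{1\})$ almost surely, so almost surely the set $\{\nu\in[\nu_0,1):Z(\nu)=0\}$ has $\omega$-measure zero; since the contribution of $\nu=1$ to $I$ vanishes, this forces $I=\int_{[\nu_0,1)}\nu^2 Z^2(\nu)\,\omega(d\nu)>0$ almost surely, provided $\omega([\nu_0,1))>0$. This last requirement — that $\omega$ is not concentrated at the single point $\nu=1$, where the integrand of $\hat\VV_n$ degenerates — together with $\sigma_d^2>0$ are precisely the two nondegeneracy conditions that keep $\psi$ continuous on a set of full limiting probability, and I expect verifying the almost sure positivity of $I$ to be the only step requiring care beyond bookkeeping.
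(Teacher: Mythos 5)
Your argument is correct and follows essentially the same route as the paper: rewrite the ratio as $\hat\G_n(1)/\{\int_{\nu_0}^1|\hat\G_n(\nu)-\nu^2\hat\G_n(1)|^2\,\omega(d\nu)\}^{1/2}$, invoke Theorem~\ref{thm:2.1} and the continuous mapping theorem for the joint limit in \eqref{det9}, and then apply the continuous mapping theorem once more to the ratio map. The one place you go beyond the paper is the verification that $\int_{\nu_0}^1|\nu\,\BB(\nu)-\nu^2\BB(1)|^2\,\omega(d\nu)>0$ almost surely (under the implicit nondegeneracy condition $\omega([\nu_0,1))>0$); the paper simply asserts continuity of the ratio functional wherever the denominator is nonzero without checking that the limit law charges only such points, so your Fubini argument for the Brownian bridge closes a gap the paper leaves implicit.
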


Theorem~\ref{thm:pivot} reveals a self-normalized 
statistic $(\hat\TT_n-d_0)/\hat\VV_{n}$ that converges weakly to a pivotal random variable $\WW$, since its distribution
%a pivotal random variable $\WW$ and a self-normalized random variable $(\hat\TT_n-d_0)/\hat\VV_{n}$, whose asymptotic distribution 
does not depend on the nuisance parameters (namely $a$ and $D$ in Assumption~\ref{a201}, and the $\sigma_d^2$ in \eqref{sigmad2}) or the eigen-system $\{(\rho_k,\phi_k)\}_{k\geq1}$. Moreover, the distribution of $\WW$ in \eqref{core} can easily be simulated from computer-generated sample paths of standard Brownian motions. For illustration, Table~\ref{tab:q} contains the simulated 90\%, 95\% and 99\%-quantiles of 
the distribution of $\WW$, where $\nu_0=1/4$ and $1/2$, and the probability measure $\omega$ in \eqref{core} is the discrete uniform distribution supported on the set $\{\nu_0+(1-\nu_0)/Q\}_{q=1}^Q$, where $Q=5, 25, 100$. This allows us to define the following test for the relevant hypotheses \eqref{rh}. Letting $\mathcal Q_{1-\alpha}(\WW)$ denote the $(1-\alpha)$-quantile of the distribution of $\WW$ in \eqref{core}, we propose to reject the null hypothesis in \eqref{rh} at nominal level $\alpha$, if
\begin{align}\label{test}
\hat\TT_n> \mathcal Q_{1-\alpha}(\WW)\hat\VV_n+\Delta\,.
\end{align}
%\begin{table}[htbp]
%\centering
%\begin{tabular}{l|cc|cc|cc} 
%\cline{1-7}
%&\multicolumn{2}{c|}{90\%}&
%\multicolumn{2}{c|}{95\%}&
%\multicolumn{2}{c}{99\%}\\
%\cline{1-7}
%\multicolumn{1}{c|}{$\nu_0$}&$1/4$&$1/2$&$1/4$&$1/2$&$1/4$&$1/2$\\    
%\cline{1-7}
%$Q=5$ & 4.418 & 5.799 & 5.474 & 7.593 & 7.914 & 12.26 \\
%$Q=20$& 4.195 & 5.387 &    4.979 &    6.730 & 6.694 &    10.19\\
%$Q=100$&    4.168 &     5.198 & 4.802 &     6.383 & 6.446 &    9.518 \\
%\cline{1-7}
%\end{tabular} 
%\caption{Simulated 90\%, 95\% and 99\% quantiles of the distribution of $\WW$ defined in \eqref{core}, based on $10^4$ replications, with $\nu_0=1/4$ and 1/2, and with discrete uniform distribution $\omega$ supported on the set $\{\nu_0+(1-\nu_0)/Q\}_{q=1}^Q$, for $Q=5,20$ and 100.}
%\label{tab:q}
%\end{table}
The following theorem proved in Section~\ref{proof:thm:one} of the online supplementary material provides a theoretical justification of the consistency of the test defined in \eqref{test} at nominal level $\alpha$.
\begin{table}[htbp]
\caption{Simulated $90\%$, $95\%$ and $99\%$-quantiles of the distribution of $\WW$ defined in \eqref{core}, based on $10^4$ replications, with $\nu_0=1/4$ and $1/2$, and with discrete uniform distribution $\omega$ supported on the set $\{\nu_0+q(1-\nu_0)/Q\}_{q=1}^Q$, for $Q=5,\,25$ and $100$.}
\centering
\begin{tabular}{l|cc|cc|cc} 
\cline{1-7}
&\multicolumn{2}{c|}{90\%}&
\multicolumn{2}{c|}{95\%}&
\multicolumn{2}{c}{99\%}\\
\cline{1-7}
\multicolumn{1}{c|}{$\nu_0$}&$1/4$&$1/2$&$1/4$&$1/2$&$1/4$&$1/2$\\    
\cline{1-7}
$Q=5$ &  8.210 & 9.277 & 11.94 & 13.79 & 21.72 & 25.76 \\
%$Q=20$& 2.1487  & 3.6195  &    2.4909&      4.5518 &3.3065 &   7.0066\\
$Q=25$& 7.349  & 8.476  &    10.21&      11.55  &16.43 &   20.37\\
$Q=100$&    7.690 &     8.622 & 10.48 &     12.09  & 16.83 &     20.03 \\
\cline{1-7}
\end{tabular}
\label{tab:q}
\end{table}
\begin{theorem}\label{thm:one}
Assume $\Delta>0$.
Under Assumptions~\ref{a1}--\ref{a:m}
%, for $d_0$ defined in \eqref{d0}, 
we have
\begin{align}\label{eq:rele}
\lim_{n\to\infty}\P\big\{\hat\TT_n> \mathcal Q_{1-\alpha}(\WW)\hat\VV_n+\Delta\big\}=\left\{\begin{array}{ll}
0 & \quad \text{if }\,d_0<\Delta\,\\
\alpha &\quad \text{if }\,d_0=\Delta\text{ and }\sigma_d^2>0\,\\
1 &\quad \text{if }\,d_0>\Delta\,
\end{array}\right.\,.
\end{align}
\end{theorem}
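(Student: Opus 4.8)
The plan is to reduce the rejection event in \eqref{test} to the normalized quantities whose limits have already been established, and then to split into three cases according to the sign of $d_0-\Delta$. First I would set $r_n=\sqrt n\,\lambda^{(2a+1)/(2D)}$ and record that Assumption~\ref{a:rate} (through $n^{-1}\lambda^{-(2a+1)/D}=o(1)$) forces $r_n\to\infty$. Writing $\hat\TT_n-\Delta=(\hat\TT_n-d_0)+(d_0-\Delta)$ and recalling from \eqref{det9} that $r_n(\hat\TT_n-d_0)$ and $r_n\hat\VV_n$ are both $O_p(1)$ with the explicit Gaussian limits displayed there, I would rewrite the event $\{\hat\TT_n>\mathcal Q_{1-\alpha}(\WW)\hat\VV_n+\Delta\}$ equivalently as $\{r_n(\hat\TT_n-d_0)+r_n(d_0-\Delta)>\mathcal Q_{1-\alpha}(\WW)\,r_n\hat\VV_n\}$.

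For the boundary case $d_0=\Delta$ with $\sigma_d^2>0$, the drift term $r_n(d_0-\Delta)$ vanishes and the event becomes $\{(\hat\TT_n-d_0)/\hat\VV_n>\mathcal Q_{1-\alpha}(\WW)\}$; division by $\hat\VV_n$ is legitimate here because $\sigma_d^2>0$ keeps its rescaled version bounded away from $0$ in probability. Theorem~\ref{thm:pivot} then gives $(\hat\TT_n-d_0)/\hat\VV_n\converged\WW$, so the rejection probability converges to $\P(\WW>\mathcal Q_{1-\alpha}(\WW))$, which equals $\alpha$ provided the law of $\WW$ has no atom at its $(1-\alpha)$-quantile.

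For $d_0<\Delta$ I would exploit that the deterministic drift satisfies $r_n(d_0-\Delta)\to-\infty$ while $r_n(\hat\TT_n-d_0)$ and $\mathcal Q_{1-\alpha}(\WW)\,r_n\hat\VV_n$ stay $O_p(1)$; the elementary inclusion $\{A_n>B_n\}\subseteq\{A_n>-K\}\cup\{B_n\le-K\}$, valid for any $K>0$, then forces the rejection probability to $0$ after letting $n\to\infty$ and then $K\to\infty$. Symmetrically, for $d_0>\Delta$ the drift $r_n(d_0-\Delta)\to+\infty$ dominates the $O_p(1)$ stochastic terms and the analogous bound drives the rejection probability to $1$. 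A point worth stressing is that neither of these two cases needs $\sigma_d^2>0$: only the $O_p(1)$ control from \eqref{det9} enters (and when $\sigma_d=0$ this control even improves to $o_p(1)$), so the conclusions hold under Assumptions~\ref{a1}--\ref{a:m} alone.

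I expect the only genuinely delicate step to be the exact identity $\P(\WW>\mathcal Q_{1-\alpha}(\WW))=\alpha$ in the boundary case, which rests on the continuity of the distribution function of $\WW$. Verifying this amounts to checking that the denominator $\{\int_{\nu_0}^1|\nu\BB(\nu)-\nu^2\BB(1)|^2\,\omega(d\nu)\}^{1/2}$ is strictly positive almost surely, so that $\WW$ is a well-defined random variable with a density; this is a short separate argument, and once it is in place every remaining step follows directly from the weak convergence already supplied by Theorems~\ref{thm:2.1} and \ref{thm:pivot}.
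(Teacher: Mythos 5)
Your proposal is correct and follows essentially the same route as the paper: normalize by $\sqrt{n}\,\lambda^{(2a+1)/(2D)}$, use Theorem~\ref{thm:pivot} for the boundary case $d_0=\Delta$, and let the diverging drift $\sqrt{n}\,\lambda^{(2a+1)/(2D)}(d_0-\Delta)$ dominate the $O_p(1)$ terms in the other two cases. Your version is in fact marginally tidier, since by avoiding division by $\hat\VV_n$ off the boundary you do not need the paper's separate treatment of the subcase $d_0=0$, and you correctly flag the continuity of the law of $\WW$ needed for $\P\{\WW>\mathcal Q_{1-\alpha}(\WW)\}=\alpha$, which the paper leaves implicit.
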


% \begin{remark}
% It is worth mentioning the difference between our pivotal random variable $\WW$ in \eqref{core} and the pivotal random variable derived in \cite{dettejrssb2020}, \cite{dettekutta} and \cite{aue2022}, where self-normalization for mean functions, covariance functions and eigenvalues/eigenfunctions are considered. Their pivotal random variable is defined by
% \begin{align}\label{tildew}
% \tilde\WW=\frac{\BB(1)}{\scaleobj{1.2}{\big\{}\int_{0}^1|\nu\BB(\nu)-\nu^2\,\BB(1)|^2\,\tilde\omega(d\nu)\scaleobj{1.2}{\big\}^{\scaleobj{0.83}{1/2}}}}\,,
% \end{align} 
% where $\tilde\omega$ denotes a probability measure on $[0,1]$. We notice that measure $\tilde\omega$ in $\tilde\WW$ in \eqref{tildew} can be supported on $[0,1]$, whereas the support of our measure $\omega$ in $\WW$ in \eqref{core} should be bounded away from zero.
%We note that the pivotal random variable $\WW$ in \eqref{core} is different from the one for the mean/covariance/eigenfunction using the self-normalization approach; see \cite{dettejrssb2020} and \cite{aue2022}. Their pivotal random variable takes the following form:
%\begin{align*}
%\tilde\WW=\frac{\BB(1)}{\scaleobj{1.2}{\big\{}\int_{\nu_0}^1\nu^2\big|\BB(\nu)-\nu\,\BB(1)\big|^2\,\omega(d\nu)\scaleobj{1.2}{\big\}^{\scaleobj{0.83}{1/2}}}}\,,
%\end{align*}
%if we take $\omega$ to be a probability measure on $[\nu_0,1]$.
% \end{remark}

\begin{remark} \label{delta}
% \HDM{{\bf Please check}
 The choice of the threshold $\Delta$ in the relevant hypotheses  in \eqref{rh} 
 has to be carefully discussed with experts from the field of application. We note that this is not an easy problem, but we argue that instead of testing a null hypothesis, which is believed to be not true, one should carefully think about  the effect, which is of real scientific interest. 
%  }

% \HDM{{\bf Please check}
 If this is not possible, we recommend to construct a confidence
 interval. 
%  \HDM{{\bf Please check this carefully} 
To be precise, for the  statistics $\hat\TT_n$ and $\hat\VV_n$ defined in \eqref{tn} and \eqref{vn}, respectively,
% let $\hat d_0=\int^1_0 | \hat{\beta}_{n,\lambda}{(s,1)}|^2 ds$, then the set
the set
\begin{align}
\label{det23}
\hat{\I}_n:=\Big[0\,,\hat\TT_n+\mathcal Q_{1-\alpha}(\WW){\hat{\mathbb{V}}_n}\Big]
\end{align}
defines an asymptotic $(1-\alpha)$-confidence interval for the squared $L^2$-norm 
$d_0=\int^1_0 | \beta_0(s)|^2 d  s$ of the unknown slope function. To see this, note that it follows in the case $d_0 > 0$ from Theorem \ref{thm:pivot} that 
\begin{equation}
\label{det21}
\P_{d_0>0}\big(d _0\in\hat{\I}_n\big)=
\P_{d_0>0} \bigg \{\frac{\hat{\TT}_n-d_0}{\hat{\mathbb{V}}_n}\geq-\mathcal Q_{1-\alpha}(\WW) \bigg \}\to1-\alpha
\end{equation}
as $n\to\infty$, where we have used the fact that the distribution of the random variable
$\mathbb{W}$ in \eqref{core} is symmetric, that is $-\mathcal Q_{1-\alpha}(\WW)=\mathcal Q_{\alpha}(\WW)$. In the case $d_0=0$,
% from \eqref{det9} that $\hat{\mathbb{V}}_n=O_{p}(n^{-1/2}\lambda^{-(2a+1)/(2D)})=o_{p} (1)$,
since $\hat\TT_n,\hat\VV_n\geq0$ almost surely, it follows that,
$$
\P_{d_0=0}\big(d_0\in\hat{\I}_n\big)=\P_{d_0=0}\Big\{\hat{\TT}_n+\mathcal Q_{1-\alpha}(\WW)
\hat{\mathbb{V}}_n \geq0 \Big\}=1\,.
$$
Moreover, if it is reasonable to assume that  the parameter $d_0 = \int_0^1 | \beta_0(s)|^2ds$ is positive,
an asymptotic two-sided  confidence interval
for $d_0> 0$
is given by
\begin{align}
\label{det24}
% \hat{\I}_n:=
\Big (
\max \big \{ 0,\, \hat\TT_n-\mathcal Q_{1-\alpha/2}(\WW){\hat{\mathbb{V}}_n}\} \,,\hat\TT_n+\mathcal Q_{1-\alpha/2}(\WW){\hat{\mathbb{V}}_n}\Big]~,
\end{align}
which follows by \eqref{core},
% a simple calculation from \eqref{det21}, 
observing that,  by \eqref{det9}, $\hat{\mathbb{T}}_n=  d_0 + o_p(1)$ and
$\hat{\mathbb{V}}_n
%O_{p}(n^{-1/2}\lambda^{-(2a+1)/(2D)})
=o_{p} (1)$ as $n\to\infty$, and $\hat\VV_n\geq0$ almost surely.
% }

% \HDM{{\bf Please check}
Alternatively, it is also possible to test 
 the relevant hypotheses for a finite number of thresholds $\Delta^{(1)} < \ldots < \Delta^{(L)}$ simultaneously, for some $L\in\mathbb N_+$.   In particular, rejection for a $ \Delta^{(L_0)} $  means rejection for all smaller thresholds. In this sense, evaluating the test for several thresholds is  logically consistent for the user, and it is possible to determine for a fixed nominal level $\alpha$   the largest threshold such that the null hypothesis is rejected.
%  }
\end{remark}

% \HDM{{\bf Please check}
\begin{remark} \label{omega}
Note that the statistic $\hat{\mathbb{V}}_n$  in \eqref{vn}
depends on the constant $\nu_0$ and the measure $\omega$.
However, we argue that the resulting test \eqref{test}
is not very sensitive with respect to the choice
of these quantities.  Note that 
 these quantities also appear in the definition 
of the pivotal random variable
$\mathbb {W}$ in 
\eqref{core}. Thus, intuitively, there is a cancellation effect in the decision rule \eqref{test}, and we 
demonstrate the  resulting robustness by a small simulation study at the end of Section \ref{sec:simulation}. 
\end{remark}
% }
\begin{remark}\label{beta*}
The methodology for the relevant hypotheses \eqref{rh} can be extended to constructing tests for the relevant hypotheses regarding the location of the slope function at a pre-specified function $\beta_*$, that is
\begin{align}\label{rh*}
H_0:\,\int_0^1|\beta_0(s)-\beta_* (s)|^2\,ds\leq\Delta\quad&\text{versus}\quad H_1:\, \int_0^1|\beta_0(s) - \beta_*(s) |^2\,ds>\Delta\,.
\end{align}
In this case, for the RKHS estimator $\hat\beta_{n,\lambda}(\cdot,\nu)$ in \eqref{hatbeta}, define
\begin{align*}
&\hat{\mathbb{T}}^*_n=\int_0^1|\hat\beta_{n,\lambda}(s,1)-\beta_*(s)|^2\,ds\,,
\\
&\hat{\mathbb{V}}^*_{n}=\Bigg[\int_{\nu_0}^1\bigg|\nu^2\int_0^1\Big\{\big|\hat\beta_{n,\lambda}(s,\nu)-\beta_*(s)\big|^{2}-\big|\hat\beta_{n,\lambda}(s,1)-\beta_*(s)\big|^{2}\Big\}\,ds\bigg|^2\,\omega(d\nu)\Bigg]^{1/2}\,.
\end{align*}
Then, the corresponding decision rule is to reject $H_0$ in \eqref{rh*} at nominal level $\alpha$ if
\begin{align*}
\hat\TT_n^*> \mathcal Q_{1-\alpha}(\WW)\hat\VV_n^*+\Delta\,,
\end{align*}
where $\WW$ is defined in \eqref{core}.
%where $\hat\VV_n$ and $\WW$ are defined in \eqref{vn} and \eqref{core}, respectively. 
The proof of consistency of the above test can be achieved by using arguments similar to the ones used to prove Theorem~\ref{thm:one}, and is therefore omitted for the sake of brevity.

\end{remark}

\begin{remark} \label{comp}
% \HDM{{\bf please check}
We shall briefly compare our results with those  in \cite{kutta2021}.
% ; see their Corollary 3.6.  
 Roughly speaking, these authors considered  an empirical version of the nornal equation
$$
\E(Y_i X_i)  = {\cal C}_X \beta\,,$$
where ${\cal C}_X$ denotes the operator induced by the covariance kernel $C_X$, which is then solved  by  an application of 
a regularized inverse based on
 a spectral-cut-off series estimator.  
As an alternative, our method is based on a reproducing kernel Hilbert space approach using the minimizer of a regularized
optimization problem.  This makes its extension
for inference 
regarding the slope  in a function-on-function model  very easy (see the discussion in Section~\ref{sec:ff}).
% the following section
Moreover, the dependence structure of the time series in \cite{kutta2021} is characterized by the so-called $\phi$-mixing (see, for example, \citealp{dehling2002}), whereas in this article we adopt the concept of $m$-approximability. As pointed out by \cite{hormann2010},
 verifying $m$-approximability is much easier than
 the verification of  $\phi$-mixing.
% }
\end{remark}

\subsection{A test for a relevant difference between two slopes}\label{sec:two}

Suppose $\{(X_{1,i},Y_{1,i})\}_{i\in\mathbb Z}$ and $\{(X_{2,i},Y_{2,i})\}_{i\in\mathbb Z}$ denote two independent strictly stationary time series, where the $X_{1,i}$'s and $X_{2,i}$'s are mean zero random functions in $L^2([0,1])$ and the $Y_{j,i}$'s are defined by
\begin{align}\label{model01}
Y_{j,i}=\int_0^1X_{j,i}(s)\,\beta_j(s)\,ds+\epsilon_{j,i}\,,\qquad i\in\mathbb Z\,,\ j=1,2\,.
\end{align}
%where the $X_i$'s are mean-zero random variables in $L^2([0,1])$. 
Suppose the $j$-th sample consists of $n_j$ observations $(X_{j,1},Y_{j,1}),\ldots,(X_{j,n_j},Y_{j,n_j})$, for $j=1,2$.
%The sample sizes for the two samples are denoted by $n_1$ and $n_2$, respectively.
For a threshold $\Delta>0$, we consider the following relevant relevant hypotheses  for the 
difference between the two slope functions 
% for the two samples 
w.r.t.~the $L^2$ norm:
\begin{align}\label{htwo}
H_0^T:\,\int_0^1|\beta_1(s)-\beta_2(s)|^2\,ds\leq\Delta\quad&\text{versus}\quad H_1^T:\, \int_0^1|\beta_1(s)-\beta_2(s)|^2\,ds>\Delta\,.
\end{align}
Applying the self-normalization methodology developed in Section~\ref{sec:one} for the one sample case, we first define for each sample the RKHS estimator based on the partial sample. For the Sobolev space $\H$ defined in \eqref{H}, $\nu\in[\nu_0,1]$, and $j=1,2$, define
\begin{align}\label{betat}
\hat\beta_{n_j,\lambda_j}(\cdot,\nu)&=\underset{\beta\in \H}{\arg\min}\ \Bigg[ \frac{1}{2\lfloor n_j\nu\rfloor}\sum_{i=1}^{\lfloor n_j\nu\rfloor}\left\{Y_{j,i}-\int_0^1X_{j,i}(s)\,\beta(s)\,ds\right\}^2+\frac{\lambda_j}{2} J(\beta,\beta)\Bigg]\,,
\end{align}
and take the difference
\begin{align*}
\hat\beta_{n_1,n_2}(\cdot,\nu)=\hat\beta_{n_1,\lambda_1}(\cdot,\nu)-\hat\beta_{n_2,\lambda_2}(\cdot,\nu)\,
\end{align*}
%Then, $\int_0^1\big\{\hat\beta_{n_1,n_2}(s,1)\big\}^2ds$ defines a consistent estimate of $\int_0^1|\beta_1(s)-\beta_2(s)|^2\,ds$.
%
(the dependence of the estimator $\hat\beta_{n_1,n_2}(\cdot,\nu) $ on the regularization parameters $\lambda_1, \lambda_2$ will not be refelcted in our notation).
Following ideas similar to the ones for the one sample problem in Section~\ref{sec:one}, we define
\begin{equation}\label{tn1n2}
\begin{split}
&\hat\TT_{n_1,n_2}=\int_0^1\big | \hat\beta_{n_1,n_2}(s,1)\big |^2ds\,,\\
&\hat \VV_{n_1,n_2}=\bigg[\int_{\nu_0}^1\bigg|\nu^2\int_0^1\big\{\hat\beta^{\,2}_{n_1,n_2}(s,\nu)-\hat\beta^{\,2}_{n_1,n_2}(s,1)\big\}\,ds\bigg|^2\,\omega(d\nu)\bigg]^{1/2}\,.
\end{split}
\end{equation}
In order to define our test for the two sample relevant hypotheses in \eqref{htwo}, we apply the methodology in Sections~\ref{sec:rkhs} and \ref{sec:one} and study the asymptotic properties of the statistics $\hat\TT_{n_1,n_2}$ and $\hat \VV_{n_1,n_2}$. For 
% any $\beta_1,\beta_2\in\H$ and 
$j=1,2$, let
\begin{align}\label{lrj}
\l\beta_1,\beta_2\r_j=\int_0^1\int_0^1\cov\{X_{j,1}(s),X_{j,1}(t)\}\,\beta_1(s)\,\beta_2(t)\,ds\,dt+\lambda_j J(\beta_1,\beta_2)
\end{align}
define an inner product on $\H$. For any $z\in L^2([0,1])$ and $\beta\in\H$, and for $j=1,2$, by the Riesz representation theorem, let $\tau_j(z)\in\mathcal{H}$ denote the unique element such that $\l\tau_j(z),\beta\r_j=\l z,\beta\r_{L^2}$. 
%\begin{align}\label{tau0t}
%\l\tau_j(z),\beta\r_K=\int_0^1\beta(s)\,z(s)\,ds\,.
%\end{align}
%As for the consistency of the two-sample relevant test \eqref{two},
In addition, suppose Assumption~\ref{a201} is satisfied for the $j$-th sample ($j=1,2$) with parameters $a_j,D_j>0$, respectively (see Assumption~\ref{a201t} in Section~\ref{app:a:two} of the online supplementary material for details). 
%For $j=1,2$, let $C_{U,j}$ denote the long-run covariance the way $C_{U,\lambda}$ in \eqref{cu} is defined for each sample, that is,
%\begin{align*}
%C_{U,j}(s,t)=\lambda_j^{(2a_j+1)/D_j}\sum_{\ell=-\infty}^{+\infty}\cov\Big\{\e_{j,0}\,\tau_j(X_{j,0})(s)\,,\,\e_{j,\ell}\,\tau_j(X_{j,\ell})(t)\Big\}\,.
%\end{align*}
For $\nu\in[\nu_0,1]$, define
\begin{align}\label{gn1n2}
\hat\G_{n_1,n_2}(\nu)=\sqrt {n_1}\lambda_1^{(2a_1+1)/(2D_1)}\nu^2\int_0^1\Big[\hat\beta^{\,2}_{n_1,n_2}(s,\nu)-\{\beta_1(s)-\beta_2(s)\}^2\Big]\,ds\,,
\end{align}
so that, in view of \eqref{tn1n2},
\begin{align}\label{tv}
&\Bigg(\sqrt{n_1}\lambda_1^{(2a_1+1)/(2D_1)}\bigg\{\hat\TT_{n_1,n_2}-\int_0^1|\beta_1(s)-\beta_2(s)|^2\,ds\bigg\}\,,\sqrt{n_1}\lambda_1^{(2a_1+1)/(2D_1)}\hat\VV_{n_1,n_2}\Bigg)\notag\\
&=\Bigg(\hat\G_{n_1,n_2}(1)\,, \bigg\{\int_{\nu_0}^1\big|\hat\G_{n_1,n_2}(\nu)-\nu^2\hat\G_{n_1,n_2}(1)\big|^2\,\omega(d\nu)\bigg\}^{1/2}\Bigg)\,.
\end{align}
Then, in order to show the asymptotic distributions of $\hat\TT_{n_1,n_2}$ and $\hat\VV_{n_1,n_2}$ in \eqref{tn1n2}, it suffices to show the weak convergence of the process $\{\hat\G_{n_1,n_2}(\nu)\}_{\nu\in[\nu_0,1]}$ defined in \eqref{gn1n2}. To achieve this, we make the following assumption regarding the convergence rates of the sample sizes $n_1,n_2$ and the regularization parameters $\lambda_1,\lambda_2$.
\begin{assumption}\label{a6}
Assume $n_1,n_2\to\infty$ and $n_2\lambda_2^{(2a_2+1)/D_2}/\big(n_1\lambda_1^{(2a_1+1)/D_1}\big)\to\gamma>0$.
%\begin{align*}
%\frac{n_2\lambda_2^{(2a_2+1)/D_2}}{n_1\lambda_1^{(2a_1+1)/D_1}}\to\gamma>0\,.
%\end{align*}
\end{assumption}
The following long-run covariance for each sample defined by
\begin{align*}
C_{U,\lambda,j}(s,t)=\lambda_j^{(2a_j+1)/D_j}\sum_{\ell=-\infty}^{+\infty}\cov\big\{\e_{j,0}\, \tau_j(X_{j,0})(s)\,,\e_{j,\ell} \,\tau_j(X_{j,\ell})(t)\big\}\,,\qquad(j=1,2)
\end{align*}
plays a crucial role in the asymptotic distribution of $\hat\G_{n_1,n_2}$.
Let
\begin{align}\label{sigma12}
\sigma_{1,2}^2=\lim_{\lambda\downarrow0}\int_0^1\int_0^1\{C_{U,\lambda,1}(s,t)+\gamma\, C_{U,\lambda,2}(s,t)\}\{\beta_1(s)-\beta_2(s)\}\{\beta_1(t)-\beta_2(t)\}\,ds\,dt\,.
\end{align}
Theorem~\ref{thm:gnt} below establishes the weak convergence of the process $\{\hat\G_{n_1,n_2}(\nu)\}_{\nu\in[\nu_0,1]}$. The proof follows arguments similar to the ones used to prove Theorem~\ref{thm:one}, and is given in Section~\ref{app:two} of the online supplementary material, where we also state the necessary assumptions for this statement.

\begin{theorem}\label{thm:gnt}
Under Assumption~\ref{a6} and Assumptions~\ref{a1t}--\ref{a:mt} in Section~\ref{app:a:two} of the online supplementary material, we have
\begin{align*}
\big\{\hat\G_{n_1,n_2}(\nu)\big\}_{\nu\in[\nu_0,1]}\weakconverge\big\{2\sigma_{1,2}\,\nu\BB(\nu)\big\}_{\nu\in[\nu_0,1]}
\qquad \text{in }\ell^\infty([\nu_0,1])\,,
\end{align*}
where $\BB$ denotes the standard Brownian motion and the $\sigma_{1,2}$ is defined in \eqref{sigma12}.
\end{theorem}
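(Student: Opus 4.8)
The plan is to follow the blueprint of the one-sample result Theorem~\ref{thm:2.1}, treating $\hat\beta_{n_1,n_2}(\cdot,\nu)=\hat\beta_{n_1,\lambda_1}(\cdot,\nu)-\hat\beta_{n_2,\lambda_2}(\cdot,\nu)$ through the two marginal estimators and exploiting the independence of the two samples. First I would linearize the integrand of $\hat\G_{n_1,n_2}$ in \eqref{gn1n2} via the product rule,
\begin{equation*}
\hat\beta^{\,2}_{n_1,n_2}(s,\nu)-\{\beta_1(s)-\beta_2(s)\}^2=\big\{\hat\beta_{n_1,n_2}(s,\nu)+\beta_1(s)-\beta_2(s)\big\}\big\{\hat\beta_{n_1,n_2}(s,\nu)-\beta_1(s)+\beta_2(s)\big\}\,,
\end{equation*}
so that, pulling one factor $\nu$ out of $\nu^2$,
\begin{equation*}
\hat\G_{n_1,n_2}(\nu)=\nu\int_0^1\big\{\hat\beta_{n_1,n_2}(s,\nu)+\beta_1(s)-\beta_2(s)\big\}\,R_n(s,\nu)\,ds\,,
\end{equation*}
where $R_n(s,\nu)=\sqrt{n_1}\,\lambda_1^{(2a_1+1)/(2D_1)}\,\nu\big\{\hat\beta_{n_1,n_2}(s,\nu)-\beta_1(s)+\beta_2(s)\big\}$ is the scaled, centred difference estimator.

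Next I would apply the uniform Bahadur representation of Theorem~\ref{thm:bahadur} to each sample (in the two-sample form guaranteed by Assumptions~\ref{a1t}--\ref{a:mt}). Writing $U_{j,i}=\lambda_j^{(2a_j+1)/(2D_j)}\e_{j,i}\tau_j(X_{j,i})$ as in \eqref{u}, this gives, uniformly in $\nu\in[\nu_0,1]$ and in the sense $\sup_\nu\|\cdot\|_{L^2}=o_p(1)$,
\begin{equation*}
R_n(s,\nu)=\frac{1}{\sqrt{n_1}}\sum_{i=1}^{\lfloor n_1\nu\rfloor}U_{1,i}(s)-\frac{\sqrt{n_1}\,\lambda_1^{(2a_1+1)/(2D_1)}}{\sqrt{n_2}\,\lambda_2^{(2a_2+1)/(2D_2)}}\,\frac{1}{\sqrt{n_2}}\sum_{i=1}^{\lfloor n_2\nu\rfloor}U_{2,i}(s)+o_p(1)\,,
\end{equation*}
where multiplication by $\sqrt{n_1}\lambda_1^{(2a_1+1)/(2D_1)}$ turns the Bahadur remainders $O_p(v_{n_j})$ and the deterministic bias terms $\nu W_{\lambda_j}(\beta_j)$ into $o_p(1)$; this negligibility is exactly what the two-sample analogue of Assumption~\ref{a:rate} is for. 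By Assumption~\ref{a6} the prefactor of the second sum converges to a finite positive constant $c$.

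The factor $\hat\beta_{n_1,n_2}(s,\nu)+\beta_1(s)-\beta_2(s)$ converges to $2\{\beta_1(s)-\beta_2(s)\}$, and the replacement is legitimate after integration against $R_n$ by Cauchy--Schwarz, using the uniform (in $\nu$) consistency of each $\hat\beta_{n_j,\lambda_j}(\cdot,\nu)$ towards $\beta_j$ in a norm dominating $\|\cdot\|_{L^2}$, a by-product of the Bahadur analysis. Let $\Gamma_j$ be the Gaussian limit supplied by the weak invariance principle of Theorem~\ref{thm:wip} applied to $\{U_{j,i}\}$, with covariance $(\nu_1\wedge\nu_2)C_{U,j}(s_1,s_2)$ and $C_{U,j}=\lim_{\lambda\downarrow0}C_{U,\lambda,j}$; since $\{U_{1,i}\}$ and $\{U_{2,i}\}$ are independent, the couplings of Theorem~\ref{thm:wip} can be taken jointly on a product space, making $\Gamma_1$ and $\Gamma_2$ independent and yielding $\sup_\nu\|R_n(\cdot,\nu)-\{\Gamma_1(\cdot,\nu)-c\,\Gamma_2(\cdot,\nu)\}\|_{L^2}=o_p(1)$. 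A Cauchy--Schwarz bound with the Lipschitz functional $z\mapsto\big(\nu\mapsto 2\nu\int_0^1\{\beta_1(s)-\beta_2(s)\}z(s,\nu)\,ds\big)$ on $\mathcal F$ in \eqref{f} then gives
\begin{equation*}
\sup_{\nu\in[\nu_0,1]}\Big|\hat\G_{n_1,n_2}(\nu)-2\nu\int_0^1\{\beta_1(s)-\beta_2(s)\}\big\{\Gamma_1(s,\nu)-c\,\Gamma_2(s,\nu)\big\}\,ds\Big|=o_p(1)\,.
\end{equation*}
The approximating process is centred Gaussian, and from
\begin{equation*}
\cov\big\{\Gamma_1(s,\nu_1)-c\,\Gamma_2(s,\nu_1),\ \Gamma_1(t,\nu_2)-c\,\Gamma_2(t,\nu_2)\big\}=(\nu_1\wedge\nu_2)\big\{C_{U,1}(s,t)+c^2\,C_{U,2}(s,t)\big\}
\end{equation*}
its covariance equals $4\nu_1\nu_2(\nu_1\wedge\nu_2)\sigma_{1,2}^2$, once the combination $C_{U,1}+c^2 C_{U,2}$ is identified, via Assumption~\ref{a6}, with the integrand in the definition \eqref{sigma12} of $\sigma_{1,2}^2$. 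This is precisely the covariance of $2\sigma_{1,2}\,\nu\BB(\nu)$, so the two centred Gaussian processes coincide in law and the asserted convergence in $\ell^\infty([\nu_0,1])$ follows; together with \eqref{tv} this delivers the limiting laws of $\hat\TT_{n_1,n_2}$ and $\hat\VV_{n_1,n_2}$.

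The hard part will be the rate bookkeeping that forces every remainder and bias term to vanish after the single normalization $\sqrt{n_1}\lambda_1^{(2a_1+1)/(2D_1)}$, while the two samples carry genuinely different regularization rates $\lambda_1,\lambda_2$. Concretely, one must verify that $\sqrt{n_1}\lambda_1^{(2a_1+1)/(2D_1)}$ times the \emph{second}-sample Bahadur remainder $O_p(v_{n_2})$ and the second-sample bias $\nu W_{\lambda_2}(\beta_2)$ are $o_p(1)$ uniformly in $\nu$; this is the only place where the calibration encoded in Assumption~\ref{a6}, combined with the two-sample versions of Assumption~\ref{a:rate}, really bites, and where tracking the constant $c$ to recover \eqref{sigma12} must be done with care. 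The remaining ingredients---the product-rule linearization, the Cauchy--Schwarz transfer from the $L^2$-valued invariance principle to uniform-in-$\nu$ scalar convergence, and the Gaussian covariance identification---are routine adaptations of the one-sample arguments behind Theorems~\ref{thm:2.1} and \ref{thm:one}; the only genuinely new structural feature is the independence of the two samples, which renders $\Gamma_1$ and $\Gamma_2$ independent and thus makes their long-run covariances combine additively.
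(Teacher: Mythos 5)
Your proposal follows essentially the same route as the paper's proof: a uniform Bahadur representation applied to each sample separately, a joint weak invariance principle for the two (independent) normalized partial-sum processes, and the same product-rule linearization and Gaussian covariance identification used for Theorem~\ref{thm:2.1}. The one point to double-check is the constant in front of the second-sample contribution: with $\gamma$ as defined in Assumption~\ref{a6}, your prefactor satisfies $c^2=1/\gamma$ rather than $\gamma$, so the identification of $C_{U,1}+c^2C_{U,2}$ with the integrand of \eqref{sigma12} needs a careful sign-of-the-ratio check (the paper's own Step~2 is equally terse on exactly this bookkeeping).
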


For the $\hat\TT_{n_1,n_2}$ and $\hat\VV_{n_1,n_2}$ in \eqref{tn1n2}, by \eqref{tv} and Theorem~\ref{thm:gnt}, we deduce that,
\begin{align*}
&\Bigg(\sqrt{n_1}\lambda_1^{(2a_1+1)/(2D_1)}\bigg\{\hat\TT_{n_1,n_2}-\int_0^1|\beta_1(s)-\beta_2(s)|^2\,ds\bigg\}\,,\sqrt{n_1}\lambda_1^{(2a_1+1)/(2D_1)}\hat\VV_{n_1,n_2}\Bigg)\\
%
%&=\Bigg(\hat\G_{n_1,n_2}(1)\,, \bigg\{\int_{\nu_0}^1\nu^2\big|\hat\G_{n_1,n_2}(\nu)-\hat\G_{n_1,n_2}(1)\big|^2\,\omega(d\nu)\bigg\}^{1/2}\Bigg)\\
%
&\converged\Bigg(2\sigma_{1,2}\,\BB(1)\,,2\sigma_{1,2}\bigg\{\int_{\nu_0}^1\,|\nu\,\BB(\nu)-\nu^2\,\BB(1)|^2\,\omega(d\nu)\bigg\}^{1/2}\Bigg)\,.
\end{align*}
Therefore, when $\sigma_{1,2}^2 > 0$, by the continuous mapping theorem, we find
\begin{align*}
\frac{\hat\TT_{n_1,n_2}-\int_0^1|\beta_1(s)-\beta_2(s)|^2ds}{\hat\VV_{n_1,n_2}}\converged\frac{2\sigma_{1,2}\BB(1)}{2\sigma_{1,2}\scaleobj{1.2}{\big\{}\int_{\nu_0}^1|\nu\,\BB(\nu)-\nu^2\,\BB(1)|^2\,\omega(d\nu)\scaleobj{1.2}{\big\}^{\scaleobj{0.83}{1/2}}}} 
\stackrel{d}{=}
\WW\,,
\end{align*}
where the $\WW$ is defined in \eqref{core}. Now, we 
propose to reject the null hypothesis in \eqref{htwo} at nominal level $\alpha$ if
\begin{align}\label{two}
\hat\TT_{n_1,n_2}>\mathcal Q_{1-\alpha}(\WW)\hat\VV_{n_1,n_2}+\Delta\,.
\end{align}
The following theorem shows that the test \eqref{two} is a asymptotically consistent test  for the relevant hypotheses \eqref{htwo} at nominal level $\alpha$. The proof is omitted for the sake of brevity,
because it is based on Theorem~\ref{thm:gnt} and follows arguments similar to the ones used to prove Theorem~\ref{thm:one}.

\begin{theorem}\label{thm:two}
Assume $\Delta>0$.
Under the assumptions of Theorem~\ref{thm:gnt},  we have
\begin{align*}
\lim_{n\to\infty}\P\big\{\hat\TT_{n_1,n_2}>\mathcal Q_{1-\alpha}(\WW)\hat\VV_{n_1,n_2}+\Delta\big\}=\left\{\begin{array}{ll}
0 & \quad \text{if}\,\ \|\beta_1-\beta_2\|_{L^2}^2<\Delta\,\\
\alpha &\quad \text{if}\,\ \|\beta_1-\beta_2\|_{L^2}^2=\Delta\text{ and }
%\beta_1\not\equiv\beta_2
\sigma_{1,2}^2>0
\,\\
1 &\quad \text{if}\,\ \|\beta_1-\beta_2\|_{L^2}^2>\Delta\,
\end{array}\right.\,.
\end{align*}
\end{theorem}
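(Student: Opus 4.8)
The plan is to decompose the rejection event of the test \eqref{two} according to the position of the population quantity $d_{1,2} := \int_0^1|\beta_1(s)-\beta_2(s)|^2\,ds$ relative to the threshold $\Delta$, and to reduce each of the three regimes to the joint weak convergence already furnished by Theorem~\ref{thm:gnt} through the identity \eqref{tv}. Write $r_n := \sqrt{n_1}\lambda_1^{(2a_1+1)/(2D_1)}$ for the normalizing rate. First I would record the two consequences of Theorem~\ref{thm:gnt}: since the first component of the pair in \eqref{tv} is exactly $\hat\G_{n_1,n_2}(1)$ and the second is $\{\int_{\nu_0}^1|\hat\G_{n_1,n_2}(\nu)-\nu^2\hat\G_{n_1,n_2}(1)|^2\,\omega(d\nu)\}^{1/2}$, we obtain $r_n(\hat\TT_{n_1,n_2}-d_{1,2}) \converged 2\sigma_{1,2}\BB(1)$ and $r_n\hat\VV_{n_1,n_2} \converged 2\sigma_{1,2}\{\int_{\nu_0}^1|\nu\BB(\nu)-\nu^2\BB(1)|^2\,\omega(d\nu)\}^{1/2}$ jointly; in particular both are $O_p(1)$ irrespective of the value of $\sigma_{1,2}^2$. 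I would also note that the rate conditions underlying Theorem~\ref{thm:gnt} (the two-sample counterparts of Assumption~\ref{a:rate}) force $r_n\to\infty$, because the condition $n_1^{-1}\lambda_1^{-(2a_1+1)/D_1}=o(1)$ is precisely $r_n^2 = n_1\lambda_1^{(2a_1+1)/D_1}\to\infty$.

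Next I would rewrite the rejection event, after multiplying through by $r_n$, as
\[
\Big\{\, r_n\big(\hat\TT_{n_1,n_2}-d_{1,2}\big) + r_n\big(d_{1,2}-\Delta\big) > \mathcal Q_{1-\alpha}(\WW)\, r_n\hat\VV_{n_1,n_2}\,\Big\}.
\]
The sign of the deterministic drift $r_n(d_{1,2}-\Delta)$ then drives the two boundary-away regimes. If $d_{1,2} < \Delta$, this drift tends to $-\infty$ while the other two summands and the right-hand side are $O_p(1)$, so the probability of the event tends to $0$. If $d_{1,2} > \Delta$, the drift tends to $+\infty$ and dominates the $O_p(1)$ right-hand side, so the probability tends to $1$. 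Neither of these cases requires $\sigma_{1,2}^2 > 0$, since only the $O_p(1)$ character of the remaining terms is used.

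The nontrivial regime is $d_{1,2} = \Delta$ with $\sigma_{1,2}^2 > 0$, where the drift term vanishes identically and the event collapses to $\{(\hat\TT_{n_1,n_2}-d_{1,2})/\hat\VV_{n_1,n_2} > \mathcal Q_{1-\alpha}(\WW)\}$. Here positivity of $\sigma_{1,2}^2$ guarantees that the common factor $2\sigma_{1,2}$ cancels in the self-normalized ratio, so by the continuous mapping theorem applied to the joint convergence recorded above (exactly as in the display preceding \eqref{two}) this ratio converges in distribution to the pivot $\WW$ of \eqref{core}. Consequently the rejection probability tends to $\P\{\WW > \mathcal Q_{1-\alpha}(\WW)\}$. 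The one place where genuine care is needed — and the main obstacle — is to justify that this limit equals $\alpha$ exactly rather than merely $\leq \alpha$: one must invoke the continuity of the distribution function of $\WW$ at its $(1-\alpha)$-quantile, so that $\P\{\WW = \mathcal Q_{1-\alpha}(\WW)\} = 0$ and the portmanteau theorem yields the precise value $\alpha$. This atomlessness follows because $\WW$ is a ratio of almost surely continuous functionals of Brownian motion with an almost surely strictly positive denominator, whence its law admits no atoms. With this point settled, the remaining steps are the routine applications of Slutsky's lemma and the continuous mapping theorem already employed in the proof of the one-sample Theorem~\ref{thm:one}.
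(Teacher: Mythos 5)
Your proposal is correct and follows essentially the same route as the paper, which proves this theorem by the argument used for Theorem~\ref{thm:one}: reduce everything to the joint convergence of $\big(\hat\G_{n_1,n_2}(1),\{\int_{\nu_0}^1|\hat\G_{n_1,n_2}(\nu)-\nu^2\hat\G_{n_1,n_2}(1)|^2\,\omega(d\nu)\}^{1/2}\big)$ via \eqref{tv} and Theorem~\ref{thm:gnt}, let the deterministic drift $r_n(d_{1,2}-\Delta)$ decide the two off-boundary regimes, and apply the continuous mapping theorem on the boundary. Your additional care about atomlessness of the law of $\WW$ is a point the paper leaves implicit; it is most cleanly justified by noting that $\nu\BB(\nu)-\nu^2\BB(1)=\nu\{\BB(\nu)-\nu\BB(1)\}$, so the denominator is a functional of a Brownian bridge independent of $\BB(1)$, rather than by the (by itself insufficient) remark that $\WW$ is a continuous functional of Brownian motion.
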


\section{Function-on-function linear regression}\label{sec:ff}

In this section, we extend the new  self-normalization methodology 
to the problem of testing relevant hypotheses regarding the slope function in functional linear regression where both the response and the predictor are functions. Suppose that  $\{(X_i,Y_i)\}_{i\in\mathbb Z}$ is a stationary time series in $L^2([0,1]) \times L^2([0,1]) $, defined  by  the  function-on-function linear regression model
\begin{align}\label{model0f}
Y_i(t)=\int_0^1\beta_0(s,t)\,X_i(s)\,ds+\epsilon_i(t)\,,\quad i\in\mathbb Z\,,\ t\in[0,1]\,,
\end{align}
where $\{ \epsilon_i\}_{i \in \mathbb{Z}}$ is a centred random noise process in $L^2([0,1])$, and the slope function $\beta_0$ is defined on $[0,1]^2$. Suppose a sample generated by model \eqref{model0f} is available and consists of $n$ observations $(X_1,Y_1),\ldots,(X_n,Y_n)$.
We consider the following relevant hypotheses 
% regarding the $L^2$-norm of the slope function
\begin{align}\label{hf}
H_0^f:\,\int_0^1\int_0^1|\beta_0(s,t)|^2\,ds\,dt\leq\Delta\quad&\text{versus}\quad H_1^f:\, \int_0^1\int_0^1|\beta_0(s,t)|^2\,ds\,dt>\Delta\,,
\end{align}
where $\Delta>0$ is a pre-specified threshold.
%In the sequel, we use $L^2([0,1])$ to denote the $L^2$ spaces on $[0,1]$, and its corresponding inner product is denoted by $\l\cdot,\cdot\r_{L^2}$. We use $C([0,1]^2)$ to denote the Banach space of continuous functions on $[0,1]$ equipped with the supremum norm $\Vert\cdot\Vert_\infty$, denote by 
%``$\weakconverge$"  the weak convergence in $C([0,1]^2)$, and use ``$\converged$" to denote the convergence in distribution in $\mathbb{R}^k$ for some positive integer $k$. 
Let
\begin{align*} %\label{Hf}
\H_f&=\Big\{\beta:[0,1]^2\to\mathbb{R}\,\big|\,\partial^{(\theta_1,\theta_2)}\beta\text{ is absolutely continuous, for }0\leq\theta_1+\theta_2\leq m_f-1\,;\notag\\
&\hspace{3cm}\partial^{(\theta_1,\theta_2)}\beta\in L^2([0,1]^2),\text{ for }\theta_1+\theta_2=m_f\Big\}
\end{align*}
denote the Sobolev space on $[0,1]^2$ of order $m_f>1$.  Let $\nu_0\in(0,1]$ denote an arbitrary fixed constant. For $\nu\in[\nu_0,1]$, following the methodology developed in Section~\ref{sec:sf}, we start by defining the RKHS estimator $\hat\beta_{n,\lambda}(\cdot;\nu)$ based on the observations $(X_1,Y_1),\ldots,(X_{\lfloor n\nu\rfloor},Y_{\lfloor n\nu\rfloor})$ via the following minimization problem:
% We propose to estimate the slope function $\beta_0$ in \eqref{model0f} via the following minimization problem. For an arbitrary fixed constant $\nu_0\in(0,1]$ and for any $\nu\in[\nu_0,1]$, define
\begin{align}\label{betaf}
\hat\beta_{n,\lambda}(\,\cdot\,;\nu)
&=\underset{\beta\in \H_f}{\arg\min}\ \Bigg[ \frac{1}{2\lfloor n\nu\rfloor}\sum_{i=1}^{\lfloor n\nu\rfloor}\int_0^1\left\{Y_i(t)-\int_0^1X_i(s)\,\beta(s,t)\,ds\right\}^2dt+\frac{\lambda}{2} J_f(\beta,\beta)\Bigg]\,,
\end{align}
where $J_f$ is the thin-plate spline smoothness penalty functional (see, for example, \citealp{wood2003}), defined by, for $m_f>1$,
\begin{align}\label{jf}
&J_f(\beta_1,\beta_2)=\sum_{\theta=0}^m{m_f\choose \theta}\int_0^1\int_0^1\frac{\partial^{m_f}\beta_1}{\partial s^{\theta}\,\partial t^{m_f-\theta}}\times\frac{\partial^{m_f}\beta_2}{\partial s^{\theta}\,\partial t^{m_f-\theta}}\,ds\,dt\,.
\end{align}

For the RKHS estimator $\hat\beta_{n,\lambda}(\cdot,\nu)$ defined in \eqref{betaf}, we apply the methodology developed in Sections~\ref{sec:rkhs} and \ref{sec:one}, and define the following statistics
\begin{equation}\label{tnf}
\begin{split}
&\hat\TT_n^f=\int_0^1\int_0^1|\hat\beta_{n,\lambda}(s,t;1)|^2\,ds\,dt\,;\\
&\hat{\mathbb{V}}^f_{n}=\Bigg[\int_{\nu_0}^1\,\bigg|\nu^2\int_0^1\int_0^1\big\{\hat\beta^{\,2}_{n,\lambda}(s,t;\nu)-\hat\beta^{\,2}_{n,\lambda}(s,t;1)\big\}\,ds\,dt\bigg|^2\,\omega(d\nu)\Bigg]^{1/2}\,,
\end{split}
\end{equation}
so that under suitable conditions $\hat\TT_n^f$ is a consistent estimator of
\begin{align}\label{d0f}
d_0^f=\int_0^1\int_0^1|\beta_0(s,t)|^2\,ds\,dt\,.
\end{align}
In order to study the asymptotic properties of $\hat\TT_n^f$ and $\hat{\mathbb{V}}^f_{n}$ in \eqref{tnf}, we define an inner product on $\H_f$ by
\begin{align*} %\label{innerf}
\langle\beta_1,\beta_2\rangle_f=V_f(\beta_1,\beta_2)+\lambda J_f(\beta_1,\beta_2)\,,
\end{align*}
where 
\begin{align}\label{vf}
&V_f(\beta_1,\beta_2)=\int_{[0,1]^3}C_X(s_1,s_2)\,\beta_1(s_1,t)\,\beta_2(s_2,t)\,ds_1\,ds_2\,dt\,
\end{align}
and  $C_X(s_1,s_2)=\cov\{X_1(s_1),X_1(s_2)\}$
denotes the covariance function of the predictor. It was shown in \cite{dettetang} that, under Assumption~\ref{a1},  the mapping $\l\cdot,\cdot\r_f$ is a well-defined inner product in $\H$, and $\H$ is a reproducing kernel Hilbert space (RKHS) equipped with the inner product $\l\cdot,\cdot\r_f$, and we use $\|\cdot\|_f$ to denote its corresponding norm. 
%For $s_1,s_2,t_1,t_2\in[0,1]$, let $K_f\{(s_1,t_1), (s_2,t_2)\}$ denote the reproducing kernel of the reproducing kernel Hilbert space $\H$ equipped with inner product $\langle\cdot,\cdot\rangle_K$. 
For functions $x,y$ on $[0,1]$, let $x\otimes y$ denote the function defined by $x\otimes y(s,t)=x(s)y(t)$. 
%We use $\sum_{k,\ell=1}^\infty $ to denote the sum $\sum_{k=1}^{\infty}\sum_{\ell=1}^{\infty}$ for abbreviation. 
Following \cite{dettetang}, we assume that there exists a sequence of functions in $\H_f$ that diagonalizes operators $V_f$ in  \eqref{vf} and $J_f$ in  \eqref{jf} simultaneously.

\begin{assumption}[Simultaneous diagonalization for functional response]\label{a201f}
There exists a sequence of functions $\phi_{k\ell} =x_{k\ell} \otimes \eta_\ell \in\H$, such that $\Vert \phi_{k\ell}\Vert_{\infty}\leq c (k\ell)^{a_f}$ for any $k,\ell\geq 1$, and
\begin{align*} %\label{diagf}
V_f(\phi_{k\ell},\phi_{k'\ell'})=\delta_{kk'}\,\delta_{\ell\ell'}\,,\qquad J_f(\phi_{k\ell},\phi_{k'\ell'})=\rho_{k\ell}\,\delta_{kk'}\,\delta_{\ell\ell'}\,,\qquad \text{for any }k,k',\ell,\ell'\geq1\,,
\end{align*}
where $a_f\geq 0$, $c>0$ are constants, $\delta_{kk'}$ is the Kronecker delta and $\rho_{k\ell}$ is such that $\rho_{k\ell}\asymp (k\ell)^{2D_f}$ for some constant $D_f>a_f+1/2$. Furthermore, any $\beta\in\H_f$ admits the expansion $\beta=\sum_{k,\ell=1}^\infty V_f(\beta,\phi_{k\ell})\phi_{k\ell}$ with convergence in $\H_f$ with respect to the norm $\Vert\cdot\Vert_f$.
\end{assumption}

Under Assumption~\ref{a201f}, we have
for the inner product $\l\cdot,\cdot\r_f$ in \eqref{inner},
$$
\l\phi_{k\ell},\phi_{k'\ell'}\r_f=V_f(\phi_{k\ell},\phi_{k'\ell'})+\lambda J_f(\phi_{k\ell},\phi_{k'\ell'})=(1+\lambda\rho_{k\ell})\,\delta_{kk'}\,\delta_{\ell\ell'}~~~~(\text{for }k,k',\ell,\ell'\geq 1)\,.
$$
For any $\beta_1,\beta_2\in\mathcal{H}_f$ and $J_f$ defined in \eqref{jf}, let $W_\lambda^f:\H_f\to\H_f$ denote the operator such that $\l W_\lambda^f(\beta_1),\beta_2\r_f=\lambda J_f(\beta_1,\beta_2)$. 
For any $z\in L^2([0,1])$ and $\beta\in\H$, by the Riesz representation theorem, let $\tau_\lambda^f (z)$ denote the unique element in $\H_f$ such that
\begin{align*} % \label{tau0f}
\l\tau_\lambda^f (z),\beta\r_f=\int_0^1\int_0^1\beta(s,t)\,z(s,t)\,ds\,dt\,.
\end{align*}
In particular, $\l\tau_\lambda^f (z),\phi_{k\ell}\r_f=\l z,\phi_{k\ell}\r_{L^2}$, so that
\begin{align*} %\label{tauf}
\tau_\lambda^f (z)=\sum_{k,\ell=1}^\infty\frac{\l z,\phi_{k\ell}\r_{L^2}}{1+\lambda\rho_{k\ell}}\,\phi_{k\ell}\,.
\end{align*}

In order to study the asymptotic properties of the statistics $\hat\TT_n^f$ and $\hat\VV_n^f$ defined in \eqref{tnf}, first, it can be shown that, under suitable conditions,
%We expect that, for $\nu\in[\nu_0,1]$,
\begin{align*}
\nu\big\{\hat\beta_{n,\lambda}(\cdot;\nu)-\beta_0+W^f_\lambda(\beta_0)\big\}\approx\frac{1}{n}\sum_{i=1}^{\lfloor n\nu \rfloor}\tau_\lambda^f (X_i\otimes\e_i)\,,
\end{align*}
w.r.t.~the $\|\cdot\|_f$-norm, uniformly in $\nu\in[\nu_0,1]$; for details
see  Lemma~\ref{lem:vn} in the online supplementary materials.
%\begin{align*}
%\nu\big\{\hat\beta_{n,\lambda}(\,\cdot\,;\nu)-\beta_0+W^f_\lambda(\beta_0)\big\}\approx\frac{1}{n}\sum_{i=1}^{\lfloor n\nu \rfloor}\tau_\lambda^f (X_i\otimes\e_i)
%\end{align*}
Similar to the discussion  in Section~\ref{sec:one}, we consider, for $\nu\in[\nu_0,1]$,
\begin{align}\label{gnf}
\hat\G_n^f(\nu)=\sqrt {n}\lambda^{(2a_f+1)/(2D_f)}\,\nu^2\int_0^1\int_0^1\big\{\hat\beta^{\,2}_{n,\lambda}(s,t;\nu)-\beta_0^2(s,t)\big\}\,ds\,dt\,.
\end{align}
In addition, define the long-run covariance
\begin{align*} %\label{cf}
C_{f,\lambda}\{(s_1,t_1),(s_2,t_2)\}=\lambda^{(2a+1)/D}\sum_{\ell=-\infty}^{+\infty}\cov\big\{\tau_\lambda^f(X_0\otimes\e_0)(s_1,t_1)\,,\tau_\lambda^f(X_\ell\otimes\e_\ell)(s_2,t_2)\big\}\,,
\end{align*}
and let
\begin{align}\label{sigmaf}
\sigma_f^2=\lim_{\lambda\downarrow0}\int_{[0,1]^4}C_{f,\lambda}\{(s_1,t_1),(s_2,t_2)\}\,\beta_0(s_1,t_1)\,\beta_0(s_2,t_2)\,ds_1\,ds_2\,dt_1\,dt_2\,.
\end{align}
We  now state several assumptions required for the asymptotic theory developed in this section, namely the regularity conditions in Assumption~\ref{a:subgf}, the conditions on the convergence rates for the regularization parameter in Assumption~\ref{a:ratef}, and the $m$-approximability condition for functional time series in Assumption~\ref{a:mf}.

\begin{assumption}\label{a:subgf}~
%There exists a constant $\varpi>0$ such that $\E\{\exp(\varpi\|X_0\|_{L^2}^2)\}<\infty$; there exists a constant $c_0>0$ such that for any $\beta\in\H$, $\E\big(\l X_0,\beta\r_{L^2}^4\big)\leq c_0\big\{\E\big(\l X_0,\beta\r_{L^2}^2\big)\big\}^2$.

%There exist constants $\varpi>0$ and $c_0>0$ such that
\begin{enumerate}[label={\rm(\ref*{a:subgf}.\arabic*)},ref={\rm\ref*{a:subgf}.\arabic*},series=a3,nolistsep,leftmargin=1.4cm]
\item\label{a3.1f} There exists a constant $\varpi>0$ such that $\E\{\exp(\varpi\|X_0\|_{L^2}^2)\}<\infty$.

\item\label{a3.2f} 
For any $w\in L^2([0,1])$, $\E\big(\l X_0,w\r_{L^2}^4\big)\leq c_0\big\{\E\big(\l X_0,w\r_{L^2}^2\big)\big\}^2$, for some constant $c_0>0$.

\item\label{wn} $\cov\{\epsilon(t_1),\epsilon(t_2)\}=\sigma_\e^2\,\delta(t_1,t_2)$, for some $\sigma_\e^2>0$, where $\delta$ is the delta function.

%\item\label{a3.4f} $\E\{\e( \cdot )\e( \cdot )\e( \cdot )\e( \cdot  )|X\}\in L^2([0,1]^4)$ almost surely.

\item\label{a3.3f}The true slope function $\beta_0$ is such that $\sum_{k,\ell=1}^\infty  \rho_{k\ell}^2\,V_f^2(\beta_0,\phi_{k\ell})<\infty$.

\item\label{a34f} For $(s_1,t_1),(s_2,t_2)\in[0,1]^2$, the limit $C_{f}\{(s_1,t_1),(s_2,t_2)\}=\lim_{\lambda\downarrow0}C_{f,\lambda}\{(s_1,t_1),(s_2,t_2)\}$ exists.

\end{enumerate}
%\begin{align*}
%\E\big(\l X_0,\beta\r_{L^2}^4\big)\leq c_0\big\{\E\big(\l X_0,\beta\r_{L^2}^2\big)\big\}^2
%\end{align*}
%\begin{align*}
%\E\{\exp(\varpi\|X_0\|_{L^2}^2)\}<\infty\,.
%\end{align*}
\end{assumption}

The reason for postulating Assumption~\ref{wn} is that the $L^2$-loss function in \eqref{betaf} corresponds to the likelihood function of Gaussian white noise processes; see, for example, \cite{wellner2003}.

%\begin{assumption}\label{a0f}
%For any $t_1,t_2\in[0,1]$, $\E\{\epsilon(t_1)|X\}=0$ and $\E\{\epsilon(t_1)\epsilon(t_2)|X\}=C_{\epsilon}(t_1,t_2)$ almost surely. Assume further that $C_\e(t_1,t_2)=\sigma_\e^2\,\delta(t_1,t_2)$, for some $\sigma_\e^2>0$, where $\delta$ is the delta function. 
%\end{assumption}

%\begin{assumption}\label{a:x}
%There exist constants $c_X,c_\e>0$ such that $\E\{\exp(c_X\Vert X\Vert_{L^2})\}<+\infty$ and $\E\{\exp(c_\e\Vert \e\Vert_{L^2})|X\}<+\infty$ almost surely. There exists a constant $c_0>0$ such that, for any $\omega\in L^2([0,1])$,
%\begin{align}\label{moment}
%\E\bigg\{\int_0^1 X(s)\,\omega(s)ds\bigg\}^4\leq c_0\,\bigg[\E\bigg\{\int_0^1 X(s)\,\omega(s)ds\bigg\}^2\,\bigg]^2\,.
%\end{align}
%Moreover,  $\E\{\e( \cdot )\e( \cdot )\e( \cdot )\e( \cdot  )|X\}\in L^2([0,1]^4)$ almost surely.
%\end{assumption}

\begin{assumption}\label{a:ratef}

For the constants $a_f$ and $D_f$ in Assumption~\ref{a201f},
and the regularization parameter 
$\lambda$ in \eqref{betaf},  $\lambda=o(1)$, $n^{-1}\lambda^{-(2a_f+1)/D_f}=o(1)$, $ n\lambda^{2+(2a_f+1)/(2D_f)}=o(1)$ as $n\to\infty$. In addition, $n^{-1}\lambda^{-2\varsigma_f}\log n=o(1)$ and $\lambda^{-2\varsigma_f+(2D_f+2a_f+1)/(2D_f)}\log n=o(1)$  as $n\to\infty$, where $\varsigma_f=(2D_f-2a_f-1)/(4D_fm_f)+(a_f+1)/(2D_f)>0$.
%The regularization parameter 
%$\lambda$ in \eqref{inner} satisfies  $\lambda=o(1)$, $n^{-1}\lambda^{-1/(2D)}=o(1)$ and $n^{-1/2}\lambda^{-\varsigma}\,(\log\log n)^{1/2}=o(1)$   as $n\to\infty$, where $\varsigma=(2D-2a-1)/(4Dm)+(a+1)/(2D)>0$, for the constants $a$ and $D$ in Assumption~\ref{a201}.
\end{assumption}

\begin{assumption}\label{a:mf}~
%[$m$-approximability]
For $i\in\mathbb Z$, $(X_i,Y_i)$ is generated by the model \eqref{model0f} that follows the following assumptions.
\begin{enumerate}[label={\rm(\ref*{a:mf}.\arabic*)},ref={\rm\ref*{a:mf}.\arabic*},series=Qbeta,nolistsep,leftmargin=1.6cm]

\item 
$X_i=g(\ldots,\xi_{i-1},\xi_i)$ and $\e_i=h(\ldots,\eta_{i-1},\eta_i)$, for some deterministic measurable functions $g,h:\mathcal S^{\infty}\mapsto L^2([0,1])$, where $\mathcal S$ is some measurable space,  $\xi_i=\xi_i(t,\upomega)$ and $\eta_i=\eta_i(t,\upomega)$ are jointly measurable in $(t,\upomega)$. The $\xi_i$'s and the $\eta_i$'s are i.i.d.

\item For any $s,t\in[0,1]$, $\E\{X_0(s)\}=\E\{\e_0(t)\}=0$. For some $\delta\in(0,1)$, $\E\|\e_0\|_{L^2}^{2+\delta}<\infty$.

%\item For some $\delta\in(0,1)$, $\E\|X_0\|_{L^2}^{2+\delta}<\infty$ and . 

\item The sequences $ \{X_i\}_{i\in\mathbb Z}$ and $\{\e_i\}_{i\in\mathbb Z}$ can be approximated by $\ell$-dependent sequences $\{X_{i,\ell}\}_{i,\ell\in\mathbb Z}$ and $\{\e_{i,\ell}\}_{i,\ell\in\mathbb Z}$, respectively, in the sense that, for some $\kappa>2+\delta$,
\begin{align*} %\label{a0f}
\sum_{\ell=1}^\infty\big(\E\|X_i-X_{i,\ell}\|_{L^2}^{2+\delta}\big)^{1/\kappa}<\infty\,,\qquad\sum_{\ell=1}^\infty\big(\E\|\e_i-\e_{i,\ell}\|_{L^2}^{2+\delta}\big)^{1/\kappa}<\infty\,.
\end{align*}
Here, $X_{i,\ell}=g(\xi_i,\xi_{i-1},\ldots,\xi_{i-\ell+1},\boldsymbol{\xi}_{i,\ell}^*)$ and $\e_{i,\ell}=h(\eta_i,\eta_{i-1},\ldots,\eta_{i-\ell+1},\boldsymbol{\eta}_{i,\ell}^*)$, where $\boldsymbol{\xi}_{i,\ell}^*=(\xi^*_{i,\ell,i-\ell},\xi^*_{i,\ell,i-\ell-1},\ldots)$ and $\boldsymbol{\eta}_{i,\ell}^*=(\eta^*_{i,\ell,i-\ell},\eta^*_{i,\ell,i-\ell-1},\ldots)$, and where the $\xi^*_{i,\ell,k}$'s and the $\eta^*_{i,\ell,k}$'s are independent copies of $\xi_0$ and $\eta_0$, and are independent of $\{\xi_i\}_{i\in\mathbb Z}$ and $\{\eta_i\}_{i\in\mathbb Z}$, respectively.
\end{enumerate}
\end{assumption}

The following theorem establishes the weak convergence of the process $\{\hat\G_n^f(\nu)\}_{\nu\in[\nu_0,1]}$ defined in \eqref{gnf}, and is proved in Section~\ref{app:thm:gnf} of the online supplementary material.

\begin{theorem}\label{thm:gnf}
Suppose that Assumptions~\ref{a1} and \ref{a201f}--\ref{a:mf} are satisfied. Then, for the constant $\sigma_f^2$ defined in \eqref{sigmaf}, we have
\begin{align*}
\big\{\hat\G_n^f(\nu)\big\}_{\nu\in[\nu_0,1]}\weakconverge\big\{2\sigma_f\,\nu\BB(\nu)\big\}_{\nu\in[\nu_0,1]}
\qquad \text{in }\ell^\infty([\nu_0,1])\,,
\end{align*}
where $\BB$ denotes the standard Brownian motion.
\end{theorem}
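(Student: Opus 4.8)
The plan is to follow the route used in the scalar-on-function case (Theorem~\ref{thm:2.1}), with the uniform Bahadur representation of Theorem~\ref{thm:bahadur} replaced by its function-on-function analogue, Lemma~\ref{lem:vn}. Writing $\Delta_n(\cdot;\nu)=\hat\beta_{n,\lambda}(\cdot;\nu)-\beta_0$ and using the identity $\hat\beta_{n,\lambda}^2-\beta_0^2=2\beta_0\Delta_n+\Delta_n^2$, I would first rewrite
\[
\hat\G_n^f(\nu)=\sqrt n\,\lambda^{(2a_f+1)/(2D_f)}\,\nu^2\Big\{2\langle\beta_0,\Delta_n(\cdot;\nu)\rangle_{L^2}+\|\Delta_n(\cdot;\nu)\|_{L^2}^2\Big\},
\]
where $\langle\cdot,\cdot\rangle_{L^2}$ and $\|\cdot\|_{L^2}$ are taken on $L^2([0,1]^2)$. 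The aim is then to show that the quadratic term is asymptotically negligible uniformly in $\nu\in[\nu_0,1]$, and that the linear term, after inserting the representation of Lemma~\ref{lem:vn}, reduces to a rescaled partial-sum process of a scalar $m$-approximable array whose weak limit is $2\sigma_f\,\nu\BB(\nu)$.

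For the negligible terms, Lemma~\ref{lem:vn} gives, uniformly in $\nu\in[\nu_0,1]$ and with respect to $\|\cdot\|_f$,
\[
\nu\Delta_n(\cdot;\nu)=-\nu\,W_\lambda^f(\beta_0)+\frac1n\sum_{i=1}^{\lfloor n\nu\rfloor}\tau_\lambda^f(X_i\otimes\e_i)+r_n(\nu),
\]
with $\sup_{\nu}\|r_n(\nu)\|_f$ of smaller order. Since the map $\beta\mapsto\langle\beta_0,\beta\rangle_{L^2}=\langle\tau_\lambda^f(\beta_0),\beta\rangle_f$ is a bounded linear functional on $(\H_f,\|\cdot\|_f)$, the contributions of the bias term $-\nu W_\lambda^f(\beta_0)$ and of the remainder $r_n(\nu)$ to the linear part $2\nu^2\langle\beta_0,\Delta_n\rangle_{L^2}$ can be controlled directly from their $\|\cdot\|_f$-bounds; using $\langle\beta_0,W_\lambda^f(\beta_0)\rangle_{L^2}=O(\lambda)$ (which follows from the undersmoothing condition Assumption~\ref{a3.3f}) together with the order of $\sup_\nu\|r_n(\nu)\|_f$ and the rate conditions of Assumption~\ref{a:ratef}, both vanish after multiplication by $\sqrt n\,\lambda^{(2a_f+1)/(2D_f)}$. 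For the quadratic term, evaluating $\|\Delta_n\|_{L^2}^2$ through the spectral representation afforded by Assumption~\ref{a201f} (the bounds $\|\phi_{k\ell}\|_\infty\le c(k\ell)^{a_f}$, $\rho_{k\ell}\asymp(k\ell)^{2D_f}$ with $D_f>a_f+1/2$) yields a bound of the form $O_p\big(n^{-1}\lambda^{-(2a_f+1)/(2D_f)}\log(1/\lambda)+\lambda^2\big)$, and the polynomial and logarithmic conditions in Assumption~\ref{a:ratef} (in particular $n\lambda^{2+(2a_f+1)/(2D_f)}=o(1)$) render $\sqrt n\,\lambda^{(2a_f+1)/(2D_f)}\,\|\Delta_n\|_{L^2}^2=o_p(1)$ uniformly in $\nu$.

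What remains is the leading term
\[
2\sqrt n\,\lambda^{(2a_f+1)/(2D_f)}\,\nu\cdot\frac1n\sum_{i=1}^{\lfloor n\nu\rfloor}\big\langle\beta_0,\tau_\lambda^f(X_i\otimes\e_i)\big\rangle_{L^2}=2\nu\cdot\frac1{\sqrt n}\sum_{i=1}^{\lfloor n\nu\rfloor}Z_{i,n},
\]
where $Z_{i,n}:=\lambda^{(2a_f+1)/(2D_f)}\langle\beta_0,\tau_\lambda^f(X_i\otimes\e_i)\rangle_{L^2}$. The array $\{Z_{i,n}\}_i$ is centred by Assumption~\ref{a:mf}, inherits $m$-approximability from $\{X_i\}$ and $\{\e_i\}$, and — using the white-noise structure of Assumption~\ref{wn} to evaluate the covariances of $\tau_\lambda^f(X_i\otimes\e_i)$ — has long-run variance equal to the limit $\sigma_f^2$ in \eqref{sigmaf}, whose existence is guaranteed by Assumption~\ref{a34f}. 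I would then establish the weak invariance principle $n^{-1/2}\sum_{i=1}^{\lfloor n\nu\rfloor}Z_{i,n}\weakconverge\sigma_f\BB(\nu)$ in $\ell^\infty([\nu_0,1])$; multiplying by the continuous deterministic factor $2\nu$ and invoking the continuous mapping theorem then yields the claimed limit $2\sigma_f\,\nu\BB(\nu)$.

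I expect this last weak invariance principle to be the main obstacle. Because $Z_{i,n}$ is a triangular array whose summands depend on $n$ through $\lambda=\lambda_n\downarrow0$, no fixed-sequence functional central limit theorem applies directly. I would obtain convergence of the finite-dimensional distributions from a central limit theorem for $m$-approximable arrays, verifying a Lindeberg-type condition uniformly in $\lambda$ with the help of the exponential-moment and fourth-moment conditions in Assumptions~\ref{a3.1f} and \ref{a3.2f}, and I would establish tightness in $\ell^\infty([\nu_0,1])$ from a moment bound on the increments of the partial sums. This is the function-on-function counterpart of Theorem~\ref{thm:wip}, and controlling the array uniformly as $\lambda\to0$ — together with the uniform-in-$\nu$ negligibility of the quadratic and remainder terms above — is where the bulk of the technical effort lies.
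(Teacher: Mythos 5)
Your proposal is correct in outline and shares the overall architecture of the paper's argument (the decomposition $\hat\beta_{n,\lambda}^2-\beta_0^2=2\beta_0\Delta_n+\Delta_n^2$, the Bahadur representation of Lemma~\ref{lem:vn}, negligibility of the bias, remainder and quadratic terms under Assumption~\ref{a:ratef}, then an invariance principle plus tightness), but it diverges at the key probabilistic step. The paper first proves an \emph{infinite-dimensional} weak invariance principle (Lemma~\ref{lem:vn2}, the function-on-function analogue of Theorem~\ref{thm:wip}, obtained by running the argument of Theorem~1.1 of \cite{berkes2013} for $L^2$-$m$-approximable arrays uniformly in $\lambda$): the normalized partial sums of $\tau_\lambda^f(X_i\otimes\e_i)$ are approximated in $L^2([0,1]^2)$, uniformly in $\nu$, by a Gaussian process $\Gamma_f$ in the class \eqref{ff}, and only afterwards is the pairing with $\beta_0$ performed; the identification of $2\nu\int\int\beta_0\Gamma_f(\cdot;\nu)$ with $2\sigma_f\nu\BB(\nu)$ and the tightness of $\{\hat\G_n^f(\nu)\}$ then come essentially for free from the Karhunen--Lo\`eve representation of $\Gamma_f$ and the modulus of continuity of Brownian motion, exactly as in the proof of Theorem~\ref{thm:2.1}. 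You instead project onto $\beta_0$ first and reduce the leading term to a scalar triangular array $Z_{i,n}=\lambda^{(2a_f+1)/(2D_f)}\langle\beta_0,\tau_\lambda^f(X_i\otimes\e_i)\rangle_{L^2}$, for which you must then prove a scalar invariance principle from scratch (finite-dimensional convergence via a Lindeberg condition uniform in $\lambda$, convergence of the long-run variance to $\sigma_f^2$ via Assumption~\ref{a34f} and a uniform-in-$\lambda$ bound on $C_{f,\lambda}$, and tightness via moment bounds on increments). Your route avoids constructing the $L^2([0,1]^2)$-valued limit process, but it relocates rather than removes the technical work: the tightness and fidi arguments you defer are precisely what the functional approach obtains automatically once Lemma~\ref{lem:vn2} is in hand, and the uniformity-in-$\lambda$ issues for the triangular array are the same in both formulations. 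Two small inaccuracies that do not affect the conclusion: what Assumption~\ref{a3.3f} actually delivers is $\|W_\lambda^f(\beta_0)\|_f=O(\lambda)$, so the bias contribution should be controlled by converting to the $L^2$-norm at a cost of $\lambda^{-(2a_f+1)/(4D_f)}$ (still $o(1)$ after the $\sqrt{n}\lambda^{(2a_f+1)/(2D_f)}$ scaling, by $n\lambda^{2+(2a_f+1)/(2D_f)}=o(1)$); and the cleanest bound for the quadratic term is not a separate risk computation but simply $\sqrt{n}\lambda^{(2a_f+1)/(2D_f)}\|\Delta_n\|_{L^2}^2=(n\lambda^{(2a_f+1)/D_f})^{-1/2}\|\sqrt{n}\lambda^{(2a_f+1)/(2D_f)}\Delta_n\|_{L^2}^2=o_p(1)$, since the rescaled deviation is $O_p(1)$ in $L^2$ uniformly in $\nu$ and $n\lambda^{(2a_f+1)/D_f}\to\infty$.
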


Recalling  the definition of the statistics $\hat\TT_n^f$ and $\hat\VV_n^f$ in \eqref{tnf} and of  $d_0^f$ in \eqref{d0f}, it follows from Theorem~\ref{thm:gnf} and the continous mapping theorem that
\begin{align*}
% &\Bigg(\sqrt{n}\lambda^{(2a_f+1)/(2D_f)}\bigg\{\hat\TT_{n}^f-\int_0^1\int_0^1|\beta_0(s,t)|^2\,ds\,dt\bigg\}\,,\sqrt{n}\lambda^{(2a_f+1)/(2D_f)}\hat\VV_{n}^f\Bigg)\\
%
\sqrt{n}\lambda^{(2a_f+1)/(2D_f)}\Big(\big(\hat\TT_n^f-d_0^f\big),\hat\TT_n^f\Big)&=\Bigg(\hat\G_{n}^f(1)\,, \bigg\{\int_{\nu_0}^1\big|\hat\G_{n}^f(\nu)-\nu^2\,\hat\G_{n}^f(1)\big|^2\,\omega(d\nu)\bigg\}^{1/2}\Bigg)\\
&\converged\Bigg(2\sigma_f\,\BB(1)\,,2\sigma_f\bigg\{\int_{\nu_0}^1\,|\nu\,\BB(\nu)-\nu^2\,\BB(1)|^2\,\omega(d\nu)\bigg\}^{1/2}\Bigg)\,.
\end{align*}
Therefore, by the continuous mapping theorem, when $\sigma_f^2 > 0$,
\begin{align*}
\frac{\hat\TT_{n}^f-d_0^f}{\hat\VV_{n}^f}\converged\frac{2\sigma_f\BB(1)}{2\sigma_f\scaleobj{1.2}{\big\{}\int_{\nu_0}^1|\nu\,\BB(\nu)-\nu^2\,\BB(1)|^2\,\omega(d\nu)\scaleobj{1.2}{\big\}^{\scaleobj{0.83}{1/2}}}}\stackrel{d}{=}\WW\,,
\end{align*}
where the random variable $\WW$ is defined in \eqref{core}.
%\noindent{\bf Step 6.} Conclude using arguments similar to the ones used in the proof of Theorem~\ref{thm:one}.
%
%Then, for the $\WW$ defined in \eqref{core}, we have
%\begin{align*}
%\frac{\hat\TT_n^f-d_0^f}{\hat\VV_{n}^f}\converged\WW\,.
%\end{align*}
Finally, we propose to reject the null hypothesis in \eqref{hf} at nominal level $\alpha$ if
\begin{align}\label{testf}
\hat\TT_{n}^f>\mathcal Q_{1-\alpha}(\WW)\hat\VV_{n}^f+\Delta\,.
\end{align}
The following theorem shows the consistency of the test \eqref{testf} for the relevant hypotheses \eqref{hf} at nominal level $\alpha$. The proof uses arguments similar to the ones used to prove Theorem~\ref{thm:one}, and is therefore omitted for the sake of brevity.

%It is proved in Section~\ref{proof:thm:f} of the online supplementary material.

\begin{theorem}\label{thm:f}
Assume $\Delta>0$.
Under Assumptions~\ref{a1} and \ref{a201f}--\ref{a:mf}, we have
\begin{align*}
\lim_{n\to\infty}\P\big\{\hat\TT_{n}^f>\mathcal Q_{1-\alpha}(\WW)\hat\VV_{n}^f+\Delta\big\}=
\left\{\begin{array}{ll}
0 & \quad \text{if}\,\ d_0^f<\Delta\\
\alpha &\quad \text{if}\,\ d_0^f=\Delta\text{ and }
%\beta_1\not\equiv\beta_2
\sigma_f^2>0\\
1 &\quad \text{if}\,\ d_0^f>\Delta
\end{array}\right.\,.
\end{align*}
\end{theorem}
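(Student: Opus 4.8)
The plan is to derive Theorem~\ref{thm:f} from the weak convergence in Theorem~\ref{thm:gnf} via the continuous mapping theorem, following the structure of the proof of Theorem~\ref{thm:one}. Write $r_n:=\sqrt{n}\,\lambda^{(2a_f+1)/(2D_f)}$ for the normalising rate, and note that Assumption~\ref{a:ratef}, in particular $n^{-1}\lambda^{-(2a_f+1)/D_f}=o(1)$, forces $r_n\to\infty$. A direct computation from the definitions in \eqref{gnf} and \eqref{tnf} gives the exact identities $\hat\G_n^f(1)=r_n(\hat\TT_n^f-d_0^f)$ and $r_n\hat\VV_n^f=\{\int_{\nu_0}^1|\hat\G_n^f(\nu)-\nu^2\hat\G_n^f(1)|^2\,\omega(d\nu)\}^{1/2}$. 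Applying the continuous mapping theorem to the map $G\mapsto(G(1),\{\int_{\nu_0}^1|G(\nu)-\nu^2 G(1)|^2\,\omega(d\nu)\}^{1/2})$, which is continuous on $\ell^\infty([\nu_0,1])$, Theorem~\ref{thm:gnf} then yields
\begin{align*}
\big(r_n(\hat\TT_n^f-d_0^f)\,,\,r_n\hat\VV_n^f\big)\converged\Big(2\sigma_f\,\BB(1)\,,\,2\sigma_f\big\{\textstyle\int_{\nu_0}^1|\nu\BB(\nu)-\nu^2\BB(1)|^2\,\omega(d\nu)\big\}^{1/2}\Big)\,.
\end{align*}
In particular both coordinates are $O_p(1)$, the second is a.s.\ nonnegative, and $\hat\VV_n^f\ge0$ a.s.

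Next I would rewrite the rejection event. Multiplying the inequality in \eqref{testf} by $r_n>0$ and subtracting $r_nd_0^f$, the rejection probability equals
\begin{align*}
\P\big\{\hat\TT_n^f>\mathcal Q_{1-\alpha}(\WW)\hat\VV_n^f+\Delta\big\}=\P\big\{r_n(\hat\TT_n^f-d_0^f)+r_n(d_0^f-\Delta)>\mathcal Q_{1-\alpha}(\WW)\,r_n\hat\VV_n^f\big\}\,,
\end{align*}
which isolates the deterministic drift $r_n(d_0^f-\Delta)$, and I would then split according to the sign of $d_0^f-\Delta$. If $d_0^f<\Delta$, the drift tends to $-\infty$ while $r_n(\hat\TT_n^f-d_0^f)$ and $\mathcal Q_{1-\alpha}(\WW)\,r_n\hat\VV_n^f$ are $O_p(1)$ by the joint convergence above; this remains valid when $\sigma_f=0$ (e.g.\ $\beta_0\equiv0$), since the limits are then degenerate at $0$ and hence tight, so the left-hand side tends to $-\infty$ in probability and the rejection probability tends to $0$. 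Symmetrically, if $d_0^f>\Delta$, the drift tends to $+\infty$ and dominates the two $O_p(1)$ terms, so the rejection probability tends to $1$; here no positivity of $\sigma_f^2$ is required.

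In the boundary case $d_0^f=\Delta$ with $\sigma_f^2>0$ the drift vanishes, and the rejection probability reduces to $\P\{(\hat\TT_n^f-d_0^f)/\hat\VV_n^f>\mathcal Q_{1-\alpha}(\WW)\}$. Because $\sigma_f^2>0$, the limiting self-normaliser $2\sigma_f\{\int_{\nu_0}^1|\nu\BB(\nu)-\nu^2\BB(1)|^2\,\omega(d\nu)\}^{1/2}$ is strictly positive a.s., so the map $(x,y)\mapsto x/y$ is continuous at the limit; the continuous mapping theorem then gives $(\hat\TT_n^f-d_0^f)/\hat\VV_n^f\converged\WW$, and the rejection probability converges to $\P\{\WW>\mathcal Q_{1-\alpha}(\WW)\}=\alpha$, the last equality using that $\WW$ in \eqref{core} has a continuous distribution, so its $(1-\alpha)$-quantile carries no atom.

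All genuine analytic work is absorbed into Theorem~\ref{thm:gnf}, so within the present argument the only points needing care are the a.s.\ strict positivity of the limiting self-normaliser under $\sigma_f^2>0$, which justifies dividing and invoking the continuous mapping theorem for the ratio in the boundary case, and the absence of an atom of $\WW$ at its quantile, which is needed for the exact level $\alpha$. The former holds because $\int_{\nu_0}^1|\nu\BB(\nu)-\nu^2\BB(1)|^2\,\omega(d\nu)>0$ almost surely, as the integrand vanishes identically in $\nu$ only on a null set of Brownian paths; the latter is a standard property of the pivotal law in \eqref{core}. Accordingly, I expect the main obstacle of the overall development to lie not in this theorem but in Theorem~\ref{thm:gnf}, which rests on the function-on-function analogues of the uniform Bahadur representation and the weak invariance principle.
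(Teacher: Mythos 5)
Your argument is correct and follows essentially the same route as the paper: the paper omits the proof of this theorem, referring to the proof of Theorem~\ref{thm:one}, which likewise combines the joint convergence of $\big(\sqrt{n}\lambda^{(2a_f+1)/(2D_f)}(\hat\TT_n^f-d_0^f),\sqrt{n}\lambda^{(2a_f+1)/(2D_f)}\hat\VV_n^f\big)$ (obtained from Theorem~\ref{thm:gnf} via the continuous mapping theorem) with the diverging drift $\sqrt{n}\lambda^{(2a_f+1)/(2D_f)}(d_0^f-\Delta)$ and the same three-way case analysis. Your presentation of the off-boundary cases by clearing the denominator rather than dividing by $\hat\VV_n^f$ is a minor (and slightly cleaner) variant that makes explicit why no positivity of $\sigma_f^2$ is needed there, but it is not a genuinely different proof.
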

%\begin{align*}
%\lim_{n\to\infty}\P\big\{\hat\TT_{n}^f>\mathcal Q_{1-\alpha}(\WW)\hat\VV_{n}^f+\Delta\big\}=\left\{\begin{array}{ll}
%0 & \quad \text{if}\,\ \int_0^1\int_0^1|\beta_0(s,t)|^2dsdt<\Delta\,\\
%\alpha &\quad \text{if}\,\ \int_0^1\int_0^1|\beta_0(s,t)|^2dsdt=\Delta\text{ and }
%%\beta_1\not\equiv\beta_2
%\sigma_f^2>0
%\,\\
%1 &\quad \text{if}\,\ \int_0^1\int_0^1|\beta_0(s,t)|^2dsdt>\Delta\,
%\end{array}\right.\,.
%\end{align*}

\section{Finite sample properties}\label{sec:finitesample}

\subsection{Implementation}\label{sec:implementation}

In this section we discuss some details regarding the implementation of the proposed tests 
 for the relevant hypotheses. In Section~\ref{sec:scalar} we deal with scalar-on-function linear regression and Section~\ref{sec:function} is dedicated to function-on-function linear regression. 
 % We illustrate our finite-sample methods for implementation by taking $[0,1]=[0,1]$. 

To begin with, for the pivotal random variable $\WW$ in \eqref{core}, in practice, we may choose the probability measure $\omega$ as the discrete uniform distribution on the interval $[\nu_0,1]$. To be precise, for some positive integer $Q$, let
\begin{align}\label{nuq}
\nu_q=\nu_0+q(1-\nu_0)/Q\,,\qquad\quad\text{for }1\leq q\leq Q\,.
\end{align}
Then, we define $\omega$ as the discrete uniform distribution supported on the set $\{\nu_q\}_{q=1}^Q$ with equal probability mass $1/Q$.
%If we denote $\hat\beta_q=\hat\beta_{n,\lambda}(\cdot,\nu_q)$ for simplicity, 
%\begin{align*}
%&\hat\TT_n=\int_0^1|\hat\beta_{Q}(s)|^2\,ds\,,\qquad\hat\VV_n=\Bigg\{\frac{1-\nu_0}{Q}\,	\sum_{ q =0}^{Q-1}\nu_ q ^2\,\Big(\|\hat\beta_{q}\|_{L^2}^2-\|\hat\beta_{Q}\|_{L^2}^2\Big)^2\Bigg\}^{1/2}\,.
%\end{align*}
With this choice,   the pivotal random variable $\WW$ in \eqref{core} is given by
\begin{align}\label{wq}
\WW_Q=\frac{\BB(1)}{\scaleobj{1.2}{\big\{}Q^{-1}(1-\nu_0)\sum_{q=1}^Q\big|\nu_q\,\BB(\nu_q)-\nu^2_q\,\BB(1)\big|^2\scaleobj{1.2}{\big\}^{\scaleobj{0.83}{1/2}}}}\,.
\end{align}

\subsubsection{Scalar-on-function linear regression}\label{sec:scalar}

In this section we propose finite-sample methods for implementing the test \eqref{test} in Section~\ref{sec:one} for the relevant hypotheses \eqref{rh}.
%focus on the implementation of our approach in the one-sample problem . 
%For $1\leq q\leq Q$, for the $q$-th bootstrap estimator $\hat\beta_{n,q}^*$ and for the bootstrap weights $\{M_{i,q}\}_{i=1}^n$ in Algorithms~\ref{algo:cb}--\ref{algo:pb}, let $\tilde M_q={\rm diag}(M_{1,q},\ldots,M_{n,q})$ denote an $n\times n$ diagonal matrix. 
Recall that for the test defined in \eqref{test}, we need to compute the statistics $\hat\TT_n$ and $\hat\VV_n$ defined in \eqref{tn} and \eqref{vn}, respectively. We start by rewriting $\hat\TT_n$ and $\hat\VV_n$ by $\hat\TT_{n,Q}=\big\|\hat\beta_{n,\lambda}(\cdot,\nu_Q)\big\|_{L^2}^2$ and
\begin{align}\label{temp}
&\hat\VV_{n,Q}=\Bigg\{\frac{1-\nu_0}{Q}\,	\sum_{ q =1}^{Q}\nu_ q ^4\,\Big(\big\|\hat\beta_{n,\lambda}(\cdot,\nu_q)\big\|_{L^2}^2-\big\|\hat\beta_{n,\lambda}(\cdot,\nu_Q)\big\|_{L^2}^2\Big)^2\Bigg\}^{1/2}\,.
\end{align}
Here, for $1\leq q\leq Q$, the RKHS estimator $\hat\beta_{n,\lambda}(\cdot,\nu_q)$ of the slope function is  defined in \eqref{hatbeta}, based on the observations $(X_1,Y_1),\ldots,(X_{n_q},Y_{n_q})$,
%\HD{
where $n_q = \lfloor \nu_qn \rfloor $ ($q=1, \ldots, Q$).
%} 
Since $\hat\beta_{n,\lambda}(\cdot,\nu_q)$ is defined as the solution of a penalized minimization problem on  an infinite dimensional function space $\H$ defined in \eqref{H}, exact solutions are inaccessible. We circumvent this difficulty by introducing the following finite-sample method, and propose a method to choose the regularization parameter $\lambda$ in \eqref{H}. To begin with, we deduce from Assumption~\ref{a201} that $J(x_{k},x_{k'\ell'})=\rho_{k}\,\delta_{kk'}$, so that for $\beta=\sum_{k=1}^\infty b_{k}\,\phi_{k}\in\H$ and for $b_{k}\in\mathbb{R}$, we have $J(\beta,\beta)=\sum_{k=1}^\infty b_{k}^2\,\rho_{k}$. Consider the Sobolev space on $[0,1]$ of order $m=2$. In this case, the penalty functional in \eqref{hatbeta} is $J(\beta,\beta)=\int_0^1\{\beta''(s)\}^2ds$. In view of \eqref{id}, we choose the $\rho_k$'s and $\phi_k$'s as 
%For the choice of the basis, we use Proposition~2.2 in \cite{shang2015}. 
%More precisely,
%$
%\eta_1(t) \equiv 1$,
%$\eta_\ell(t) = \sqrt{2}\cos\{(\ell-1)\pi t\}$   ($ \ell=  2,3, \ldots $) and for 
%$\ell\geq 1$ 
%we consider 
the  eigenvalues and eigenfunctions of  integro-differential equation
\begin{align}\label{eq}
\left\{
\begin{aligned}
&\displaystyle(-1)^{m}\,\phi^{(4)}(s)=\rho\int_0^1 C_X(s,t)\, \phi(t)\,dt\,,\\
&\phi^{(\theta)}(0)=\phi^{(\theta)}(1)=0\,,\qquad\text{ for }\theta=3\text{ and }4\,.
\end{aligned}\right.
\end{align}
%with corresponding eigenvalues $\{\rho_{k} \}_{k\geq1}$.
In order to find the eigenvalues and the eigenfunctions of \eqref{eq}, we 
%follow \cite{dettetang} and 
use \texttt{Chebfun}, an efficient open-source Matlab add-on package available at \nolinkurl{https://www.chebfun.org/}. We substitute the covariance function $C_X$ in \eqref{eq} by its empirical version $\hat{C}_X$, and compute the eigenvalues $\hat\rho_{k}$ and the normalized eigenfunctions $\hat\phi_{k}$ of equation \eqref{eq}.
%, according to \jt{Proposition~\ref{prop:1.3} in Appendix~\ref{app:cond:prop1.3}}.
This allows us to approximate the space $\H$ by a finite-dimensional linear space spanned by $\{\hat\phi_{k}\}_{1\leq k\leq r}$, defined by $\tilde\H=\big\{\sum_{1\leq k\leq r}b_{k}\,\hat \phi_{k}\big\}$, where $r$ is a parameter that depends on the sample size $n$.

% Let $n_q=\lfloor n\nu_q\rfloor$. 
For $1\leq q\leq Q$, $1\leq i\leq n_q$ and $1\leq k\leq r$, let $\omega_{ik}=\int_0^1X_i(s)\hat \phi_{k}(s)ds$ and let $\Omega_q=(\omega_{ik})_{1\leq i\leq{n_q},1\leq k\leq r}$ denote a $n_q\times r$ matrix; let $\Lambda={\rm diag}\big\{\hat\rho_{1},\ldots,\hat\rho_{r}\big\}$ denote an $r\times r$ diagonal matrix;
%let $\tilde Y_{i}=\l Y_i,\eta\r_{L^2}$ and 
let $\tilde Y_q=(Y_{1},Y_2,\ldots, Y_{n_q})\trans\in\mathbb R^{n_q}$ denote a $n_q$-dimensional  vector. 
If we write $\tilde\beta(\cdot,\nu_q)=\sum_{k=1}^r b_{k,q}\,\hat\phi_{k}\in\tilde\H$, then, in order to approximate $\hat\beta_{n,\lambda}(\cdot,\nu_q)$ in \eqref{temp}, we find the $b_{k,q}$'s by solving the following optimization problem
\begin{align}\label{lambda}
\hat b_q=(\hat b_{1,q},\ldots,\hat b_{r,q})\trans&=\underset{b_1,\ldots,b_r\in\mathbb R}{\arg\min}\Bigg\{\frac{1}{2n_q}\sum_{i=1}^{n_q}\bigg| Y_i-\sum_{k=1}^{r}b_{k}\int_0^1 X_i(s)\,\hat \phi_{k}(s)\,ds\bigg|^2+\frac{\lambda}{2}  \sum_{k=1}^{r}b_{k}^{2}\,\hat \rho_{k}\Bigg\}\notag\\
&=\underset{b\,\in\mathbb{R}^r}{\arg\min}\Bigg\{\frac{1}{2n_q}(\tilde Y_{q}-\Omega_q\,b)\trans(\tilde Y_{q}-\Omega_q\,b)+\frac{\lambda}{2} \sum_{k=1}^r b_q\trans\,\Lambda\, b_q\,\Bigg\}\,.
\end{align}
%where $\lambda$ is a tuning parameter for each $\nu\in[\nu_0,1]$.
%where we write $b=(b_{1},\ldots,b_{r})\trans\in\mathbb{R}^r$. 
By direct calculations, for $1\leq q \leq Q$, we find the solution to \eqref{lambda} defined by
\begin{align}\label{hatb}
\hat b_{q}=(\Omega_q\trans\,\Omega_q+n_q\lambda\Lambda)^{-1}\,\Omega_q\trans\,\tilde Y_q\,,
\end{align}
so that if we let $\hat\phi=(\hat \phi_{1},\ldots,\hat\phi_{r})\trans$ denote a function-valued $r$-dimensional vector, we can approximate the estimator 
$\hat\beta_{n,\lambda}(\cdot,\nu_q)$ in \eqref{hatbeta} by 
\begin{align*}
\tilde\beta(\cdot,\nu_q)=\hat b_{q}^{\,\mathsf{T}}\,\hat\phi\,,\qquad 1\leq q\leq Q\,.
\end{align*}
Let $\hat\Phi=\big(\l\hat\phi_i,\hat\phi_j\r_{L^2}\big)_{r\times r}$ denote an $r\times r$ matrix. Then, recalling the definition of  $\hat b_q$ in \eqref{hatb}, we may approximate $\hat\TT_{n,Q}=\int_0^1|\hat\beta_{n,\lambda}(s,\nu_Q)|^2\,ds$ and $\hat\VV_{n,Q}$ in \eqref{temp} by $\tilde\TT_{n,Q}={\hat b_{Q}}^{\,\mathsf{T}}\,\hat\Phi\,\hat b_{Q}$ and
\begin{align*}
&\tilde\VV_{n,Q}=\Bigg\{\frac{1-\nu_0}{Q}\,\sum_{ q =1}^{Q}\nu_ q ^4\,\Big(\hat b_{q}^{\,\mathsf{T}}\,\hat\Phi\,\hat b_{q}-{\hat b_{Q}}^{\,\mathsf{T}}\,\hat\Phi\,\hat b_{Q}\Big)^2\Bigg\}^{1/2}\,,
\end{align*}
respectively. Then,   the decision rule in 
the test \eqref{test} is defined by  rejecting $H_0$ in \eqref{rh} at nominal level $\alpha$, if
% \HDM{{\bf please check}
\begin{align}
\label{det5}
\tilde\TT_{n,Q} > \mathcal Q_{1-\alpha}(\WW_Q) \tilde\VV_{n,Q}+\Delta\,,
\end{align}
% }
%\begin{align}
%\label{det5}
%{\hat b_{Q}}^{\,\mathsf{T}}\,\hat\Phi\,\hat b_{Q}> \mathcal Q_{1-\alpha}(\WW_Q)\times\Bigg\{\frac{1-\nu_0}{Q}\,\sum_{ q =1}^{Q}\nu_ q ^4\,\Big(\hat b_{q}^{\,\mathsf{T}}\,\hat\Phi\,\hat b_{q}-{\hat b_{Q}}^{\,\mathsf{T}}\,\hat\Phi\,\hat b_{Q}\Big)^2\Bigg\}^{1/2}+\Delta\,,
%\end{align}
where $Q_{1-\alpha}(\WW_Q)$ denotes the $(1-\alpha)$-quantile of the pivotal distribution of 
the $\WW_Q$  in \eqref{wq}.

%$\int_0^1|\hat\beta_{n,\lambda}(s,\nu)|^2ds$ by ${\hat b_{\nu}}^{\,\mathsf{T}}\,\hat\Phi\,\hat b_{\nu}$

%if we let $\hat\phi=(\hat \phi_{1},\ldots,\hat\phi_{r})\trans$ denote a function-ralued $r$-dimensional rector. 

%Under this choice, we can approximate $\hat\VV_n$ defined in \eqref{vn} by
%\begin{align*}
%\tilde\VV_n=\Bigg[\frac{1-\nu_0}{L}\sum_{ q =0}^{L-1}\nu_ q ^2\,\Big(\hat b_{\nu_ q }^{\,\mathsf{T}}\,\hat\Phi\,\hat b_{\nu_ q }-{\hat b_{1}}^{\,\mathsf{T}}\,\hat\Phi\,\hat b_{1}\Big)^2\Bigg]^{1/2}\,.
%\end{align*}

In order to choose the regularization parameter $\lambda$ in \eqref{lambda}, we propose to use a modified version of the generalized cross-validation (GCV, see, for example, \citealp{golub} and \citealp{whaba1990}). To be specific, for $1\leq q\leq Q$, let $\hat Y_{q}(\lambda)=\Omega_q(\Omega_q\trans\,\Omega_ q +n_q\lambda\Lambda)^{-1}\Omega_q\trans\,\tilde Y_q$ and let $H_{q}(\lambda)$ denote the so-called hat matrix with ${\rm tr}\{H_{q}(\lambda)\}={\rm tr}\{\Omega_q(\Omega_q\trans\,\Omega_q+n_q\lambda\,\Lambda)^{-1}\,\Omega_q\trans\}$. Then, we propose to choose $\lambda$ as the minimizer of  the modified GCV score
\begin{align*}
\text{GCV}(\lambda)=\sum_{q=1}^Q\frac{n_q^{-1}\Vert \hat Y_{q}(\lambda)-\tilde Y_q\Vert_{2}^2}{|1-n_q^{-1}\,{\rm{tr}}\{H_q(\lambda)\}|^2}\,.
\end{align*}

\subsubsection{Function-on-function linear regression}\label{sec:function}

We now consider the implementation of the  test \eqref{testf} for the relevant hypotheses \eqref{hf}
in  the function-on-function linear regression. Recall from the beginning of Section~\ref{sec:finitesample} that we take the probability measure $\omega$ as the discrete uniform distribution supported on the set $\{\nu_q\}_{q=1}^Q$ defined in \eqref{nuq}. Then,  recalling from \eqref{betaf} that the RKHS estimator $\hat\beta_{n,\lambda}(\cdot;\nu_q)$ is based on  the observations $(X_1,Y_1),\ldots,(X_{\lfloor n\nu_q\rfloor},Y_{\lfloor n\nu_q\rfloor})$, we rewrite 
$\hat\TT_n^f$ and $\hat\VV_n^f$ in \eqref{tnf} as
\begin{equation}\label{tnq}
\begin{split}
&\hat\TT_{n,Q}^f=\int_0^1\int_0^1|\hat\beta_{n,\lambda}(s,t;\nu_Q)|^2\,ds\,dt\,;\\
&\hat{\mathbb{V}}^f_{n,Q}=\Bigg[\frac{1-\nu_0}{Q}\,	\sum_{ q =1}^{Q}\nu_ q ^4\,\bigg|\int_0^1\int_0^1\big\{\hat\beta^{\,2}_{n,\lambda}(s,t;\nu_q)-\hat\beta^{\,2}_{n,\lambda}(s,t;\nu_Q)\big\}\,ds\,dt\bigg|^2\Bigg]^{1/2}\,.
\end{split}
\end{equation}
We then proceed to introduce our finite-sample methods for computing $\hat\beta_{n,\lambda}(\cdot;\nu_q)$. Recall from Assumption~\ref{a201f} that $J_f(\phi_{k\ell},\phi_{k'\ell'})=\rho_{k\ell}\,\delta_{kk'}\,\delta_{\ell\ell'}$, so that for $\beta=\sum_{k,\ell=1}^\infty b_{k\ell}\,\phi_{k\ell}\in\H_f$ and for $b_{k\ell}\in\mathbb{R}$, we have $J_f(\beta,\beta)=\sum_{k,\ell=1}^\infty b_{k\ell}^2\,\rho_{k\ell}$. If we consider the Sobolev space on $[0,1]^2$ of order $m=2$, then, in this case, the penalty functional in \eqref{betaf} is $J_f(\beta,\beta)=\int_0^1\int_0^1(\beta_{ss}^2+2\beta_{st}^2+\beta_{tt}^2)\,ds\,dt$, where $\beta_{st} = \frac{\partial^2 \beta  }{\partial s \partial t}$. Applying similar ideas as in the scalar response case 
proposed in Section~\ref{sec:scalar}, we propose to approximate the infinite-dimensional space $\mathcal H_f$ by a finite-dimensional space. To achieve this, we first find the empirical eigenfunctions in Assumptions~\ref{a201f} by applying Proposition~B.1 in \cite{dettetang} and considering the following integro-differential equations with boundary conditions:
\begin{align}\label{eqf}
\left\{ 
\begin{array}{ll}
\displaystyle\rho_\ell\int_{0}^1\hat C_X(s,t)\, x(t)\,dt= x^{(4)}(s)-2(\ell-1)^2\pi^2 x^{(2)}(s)+(\ell-1)^4\pi^4\,,\\
 x^{(\theta)}(0)= x^{(\theta)}(1)=0\,,\qquad\text{ for }\theta=3\text{ and }4\,,
\end{array}\right.
\end{align}
for each $\ell\geq1$, where $\hat C_X$ denotes the empirical covariance function of the predictor $X$. For each $\ell\geq1$, let $\{\hat x_{k\ell}\}_{k\geq1}$ denote the normalized eigenfunctions of \eqref{eqf} with the corresponding eigenvalues $\{\hat\rho_{k\ell}\}_{k\geq1}$, which can be obtained by using the Matlab package \texttt{Chebfun}. Let $\{\eta_\ell\}_{\ell\geq1}$ denote the cosine basis of $L^2([0,1])$, that is, 
$
\eta_1\equiv 1$,
$\eta_\ell(t) = \sqrt{2}\cos\{(\ell-1)\pi t\}$,   ($ \ell=  2,3, \ldots $). Then, we take the empirical eigenfunction $\hat\phi_{k\ell}=\hat x_{k\ell}\otimes\eta_\ell$, for $k,\ell\geq1$. Now, we approximate the Sobolev space $\H_f$ defined in \eqref{hf} by
%by a finite dimensional subspace spanned by $\{\hat\phi_{k\ell}\}_{1\leq k,\ell\leq r}$, defined by
$\tilde\H_f=\big\{\sum_{k=1}^r\sum_{\ell=1}^rb_{k\ell}\,\hat \phi_{k\ell}:b_{k\ell}\in\mathbb R\big\}$, where $r$ is a truncation parameter that depends on the sample size $n$.

Recall from \eqref{betaf} that $\hat\beta_{n,\lambda}(\cdot;\nu_q)$ is the RKHS estimator based on the observations $(X_1,Y_1),\ldots,(X_{n_q},Y_{n_q})$, where $n_q=\lfloor n\nu_q\rfloor$ and   $\nu_q$ 
is defined in \eqref{nuq}. Now, for $1\leq q\leq Q$, $1\leq i\leq n_q$ and $1\leq k,\ell\leq r$, let $\omega_{ik\ell}=\int_0^1X_i(s)\hat x_{k\ell}(s)ds$; for each $1\leq \ell\leq r$ and $1\leq q\leq Q$, let $\Omega_{\ell,q}=(\omega_{ik\ell})_{1\leq i\leq n_q,1\leq k\leq r}$ denote a $n_q\times r$ matrix; let $\hat\Lambda_{\ell}={\rm diag}\big\{\hat\rho_{1\ell},\ldots,\hat\rho_{r\ell}\big\}$ denote an $r\times r$ diagonal matrix; let $\breve Y_{i\ell}=\l Y_i,\eta_\ell\r_{L^2}$ and let $\tilde Y_{\ell,q}=(\breve Y_{1\ell},\ldots,\breve Y_{n_q\ell})\trans\in\mathbb{R}^{n_q}$ denote a $n_q$-dimensional vector. If we write $\tilde\beta(\cdot,\nu_q)=\sum_{k=1}^r\sum_{\ell=1}^r\tilde b_{k\ell}^{(q)}\,\hat\phi_{k\ell}\in\tilde\H_f$, for $\tilde b_{k\ell}^{(q)}\in\mathbb R$, then, in order to approximate $\hat\beta_{n,\lambda}(\cdot;\nu_q)$ in \eqref{betaf}, for each $1\leq q\leq Q$, we can find the  coefficients $\tilde b_{k\ell}^{(q)}$'s by solving the following optimization problem
\begin{align}\label{lambdaf}
\{\tilde b_{k\ell}^{(q)}\}&=\underset{\{b_{k\ell}^{(q)}\}}{\arg\min}\left\{\frac{1}{2n_q}\sum_{i=1}^{n_q}\int_0^1\bigg| Y_i(t)-\sum_{k,\ell=1}^{r}b_{k\ell}^{(q)}\,\eta_\ell(t)\int_0^1 X_i(s)\,\hat x_{k\ell}(s)\,ds\bigg|^2dt+\frac{\lambda}{2}  \sum_{k,\ell=1}^{r}b_{k\ell}^{(q)^2}\,\hat \rho_{k\ell}\right\}\notag\\
&=\underset{\{b_{k\ell}^{(q)}\}}{\arg\min}\left\{\frac{1}{2n_q}\sum_{i=1}^{n_q}\sum_{\ell=1}^{r}\bigg|\breve Y_{i\ell}-\sum_{k=1}^{r}b_{k\ell}^{(q)}\int_0^1 X_i(s)\,\hat x_{k\ell}(s)\,ds\bigg|^2+\frac{\lambda }{2} \sum_{k,\ell=1}^{r}b_{k\ell}^{(q)^2}\,\hat \rho_{k\ell}\right\}\notag\\
&=\underset{b_1^{(q)},\ldots,b_r^{(q)}\in\mathbb R^r}{\arg\min}\Bigg\{\frac{1}{2n_q}\sum_{\ell=1}^r\big(\tilde Y_{\ell,q}-\Omega_{\ell,q}\,b_{\ell}^{(q)}\big)\trans\big(\tilde Y_{\ell,q}-\Omega_{\ell,q}\,b_{\ell}^{(q)}\big)+\frac{\lambda}{2} \sum_{\ell=1}^r b_{\ell}^{(q)\trans}\,\hat\Lambda_\ell\, b_{\ell}^{(q)}\Bigg\}\,,
\end{align}
where we write $b_{\ell}^{(q)}=(b_{1\ell}^{(q)},\ldots,b_{r\ell}^{(q)})\trans\in\mathbb{R}^r$ for $1\leq\ell\leq r$. A direct calculation shows that  for $1\leq\ell\leq r$ the solution of \eqref{lambdaf} is given by 
\begin{align*} %\label{hatbf}
\hat b_{\ell}^{(q)}=\big(\Omega_{\ell,q}\trans\,\Omega_{\ell,q}+n_q\lambda\hat\Lambda_\ell\big)^{-1}\Omega_{\ell,q}\trans\,\tilde Y_{\ell,q}\,.
\end{align*}
Therefore, we can approximate  the estimator $\hat\beta_{n,\lambda}(\cdot;\nu_q)$ in \eqref{betaf} by 
\begin{align*}
\tilde\beta(\cdot;\nu_q)=\sum_{\ell=1}^r(\hat b_{\ell}^{(q)\trans}\,\hat x_{\ell})\otimes\eta_\ell\,, 
\end{align*}
where  $\hat x_\ell=(\hat x_{1\ell},\ldots,\hat x_{r\ell})\trans$ denotes an  $r$-dimensional vector of functions. Then, the statistics $\hat\TT_{n,Q}^f$ and $\hat\VV_{n,Q}^f$ in \eqref{tnq} can be accordingly approximated by $\tilde\TT_{n,Q}^f=\sum_{\ell=1}^r\hat b_{\ell}^{(Q)\trans}\,\hat\Phi\,\hat b_{\ell}^{(Q)}$ and
\begin{align*}
%&\tilde\TT_n^f=\sum_{\ell=1}^r\hat b_{\ell}^{(Q)\trans}\,\hat\Phi\,\hat b_{\ell}^{(Q)}\,,\\
%
&\tilde\VV_{n,Q}^f=\Bigg[\frac{1-\nu_0}{Q}\,\sum_{ q =1}^{Q}\nu_ q ^4\,\bigg\{\sum_{\ell=1}^r\Big(\hat b_{\ell}^{(q)\trans}\,\hat\Phi\,\hat b_{\ell}^{(q)}-\hat b_{\ell}^{(Q)\trans}\,\hat\Phi\,\hat b_{\ell}^{(Q)}\Big)\bigg\}^2\Bigg]^{1/2}\,,
\end{align*}
respectively. The decision rule in the test \eqref{testf} is finally defined by, rejecting the null hypothesis in \eqref{hf} at nominal level $\alpha$ if
% \HDM{{\bf please check!} 
\begin{align}
\label{det6}
%\sum_{\ell=1}^r\hat b_{\ell}^{(Q)\trans}\,\hat\Phi\,\hat b_{\ell}^{(Q)}
\tilde\TT_{n,Q}^f >\mathcal Q_{1-\alpha}(\WW_Q)
\tilde\VV_{n,Q}^f
%\times\Bigg[\frac{1-\nu_0}{Q}\,\sum_{ q =1}^{Q}\nu_ q ^4\,\bigg\{\sum_{\ell=1}^r\Big(\hat b_{\ell}^{(q)\trans}\,\hat\Phi\,\hat b_{\ell}^{(q)}-\hat b_{\ell}^{(Q)\trans}\,\hat\Phi\,\hat b_{\ell}^{(Q)}\Big)\bigg\}^2\Bigg]^{1/2}
+\Delta\,,
\end{align}
% }
where $\mathcal Q_{1-\alpha}(\WW_Q)$ denotes the $(1-\alpha)$-quantile quantile of the pivotal distribution of $\WW_Q$ defined in \eqref{wq}.
A modified version of the generalized cross-validation (GCV) is applied to choose the regularization parameter $\lambda$ in \eqref{lambdaf}. More precisely, we take the value that minimizes the modified GCV score
\begin{align*}
\text{GCV}(\lambda)=\sum_{q=1}^Q\frac{n_q^{-1}\sum_{\ell=1}^r\Vert \hat Y_{\ell,q}(\lambda)-\tilde Y_{\ell,q}\Vert_{2}^2}{|1-{\rm{tr}}\{H_q(\lambda)\}/n_q|^2}\,,
\end{align*}
where $\hat Y_{\ell,q}(\lambda)=\Omega_{\ell,q}(\Omega_{\ell,q}\trans\,\Omega_{\ell,q}+n_q\lambda\hat\Lambda_\ell)^{-1}\Omega_{\ell,q}\trans\,\tilde Y_{\ell,q}$ and $H_{q}(\lambda)$ is the so-called hat matrix with ${\rm tr}\{H_{q}(\lambda)\}=\sum_{\ell=1}^r{\rm tr}\{\Omega_{\ell,q}(\Omega_{\ell,q}\trans\,\Omega_{\ell,q}+n_q\lambda\,\hat\Lambda_\ell)^{-1}\,\Omega_{\ell,q}\trans\}$.

\subsection{Simulated data}
\label{sec:simulation}

We applied our method to various settings of simulated data, where we consider both scalar-on-function and function-on-function linear regression. In order to evaluate the function $X$ (and $Y$ in the functional response case) on its domain $[0,1]$, we take 
$100$ equally spaced time points. For all the settings, we took the nominal level $\alpha=0.05$, and all reported results are based on $500$ simulation runs.
% For all the settings, we took the nominal level $\alpha=0.05$; we took $\nu_0=1/2$ and chose $\omega$ as the discrete uniform distribution on the $\{\nu_q\}_{q=1}^Q$, with $Q=25$, where the $\nu_q$ is defined in \eqref{nuq}; for the sample sizes we took $n=50$ and 200. All results are based on 500 simulation runs. 

We first consider scalar-on-function linear regression \eqref{model0} and the relevant hypotheses \eqref{rh}. For the true slope function $\beta_0$, we consider the following two settings.
\begin{enumerate}[]
\item[(S1)] Let $f_1\equiv1$, $f_{j+1}(s)=\sqrt{2}\cos(j\pi s)$, for $j\geq 1$, and define $\beta_0=\tilde\beta_0/\|\tilde\beta_0\|_{L^2}$, where
$$
\tilde\beta_0(s)=f_1(s)+4\sum_{j=2}^{50}(-1)^{j+1}j^{-2}f_j(s)\,,\qquad s\in[0,1]\,.$$

\item[(S2)] $\beta_0(s)=\sqrt2\exp(-s/4)$.

\end{enumerate}
The first setting (S1) is similar to the ones used in \cite{yuancai}, except that we standardize the slope function such that $d_0=\|\beta_0\|_{L^2}^2=1$. For the second setting (S2), we have $d_0=\|\beta_0\|_{L^2}^2=4-4/\sqrt e\approx1.57$.
% for the predictor processes $\{X_i\}_{i\in\mathbb Z}$ and the error process $\{\e_i\}_{i\in\mathbb Z}$.
%Following \cite{dettejrssb2020} and \cite{aue2015}, we consider a functional time series $\{X_i\}_{i\in\mathbb Z}$ with mean zero and
For the predictor process $\{X_i\}_{i\in\mathbb Z}$, we consider a similar setting as in \cite{dettejrssb2020}.
We first generate i.i.d.~random variables $\eta_i$ defined by $\eta_i=\sum_{j=1}^{50}j^{-1}Z_{ij} \,f_j$, where $Z_{ij}\iidsim {\rm Normal}(0,1)$, for $i\in\mathbb Z$ and $1\leq j\leq 50$. We consider the following two settings.

\begin{enumerate}[label={\rm(\roman*)},series=i]
\item\label{i} The functional moving average process FMA(1) defined by, for $1\leq i\leq n$,
\begin{align*}
X_i=\eta_i+\theta_i\,\eta_{i-1}\,,
\end{align*}
where $\theta_{i}\iidsim \text{unif}(-1/\sqrt{2},1/\sqrt{2})$.
%, for $i\in\mathbb Z$.

\item\label{ii}  The i.i.d.~case $X_i=\sqrt{7/6}\,\eta_i$, so that the predictors in Settings~\ref{i} and \ref{ii} have the same point-wise variance.
\end{enumerate}
%\begin{align*}
%X_i=\eta_i+\theta_{i,1}\,\eta_{i-1}+\theta_{i,2}\,\eta_{i-2}\,,
%\end{align*}
For the error process, we generate i.i.d.~standard normal random variables $\{\xi_i\}$ and  take $\e_i=c_1\cdot(\xi_i+\upsilon_{i,1}\xi_{i-1}+\upsilon_{i,2}\xi_{i-2})$, where $\upsilon_{i,j}\iidsim \text{unif}(-1/\sqrt{2},1/\sqrt{2})$, for $i\in\mathbb Z$ and $j=1,2$, and $c_1>0$ is chosen such that $\var(\e_i)/\var(\|X_i\|_{L^2})=0.3$.

%consider the following two settings
%\begin{enumerate}[nolistsep]
%\item[(i)] $\e_i=\xi_i\cdot0.5\cdot{\rm sd}(\|X_i\|_{L^2})$, where $\sigma_X$ is the standard deviation of $X_i$;
%
%\item[(ii)] 
%\end{enumerate}

For the function-on-function linear regression \eqref{model0f}, we consider the relevant hypotheses \eqref{hf}. For the true slope function $\beta_0$, we consider the following two settings:
\begin{enumerate}[]
\item[(F1)] Let $f_1\equiv1$, $f_{j+1}(s)=\sqrt{2}\cos(j\pi s)$, for $j\geq 1$, and take $\beta_0=\tilde\beta_0/\|\tilde\beta_0\|_{L^2}$, where
$$
\tilde\beta_0(s,t)=f_1(s)f_1(t)+4\sum_{j=2}^{50}(-1)^{j+1}j^{-2}f_j(s)f_j(t)\,.$$
%Let $X_i=\sum_{j=1}^{50}j^{-1}Z_{ij} \,f_j$, where $Z_{ij}\iidsim \text{unif}(-\sqrt{3},\sqrt{3})$, for $1\leq i\leq n, 1\leq j\leq 50$.
%Take , so that $\|\beta_0\|_{L^2}^2=1$.
% ; $Y_i(t)=\int_0^1\beta_0(s,t)X_i(s)ds+\epsilon_i(t)$.

\item[(F2)] Let $\beta_0(s,t)=\sqrt 2\exp\{-(s+t)/4\}$.

\end{enumerate}
The slope functions in the 
first setting (F1) is similar to the one used in \cite{sun2018}, except that we standardize the slope function so that $d_0^f=\|\beta_0\|_{L^2}^2=1$. For the second setting (F2) we have $d_0^f=\|\beta_0\|_{L^2}^2=8(1-1/\sqrt e)^2\approx 1.24$. The settings for the predictor process $\{X_i\}_{i\in\mathbb Z}$ are the same as settings \ref{i} and \ref{ii} for the scalar-on-function linear regression.
For the error process, we first generate i.i.d.~Gaussian processes $\{\zeta_i\}$ with covariance function $\cov\{\zeta_i(s),\zeta_i(t)\}=\delta(s,t)$, where $\delta$ denotes the delta function, and take $\e_i=c_2\cdot(\zeta_i+\upsilon_{i,1}\zeta_{i-1}+\upsilon_{i,2}\zeta_{i-2})$, for $1\leq i\leq n$, where $c_2>0$ is chosen such that $\var(\|\e_i\|_{L^2})/\var(\|X_i\|_{L^2})=0.3$, and $\upsilon_{i,j}\iidsim \text{unif}(-1/\sqrt{2},1/\sqrt{2})$, for $i\in\mathbb Z$ and $j=1,2$.

In Figures~\ref{fig:sf} and \ref{fig:ff} we 
display the empirical rejection probabilities 
of the tests \eqref{det5} and \eqref{det6} for the scalar response case and the functional response case, respectively, where we vary the value of the threshold $\Delta$; we took $\nu_0=1/2$ and chose $\omega$ as the discrete uniform distribution on the $\{\nu_q\}_{q=1}^Q$, with $Q=25$, where the $\nu_q$ is defined in \eqref{nuq}; for the sample sizes we took $n=50$ and $200$ observations. The results confirm our theoretical 
findings in Theorem \ref{thm:one} and \ref{thm:f}
and can be summarized as follows:
\begin{itemize}
\item[(1)] The tests provide a reasonable approximation of the nominal level $\alpha$ when $\Delta=\|\beta_0\|_{L^2}^2$, for both scalar and functional response cases.
\item[(2)] The rejection probabilities are close to zero when $\Delta<\|\beta_0\|_{L^2}^2$ (interior of the null hypothesis).
\item[(3)] In the cases where $\Delta>\|\beta_0\|_{L^2}^2$ (interior of the alternative), the empirical rejection probabilities increases with $\Delta$, and larger sample size ($n=200$) attained higher empirical rejection probabilities.
%, which are in accordance with our theoretical results in Sections~\ref{sec:sf} and \ref{sec:ff}.
\end{itemize}
% \HDM{{\bf Please check:} 

We conclude this section with an investigation 
of the sensitivity of the tests with respect to
the choice of the parameter $\nu_0$ and the measure $\omega$ in the definition of the statistic $\hat{ \mathbb{V}}_n$ in \eqref{vn}.
For the sake of brevity we restrict ourselves to the scalar-on-function model. In
Figure~\ref{fig:s11} we display the empirical rejection probabilities of the test \eqref{det5} for the relevant hypotheses in \eqref{rh} under settings~(S1) and (i), at nominal level $\alpha=0.05$, using the sample size $n=200$ and different choices of $\nu_0$ and $Q$ in \eqref{nuq}: $\nu_0=1/4,1/2$ and $Q=15,25,35$. The results show that the empirical rejection probabilities remain relatively stable despite different values of parameters $\nu_0$ and $Q$ are used.
% }

\begin{figure}[h]
\centering
(S1) $\vcenter{\hbox{\includegraphics[width=6cm]{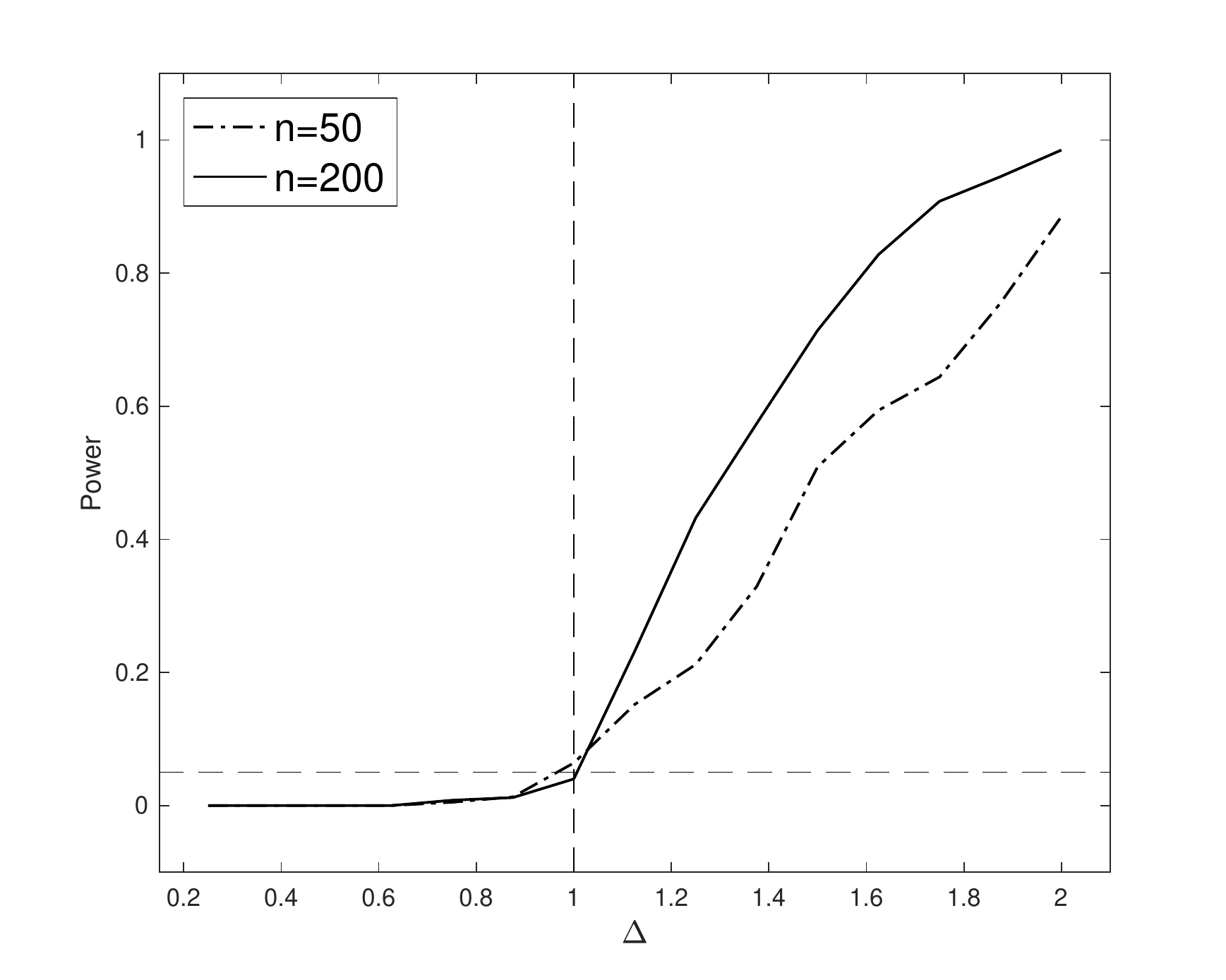}\includegraphics[width=6cm]{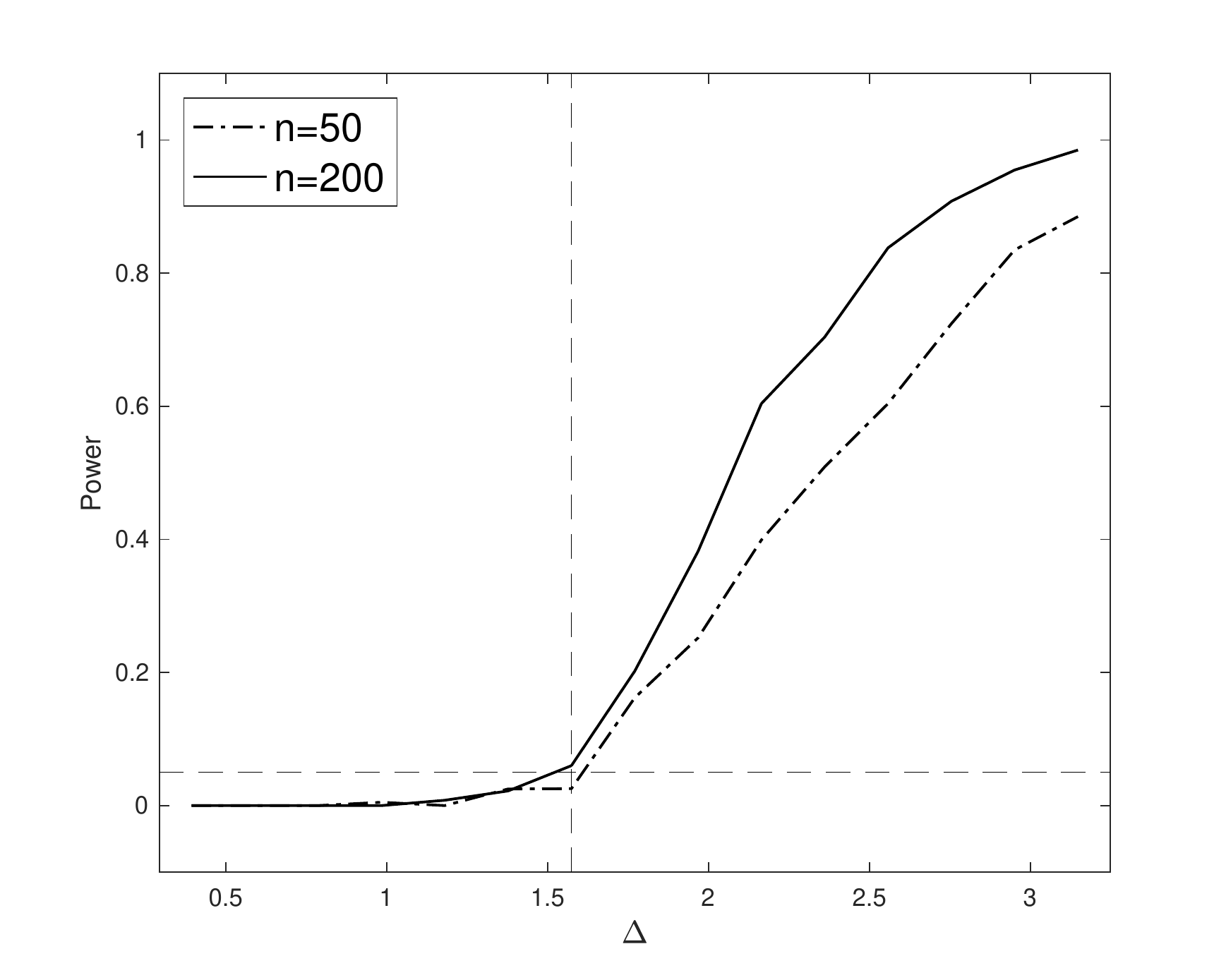}}}$~\\
(S2) $\vcenter{\hbox{\stackunder[5pt]{\includegraphics[width=6cm]{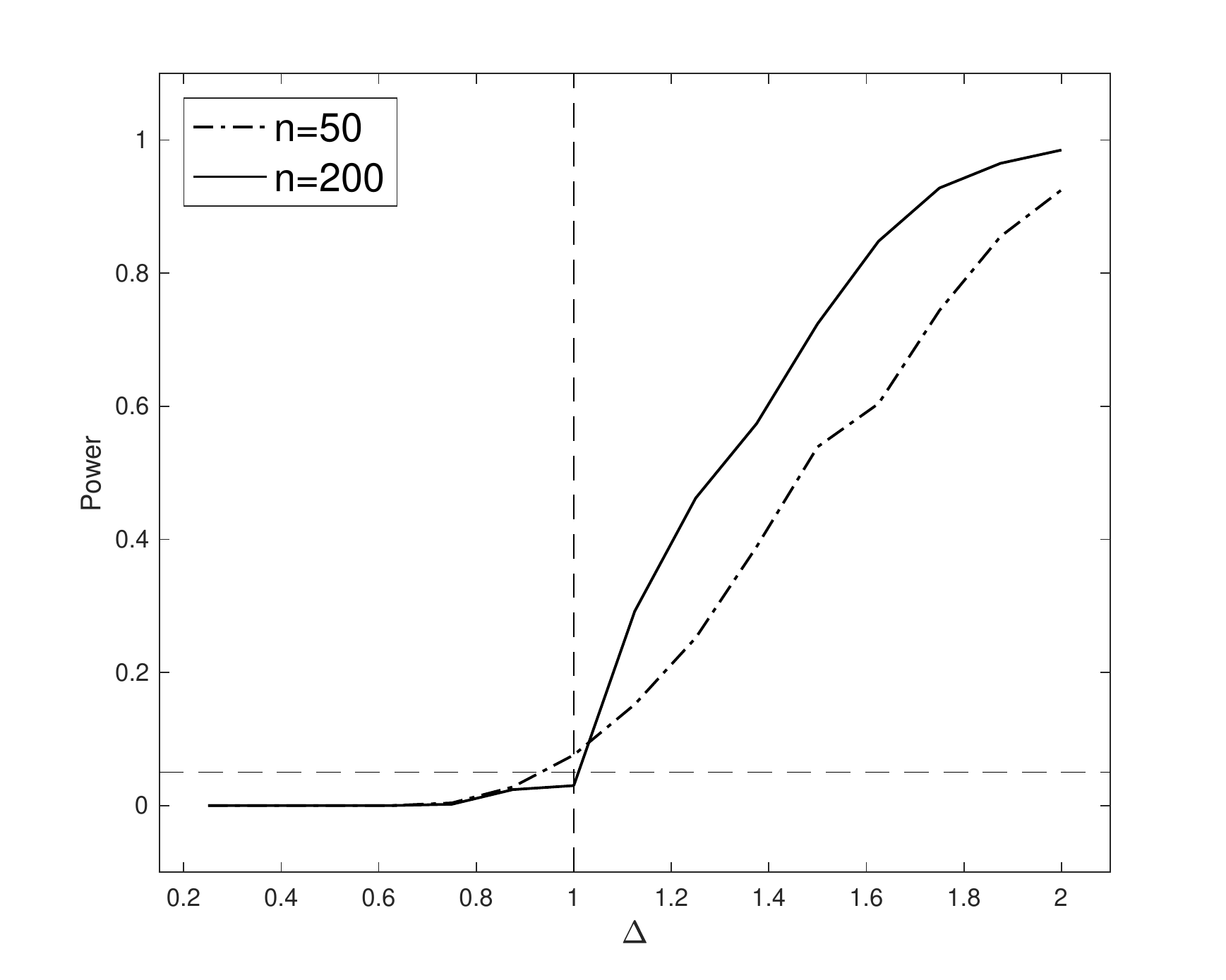}}{(i)}\stackunder[5pt]{\includegraphics[width=6cm]{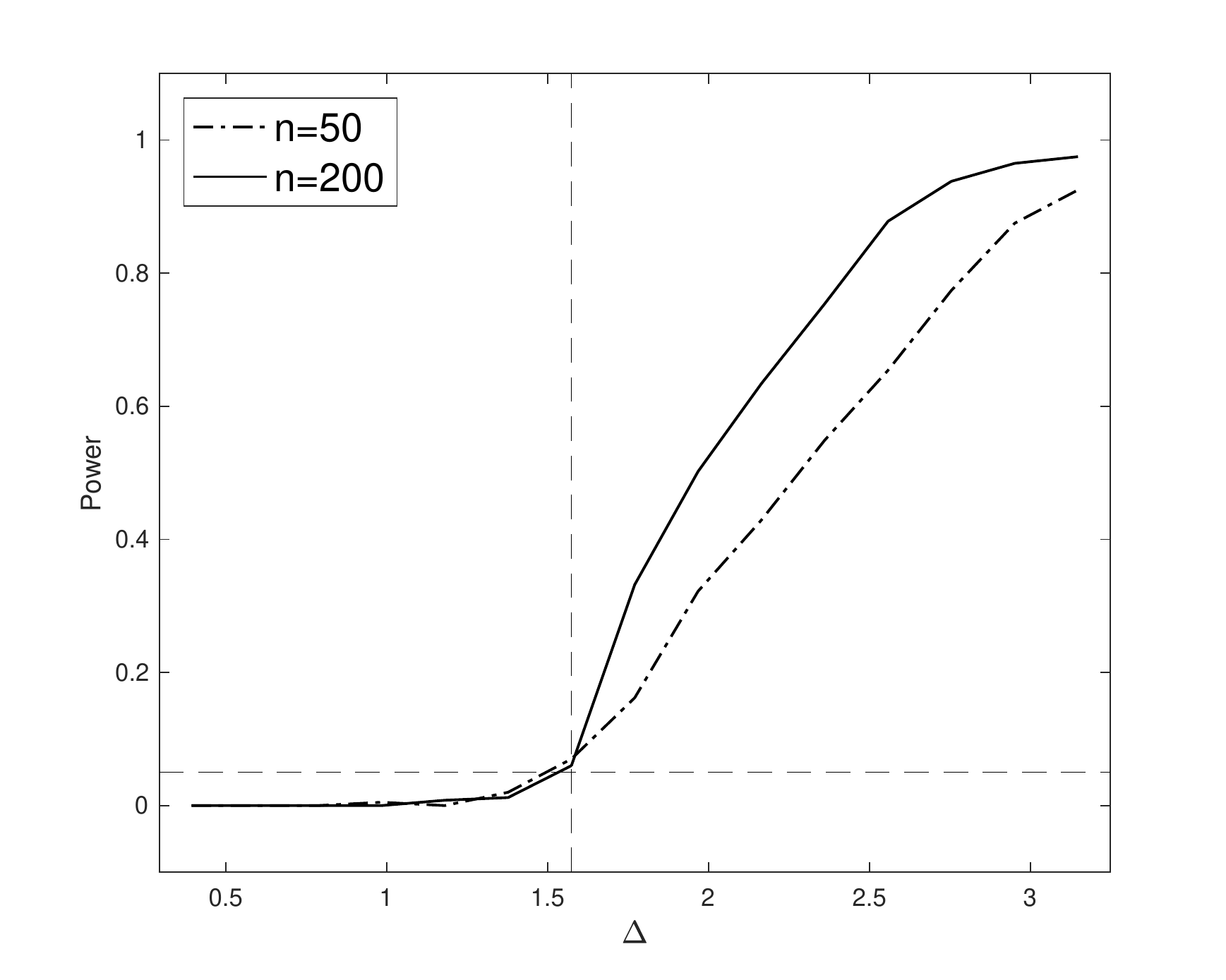}}{(ii)}}}$
\caption{\it Empirical rejection probabilities  
of the test \eqref{det5} for the relevant hypotheses in \eqref{rh} at nominal level $\alpha=0.05$, under settings~(S1) and (S2) (first and second row), and settings~(i) and (ii) (first and second column).
%, with $n=50$ and $200$. 
The horizontal dashed line is the nominal level 0.05; the vertical dashed line is the true value $d_0=\int_0^1|\beta(s)|^2ds$.\label{fig:sf}}
\end{figure}

\begin{figure}[h]
\centering
(F1) $\vcenter{\hbox{\includegraphics[width=6cm]{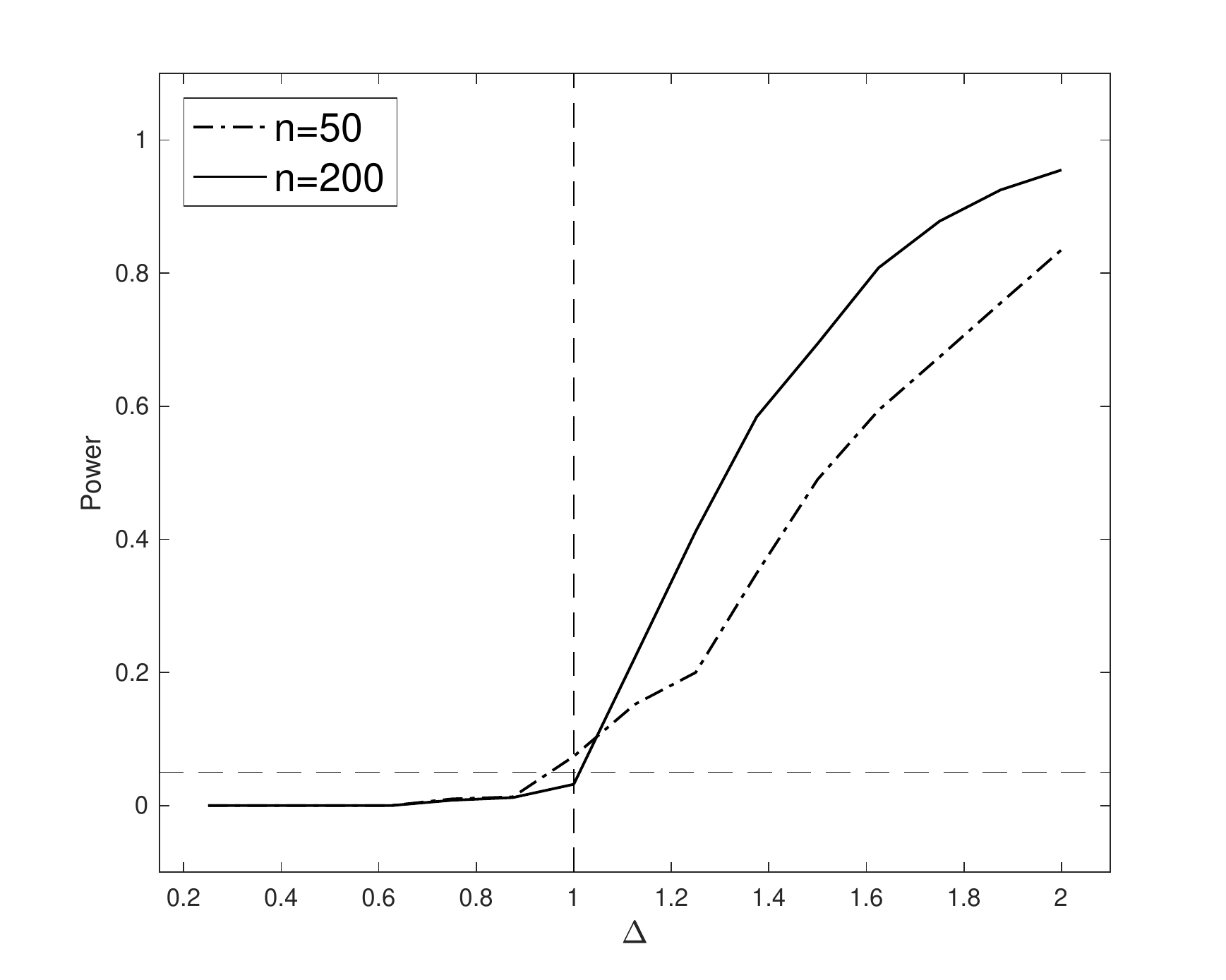}\includegraphics[width=6cm]{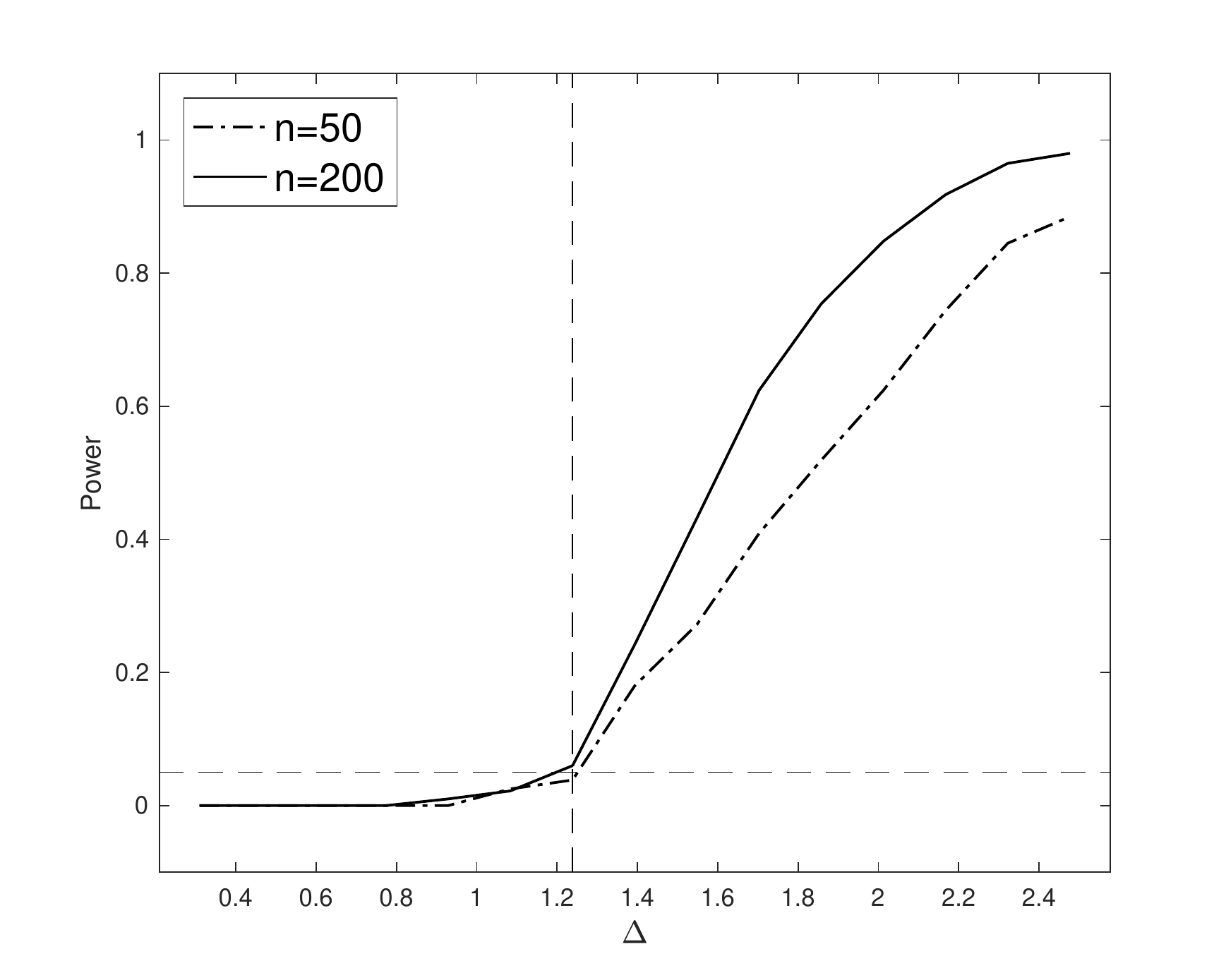}}}$~\\
(F2) $\vcenter{\hbox{\stackunder[5pt]{\includegraphics[width=6cm]{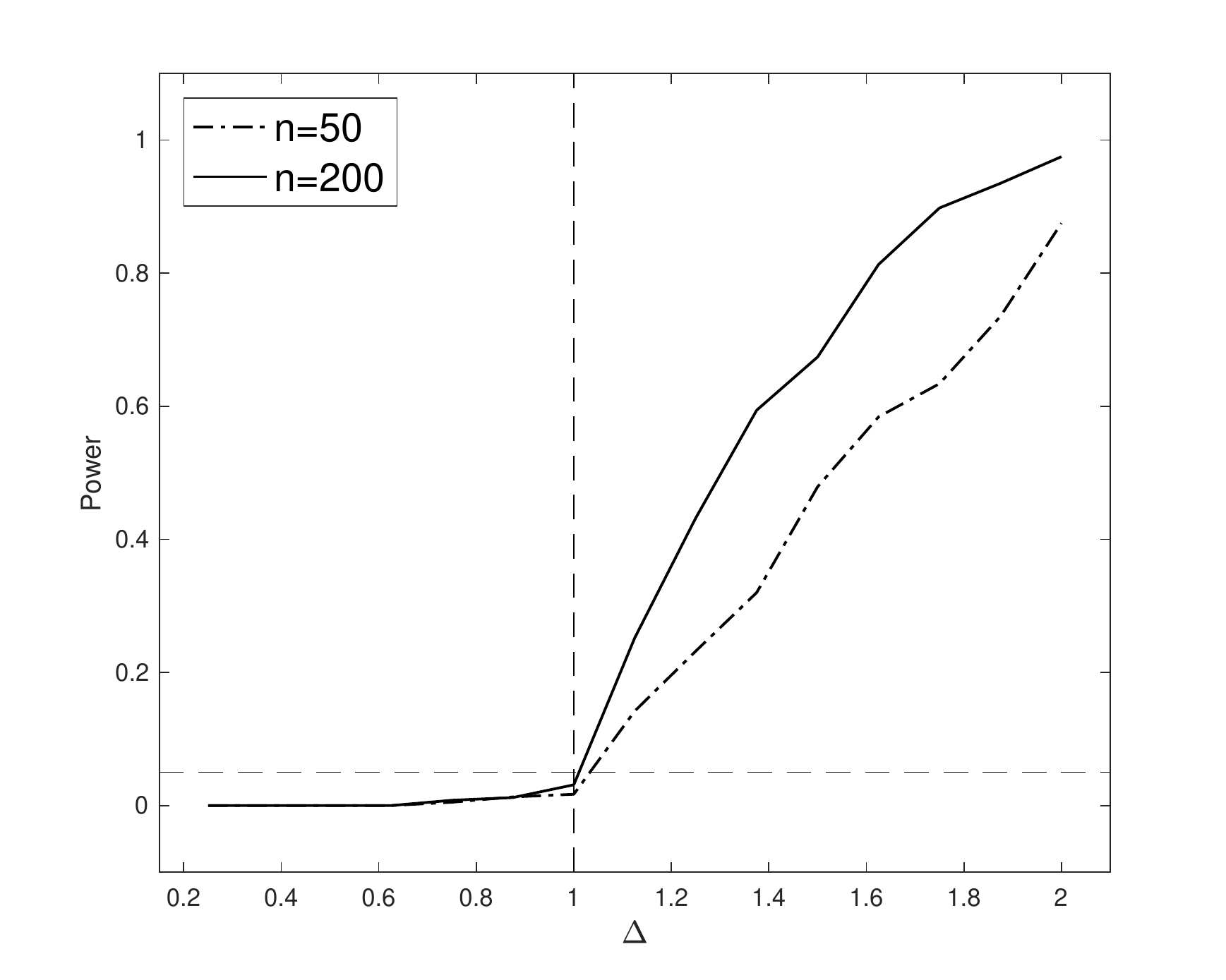}}{(i)}\stackunder[5pt]{\includegraphics[width=6cm]{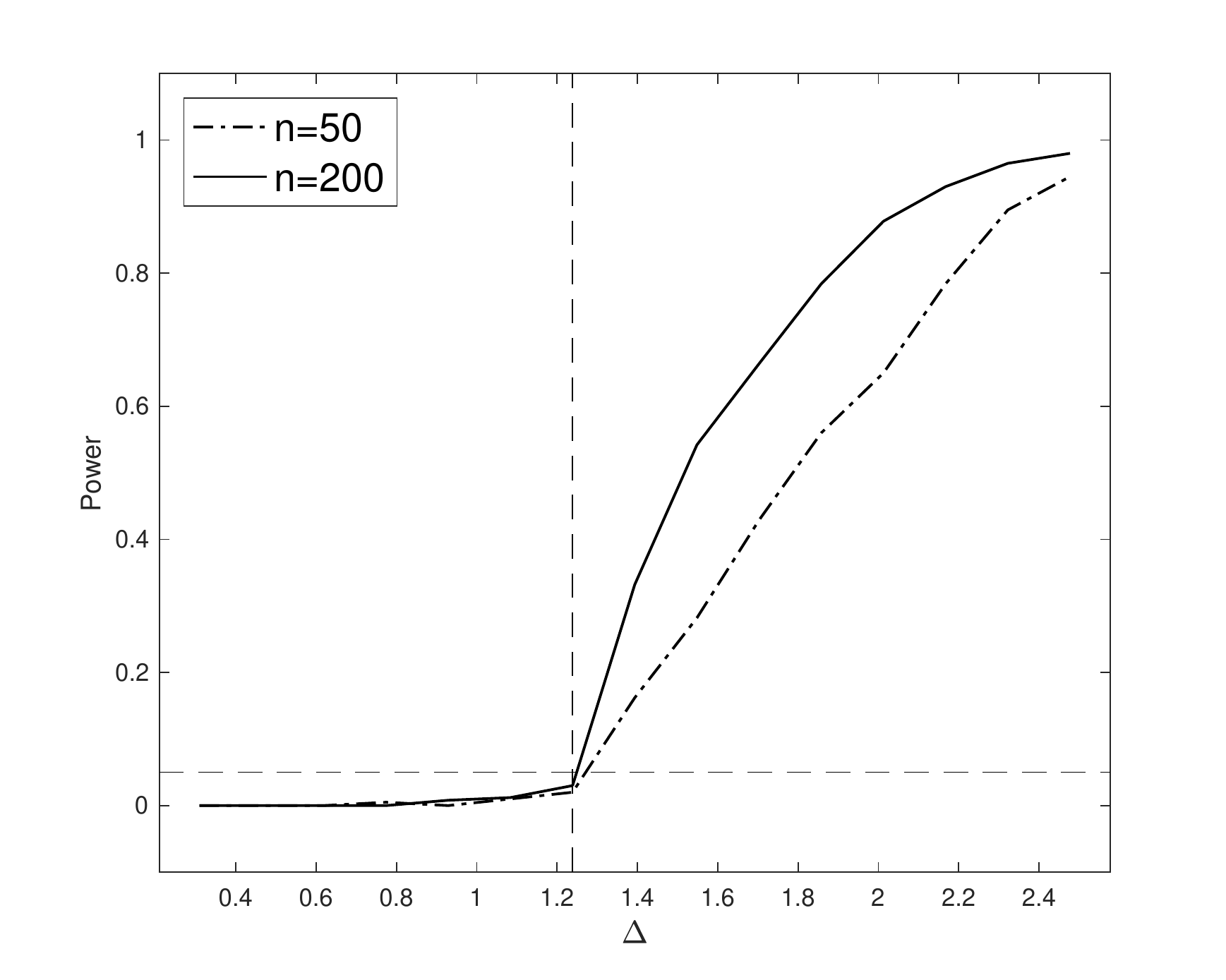}}{(ii)}}}$
\caption{\it Empirical rejection probabilities 
of the test \eqref{det6} for the relevant hypotheses in \eqref{hf} at nominal level $\alpha=0.05$, under settings~(F1) and (F2) (first and second row), and settings~(i) and (ii) (first and second column).
%, with $n=50$ and $200$. 
The horizontal dashed line is the nominal level 0.05; the vertical dashed line is the true value $d_0^f=\int_0^1\int_0^1|\beta(s,t)|^2dsdt$.\label{fig:ff}}
\end{figure}

\begin{figure}[h]
\centering
\includegraphics[width=6cm]{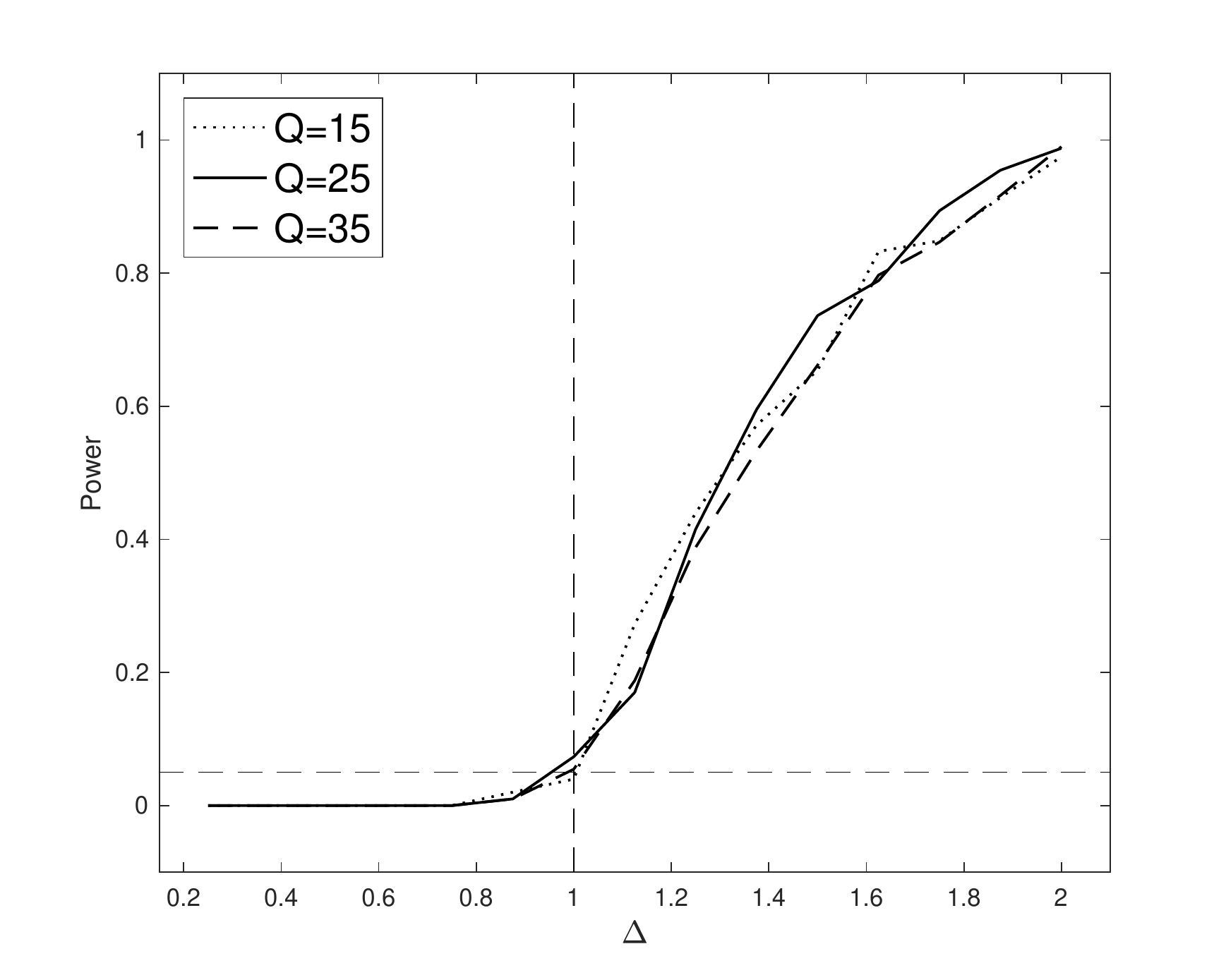}\includegraphics[width=6cm]{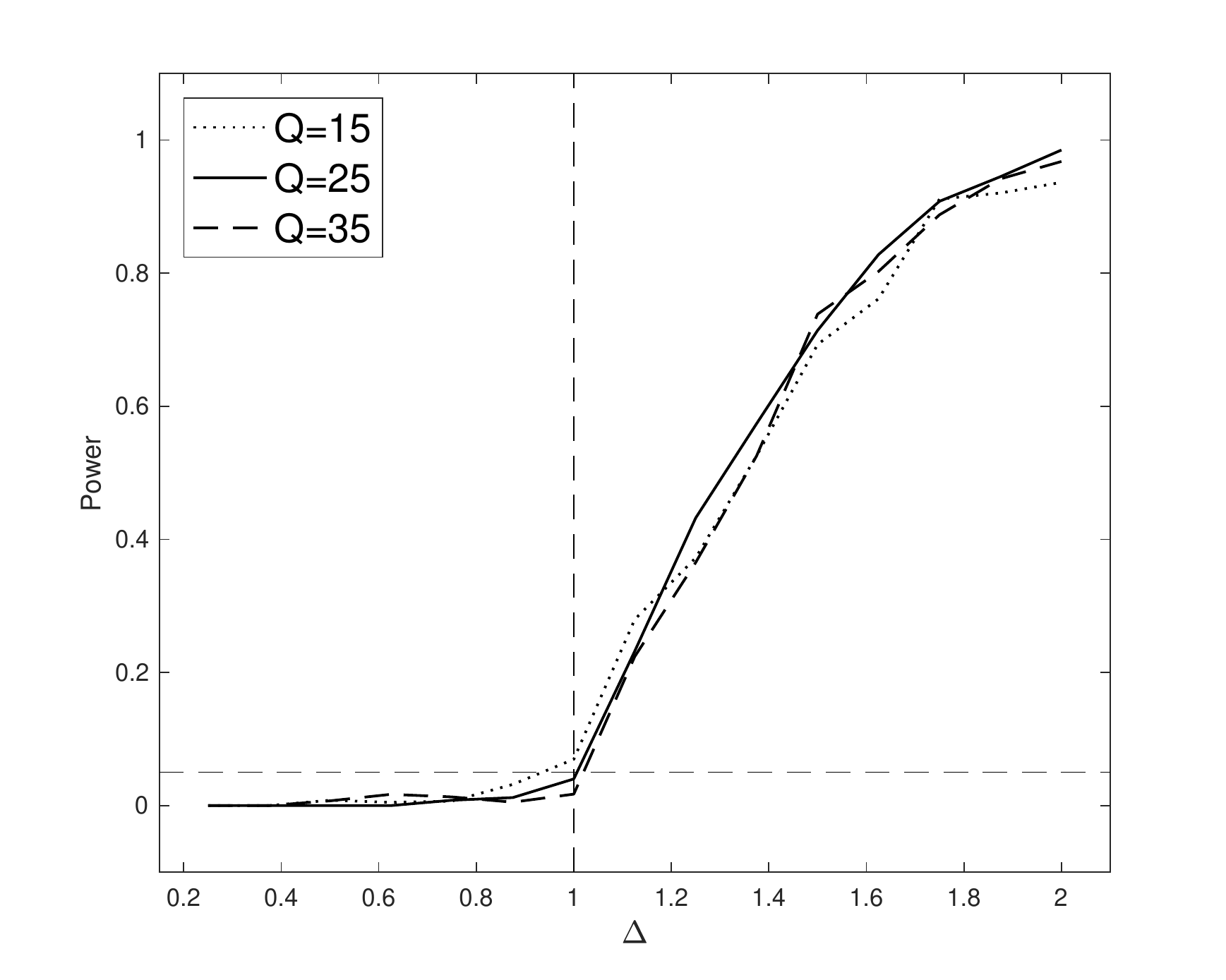}
\caption{\it Empirical rejection probabilities  
of the test \eqref{det5} for the relevant hypotheses in \eqref{rh} at nominal level $\alpha=0.05$, using sample size $n=200$, under settings~(S1) and (i), with $\nu_0=1/4$ (left), $\nu_0=1/2$ (right) and with $Q=15,25,35$ in \eqref{nuq}. 
The horizontal dashed line is the nominal level 0.05; the vertical dashed line is the true value $d_0=\int_0^1|\beta(s)|^2ds$.\label{fig:s11}}
\end{figure}

\clearpage

\subsection{Data example}\label{sec:realdata}

We applied our method to the Australian weather data available from the website of Australian Bureau of Meteorology at \nolinkurl{http://www.bom.gov.au/climate/data/}. We downloaded the daily observations of the maximum temperature and rainfall at the Melbourne Airport station (Station number 086282; Latitude: $37.67^\circ$S; Longitude: $144.83^\circ$E) from the year 1971 to 2020.
%, and obtained 50 years of daily rainfall and temperature data.
%Following \cite{ramsay2005}, the daily precipitation observations were pre-processed by taking the base 10 logarithm after we set minimum precipitation to be 0.05mm.
In this case, for $1\leq i\leq 50$, the predictor curve $X_i$ and the response curve $Y_i$ are supported on $[0,1]$ and represent the daily maximum temperature and daily rainfall for the $i$-th year, respectively. Following \cite{dettejrssb2020}, we obtained the curves $X_i$ and $Y_i$ by projecting the daily observations onto the linear space spanned by $49$ Fourier basis functions on $[0,1]$. We considered the relevant hypotheses \eqref{hf}, and took $\nu_0=1/2$ and $Q=10$ in \eqref{nuq}. Table~\ref{tab:weather} displays the decisions of our test with different values of $\Delta$ and nominal level $\alpha=0.10$, $0.05$ and $0.01$. 
For example, the largest  value of $\Delta$ such that the
test \eqref{det6} 
% \HDM{{\bf please check!} 
rejects
% } 
the null hypothesis in \eqref{hf}
at level $\alpha =0.05$ is given by $\Delta = 0.32$.
Alternatively, if one wants to avoid the specification 
of the threshold $\Delta$  for a test (see the discussion in Remark~\ref{delta}), we can provide 
a one-sided and two-sided confidence interval for the quantity 
$\int_0^1\int_0^1 |\beta_0(s,t)|^2 ds dt$, which are  given by $[0,0.44]$ and $[0.18,0.52]$, respectively 
(confidence level $95\%$).

\begin{table}[h]
\caption{Decisions of the test  \eqref{det6} for the relevant hypotheses \eqref{hf} with different values of $\Delta$ and confidence levels using the Canadian weather data, where ``R" stands for rejection of the null hypothesis and ``-" stands for not rejecting the null hypothesis.}
\centering
\begin{tabular}{c|cccccc} 
\cline{1-7}
$\Delta$&0.19&0.20&0.32&0.33& 0.41&0.42 \\  
\cline{1-7}
1\% &  R & -& - & - & - & - \\
5\%& R  & R  &    R&      - &- &  -\\
10\%&    R &     R & R &     R & R &     - \\
\cline{1-7}
\end{tabular}
\label{tab:weather}
\end{table}

%In order to define the dependency structure, we adopt the so-called $m$-approximation; see, for example, \cite{berkes2013}.

\baselineskip=13pt
\vspace{1cm}

\begin{center}
{\large\textbf{Acknowledgements}}
\end{center}

\noindent 
%The Canadian weather data are available in \cite{ramsay2014}'s \texttt{fda} R package. 
The authors would like to thank Tim Kutta for helpful discussions and pointing out important references.

\begingroup
\renewcommand{\section}[2]{\subsection#1{#2}}
{\centering

}

\endgroup

\clearpage

\appendix

\setcounter{page}{1}

\renewcommand{\thesection}{\Alph{section}}

\pagestyle{fancy}
\fancyhf{}
\rhead{\textbf{\thepage}}
%\lhead{\textbf{NOT-FOR-PUBLICATION APPENDIX}}
\lhead{\textbf{Online supplement}}

\begin{center}
{\bf \large Supplementary material for ``An  RKHS approach for pivotal  inference in functional linear regression"} 
\end{center}

\baselineskip=17pt
\begin{center}
Holger Dette, Jiajun Tang\\
Fakult\"at f\"ur Mathematik, Ruhr-Universit\"at Bochum, Bochum, Germany
\end{center}

\vspace{.5cm}

In this supplementary material we provide technical details of our theoretical results. 
In Section~\ref{app:proof} we provide the proofs of our theorems in our main article. In Section~\ref{app:aux:lem} we provide supporting lemmas that are used in the proofs in Section~\ref{app:proof}.
In the sequel, we use $c$ to denote a generic positive constant that might differ from line to line.

%%%%%% Table of contents  %%%%
%\bigskip\bigskip\bigskip
%
%\noindent {\large\textbf{Contents}}
%
%{\hypersetup{linkcolor=black}
%\startcontents[section]
%\printcontents[section]{l}{0}{\setcounter{tocdepth}{3}}
%}
%
%\newpage

\section{Theoretical details of main results}\label{app:proof}

\subsection{Proof of Theorem~\ref{thm:bahadur}}\label{app:thm:bahadur}

We first prove in Lemma~\ref{lem:hatbeta} below the uniform convergence rate of the 
sequential RKHS estimator $\hat\beta_{n,\lambda}(\cdot,\nu)$ for the slope function
$\beta_0$  defined in \eqref{hatbeta} w.r.t~the $\|\cdot\|_K$-norm.

%\subsubsection{Lemma~\ref{lem:hatbeta}}

\begin{lemma}\label{lem:hatbeta}
Under Assumptions~\ref{a1}--\ref{a:rate}, we have, for any fixed (but arbitrary) $\nu_0\in(0,1]$,
\begin{align*}
\sup_{\nu\in[\nu_0,1]}\big\|\hat\beta_{n,\lambda}(\cdot,\nu)-\beta_0\big\|_K=O_p(\lambda^{1/2}+n^{-1/2}\lambda^{-(2a+1)/(4D)})\,.
\end{align*}

\end{lemma}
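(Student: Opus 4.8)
The plan is to exploit the fact that the objective functional $L_{n,\lambda,\nu}$ is quadratic in $\beta$, so that $\mathcal D^2 L_{n,\lambda,\nu}$ does not depend on its base point and $\mathcal D^3 L_{n,\lambda,\nu}\equiv 0$. Writing the first-order condition $S_{n,\lambda,\nu}(\hat\beta_{n,\lambda}(\cdot,\nu))=0$ and Taylor-expanding $S_{n,\lambda,\nu}$ about $\beta_0$ (an expansion that terminates exactly after the linear term) yields the identity
\[
\hat\beta_{n,\lambda}(\cdot,\nu)-\beta_0 = -\big[\mathcal A_{n,\nu}\big]^{-1}S_{n,\lambda,\nu}(\beta_0)\,,
\]
where $\mathcal A_{n,\nu}:=\mathcal D S_{n,\lambda,\nu}(\beta_0)$ is the fixed self-adjoint operator on $\H$ determined by $\langle\mathcal A_{n,\nu}\beta_1,\beta_2\rangle_K=\frac{1}{\lfloor n\nu\rfloor}\sum_{i=1}^{\lfloor n\nu\rfloor}\langle X_i,\beta_1\rangle_{L^2}\langle X_i,\beta_2\rangle_{L^2}+\lambda J(\beta_1,\beta_2)$. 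The problem thus reduces to (i) bounding $\mathcal A_{n,\nu}$ away from degeneracy uniformly in $\nu$, and (ii) bounding $\sup_{\nu}\|S_{n,\lambda,\nu}(\beta_0)\|_K$, after which I combine them through $\sup_\nu\|\hat\beta_{n,\lambda}(\cdot,\nu)-\beta_0\|_K\le \sup_\nu\|\mathcal A_{n,\nu}^{-1}\|_{\mathrm{op}}\cdot\sup_\nu\|S_{n,\lambda,\nu}(\beta_0)\|_K$.

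For (i), the key observation is that $\E\langle\mathcal A_{n,\nu}\beta_1,\beta_2\rangle_K=V(\beta_1,\beta_2)+\lambda J(\beta_1,\beta_2)=\langle\beta_1,\beta_2\rangle_K$, so $\mathcal A_{n,\nu}$ is an unbiased estimator of the identity on $\H$. I would prove $\sup_{\nu\in[\nu_0,1]}\|\mathcal A_{n,\nu}-\mathrm{id}\|_{\mathrm{op}}=o_p(1)$, from which a Neumann-series argument gives invertibility with $\sup_\nu\|\mathcal A_{n,\nu}^{-1}\|_{\mathrm{op}}=O_p(1)$. To establish the perturbation bound I would diagonalize in the basis $\{\phi_k\}$ from Assumption~\ref{a201}, reducing the operator norm to the maximal fluctuation of the empirical second moments $\frac{1}{\lfloor n\nu\rfloor}\sum_i\langle X_i,\phi_k\rangle_{L^2}\langle X_i,\phi_{k'}\rangle_{L^2}-\delta_{kk'}$, controlled uniformly in $\nu$ by a maximal inequality. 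Here the fourth-moment condition \ref{a3.2}, the exponential tail \ref{a3.1}, the $m$-approximability of Assumption~\ref{a:m}, and the sup-norm growth $\|\phi_k\|_\infty\le ck^a$ all enter, while the decay $\rho_k\asymp k^{2D}$ with $D>a+1/2$ makes the relevant series converge and links to the rate restrictions of Assumption~\ref{a:rate}.

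For (ii), I would use the explicit form $S_{n,\lambda,\nu}(\beta_0)=-\frac{1}{\lfloor n\nu\rfloor}\sum_{i=1}^{\lfloor n\nu\rfloor}\e_i\,\tau_\lambda(X_i)+W_\lambda(\beta_0)$. The deterministic penalty term is bounded directly: from $W_\lambda(\phi_k)=\frac{\lambda\rho_k}{1+\lambda\rho_k}\phi_k$ together with $\frac{(\lambda\rho_k)^2}{1+\lambda\rho_k}\le\lambda\rho_k$ one gets
\[
\|W_\lambda(\beta_0)\|_K^2=\sum_{k}V^2(\beta_0,\phi_k)\,\frac{(\lambda\rho_k)^2}{1+\lambda\rho_k}\le\lambda\sum_k\rho_k\,V^2(\beta_0,\phi_k)=\lambda\,J(\beta_0,\beta_0)=O(\lambda)\,,
\]
since $\beta_0\in\H$; this produces the $\lambda^{1/2}$ term (note the sharper $O(\lambda)$ bias, which uses the stronger Assumption~\ref{a3.3}, is only needed later for the Bahadur representation in Theorem~\ref{thm:bahadur}). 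For the stochastic term, expanding in $\{\phi_k\}$ gives the pointwise bound $\E\big\|\frac{1}{\lfloor n\nu\rfloor}\sum_i\e_i\tau_\lambda(X_i)\big\|_K^2\asymp n^{-1}\sum_k(1+\lambda\rho_k)^{-1}\asymp n^{-1}\lambda^{-1/(2D)}$, and the uniform-in-$\nu$ control of this partial-sum process, via a maximal inequality under $m$-approximability whose higher-moment bounds invoke the factors $\|\phi_k\|_\infty\le ck^a$, yields the (conservative but sufficient) rate $n^{-1/2}\lambda^{-(2a+1)/(4D)}$.

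The step I expect to be the main obstacle is the uniformity over $\nu\in[\nu_0,1]$: both $\mathcal A_{n,\nu}$ and the noise sum are indexed by the continuously varying truncation $\lfloor n\nu\rfloor$, so one must upgrade fixed-$\nu$ moment estimates to suprema over $\nu$. This calls for a maximal inequality for the sequential partial-sum processes under the dependence encoded by Assumption~\ref{a:m}, and for verifying that the lower bound on $\mathcal A_{n,\nu}$ does not deteriorate at the smallest index; the latter is harmless because $\lfloor n\nu\rfloor\ge\lfloor n\nu_0\rfloor\asymp n$ for all $\nu\ge\nu_0$. Once (i) and (ii) are in hand, the displayed linear identity delivers the claimed rate.
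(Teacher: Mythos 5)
Your architecture matches the paper's: both exploit the exact linearity of $S_{n,\lambda,\nu}$ to reduce the problem to (a) a bias term $\|W_\lambda(\beta_0)\|_K\le\sqrt{\lambda J(\beta_0,\beta_0)}=O(\lambda^{1/2})$, (b) a uniform bound $O_p(n^{-1/2}\lambda^{-(2a+1)/(4D)})$ on the noise partial sums $\big\|\lfloor n\nu\rfloor^{-1}\sum_i\e_i\tau_\lambda(X_i)\big\|_K$ obtained from the Berkes--Horv\'ath--Rice invariance principle for $\sum_i\e_iX_i$ together with $\sum_k\|\phi_k\|_{L^2}^2/(1+\lambda\rho_k)\asymp\lambda^{-(2a+1)/(2D)}$, and (c) a uniform operator-perturbation bound showing the empirical Hessian is close to the identity on $(\H,\|\cdot\|_K)$. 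Your bias and noise computations, and your remark that the sharper $O(\lambda)$ bias under Assumption~\ref{a3.3} is only needed later, are all correct and identical to the paper's. The packaging differs: you invert $\mathcal A_{n,\nu}$ directly via a Neumann series, whereas the paper runs a Banach fixed-point argument for $F_{n,\nu}(\beta)=\beta-S_{n,\lambda,\nu}(\beta_{\lambda,\nu}+\beta)$ on a ball of radius $q_n\asymp\lambda^{1/2}+n^{-1/2}\lambda^{-(2a+1)/(4D)}$; for an affine map these are equivalent, and both hinge on the same estimate $\sup_\nu\|\mathcal A_{n,\nu}-\mathrm{id}\|_{\mathrm{op}}=o_p(1)$ (the paper's term $I_{2,n,\nu}$).

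The one place where your route genuinely diverges, and where it is under-specified, is step (c). The paper does not diagonalize: it truncates on the event $\EE_n(c)=\{\max_i\|X_i\|_{L^2}\le c\log n\}$, rescales $\beta$ into the Sobolev ball $\mathcal F_{p_n}=\{\|\beta\|_{L^2}\le1,\,J(\beta,\beta)\le p_n\}$ with $p_n\asymp\lambda^{(2a+1)/(2D)-1}$, and controls $\sup_{1\le k\le n}\sup_{\beta\in\mathcal F_{p_n}}\|H_{n,k}(\beta)\|_K$ by a chaining argument built on Pinelis's martingale inequality in Hilbert space (Lemma~\ref{lem:hnbeta}); this is also exactly where the exponent $\varsigma$ and the rate conditions of Assumption~\ref{a:rate} enter. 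Your alternative --- reducing to fluctuations of the empirical second moments $\lfloor n\nu\rfloor^{-1}\sum_i\langle X_i,\phi_k\rangle\langle X_i,\phi_{k'}\rangle-\delta_{kk'}$ --- cannot rest on the \emph{maximal} entrywise fluctuation alone, since that does not dominate an operator norm on an infinite-dimensional space; you would need, say, a Hilbert--Schmidt bound, which after weighting by $(1+\lambda\rho_k)^{-1/2}(1+\lambda\rho_{k'})^{-1/2}$ and using Assumption~\ref{a3.2} gives a squared HS-norm of order $n^{-1}\big(\sum_k(1+\lambda\rho_k)^{-1}\big)^2\asymp n^{-1}\lambda^{-1/D}=o(1)$ pointwise in $\nu$, plus a sequential maximal inequality under $m$-approximability for the uniformity. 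That route is workable but is not a free lunch; as written, ``controlled uniformly in $\nu$ by a maximal inequality'' is the entire content of the paper's Lemma~\ref{lem:hnbeta} and should be spelled out before the proof can be considered complete.
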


%\begin{remark}
%The rate $O_p(n^{-D/(2D+2a+1)})$ of the convergence $\|\hat\beta_{n,\lambda}(\cdot,\nu)-\beta_0+W_\lambda(\beta_0)\|_K$ is slower than that in the independent case $O_p(n^{-D/(2D+1)})$. This is the price to pay for dealing with dependent data under our assumptions.
%\end{remark}

\begin{proof}
% Recall that we have defined \HDM{{\bf is this correct?}
% $\hat\beta_{n,\lambda}(\cdot,0)=\beta_0$ when $\nu=0$.}
% In the sequel we only consider the case where $\nu\in[\nu_0,1]$. 
%For $L_{n,\lambda,\nu}$ and $S_{n,\lambda,\nu}$ defined in \eqref{elln} and \eqref{sn}, respectively, let
Define $S_{\lambda,\nu}(\beta)=\E\{S_{n,\lambda,\nu}(\beta)\}$ and $\mathcal D S_{\lambda,\nu}(\beta)=\E\{\mathcal D S_{n,\lambda,\nu}(\beta)\}$.
Moreover,  from \eqref{sn} it also follows that
\begin{equation}\label{s}
\begin{split}
&S_{\lambda,\nu}(\beta)=\E\{S_{n,\lambda,\nu}(\beta)\}=-\E\bigg[\bigg\{Y_0-\int_0^1X_0(s)\,\beta(s)\,ds\bigg\}\tau_{\lambda}(X_0)\bigg]+W_\lambda(\beta)\,,\\
&\mathcal D S_{\lambda,\nu}(\beta)\beta_1=\E\{\mathcal D S_{n,\lambda,\nu}(\beta)\beta_1\}=\E\bigg[\bigg\{\int_0^1X_0(s)\,\beta(s)\,ds\bigg\}\tau_{\lambda}(X_0)\bigg]+W_\lambda(\beta)\,.
\end{split}
\end{equation}
Recall from \eqref{sn} that, for $\nu\in[0,1]$ and for any $\beta_1,\beta_2\in\H$,
\begin{align*}
\l\mathcal D S_{\lambda,\nu}(\beta)\beta_1,\beta_2\r_K&=\E\Big\{\l\tau_{\lambda}(X_i),\beta_1\r_K\,\l\tau_{\lambda}(X_i),\beta_2\r_K\Big\}+ \l W_\lambda(\beta_1),\beta_2\r_K\\
&= \E\Big\{\l X_i,\beta_1\r_{L^2}\,\l X_i,\beta_2\r_{L^2}\Big\}+ \l W_\lambda(\beta_1),\beta_2\r_K\\
&=V(\beta_1,\beta_2)+\lambda J(\beta_1,\beta_2)=\l\beta_1,\beta_2\r_K=\l id(\beta_1),\beta_2\r_K\,,
\end{align*}
which implies that 
\begin{align}\label{prop:id}
\mathcal D S_{\lambda,\nu}(\beta)=id\,,
\end{align}
where $id$ denotes the identity operator on $\H$. 
Since the second-order Fr\'echet derivative $\mathcal D^2S_{\lambda,\nu}$ vanishes, there exists a unique solution to the estimating equation $S_{\lambda,\nu}(\beta)=0$. In addition, by the mean value theorem and \eqref{prop:id}, for any $\beta\in\H$, 
\begin{align*}
S_{\lambda,\nu}(\beta)= S_{\lambda,\nu}(\beta_0)+\mathcal D S_{\lambda,\nu}(\beta)(\beta-\beta_0)= S_{\lambda,\nu}(\beta_0)+(\beta-\beta_0)\,.
\end{align*}
Let $\beta_{\lambda,\nu}=\beta_0-S_{\lambda,\nu}(\beta_0)$. We deduce that $S_{\lambda,\nu}(\beta_{\lambda,\nu})=S_{\lambda,\nu}(\beta_0)+(\beta_{\lambda,\nu}-\beta_0)=0$,
so that $\beta_{\lambda,\nu}$ is the unique solution to the estimating equation $S_{\lambda,\nu}(\beta)=0$. Moreover, in view of \eqref{s}, for any $\nu\in[\nu_0,1]$,
\begin{align}\label{0}
&\|\beta_{\lambda,\nu}-\beta_0\|_K=\|S_{\lambda,\nu}(\beta_0)\|_K\notag\\
&=\bigg\|-\E\bigg[\bigg\{Y_0-\int_0^1X_0(s)\,\beta_0(s)\,ds\bigg\}\tau_{\lambda}(X_0)\bigg]+\,W_\lambda(\beta_0)\bigg\|_K=\|W_\lambda(\beta_0)\|_K\,.
\end{align}
%Note that the above equation holds uniformly for $\nu\in[\nu_0,1]$.
 Therefore, by the Cauchy-Schwarz inequality, we deduce that
\begin{align}\label{deltalambdabeta}
&\sup_{\nu\in[\nu_0,1]}\big\|\beta_{\lambda,\nu}-\beta_0\big\|_K= \Vert W_\lambda(\beta_0)\Vert_K=\sup_{\Vert\gamma\Vert_K=1}|\l W_\lambda(\beta_0),\gamma\r_K|=\sup_{\Vert\gamma\Vert_K=1}\lambda |J(\beta_0,\gamma)|\notag\\
&\leq\sup_{\Vert\gamma\Vert_K=1}\Big\{\sqrt{\lambda J(\beta_0,\beta_0)}\sqrt{\lambda J(\gamma,\gamma)}\Big\}\leq\sup_{\Vert\gamma\Vert_K=1}\Big\{\sqrt{\lambda J(\beta_0,\beta_0)}\,\Vert\gamma\Vert_K\Big\}\notag\\
&=\sqrt{\lambda J(\beta_0,\beta_0)}=O(\lambda^{1/2})\,.
\end{align}
Since $\Vert\hat\beta_{n,\lambda}(\cdot,\nu)-\beta_0\Vert_K\leq\Vert\beta_{\lambda,\nu}-\beta_0\Vert_K+\Vert\hat\beta_{n,\lambda}(\cdot,\nu)-\beta_{\lambda,\nu}\Vert_K$, we then proceed to show the rate of $\Vert\hat\beta_{n,\lambda}(\cdot,\nu)-\beta_{\lambda,\nu}\Vert_K$. For $\nu\in[\nu_0,1]$, let 
\begin{align*}
F_{n,\nu}(\beta)=\beta-S_{n,\lambda,\nu}(\beta_{\lambda,\nu}+\beta)\,.
\end{align*}
Since $\mathcal D^2S_{\lambda,\nu}$ vanishes we obtain from  \eqref{prop:id}
\begin{align}\label{th}
F_{n,\nu}(\beta)=\mathcal DS_{\lambda,\nu}(\beta_{\lambda,\nu})\beta-S_{n,\lambda,\nu}(\beta_{\lambda,\nu}+\beta)    =I_{1,n,\nu}(\beta)+I_{2,n,\nu}(\beta)-S_{n,\lambda,\nu}(\beta_{\lambda,\nu})\,,
\end{align}
where  $\mathcal DS_{n,\lambda,\nu}$  is defined in \eqref{sn} and 
\begin{align}
&I_{1,n,\nu}(\beta)=-\{S_{n,\lambda,\nu}(\beta_{\lambda,\nu}+\beta)-S_{n,\lambda,\nu}(\beta_{\lambda,\nu})-\mathcal DS_{n,\lambda,\nu}(\beta_{\lambda,\nu})\beta\}\,,\notag\\
&I_{2,n,\nu}(\beta)=-\{\mathcal DS_{n,\lambda,\nu}(\beta_{\lambda,\nu})\beta-\mathcal DS_{\lambda,\nu}(\beta_{\lambda,\nu})\beta\}\,.\label{i1}
\end{align}
First, for $I_{1,n,\nu}(\beta)$ in \eqref{i1}, in view of $S_{n,\lambda,\nu}$ and $\mathcal DS_{n,\lambda,\nu}$ defined in \eqref{sn}, we find
\begin{align}\label{zero}
I_{1,n,\nu}(\beta)&=\frac{1}{\lfloor n\nu\rfloor}\sum_{i=1}^{\lfloor n\nu\rfloor}\bigg[Y_i-\int_0^1\{\beta_{\lambda,\nu}(s)+\beta(s)\}\,X_i(s)\,ds\bigg]\tau_{\lambda}(X_i)\notag\\
&-\frac{1}{\lfloor n\nu\rfloor}\sum_{i=1}^{\lfloor n\nu\rfloor}\bigg\{Y_i-\int_0^1\beta(s)\,X_i(s)\,ds\bigg\}\,\tau_{\lambda}(X_i)\notag\\
&+\frac{1}{\lfloor n\nu\rfloor}\sum_{i=1}^{\lfloor n\nu\rfloor}\bigg\{\int_0^1\beta_{\lambda,\nu}(s)\,X_i(s)\,ds\bigg\}\,\tau_{\lambda}(X_i)=0\,.
\end{align}
For the second term $I_{2,n,\nu}(\beta)$ in \eqref{i1}, denote the event
\begin{align}\label{enc}
\EE_n(c)=\Big\{\max_{1\leq i\leq n}\|X_i\|_{L^2}\leq c\log n\Big\}\,.
\end{align}
By Assumption~\ref{a:subg} and Markov's inequality, if we take $c>3/\varpi>0$, we have
\begin{align*}
%\label{pec}
\P\{\EE_n^{\rm c}(c)\}\leq n\,\P\big(\|X_0\|_{L^2}\leq c\log n\big)\leq n^{1-c\varpi}\,\E\{\exp(\varpi\|X_0\|_{L^2})\}=o(n^{-2})\,.
\end{align*}
Then, it suffices to confine the proof on the event $\EE_n(c)$. In view of \eqref{sn} and \eqref{s},
\begin{align}\label{i22nv}
I_{2,n,\nu}(\beta)&=\mathcal DS_{n,\nu}(\beta_{\lambda,\nu})\beta-\mathcal DS_{\nu}(\beta_{\lambda,\nu})\beta\notag\\
&=-\frac{1}{\lfloor n\nu\rfloor}\sum_{i=1}^{\lfloor n\nu\rfloor}\Bigg[\tau_{\lambda}(X_i)\int_0^1\beta(s)X_i(s)ds-\E\bigg\{\tau_{\lambda}(X_i)\int_0^1\beta(s)X_i(s)ds\bigg\}\Bigg]\notag\\
&=I_{2,1,n,\nu}(\beta)+I_{2,2,n,\nu}(\beta)\,,
%%
%&=-\frac{1}{n}\sum_{i=1}^{\lfloor n\nu\rfloor}\Bigg(\tau_{\lambda}(X_i)\int_0^1\beta(s)X_i(s)ds\times\one\{\EE_n(c)\}-\E\bigg[\tau_{\lambda}(X_i)\int_0^1\beta(s)X_i(s)ds\times\one\{\EE_n(c)\}\bigg]\Bigg)\\
%&\quad+\frac{1}{n}\sum_{i=1}^{\lfloor n\nu\rfloor}\E\bigg[\tau_{\lambda}(X_i)\int_0^1\beta(s)X_i(s)ds\times\one\{\EE_n^{\rm c}(c)\}\bigg]
\end{align}
where
\begin{align} \nonumber
&I_{2,1,n,\nu}(\beta)=
%\frac{1}{\lfloor n\nu\rfloor}\sum_{i=1}^{\lfloor n\nu\rfloor}
\E\bigg[\tau_{\lambda}(X_i)\int_0^1\beta(s)X_i(s)ds\times\one\{\EE_n^{\rm c}(c)\}\bigg]\,,\\
&I_{2,2,n,\nu}(\beta)=-\frac{1}{\lfloor n\nu\rfloor}\sum_{i=1}^{\lfloor n\nu\rfloor}\Bigg(\tau_{\lambda}(X_i)\int_0^1\beta(s)X_i(s)ds\,\one\{\EE_n(c)\}
\label{det7}
\\
& ~~~~~ ~~~~~~~~~ - \E\bigg[\tau_{\lambda}(X_i)\int_0^1\beta(s)X_i(s)ds\,\one\{\EE_n(c)\}\bigg]\Bigg)\,. \nonumber 
\end{align}
For the first term $I_{2,1,n,\nu}$ in \eqref{i22nv}, by the Cauchy-Schwarz inequality and Lemma~\ref{lem:s4}, we have
\begin{align}\label{e}
\|I_{2,1,n,\nu}(\beta)\|_K&=\Bigg\|\E\bigg[\tau_{\lambda}(X_0)\int_0^1\beta(s)X_0(s)ds\times\one\{\EE_n^{\rm c}(c)\}\bigg]\Bigg\|_K\notag\\
&\leq\sup_{\|\gamma\|_K=1}\bigg\l\gamma,\E\bigg[\tau_{\lambda}(X_0)\int_0^1\beta(s)X_0(s)ds\times\one\{\EE_n^{\rm c}(c)\}\bigg]\bigg\r_K\notag\\
&=\sup_{\|\gamma\|_K=1}\E\bigg[\int_0^1\gamma(s)X_0(s)ds\times\int_0^1\beta(s)X_0(s)ds\times\one\{\EE_n^{\rm c}(c)\}\bigg]\notag\\
&\leq \Big[\P\{\EE_n^{\rm c}(c)\}\Big]^{1/2}\times\E\big(\l X_0,\beta\r_{L^2}^4\big)^{1/4}\times\sup_{\|\gamma\|_K=1}\E\big(\l X_0,\gamma\r_{L^2}^4\big)^{1/4}\notag\\
&\leq c\,\Big[\P\{\EE_n^{\rm c}(c)\}\Big]^{1/2}\times\E\big(\l X_0,\beta\r_{L^2}^2\big)^{1/2}\times\sup_{\|\gamma\|_K=1}\E\big(\l X_0,\gamma\r_{L^2}^2\big)^{1/2}\notag\\
&= c\,\Big[\P\{\EE_n^{\rm c}(c)\}\Big]^{1/2}\times\E\big(\l\tau_{\lambda}( X_0),\beta\r_{K}^2\big)^{1/2}\times\sup_{\|\gamma\|_K=1}\E\big(\l\tau_{\lambda}( X_0),\gamma\r_{K}^2\big)^{1/2}\notag\\
&\leq c\,\Big[\P\{\EE_n^{\rm c}(c)\}\Big]^{1/2}\times\E\|\tau_{\lambda}(X_0)\|_K^2\times\|\beta\|_K\notag\\
&\leq c\,\times o(n^{-1})\times\lambda^{-1/(2D)}\|\beta\|_K=o(1)\,\|\beta\|_K\,.
\end{align}
Therefore, we deduce that
\begin{align}\label{i21}
\sup_{\nu\in[\nu_0,1]}\|I_{2,1,n,\nu}(\beta)\|_K=o(1)\,\|\beta\|_K\,.
\end{align}
For the second term $I_{2,2,n,\nu}(\beta)$ in \eqref{i22nv}, for $a,D$ in Assumption~\ref{a201} and $c_K$ in Lemma~\ref{lem:3.1} in Section~\ref{app:aux:lem}, let $p_n=c_K^{-2}\lambda^{(2a+1)/(2D)-1}$. In order to apply Lemma~\ref{lem:3.1} in Section~\ref{app:aux:lem},
% 3.1 in \cite{shang2015}, 
we shall rescale $\beta$ such that the $L^2$-norm of its rescaled version is bounded by $1$, that is
\begin{align}\label{tildeb}
\tilde\beta=\left\{
\begin{array}{ll}
\big(c_K\lambda^{-(2a+1)/(4D)}\Vert\beta\Vert_K\big)^{-1}\beta&\text{ if }\beta\neq0\,,\\
0&\text{ if }\beta=0\,
\end{array}\right.
\end{align}
where    $c_K$  is the constant in Lemma~\ref{lem:3.1}.
We have $\Vert\tilde\beta\Vert_{L^2}\leq c_K\lambda^{-(2a+1)/(4D)}\Vert\tilde\beta\Vert_K\leq 1$, since $\Vert\tilde\beta\Vert_K\leq (c_K\lambda^{-(2a+1)/(4D)})^{-1}$ in view of Lemma~\ref{lem:3.1}. In addition, observing \eqref{inner}, it follows that 
$$
J(\tilde\beta,\tilde\beta)\leq\lambda^{-1}\Vert\tilde\beta\Vert_K^2\leq c_K^{-2}\lambda^{(2a+1)/(2D)-1}=p_n.
$$
Therefore, 
\begin{equation}
\label{det8}
\tilde\beta\in\mathcal{F}_{p_n}:=\{\beta\in\H:\Vert\beta\Vert_{L^2}\leq 1,J(\beta,\beta)\leq p_n\}. \end{equation}
% \HDM{{\bf check rewriting!} 
For the event $\EE_n(c)$ defined in \eqref{enc} and for any $\beta\in\H$, let
\begin{align}\label{hnk}
%&g(X_i,\beta)=
%\tau_{\lambda}(X_i)\int_0^1\beta(s,\cdot)X_i(s)ds\,,\label{gxbeta}\\
%
&\tilde H_{n,\nu}(\beta)=\frac{1}{\sqrt{\lfloor n\nu\rfloor}}\sum_{i=1}^{{\lfloor n\nu\rfloor}}\bigg(\tau_{\lambda}(X_i)\l\beta,X_i\r_{L^2}\,\one\{\EE_n(c)\}-\E\Big[\tau_{\lambda}(X_i)\l\beta,X_i\r_{L^2}\,\one\{\EE_n(c)\}\Big]\bigg)\,.
%\label{hnbeta}
\end{align}
Note  that, for $\nu\in[\nu_0,1]$,
\begin{align}\label{tildehh}
\sup_{\beta\in\mathcal F_{p_n}}\|\tilde H_{n,\nu}(\beta)\|_K&=\frac{1}{\sqrt{\lfloor n\nu\rfloor/n}}\,\sup_{\beta\in\mathcal F_{p_n}}\|H_{n,\lfloor n\nu\rfloor}(\beta)\|_K\notag\\
&\leq \nu_0^{-1/2}\max_{1\leq k\leq n}\sup_{\beta\in\mathcal F_{p_n}}\|H_{n,k}(\beta)\|_K\times\{1+o(1)\}\,,
\end{align}
where, for the $\EE_n(c)$ in \eqref{enc}, $H_{n,k}$ is defined  by 
\begin{align}\label{hnk2}
%&g(X_i,\beta)=
%\tau_{\lambda}(X_i)\int_0^1\beta(s,\cdot)X_i(s)ds\,,\label{gxbeta}\\
%
&H_{n,k}(\beta)=\frac{1}{\sqrt{n}}\sum_{i=1}^{k}\Bigg(\tau_{\lambda}(X_i)\int_0^1\beta(s)X_i(s)ds\,\one\{\EE_n(c)\}-\E\bigg[\tau_{\lambda}(X_i)\int_0^1\beta(s)X_i(s)ds\,\one\{\EE_n(c)\}\bigg]\Bigg)\,.
\end{align}
% We will now use  Lemma~\ref{lem:hnbeta} in Section~\ref{app:aux:lem} 
% to derive an estimate for the left hand side of \eqref{tildehh}. 
% For this purpose
% %note that on the event $\EE_n(c)$ defined in \eqref{enc}, we have, 
% we first note that for any $\beta_1,\beta_2\in\H$
% \begin{align*}
% \Big|\l\beta_1-\beta_2,X_i\r_{L^2}\,\one\{\EE_n(c)\}-\E\big[\l\beta_1-\beta_2,X_i\r_{L^2}\,\one\{\EE_n(c)\}\big]\Big|\leq c\,\|\beta_1-\beta_2\|_{L^2}\,.
% \end{align*}
Therefore, observing  that $n^{-1/2}=o(p_n^{1/(2m)})$ by Assumption~\ref{a:rate}, combining \eqref{tildehh} and Lemma~\ref{lem:hnbeta} yields
with probability tending to one,
\begin{align*}
\sup_{\nu\in[\nu_0,1]}\,\sup_{\tilde\beta\in\mathcal F_{p_n}}\Vert \tilde H_{n,\nu}(\tilde\beta)\Vert_K&\leq c\,\big(p_n^{1/(2m)}+n^{-1/2}\big)\big(\lambda^{-1/(2D)}\log n\big)^{1/2}\\
&\leq c\,p_n^{1/(2m)}\lambda^{-1/(4D)}(\log n)^{1/2}\,,
\end{align*}
where $c>0$ depends on $\nu_0$. In view of \eqref{tildeb}, we deduce from the above equation that, for the $\beta$ in \eqref{tildeb}, with probability tending to one,
\begin{align*}
\sup_{\nu\in[\nu_0,1]}\Vert \tilde H_{n,\nu}(\beta)\Vert_K&\leq \big(c_K\lambda^{-(2a+1)/(4D)}\Vert\beta\Vert_K\big)\sup_{\nu\in[\nu_0,1]}\,\sup_{\tilde\beta\in\mathcal F_{p_n}}\Vert \tilde H_{n,\nu}(\tilde\beta)\Vert_K\\
&\leq c\,p_n^{1/(2m)}\lambda^{-(a+1)/(2D)}(\log n)^{1/2}\Vert\beta\Vert_K\,.
\end{align*}
Observing  that $p_n=O(\lambda^{(2a+1)/(2D)-1})$ and \eqref{det7}, we 
thus deduce  that, with probability tending to one,
\begin{align}\label{i1nrate}
\sup_{\nu\in[\nu_0,1]}\Vert I_{2,2,n,\nu}(\beta)\Vert_K&\leq n^{-1/2}\sup_{\nu\in[\nu_0,1]}\Vert \tilde H_{n,\nu}(\beta)\Vert_K\leq c\,n^{-1/2}p_n^{1/(2m)}\lambda^{-(a+1)/(2D)}(\log n)^{1/2}\Vert\beta\Vert_K\notag\\
&\leq c\,n^{-1/2}\lambda^{-\varsigma}(\log n)^{1/2}\,\Vert\beta\Vert_K=o(1)\,\Vert\beta\Vert_K\,,
\end{align}
where we used Assumption~\ref{a:rate} in the last step. Therefore, combining \eqref{i22nv}, \eqref{i21} and \eqref{i1nrate} yields that, as $n\to\infty$,
\begin{align}\label{sk}
\sup_{\nu\in[\nu_0,1]}\Vert I_{2,n,\nu}(\beta)\Vert_K&=o(1)\,\|\beta\|_K\,.
\end{align}
We now consider the term $-S_{n,\lambda,\nu}(\beta_{\lambda,\nu})$ in \eqref{th}.
Recalling the definition of   $\tau_\lambda$ in \eqref{tau} and observing that 
%let $O_i=\tau_{\lambda}(X_i)\big\{Y_i-\int_0^1\beta_{\lambda,\nu}(s)X_i(s)ds\big\}$, for $1\leq i\leq n$. 
 $S_{\lambda,\nu}(\beta_{\lambda,\nu})=0$ and $\E\{\e_0\tau_{\lambda}(X_0)\}=0$, in view of \eqref{sn}, we find
\begin{align*} %\label{10}
&-S_{n,\lambda,\nu}(\beta_{\lambda,\nu})=-\{S_{n,\lambda,\nu}(\beta_{\lambda,\nu})-S_{\lambda,\nu}(\beta_{\lambda,\nu})\}\notag\\
&=\frac{1}{{\lfloor n\nu\rfloor}}\sum_{i=1}^{\lfloor n\nu\rfloor}\Bigg(\tau_{\lambda}(X_i)\bigg\{Y_i-\int_0^1\beta_{\lambda,\nu}(s)X_i(s)ds\bigg\}-\E\bigg[\tau_{\lambda}(X_i)\bigg\{Y_i-\int_0^1\beta_{\lambda,\nu}(s)X_i(s)ds\bigg\}\bigg]\Bigg)\notag\\
&=\frac{1}{{\lfloor n\nu\rfloor}}\sum_{i=1}^{\lfloor n\nu\rfloor}\e_i\,\tau_{\lambda}(X_i)+\frac{1}{{\lfloor n\nu\rfloor}}\sum_{i=1}^{\lfloor n\nu\rfloor}\Bigg(\tau_{\lambda}(X_i)\int_0^1\big\{\beta_0(s)-\beta_{\lambda,\nu}(s)\big\}X_i(s)ds\notag\\
&\hspace{4cm}-\E\bigg[\tau_{\lambda}(X_i)\int_0^1\big\{\beta_0(s)-\beta_{\lambda,\nu}(s)\big\}X_i(s)ds\bigg]\Bigg)\notag\\
&=\frac{1}{{\lfloor n\nu\rfloor}}\sum_{i=1}^{\lfloor n\nu\rfloor}\e_i\,\tau_{\lambda}(X_i)-I_{2,n,\nu}(\beta_0-\beta_{\lambda,\nu})\,,
\end{align*}
where $I_{2,n,\nu}$ is defined in \eqref{i22nv}.
% $-S_{n,\lambda,\nu}(\beta_{\lambda,\nu})=-\{S_{n,\lambda,\nu}(\beta_{\lambda,\nu})-S_{\lambda,\nu}(\beta_{\lambda,\nu})\}=n^{-1}\sum_{i=1}^n\{O_i-\E(O_i)\}$. 
Therefore, we deduce from the above equation and \eqref{sk} that
\begin{align}\label{sss}
\sup_{\nu\in[\nu_0,1]}\Vert S_{n,\lambda,\nu}(\beta_{\lambda,\nu})\Vert_K^2&\leq 2\sup_{\nu\in[\nu_0,1]}\,\bigg\|\frac{1}{{\lfloor n\nu\rfloor}}\sum_{i=1}^{\lfloor n\nu\rfloor}\e_i\tau_{\lambda}(X_i)\bigg\|_K^2+\sup_{\nu\in[\nu_0,1]}\|I_{2,n,\nu}(\beta_0-\beta_{\lambda,\nu})\|_K^2\notag\\
&\leq 2\sup_{\nu\in[\nu_0,1]}\,\bigg\|\frac{1}{{\lfloor n\nu\rfloor}}\sum_{i=1}^{\lfloor n\nu\rfloor}\e_i\tau_{\lambda}(X_i)\bigg\|_K^2+o(1)\,\|\beta_0-\beta_{\lambda,\nu}\|_K^2\,.
\end{align}
For the first term in \eqref{sss}, 
%let $C_X^*(s,t)=\sum_{\ell=-\infty}^{+\infty}\cov\{X_0(s),X_\ell(t)\}$ denote the long-run covariance function of $X$. 
by direct calculations, we find
\begin{align}\label{e0}
&\bigg\|\frac{1}{{\lfloor n\nu\rfloor}}\sum_{i=1}^{\lfloor n\nu\rfloor}\tau_{\lambda}(X_i)\e_i\bigg\|_K^2=\frac{1}{{\lfloor n\nu\rfloor}^2}\sum_{i_1=1}^{\lfloor n\nu\rfloor}\sum_{i_2=1}^{\lfloor n\nu\rfloor}\sum_{k=1}^\infty\sum_{\ell=1}^\infty\Bigg\l\frac{\l \e_{i_1}X_{i_1},\phi_k\r_{L^2}}{1+\lambda\rho_k}\phi_k,\,\frac{\l \e_{i_2}X_{i_2},\phi_\ell\r_{L^2}}{1+\lambda\rho_\ell}\phi_\ell\Bigg\r_K\notag\\
&=\frac{1}{{\lfloor n\nu\rfloor}^2}\sum_{i_1=1}^{\lfloor n\nu\rfloor}\sum_{i_2=1}^{\lfloor n\nu\rfloor}\sum_{k=1}^\infty\frac{1}{1+\lambda\rho_k}\,\big\l \e_{i_1}X_{i_1},\phi_k\big\r_{L^2}\big\l \e_{i_2}X_{i_2},\phi_k\big\r_{L^2}\notag\\
&=\sum_{k=1}^\infty\frac{1}{1+\lambda\rho_k}\bigg(\frac{1}{{\lfloor n\nu\rfloor}}\sum_{i=1}^{\lfloor n\nu\rfloor}\big\l \e_{i}X_{i},\phi_k\big\r_{L^2}\bigg)^2=\frac{1}{{\lfloor n\nu\rfloor}}\sum_{k=1}^\infty\frac{1}{1+\lambda\rho_k}\bigg\l\frac{1}{\sqrt{\lfloor n\nu\rfloor}}\sum_{i=1}^{\lfloor n\nu\rfloor}\e_{i}X_{i},\phi_k\bigg\r_{L^2}^2\notag\\
&\leq\frac{1}{{\lfloor n\nu\rfloor}}\,\bigg\|\frac{1}{\sqrt{\lfloor n\nu\rfloor}}\sum_{i=1}^{\lfloor n\nu\rfloor}\e_{i}X_{i}\bigg\|_{L^2}^2\times\sum_{k=1}^\infty\frac{\|\phi_k\|_{L^2}^2}{1+\lambda\rho_k}\,.
%%
%&\leq \frac{2}{n}\,\Bigg[\,\bigg\|\Gamma(\cdot,\nu)-\frac{1}{\sqrt n}\sum_{i=1}^{\lfloor n\nu\rfloor}\e_{i}X_{i}\big\}\bigg\|_{L^2}^2+\|\Gamma(\cdot,\nu)\|_{L^2}^2\Bigg]\sum_{k=1}^\infty\frac{\|\phi_k\|_{L^2}^2}{1+\lambda\rho_k}\notag\\
%%
%&=\frac{2}{n}\,\|\Gamma(\cdot,\nu)\|_{L^2}^2\,\sum_{k=1}^\infty\frac{\|\phi_k\|_{L^2}^2}{1+\lambda\rho_k}\,\{1+o_p(1)\}\notag\\
%%
%&=
\end{align}
Denote the long-run covariance function
\begin{align}\label{cxe}
C_{X\e}(s,t)=\sum_{\ell=-\infty}^{+\infty}\cov\{\e_0X_0(s),\e_\ell X_\ell(t)\}\,.
\end{align}
Observing Lemmas~\ref{lem:cxe} and \ref{lem:cxe2}, we have that $C_{X\e}\in L^2([0,1]^2)$ and $\int_0^1 C_{X\e}(s,s)ds<\infty$. By Assumption~\ref{a1}, we have that $C_{X\e}$ is positive definite. Let $\{\breve\zeta_j\}_{j=1}^\infty$ and $\{\breve\psi_j\}_{j=1}^\infty$ denote the eigenvalues and the corresponding eigenfunctions of the covariance kernel $C_{X\e}$, such that $\sum_{j=1}^\infty\breve\zeta_j<\infty$.  In addition, since the $X_i$'s and the $\e_i$'s are independent, it is easy to see that the series $\{\e_i X_i\}_{i\in\mathbb Z}$ is $m$-approximable by $\{\e_{i,\ell} X_{i,\ell}\}_{i,\ell\in\mathbb Z}$. By Theorem~1.1 in \cite{berkes2013}, there exists a Gaussian process $\{\Gamma_{X\e}(s,\nu)\}_{s\in[0,1],\nu\in[0,1]}$ in $\mathcal F$ defined in \eqref{f}, given by
\begin{align*}
\Gamma_{X\e}(s,\nu)=\sum_{j=1}^\infty\sqrt{\breve\zeta_j}\,W_j(\nu)\,\breve\psi_j(s)\,,
\end{align*}
such that
\begin{align*}
\sup_{\nu\in[0,1]}\,\bigg\|\Gamma_{X\e}(\cdot,\nu)-\frac{1}{\sqrt {n}}\sum_{i=1}^{\lfloor n\nu\rfloor}\e_{i}X_{i}\bigg\|_{L^2}^2=o_p(1)\,.
\end{align*}
Here, $\{W_{j}\}_{j=1}^\infty$ is a series of i.i.d.~Wiener processes.  Note that $\E\big\{\sup_{\nu\in[0,1]}W_j^2(\nu)\big\}<\infty$, so that
\begin{align*}
\E\bigg\{\sup_{\nu\in[0,1]}\|\Gamma_{X\e}(\cdot,\nu)\|_{L^2}^2\bigg\}\leq\sum_{j=1}^\infty\breve\zeta_j\,\E\bigg\{\sup_{\nu\in[0,1]}W_j^2(\nu)\bigg\}<\infty\,.
\end{align*}
Therefore, in view of \eqref{e0}, we deduce from the above finding that
\begin{align}\label{11}
%&\sup_{\nu\in[\nu_0,1]}I_{3,1,n,\nu}=
&\sup_{\nu\in[\nu_0,1]}\,\bigg\|\frac{1}{\lfloor n\nu\rfloor}\sum_{i=1}^{\lfloor n\nu\rfloor}\tau_{\lambda}(X_i)\e_i\bigg\|_K^2\leq\sup_{\nu\in[\nu_0,1]}\Bigg\{\frac{1}{{\lfloor n\nu\rfloor}}\bigg\|\frac{1}{\sqrt {\lfloor n\nu\rfloor}}\sum_{i=1}^{\lfloor n\nu\rfloor}\e_{i}X_{i}\bigg\|_{L^2}^2\Bigg\}\times\sum_{k=1}^\infty\frac{\|\phi_k\|_{L^2}^2}{1+\lambda\rho_k}\notag\\
&\leq n^{-1}\nu_0^{-2}\sup_{\nu\in[0,1]}\bigg\|\frac{1}{\sqrt {n}}\sum_{i=1}^{\lfloor n\nu\rfloor}\e_{i}X_{i}\bigg\|_{L^2}^2\times\sum_{k=1}^\infty\frac{\|\phi_k\|_{L^2}^2}{1+\lambda\rho_k}\times\{1+o_p(1)\}\notag\\
&\leq n^{-1}\nu_0^{-2}\Bigg\{\sup_{\nu\in[0,1]}\,\bigg\|\Gamma_{X\e}(\cdot,\nu)-\frac{1}{\sqrt {n}}\sum_{i=1}^{\lfloor n\nu\rfloor}\e_{i}X_{i}\bigg\|_{L^2}^2+\sup_{\nu\in[0,1]}\|\Gamma_{X\e}(\cdot,\nu)\|_{L^2}^2\Bigg\}\notag\\
&\qquad\times\,\sum_{k=1}^\infty\frac{\|\phi_k\|_{L^2}^2}{1+\lambda\rho_k}\times\{1+o_p(1)\}\notag\\
&=O_p(n^{-1}\lambda^{-(2a+1)/(2D)})\,.
\end{align}
Consequently, combining the above finding and \eqref{deltalambdabeta} and \eqref{sss}, we obtain that
%Therefore, combining \eqref{10}, \eqref{11} and \eqref{12}, we find
\begin{align}\label{snlambda}
\sup_{\nu\in[\nu_0,1]}\Vert S_{n,\lambda,\nu}(\beta_{\lambda,\nu})\Vert_K=O_p\big(n^{-1/2}\lambda^{-(2a+1)/(4D)}\big)+o(\lambda^{1/2})\,.
\end{align}
Next, let $q_n=c(n^{-1/2}\lambda^{-(2a+1)/(4D)}+\lambda^{1/2})$ and denote by  $\mathcal{B}(r)=\{\gamma\in\mathcal{H},\Vert\gamma\Vert_K\leq r\}$ denote the $\Vert\cdot\Vert_K$-ball with radius $r>0$ in $\H$. In view of \eqref{i1nrate}, for any $\beta\in\mathcal{B}(q_n)$, with probability tending to one, $\Vert I_{2,n,\nu}(\beta)\Vert_K\leq \Vert\beta\Vert_K/2\leq q_n/2$. Therefore, in view of \eqref{zero}, \eqref{sk} and \eqref{snlambda}, for $F_n(\beta)$ defined in \eqref{th}, with probability tending to one, for any $\beta\in\mathcal{B}(q_n)$,
\begin{align*}
\sup_{\nu\in[\nu_0,1]}\Vert F_{n,\nu}(\beta)\Vert_K&\leq\sup_{\nu\in[\nu_0,1]}\Vert I_{2,n,\nu}(\beta)\Vert_K+\sup_{\nu\in[\nu_0,1]}\Vert S_{n,\lambda,\nu}(\beta_{\lambda,\nu})\Vert_K\\
&\leq c\,n^{-1/2}\lambda^{-(2a+1)/(4D)}+q_n/2\leq q_n\,,
\end{align*}
which indicates that $F_{n,\nu}\{\mathcal{B}(q_n)\}\subset\mathcal{B}(q_n)$ uniformly in $\nu\in[\nu_0,1]$. Observing \eqref{th}--\eqref{zero}, we have, for any $\beta_1,\beta_2\in\mathcal B(q_n)$, $F_{n,\nu}(\beta_1)-F_{n,\nu}(\beta_2)=I_{2,n,\nu}(\beta_1)-I_{2,n,\nu}(\beta_2)$. Due to \eqref{sk}, with probability tending to one,
\begin{align*}
&\sup_{\nu\in[\nu_0,1]}\Vert F_{n,\nu}(\beta_1)-F_{n,\nu}(\beta_2)\Vert_K=\sup_{\nu\in[\nu_0,1]}\Vert I_{2,n,\nu}(\beta_1)-I_{2,n,\nu}(\beta_2)\Vert_K\leq \Vert\beta_1-\beta_2\Vert_K/2\,,
\end{align*}
which indicates that $F_{n,\nu}$ is a contraction mapping on $\mathcal{B}(q_n)$ uniformly in $\nu\in[\nu_0,1]$. By the Banach contraction mapping theorem, there exists a unique element $\beta_\nu^*\in\mathcal{B}_n$ such that $\beta_\nu^*=F_{n,\nu}(\beta_\nu^*)=\beta_\nu^*-S_{n,\lambda,\nu}(\beta_{\lambda,\nu}+\beta_\nu^*)$. Letting $\hat\beta_{n,\lambda}(\cdot,\nu)=\beta_{\lambda,\nu}+\beta_\nu^*$, we have $S_{n,\lambda,\nu}\{\hat\beta_{n,\lambda}(\cdot,\nu)\}=0$, which implies that $\hat\beta_{n,\lambda}(\cdot,\nu)$ is the estimator defined by \eqref{hatbeta}. Moreover, we have, with probability tending to one,
\begin{align*}
\sup_{\nu\in[\nu_0,1]}\|\hat\beta_{n,\lambda}(\cdot,\nu)-\beta_{\lambda,\nu}\|_K=\sup_{\nu\in[\nu_0,1]}\|\beta_\nu^*\|_K\leq q_n\,.
\end{align*}
In view of \eqref{deltalambdabeta},
\begin{align*}
\sup_{\nu\in[\nu_0,1]}\|\hat\beta_{n,\lambda}(\cdot,\nu)-\beta_0\|_K&\leq\sup_{\nu\in[\nu_0,1]}\|\beta_{\lambda,\nu}-\beta_0\|_K+\sup_{\nu\in[\nu_0,1]}\|\hat\beta_{n,\lambda}(\cdot,\nu)-\beta_{\lambda,\nu}\|_K\notag\\
&=O_p(\lambda^{1/2}+q_n)=O_p\big(\lambda^{1/2}+n^{-1/2}\lambda^{-(2a+1)/(4D)}\big)\,,
\end{align*}
which completes the proof.

%\begin{align*}
%\Vert\hat\beta_{n,\lambda}(\cdot,\nu)-\beta_0+W_\lambda(\beta_0)\Vert_K&\leq\Vert\beta_{\lambda,\nu}-\beta_0\Vert_K+\Vert\hat\beta_{n,\lambda}(\cdot,\nu)-\beta_{\lambda,\nu}\Vert_K=O_p(\lambda^{1/2}+q_n)\notag\\
%%
%&=O_p\big(\lambda^{1/2}+n^{-1/2}\lambda^{-(2a+1)/(4D)}\big)\,.
%\end{align*}

\end{proof}

\subsubsection*{Proof of Theorem~\ref{thm:bahadur}.}
       
Now, we provide the proof of Theorem~\ref{thm:bahadur} using Lemma~\ref{lem:hatbeta}. To be precise, we define    
\begin{equation}\label{snnu}
\begin{split}
&S_{n,\nu}(\beta)=-\frac{1}{\lfloor n\nu\rfloor}\sum_{i=1}^{\lfloor n\nu\rfloor}\left\{Y_i-\int_0^1X_i(s)\,\beta(s)\,ds\right\}\,\tau_{\lambda}(X_i)\,,\\
&S_\nu(\beta)=-\E\bigg[\bigg\{Y_0-\int_0^1X_0(s)\,\beta(s)\,ds\bigg\}\tau_{\lambda}(X_0)\bigg]\, ,
\end{split}
\end{equation}
and  $\Delta_\nu\beta=\hat\beta_{n,\lambda}(\cdot,\nu)-\beta_0+W_\lambda(\beta_0)$ for the sake of notational convenience.
Since $\mathcal D^2S_{\lambda,\nu}$ vanishes and $\mathcal DS_{\lambda,\nu}(\beta_0)=id$ by \eqref{prop:id}, we have 
\begin{align*}
S_{\lambda,\nu}\{\hat\beta_{n,\lambda}(\cdot,\nu)\}-S_{\lambda,\nu}(\beta_0)=\mathcal DS_{\lambda,\nu}(\beta_0)\Delta_\nu\beta=\Delta_\nu\beta\,.
\end{align*}
%$S_{\lambda,\nu}\{\hat\beta_{n,\lambda}(\cdot,\nu)\}-S_{\lambda,\nu}(\beta_0)=\mathcal DS_{\lambda,\nu}(\beta_0)\Delta_\nu\beta=\Delta_\nu\beta$. 
Since $S_{n,\lambda,\nu}\{\hat\beta_{n,\lambda}(\cdot,\nu)\}=0$, we deduce from this equation that
\begin{align}\label{co}
&\hat\beta_{n,\lambda}(\cdot,\nu)-\beta_0+S_{n,\lambda,\nu}(\beta_0)=\Delta_\nu\beta+S_{n,\lambda,\nu}(\beta_0)\notag\\
&=-S_{n,\lambda,\nu}\{\hat\beta_{n,\lambda}(\cdot,\nu)\}+S_{n,\lambda,\nu}(\beta_0)+S_{\lambda,\nu}\{\hat\beta_{n,\lambda}(\cdot,\nu)\}-S_{\lambda,\nu}(\beta_0)\notag\\
&=-S_{n,\nu}\{\hat\beta_{n,\lambda}(\cdot,\nu)\}+S_{n,\nu}(\beta_0)+S_\nu\{\hat\beta_{n,\lambda}(\cdot,\nu)\}-S_{\nu}(\beta_0)\,,
%      
%&=-\mathcal DS_{n,\nu}(\Delta_\nu\beta)+\mathcal DS_{\nu}(\Delta_\nu\beta)
\end{align}
where  $S_{n,\nu}$ and $S_{\nu}$ are defined in \eqref{snnu}. 
Let $r_n=\lambda^{1/2}+n^{-1/2}\lambda^{-(2a+1)/(4D)}$. For $c_1>0$, consider  the event $\mathcal M_n=\big\{\sup_{\nu\in[\nu_0,1]}\Vert\Delta_\nu\beta\Vert_K\leq c_1r_n\big\}$. By Lemma~\ref{lem:hatbeta}, we obtain that $\P(\mathcal M_n)$ tends to one if the constant $c_1>0$
is chosen sufficiently 
large. For $c_K>0$ in Lemma~\ref{lem:3.1}, let $q_n=c_1c_K\lambda^{-(2a+1)/(4D)} r_n$ and let
\begin{align*}
p_n=c_1^2\,q_n^{-2}\lambda^{-1}r_n^2=c_1^2\,\big(c_1c_K\lambda^{-(2a+1)/(4D)} r_n\big)^{-2}\lambda^{-1}r_n^2=c_K^{-2}\,\lambda^{(-2D+2a+1)/(2D)}\,.
\end{align*}
Note that $p_n\geq 1$ for $n$ large enough. In order to apply Lemma~\ref{lem:hnbeta}, we shall rescale $\Delta_\nu\beta$ such that the $L^2$-norm of its rescaled version is bounded by $1$. Let $\tilde\Delta_\nu\beta=q_n^{-1}\Delta_\nu\beta$. By Lemma~\ref{lem:3.1}, we have that, on the event $\mathcal M_n$,
\begin{align*}
\Vert\tilde\Delta_\nu\beta\Vert_{L^2} & \leq c_K\lambda^{-(2a+1)/(4D)} \Vert\tilde\Delta_\nu\beta\Vert_K \\
& \leq c_Kq_n^{-1}\lambda^{-(2a+1)/(4D)} \Vert\Delta_\nu\beta\Vert_K\leq c_1c_Kq_n^{-1}\lambda^{-(2a+1)/(4D)} r_n\leq 1\,.
\end{align*}
In addition, since $J(\Delta_\nu\beta,\Delta_\nu\beta)\leq\lambda^{-1}\Vert\Delta_\nu\beta\Vert_K^2$, we have
\begin{align*}
J(\tilde\Delta_\nu\beta,\tilde\Delta_\nu\beta)\leq q_n^{-2}J(\Delta_\nu\beta,\Delta_\nu\beta)\leq q_n^{-2}\lambda^{-1}\Vert\Delta_\nu\beta\Vert_K^2\leq c_1^2\,q_n^{-2}\lambda^{-1}r_n^2=p_n\,. 
\end{align*}
Hence, we have shown that $\tilde\Delta_\nu\beta\in\mathcal{F}_{p_n}$, where $\mathcal{F}_{p_n}$
is defined in \eqref{det8}.
%=\{\beta\in\H:\Vert\beta\Vert_{L^2}\leq 1,J(\beta,\beta)\leq p_n\}$.
%defined in Lemma~\ref{lem:hnbeta}

Recall from \eqref{hnk} that, for the event $\EE_n(c)$ defined in \eqref{enc}, for any $\beta\in\H$,
\begin{align*}
&\tilde H_{n,\nu}(\beta)=\frac{1}{\sqrt{\lfloor n\nu\rfloor}}\sum_{i=1}^{{\lfloor n\nu\rfloor}}\bigg(\tau_{\lambda}(X_i)\l\beta,X_i\r_{L^2}\,\one\{\EE_n(c)\}-\E\Big[\tau_{\lambda}(X_i)\l\beta,X_i\r_{L^2}\,\one\{\EE_n(c)\}\Big]\bigg)\,.
\end{align*}
In view of \eqref{snnu} and 
\eqref{co} we thus obtain on the event $\EE_n(c)$ that 
\begin{align}\label{coo}
&\hat\beta_{n,\lambda}(\cdot,\nu)-\beta_0+S_{n,\lambda,\nu}(\beta_0)\notag\\
&=-S_{n,\nu}\{\hat\beta_{n,\lambda}(\cdot,\nu)\}+S_{n,\nu}(\beta_0)+S\{\hat\beta_{n,\lambda}(\cdot,\nu)\}-S_{\nu}(\beta_0)\notag\\
&=\frac{1}{{\lfloor n\nu\rfloor}}\sum_{i=1}^{{\lfloor n\nu\rfloor}}\bigg[\tau_{\lambda}(X_i)\int_0^1X_i(s)\,\Delta_\nu\beta(s)\,ds-\E\left\{\tau_{\lambda}(X_i)\int_0^1X_i(s)\,\Delta_\nu\beta(s)\,ds\right\}\bigg]\notag\\
&\leq \frac{1}{\sqrt{\lfloor n\nu\rfloor}}\, \tilde H_{n,\nu}(\Delta_\nu\beta)-\E\left\{\tau_{\lambda}(X_i)\int_0^1X_i(s)\,\Delta_\nu\beta(s)\,ds\,\one\{\EE(c)^{\rm c}\}\right\}\,.
\end{align}
Note that following arguments similar to the ones used in \eqref{e}, we deduce that
\begin{align}\label{8}
&\bigg\|\E\left\{\tau_{\lambda}(X_i)\int_0^1X_i(s)\,\Delta_\nu\beta(s)\,ds\,\one\{\EE(c)^{\rm c}\}\right\}\bigg\|_K\notag\\
&\leq c\times o(n^{-1})\times\lambda^{-1/(2D)}\times\|\Delta_\nu\beta\|_K\times\{1+o(1)\}\leq c\,n^{-1}\lambda^{-1/(2D)}r_n=o(v_n)\,.
\end{align}
Since $\tilde\Delta_\nu\beta\in\mathcal{F}_{p_n}$, by applying Lemma~\ref{lem:hnbeta}, observing \eqref{tildehh}, we deduce that
\begin{align*}
\sup_{\nu\in[\nu_0,1]}\,\sup_{\tilde\Delta_\nu\beta\in\mathcal F_{p_n}}\Vert \tilde H_{n,\nu}(\tilde\Delta_\nu\beta)\Vert_K&=O_p\big\{\big(p_n^{1/(2m)}+n^{-1/2}\big)\big(\lambda^{-1/(2D)}\log n\big)^{1/2}\big\}\\
&=O_p\big\{p_n^{1/(2m)}\lambda^{-1/(4D)}(\log n)^{1/2}\big\}\,.
\end{align*}
%since $n^{-1/2}=o(p_n^{1/(2m)})$ by Assumption~\ref{a:rate}. 
Consequently, for the $\Delta_\nu\beta$ in \eqref{coo}, it follows with probability tending to one,
\begin{align*} % \label{hn}
&n^{-1/2}\sup_{\nu\in[\nu_0,1]}
% \,\sup_{\Delta_\nu\beta\in q_n\mathcal F_{p_n}}
\Vert \tilde H_{n,\nu}(\Delta_\nu\beta)\Vert_K
\leq n^{-1/2}q_n\sup_{\nu\in[\nu_0,1]}\,\sup_{\tilde\Delta_\nu\beta\in\mathcal F_{p_n}}\Vert \tilde H_{n,\nu}(\tilde\Delta_\nu\beta)\Vert_K\notag\\
&\leq c\,n^{-1/2}q_n\,p_n^{1/(2m)}\lambda^{-1/(4D)}(\log n)^{1/2}\notag\\
&\leq c\,n^{-1/2}(\lambda^{-(2a+1)/(4D)}\, r_n)\,\lambda^{(-2D+2a+1)/(4Dm)}\,\lambda^{-1/(4D)}(\log n)^{1/2}\notag\\
&=c\,n^{-1/2}\lambda^{-\varsigma}\big(\lambda^{1/2}+n^{-1/2}\lambda^{-(2a+1)/(4D)}\big)(\log n)^{1/2}\,,
\end{align*}
%\begin{align}\label{hn}
%&n^{-1/2}\max_{1\leq k\leq n}\Vert \tilde H_{n,\nu}(\Delta_\nu\beta)\Vert_K\leq n^{-1/2}q_n\max_{1\leq k\leq n}\Vert \tilde H_{n,\nu}(\tilde\Delta_\nu\beta)\Vert_K\notag\\
%%
%&\leq c\,n^{-1/2}q_n\,p_n^{1/(2m)}\lambda^{-1/(4D)}(\log n)^{1/2}\notag\\
%%
%&\leq c\,n^{-1/2}(\lambda^{-(2a+1)/(4D)}\, r_n)\,\lambda^{(-2D+2a+1)/(4Dm)}\,\lambda^{-1/(4D)}(\log n)^{1/2}\notag\\
%%
%&=c\,n^{-1/2}\lambda^{-\varsigma}(\lambda^{1/2}+n^{-1/2}\lambda^{-1/(4D)})(\log n)^{1/2}\,,
%\end{align}
for the constant $\varsigma>0$ in Assumption~\ref{a:rate}. Combining the above result with \eqref{coo} and \eqref{8} yields that
% , with probability tending to one,
\begin{align}\label{b}
&\sup_{\nu\in[\nu_0,1]}\big\Vert\hat\beta_{n,\lambda}(\cdot,\nu)-\beta_0+W_\lambda(\beta_0)+S_{n,\lambda,\nu}(\beta_0)\big\Vert_K\notag\\
&=O_p\big\{n^{-1/2}\lambda^{-\varsigma}(\lambda^{1/2}+n^{-1/2}\lambda^{-(2a+1)/(4D)})(\log n)^{1/2}\big\}=O_p(v_n)\,.
\end{align}
Observing $S_{n,\lambda,\nu}(\beta_0)$ defined in \eqref{snlambdanubeta0}, we therefore deduce from the above equation that
\begin{align}\label{cc}
&\sup_{\nu\in[\nu_0,1]}\,\bigg\Vert\nu\{\hat\beta_{n,\lambda}(\cdot,\nu)-\beta_0+W_\lambda(\beta_0)\}-\frac{1}{n}\sum_{i=1}^{\lfloor n\nu \rfloor}\e_i\tau_{\lambda}(X_i)\bigg\Vert_K\notag\\
&\leq\sup_{\nu\in[\nu_0,1]}\Big\{\nu\big\Vert\hat\beta_{n,\lambda}(\cdot,\nu)-\beta_0+W_\lambda(\beta_0)+S_{n,\lambda,\nu}(\beta_0)\big\Vert_K\Big\}\notag\\
&\qquad+\sup_{\nu\in[\nu_0,1]}\bigg\{\bigg|\frac{\nu}{\lfloor n\nu\rfloor}-\frac{1}{n}\bigg|\times\bigg\Vert\sum_{i=1}^{\lfloor n\nu \rfloor}\e_i\,\tau_{\lambda}(X_i)\bigg\Vert_K\bigg\}+\sup_{\nu\in[\nu_0,1]}\Big\{\nu\Vert W_\lambda(\beta_0)\Vert_K\Big\}\notag\\
&\leq\sup_{\nu\in[\nu_0,1]}\big\Vert\hat\beta_{n,\lambda}(\cdot,\nu)-\beta_0+W_\lambda(\beta_0)+S_{n,\lambda,\nu}(\beta_0)\big\Vert_K\notag\\
&\qquad+n^{-1}\sup_{\nu\in[\nu_0,1]}\bigg\Vert\frac{1}{\lfloor n\nu\rfloor}\sum_{i=1}^{\lfloor n\nu \rfloor}\e_i\,\tau_{\lambda}(X_i)\bigg\Vert_K+\Vert W_\lambda(\beta_0)\Vert_K\,.
%%
%&=O_p(v_n)\,,
\end{align}
Observing \eqref{11} and Assumption~\ref{a:rate} we find
\begin{align*} % \label{d}
n^{-1}\sup_{\nu\in[\nu_0,1]}\bigg\Vert\frac{1}{\lfloor n\nu\rfloor}\sum_{i=1}^{\lfloor n\nu \rfloor}\e_i\,\tau_{\lambda}(X_i)\bigg\Vert_K=O_p(n^{-3/2}\lambda^{-(2a+1)/(4D)})=o_p(v_n)\,.
\end{align*}
The proof is therefore complete by combining the above equation with \eqref{b} and \eqref{cc}.

\subsection{Proof of Theorem~\ref{thm:wip}}\label{proof:thm:wip}

The proof is now performed in two steps.
First, in Lemma~\ref{lem:m}, we will 
show that the $U_i$'s are $L^2$-$m$-approximable (see Assumptions~(1.1)--(1.4) in \citealp{berkes2013}). Second, in Lemma~\ref{lem:l2} we will show that $C_{U,\lambda}$ defined in \eqref{cu} satsifies  $\sup_{\lambda>0}\int_0^1\int_0^1|C_{U,\lambda}(s,t)|dsdt<\infty$. 
Then, Theorem~\ref{thm:wip} is proved by the arguments as given in 
the proof of Theorem~1.1 in \cite{berkes2013}, which are omitted for the sake of brevity.

\begin{lemma}\label{lem:m}
Under the assumptions of Theorem~\ref{thm:wip}, the series $\{U_i\}_{i\in\mathbb Z}$ defined in \eqref{u} is $L^2$-$m$-approximable w.r.t.~the series $\{U_{i,\ell}\}_{i,\ell\in\mathbb Z}$ 
 uniformly in $\lambda>0$, where 
\begin{align*} %\label{uil}
U_{i,\ell}=\lambda^{(2a+1)/(2D)}\,\e_{i,\ell}\,\tau_{\lambda}(X_{i,\ell})=\lambda^{(2a+1)/(2D)}\e_{i,\ell}\sum_{k=1}^\infty\frac{\l X_{i,\ell},\phi_k\r_{L^2}}{1+\lambda\rho_k}\phi_k\,,\quad (i,\ell\in\mathbb Z)\, .
\end{align*}

\end{lemma}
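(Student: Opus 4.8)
The plan is to verify the three ingredients that together constitute $L^2$-$m$-approximability in the sense of \cite{berkes2013}: that $\{U_i\}_{i\in\mathbb Z}$ is a stationary Bernoulli shift taking values in $L^2([0,1])$, that $\E\|U_0\|_{L^2}^{2+\delta}<\infty$, and that the approximation errors $\E\|U_i-U_{i,\ell}\|_{L^2}^{2+\delta}$ are summable after raising to the power $1/\kappa$ --- all with constants that do \emph{not} depend on $\lambda$. The shift structure is immediate: since $X_i=g(\ldots,\xi_{i-1},\xi_i)$, $\e_i=h(\ldots,\eta_{i-1},\eta_i)$ and $\tau_{\lambda}$ is a fixed bounded linear operator, $U_i$ is a measurable function of $(\xi_j,\eta_j)_{j\le i}$, and $U_{i,\ell}$ is obtained by applying the same map to the truncated and recoupled innovations; hence stationarity and the coupling form required in \cite{berkes2013} are inherited directly from Assumption~\ref{a:m}. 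The heart of the argument, and the only place where the normalization $\lambda^{(2a+1)/(2D)}$ matters, is a uniform operator-norm bound for $\tau_{\lambda}$ that makes every power of $\lambda$ cancel.

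First I would establish the deterministic estimate $\|\tau_{\lambda}(z)\|_{L^2}\le c\,\lambda^{-(2a+1)/(2D)}\|z\|_{L^2}$, valid for every $z\in L^2([0,1])$ and uniformly in $\lambda>0$. Applying the triangle inequality to the expansion \eqref{tau}, then Cauchy--Schwarz $|\langle z,\phi_k\rangle_{L^2}|\le\|z\|_{L^2}\|\phi_k\|_{L^2}$ together with $\|\phi_k\|_{L^2}\le\|\phi_k\|_\infty\le c\,k^a$ from Assumption~\ref{a201}, one obtains
\begin{align*}
\|\tau_{\lambda}(z)\|_{L^2}\le \|z\|_{L^2}\sum_{k=1}^\infty\frac{\|\phi_k\|_{L^2}^2}{1+\lambda\rho_k}\le c\,\|z\|_{L^2}\sum_{k=1}^\infty\frac{k^{2a}}{1+\lambda k^{2D}}\,,
\end{align*}
and the last series is $\asymp\lambda^{-(2a+1)/(2D)}$ by a standard integral comparison, where the constraint $D>a+1/2$ guarantees convergence. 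Writing the difference as
\begin{align*}
U_i-U_{i,\ell}=\lambda^{(2a+1)/(2D)}\Big\{(\e_i-\e_{i,\ell})\,\tau_{\lambda}(X_i)+\e_{i,\ell}\,\tau_{\lambda}(X_i-X_{i,\ell})\Big\}\,,
\end{align*}
the operator bound cancels the prefactor exactly and yields
\begin{align*}
\|U_i-U_{i,\ell}\|_{L^2}\le c\Big(|\e_i-\e_{i,\ell}|\,\|X_i\|_{L^2}+|\e_{i,\ell}|\,\|X_i-X_{i,\ell}\|_{L^2}\Big)\,,
\end{align*}
with $c$ independent of $\lambda$; the same computation gives $\|U_0\|_{L^2}\le c\,|\e_0|\,\|X_0\|_{L^2}$.

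It then remains to take $(2+\delta)$-th moments. Because Assumption~\ref{a:m} builds $X_i,X_{i,\ell}$ from the $\xi$'s and $\e_i,\e_{i,\ell}$ from the $\eta$'s, the two factors in each product are independent, so the $c_r$-inequality and independence give
\begin{align*}
\E\|U_i-U_{i,\ell}\|_{L^2}^{2+\delta}\le c\Big(\E|\e_i-\e_{i,\ell}|^{2+\delta}\,\E\|X_i\|_{L^2}^{2+\delta}+\E|\e_{i,\ell}|^{2+\delta}\,\E\|X_i-X_{i,\ell}\|_{L^2}^{2+\delta}\Big)\,.
\end{align*}
Here $\E\|X_i\|_{L^2}^{2+\delta}<\infty$ follows from the exponential-moment Assumption~\ref{a3.1}, $\E|\e_0|^{2+\delta}<\infty$ from Assumption~\ref{a5.2}, and $\sup_\ell\E|\e_{i,\ell}|^{2+\delta}<\infty$ because $\E|\e_{i,\ell}|^{2+\delta}\le 2^{1+\delta}\{\E|\e_i|^{2+\delta}+\E|\e_i-\e_{i,\ell}|^{2+\delta}\}$ and the second term is summable in $\ell$, hence bounded. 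Raising to the power $1/\kappa$ and using subadditivity of $x\mapsto x^{1/\kappa}$ ($\kappa>1$), I would bound $(\E\|U_i-U_{i,\ell}\|_{L^2}^{2+\delta})^{1/\kappa}$ by a constant multiple of $(\E|\e_i-\e_{i,\ell}|^{2+\delta})^{1/\kappa}+(\E\|X_i-X_{i,\ell}\|_{L^2}^{2+\delta})^{1/\kappa}$; summing over $\ell$ and invoking the two summability conditions of Assumption~\ref{a:m} finishes the proof, with all bounds uniform in $\lambda$, while $\E\|U_0\|_{L^2}^{2+\delta}\le c\,\E|\e_0|^{2+\delta}\,\E\|X_0\|_{L^2}^{2+\delta}<\infty$ follows the same way.

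The main obstacle is conceptual rather than computational: one must check that the normalization $\lambda^{(2a+1)/(2D)}$ is precisely the power needed for the operator-norm estimate to render every bound free of $\lambda$, since only a $\lambda$-uniform version of $m$-approximability will later allow Theorem~\ref{thm:wip} to be applied to the triangular array in which $\lambda=\lambda_n\to0$. A secondary point requiring care is matching the coupling construction of $U_{i,\ell}$ to the exact Bernoulli-shift hypotheses (1.1)--(1.4) of \cite{berkes2013}, which is where the linearity of $\tau_{\lambda}$ and the independence of the $\xi$- and $\eta$-innovations are used.
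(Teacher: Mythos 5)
Your proof is correct and follows essentially the same route as the paper's: the same splitting of $U_i-U_{i,\ell}$ into $(\e_i-\e_{i,\ell})\,\tau_{\lambda}(X_i)+\e_{i,\ell}\,\tau_{\lambda}(X_i-X_{i,\ell})$, the same $\lambda$-uniform bound $\|\tau_{\lambda}(z)\|_{L^2}\leq c\,\lambda^{-(2a+1)/(2D)}\|z\|_{L^2}$ (which the paper obtains by chaining Lemma~\ref{lem:s4} with Lemma~\ref{lem:3.1}, while you derive it directly from the eigen-expansion and the integral comparison under $D>a+1/2$ --- an equivalent computation), and the same use of the independence of the $\xi$- and $\eta$-innovations together with the summability conditions of Assumption~\ref{a:m}. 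The only cosmetic difference is that the paper uses $\e_{i,\ell}\stackrel{d}{=}\e_0$ directly where you bound $\sup_{\ell}\E|\e_{i,\ell}|^{2+\delta}$ via a $c_r$-inequality; both are fine.
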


\begin{proof}
% We start by verifying that the $U_i$'s are $m$-approximable. 
By Lemmas~\ref{lem:s4} and \ref{lem:3.1} in Section~\ref{app:aux:lem},
% Lemma~S.4 in \cite{shang2015b},
we obtain that there exists a constant $c>0$ such that
\begin{align}\label{1}
\Vert\tau_{\lambda}(X_i)\Vert_{L^2}\leq c\lambda^{-(2a+1)/(4D)}\Vert\tau_{\lambda}(X_i)\Vert_K\leq c\lambda^{-(2a+1)/(2D)}\|X_i\|_{L^2}\,.
\end{align}
This together with the fact that $\|U_i\|_{L^2}\leq \lambda^{(2a+1)/(2D)}|\e_i|\cdot\|\tau_{\lambda}(X_i)\|_{L^2}$ implies that $U_i\in L^2([0,1])$ uniformly in $\lambda>0$. In addition, $\E(U_i)\equiv0$, and,  by \eqref{1}, for any $\delta\in(0,1)$,
\begin{align*}
\E\|U_i\|_{L^2}^{2+\delta}\leq \lambda^{(2a+1)(2+\delta)/(2D)}\E|\e_i|^{2+\delta}\,\E\|\tau_{\lambda}(X_i)\|_{L^2}^{2+\delta}\leq c\,\lambda^{(2a+1)(1+\delta)/(2D)}\E|\e_i|^{2+\delta}\,\E\|X_i\|_{L^2}^{2+\delta}<\infty\,,
\end{align*}
where in the last step we have used Assumptions~\ref{a3.1} and \ref{a5.2}.
Moreover, note that $m$-approximable series are strictly stationary (see, for example, \citealp{hormann2010}). Hence, by applying Assumption~\ref{a:m} and \eqref{1}, we find that, uniformly in $\lambda>0$,
\begin{align*}
&\sum_{\ell=1}^\infty\big(\E\|U_i-U_{i,\ell}\|_{L^2}^{2+\delta}\big)^{1/\kappa}=\sum_{\ell=1}^\infty\Big\{\lambda^{(2a+1)(2+\delta)/(2D)}\,\E\big\|\e_i\,\tau_{\lambda}(X_i)-\e_{i,\ell}\,\tau_{\lambda}(X_{i,\ell})\big\|_{L^2}^{2+\delta}\Big\}^{1/\kappa}\\
&\leq 2^{(1+\delta)/\kappa}\,\sum_{\ell=1}^\infty\Big\{\lambda^{(2a+1)(2+\delta)/(2D)}\times\E|\e_i-\e_{i,\ell}|^{2+\delta}\times\E\|\tau_{\lambda}(X_i)\|_{L^2}^{2+\delta}\Big\}^{1/\kappa}\\
&\quad+2^{(1+\delta)/\kappa}\,\sum_{\ell=1}^\infty\Big\{\lambda^{(2a+1)(2+\delta)/(2D)}\times\E|\e_{i,\ell}|^{2+\delta}\times\E\|\tau_{\lambda}(X_i)-\tau_{\lambda}(X_{i,\ell})\|_{L^2}^{2+\delta}\Big\}^{1/\kappa}\\
&=2^{(1+\delta)/\kappa}\,\Big\{\lambda^{(2a+1)(2+\delta)/(2D)}\E\|\tau_{\lambda}(X_i)\|_{L^2}^{2+\delta}\Big\}^{1/\kappa}\,\sum_{\ell=1}^\infty\big(\E|\e_i-\e_{i,\ell}|^{2+\delta}\big)^{1/\kappa}\\
&\quad+2^{(1+\delta)/\kappa}\,\big(\E|\e_{0}|^{2+\delta}\big)^{1/\kappa}\times\bigg\{\lambda^{(2a+1)(2+\delta)/(2D)}\sum_{\ell=1}^\infty\E\|\tau_{\lambda}(X_i-X_{i,\ell})\|_{L^2}^{2+\delta}\bigg\}^{1/\kappa}\\
&\leq 2^{(1+\delta)/\kappa}\,\Big(\E\|X_i\|_{L^2}^{2+\delta}\Big)^{1/\kappa}\,\sum_{\ell=1}^\infty\big(\E|\e_i-\e_{i,\ell}|^{2+\delta}\big)^{1/\kappa}\\
&\quad+2^{(1+\delta)/\kappa}\,\big(\E|\e_{0}|^{2+\delta}\big)^{1/\kappa}\bigg(\sum_{\ell=1}^\infty\E\|X_i-X_{i,\ell}\|_{L^2}^{2+\delta}\bigg)^{1/\kappa}<\infty\,.
\end{align*}
Now, we have shown that the series $\{U_i\}_{i\in\mathbb Z}$ is $L^2$-$m$-approximable uniformly in $\lambda>0$. 
%The proof is therefore complete by applying Theorem~1 in \cite{berkes2013}.

\end{proof}

%\begin{align*}
%C_{U,\lambda}(s,t)=\cov\{U_0(s),U_0(t)\}+\sum_{\ell=1}^\infty\cov\{U_0(s),U_\ell(t)\}+\sum_{\ell=1}^\infty\cov\{U_0(s),U_{-\ell}(t)\}\,.
%\end{align*}
%We have the following lemma.

%\subsubsection{Lemma~\ref{lem:l2}}\label{app:lem:l2}

\begin{lemma}\label{lem:l2}
Under the assumptions of Theorem~\ref{thm:wip}, we have  $$\sup_{\lambda>0}\int_0^1\int_0^1\{C_{U,\lambda}(s,t)\}^2\,ds\,dt<\infty\,,
$$
where  $C_{U,\lambda}$ is defined in \eqref{cu}.
\end{lemma}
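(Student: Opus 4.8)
The plan is to recognize that $C_{U,\lambda}$ is precisely the long-run covariance kernel of the centred, strictly stationary sequence $\{U_i\}_{i\in\mathbb Z}$ introduced in \eqref{u}, and to control its $L^2([0,1]^2)$-norm by summing the Hilbert--Schmidt norms of the individual lag-$\ell$ autocovariance operators, exploiting the uniform $L^2$-$m$-approximability established in Lemma~\ref{lem:m}. Since $\E(U_i)=0$ and $U_i=\lambda^{(2a+1)/(2D)}\,\e_i\,\tau_{\lambda}(X_i)$, we have $\cov\{U_0(s),U_\ell(t)\}=\lambda^{(2a+1)/D}\cov\{\e_0\,\tau_{\lambda}(X_0)(s),\e_\ell\,\tau_{\lambda}(X_\ell)(t)\}$, so that $C_{U,\lambda}(s,t)=\sum_{\ell\in\mathbb Z}\Gamma_\ell(s,t)$, where $\Gamma_\ell(s,t)=\E\{U_0(s)\,U_\ell(t)\}$. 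By the triangle inequality in $L^2([0,1]^2)$ it therefore suffices to prove that $\sum_{\ell\in\mathbb Z}\|\Gamma_\ell\|_{L^2([0,1]^2)}$ is finite, uniformly in $\lambda>0$.

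By stationarity of $\{U_i\}$ one has $\|\Gamma_{-\ell}\|_{L^2([0,1]^2)}=\|\Gamma_\ell\|_{L^2([0,1]^2)}$, so I would restrict attention to $\ell\ge1$. The key observation is that, in the notation of Assumption~\ref{a:m}, the approximation $U_{\ell,\ell}=\lambda^{(2a+1)/(2D)}\,\e_{\ell,\ell}\,\tau_{\lambda}(X_{\ell,\ell})$ is built from the innovations $\xi_1,\ldots,\xi_\ell$ and $\eta_1,\ldots,\eta_\ell$ together with independent copies, and is therefore independent of $U_0$, which is a function of $\xi_0,\xi_{-1},\ldots$ and $\eta_0,\eta_{-1},\ldots$. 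Since $\E(U_0)=0$, this independence gives $\E\{U_0(s)\,U_{\ell,\ell}(t)\}=0$, and we may replace $U_\ell$ by the approximation error, namely $\Gamma_\ell(s,t)=\E\{U_0(s)\,[U_\ell(t)-U_{\ell,\ell}(t)]\}$. Viewing $\Gamma_\ell$ as the kernel of the cross-covariance operator $\E\{U_0\otimes(U_\ell-U_{\ell,\ell})\}$ and expanding in an orthonormal basis of $L^2([0,1])$, a termwise application of the Cauchy--Schwarz inequality over the probability space yields the Hilbert--Schmidt bound $\|\Gamma_\ell\|_{L^2([0,1]^2)}\le\{\E\|U_0\|_{L^2}^2\}^{1/2}\{\E\|U_\ell-U_{\ell,\ell}\|_{L^2}^2\}^{1/2}$.

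It then remains to bound the two factors uniformly in $\lambda$. For the first factor, the estimate $\|U_i\|_{L^2}\le c\,|\e_i|\,\|X_i\|_{L^2}$ obtained from \eqref{1}, combined with the independence of $\{\e_i\}$ and $\{X_i\}$ and Assumptions~\ref{a3.1} and \ref{a5.2}, gives $\sup_{\lambda>0}\E\|U_0\|_{L^2}^2<\infty$. For the summability of the second factor, I would pass from the second to the $(2+\delta)$ moment by Lyapunov's inequality, $(\E\|U_\ell-U_{\ell,\ell}\|_{L^2}^2)^{1/2}\le(\E\|U_\ell-U_{\ell,\ell}\|_{L^2}^{2+\delta})^{1/(2+\delta)}$, and observe that, because $\kappa>2+\delta$ and the approximation errors tend to zero, for all large $\ell$ one has $\E\|U_\ell-U_{\ell,\ell}\|_{L^2}^{2+\delta}\le1$ and hence $(\E\|U_\ell-U_{\ell,\ell}\|_{L^2}^{2+\delta})^{1/(2+\delta)}\le(\E\|U_\ell-U_{\ell,\ell}\|_{L^2}^{2+\delta})^{1/\kappa}$. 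The uniform summability over $\ell$ of the right-hand side is exactly the $m$-approximability bound verified in the proof of Lemma~\ref{lem:m}, which together with the boundedness of the finitely many remaining small lags completes the argument.

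The main obstacle is the Hilbert--Schmidt estimate in the second step: one must bound the $L^2([0,1]^2)$-norm of the cross-covariance kernel, rather than merely the operator norm, by the product of the two $L^2([0,1])$ second moments, which requires the full termwise Cauchy--Schwarz argument over an orthonormal basis in place of a single application of Cauchy--Schwarz. A secondary technical point, which dictates the choice $\kappa>2+\delta$ in Assumption~\ref{a:m}, is the exponent bookkeeping that allows one to deduce summability of $(\E\|U_\ell-U_{\ell,\ell}\|_{L^2}^2)^{1/2}$ from the stated summability of the $1/\kappa$-th powers of the $(2+\delta)$ moments.
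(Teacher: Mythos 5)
Your proof is correct, and it takes a genuinely different route from the paper's. You treat $U_i=\lambda^{(2a+1)/(2D)}\e_i\tau_\lambda(X_i)$ as a single $L^2$-$m$-approximable sequence, use the independence of $U_0$ from the approximation $U_{\ell,\ell}$ (which is built only from $\xi_1,\ldots,\xi_\ell$, $\eta_1,\ldots,\eta_\ell$ and the independent starred copies) to write $\Gamma_\ell(s,t)=\E\{U_0(s)[U_\ell(t)-U_{\ell,\ell}(t)]\}$, and then sum the Hilbert--Schmidt bounds $\|\Gamma_\ell\|_{L^2([0,1]^2)}\le\{\E\|U_0\|_{L^2}^2\}^{1/2}\{\E\|U_\ell-U_{\ell,\ell}\|_{L^2}^2\}^{1/2}$; the uniformity in $\lambda$ comes entirely from the uniform $m$-approximability in Lemma~\ref{lem:m} and the uniform bound $\E\|U_0\|_{L^2}^2\le c\,\E(\e_0^2)\,\E\|X_0\|_{L^2}^2$. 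The paper instead exploits the independence of the error process and the predictor process to factorize $\cov\{\e_0\tau_\lambda(X_0)(s),\e_{-\ell}\tau_\lambda(X_{-\ell})(t)\}$ into a scalar factor $\E\{(\e_0-\e_{0,\ell})\e_{-\ell}\}$ times a factor involving $\tau_\lambda(X_0)-\tau_\lambda(X_{0,\ell})$, applies the $m$-approximation separately to the errors and to the predictors, and bounds the resulting three terms $I_1,I_2,I_3$; this is longer but yields an explicit expression for the diagonal term, $\E\{\tau_\lambda(X_0)(s)\tau_\lambda(X_0)(t)\}=\sum_k\phi_k(s)\phi_k(t)/(1+\lambda\rho_k)^2$, and hence a rate $I_1=O(\lambda^{(2a+1)/D})$ rather than just boundedness. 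Your argument is the standard one for long-run covariances of $m$-approximable sequences and reuses Lemma~\ref{lem:m} directly, which is economical. Two minor remarks: the Hilbert--Schmidt estimate does not actually require an orthonormal-basis expansion --- the pointwise Cauchy--Schwarz bound $[\E\{U_0(s)V(t)\}]^2\le\E\{U_0(s)^2\}\,\E\{V(t)^2\}$ followed by Fubini already gives it --- and your exponent bookkeeping ($x^{1/(2+\delta)}\le x^{1/\kappa}$ for $x\in[0,1]$ and $\kappa>2+\delta$, applied to all but finitely many lags) is exactly right.
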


\begin{proof}

Note that by Assumption~\ref{a:m}, for $\ell\geq1$, $\e_{0,\ell}$ and $\e_{-\ell}$ are independent; $\tau_{\lambda}(X_{0,\ell})$ and $\tau_{\lambda}(X_{-\ell})$ are independent. Note that $\E(\e_\ell)=0$ for any $\ell\in\mathbb Z$. Hence we deduce that, for $\ell\geq1$, $\E(\e_{0,\ell}\e_{-\ell})=0$, so that
\begin{align}\label{p1}
\E(\e_0\e_{-\ell})=\E\big\{(\e_0-\e_{0,\ell})\e_{-\ell}\big\}+\E(\e_{0,\ell}\e_{-\ell})=\E\big\{(\e_0-\e_{0,\ell})\e_{-\ell}\big\}\,.
\end{align}
In addition, for $\ell\geq1$, we have
\begin{align*}
&\E\big\{\tau_{\lambda}(X_{0})(s)\,\tau_{\lambda}(X_{-\ell})(t)\big\}\\
&=\E\Big[\big\{\tau_{\lambda}(X_{0})(s)-\tau_{\lambda}(X_{0,\ell})(s)\big\}\,\tau_{\lambda}(X_{-\ell})(t)\Big]+\E\big\{\tau_{\lambda}(X_{0,\ell})(s)\,\tau_{\lambda}(X_{-\ell})(t)\big\}\\
&=\E\Big[\big\{\tau_{\lambda}(X_{0})(s)-\tau_{\lambda}(X_{0,\ell})(s)\big\}\,\tau_{\lambda}(X_{-\ell})(t)\Big]+\E\big\{\tau_{\lambda}(X_{0})(s)\}\times\E\big\{\tau_{\lambda}(X_{0})(t)\big\}\,.
\end{align*}
Since $\E\{\e_\ell\,\tau(X_\ell)\}\equiv0$, combining the above equation and \eqref{p1} implies that, for $\ell\geq1$,
\begin{align*}
&\cov\big\{\e_{0}\, \tau_{\lambda}(X_{0})(s)\,,\e_{-\ell} \,\tau_{\lambda}(X_{-\ell})(t)\big\}\\
&=\E\big\{(\e_{0}-\e_{0,\ell})\e_{-\ell}\big\}\,\E\Big[\big\{\tau_{\lambda}(X_{0})(s)-\tau_{\lambda}(X_{0,\ell})(s)\big\}\times\tau_{\lambda}(X_{-\ell})(t)\Big]\\
&\quad+\E\big\{(\e_{0}-\e_{0,\ell})\e_{-\ell}\big\}\,\E\big\{\tau_{\lambda}(X_{0})(s)\}\times\E\big\{\tau_{\lambda}(X_{0})(t)\big\}\,.
\end{align*}
Therefore, we deduce from the above equation that
\begin{align}\label{intcu}
&\int_0^1\int_0^1\{C_{U,\lambda}(s,t)\}^2dsdt\notag\\
&=\lambda^{2(2a+1)/D}\int_0^1\int_0^1\bigg[\cov\big\{\e_0\, \tau_{\lambda}(X_0)(s)\,,\e_0 \,\tau_{\lambda}(X_0)(t)\big\}\notag\\
&\hspace{4cm}+2\sum_{\ell=1}^{+\infty}\cov\big\{\e_0\, \tau_{\lambda}(X_0)(s)\,,\e_{-\ell} \,\tau_{\lambda}(X_{-\ell})(t)\big\}\bigg]^2dsdt\notag\\
&=\lambda^{2(2a+1)/D}\int_0^1\int_0^1\bigg[\E(\e_0^2)\,\E\big\{\tau_{\lambda}(X_0)(s)\times\tau_{\lambda}(X_0)(t)\big\}\notag\\
&\quad+2\sum_{\ell=1}^{+\infty}\E\big\{(\e_{0}-\e_{0,\ell})\e_{-\ell}\big\}\,\E\Big[\big\{\tau_{\lambda}(X_{0})(s)-\tau_{\lambda}(X_{0,\ell})(s)\big\}\times\tau_{\lambda}(X_{-\ell})(t)\Big]\notag\\
&\quad+2\,\E\big\{\tau_{\lambda}(X_{0})(s)\}\times\E\big\{\tau_{\lambda}(X_{0})(t)\}\sum_{\ell=1}^{+\infty}\E\big\{(\e_{0}-\e_{0,\ell})\e_{-\ell}\big\}\bigg]^2dsdt\notag\\
&\leq3I_1+12I_2+12I_3\,,
%3\lambda^{2(2a+1)/D}\{\E(\e_0^2)\}^2\int_0^1\int_0^1\Big[\E\big\{\tau_{\lambda}(X_0)(s)\times\tau_{\lambda}(X_0)(t)\big\}\Big]^2dsdt\\
%&\quad+12\lambda^{2(2a+1)/D}\int_0^1\int_0^1\Bigg(\sum_{\ell=1}^{+\infty}\E\big\{(\e_{0}-\e_{0,\ell})\e_{-\ell}\big\}\,\E\Big[\big\{\tau_{\lambda}(X_{0})(s)-\tau_{\lambda}(X_{0,\ell})(s)\big\}\times\tau_{\lambda}(X_{-\ell})(t)\Big]\Bigg)^2dsdt\\
%&\quad+12\lambda^{2(2a+1)/D}\bigg[\sum_{\ell=1}^{+\infty}\E\big\{(\e_{0}-\e_{0,\ell})\e_{-\ell}\big\}\bigg]^2\,\int_0^1\int_0^1\Big[\E\big\{\tau_{\lambda}(X_{0})(s)\}\times\E\big\{\tau_{\lambda}(X_{0})(t)\big\}\Big]^2dsdt
\end{align}
where
\begin{align*}
&I_1=\lambda^{2(2a+1)/D}\{\E(\e_0^2)\}^2\int_{[0,1]^2}\Big[\E\big\{\tau_{\lambda}(X_0)(s)\times\tau_{\lambda}(X_0)(t)\big\}\Big]^2dsdt\,,\\
&I_2=\lambda^{2(2a+1)/D}\int_{[0,1]^2}\Bigg(\sum_{\ell=1}^{+\infty}\E\big\{(\e_{0}-\e_{0,\ell})\e_{-\ell}\big\}\,\E\Big[\big\{\tau_{\lambda}(X_{0})(s)-\tau_{\lambda}(X_{0,\ell})(s)\big\}\tau_{\lambda}(X_{-\ell})(t)\Big]\Bigg)^2dsdt\,,\\
&I_3=\lambda^{2(2a+1)/D}\bigg[\sum_{\ell=1}^{+\infty}\E\big\{(\e_{0}-\e_{0,\ell})\e_{-\ell}\big\}\bigg]^2\,\int_{[0,1]^2}\Big[\E\big\{\tau_{\lambda}(X_{0})(s)\}\times\E\big\{\tau_{\lambda}(X_{0})(t)\big\}\Big]^2dsdt\,.
\end{align*}
For the first term $I_1$, note that
\begin{align*}
&\E\big\{\tau_{\lambda}(X_{0})(s)\times\tau_{\lambda}(X_{0})(t)\big\}\\
&=\E\Bigg[\bigg\{\sum_{k_1=1}^\infty\frac{\l X_{0},\phi_{k_1}\r_{L^2}}{1+\lambda\rho_{k_1}}\phi_{k_1}(s)\bigg\}\bigg\{\sum_{k_2=1}^\infty\frac{\l X_{0},\phi_{k_2}\r_{L^2}}{1+\lambda\rho_{k_2}}\phi_{k_2}(t)\bigg\}\Bigg]\\
&=\sum_{k_1=1}^\infty\sum_{k_2=1}^\infty\frac{\phi_{k_1}(s)\phi_{k_2}(t)}{(1+\lambda\rho_{k_1})(1+\lambda\rho_{k_2})}\E\Big(\l X_{0},\phi_{k_1}\r_{L^2}\l X_{0},\phi_{k_2}\r_{L^2}\Big)\\
&=\sum_{k_1=1}^\infty\sum_{k_2=1}^\infty\frac{\phi_{k_1}(s)\phi_{k_2}(t)}{(1+\lambda\rho_{k_1})(1+\lambda\rho_{k_2})}\int_0^1\int_0^1C_X(t_1,t_2)\phi_{k_1}(t_1)\phi_{k_2}(t_2)dt_1dt_2=\sum_{k=1}^\infty\frac{\phi_{k}(s)\phi_{k}(t)}{(1+\lambda\rho_{k})^2}\,.
\end{align*}
Hence, by the Cauchy-Schwarz inequality, we find
\begin{align*}
I_1&=\lambda^{2(2a+1)/D}\{\E(\e_0^2)\}^2\int_0^1\int_0^1\bigg\{\sum_{k=1}^\infty\frac{\phi_{k}(s)\phi_{k}(t)}{(1+\lambda\rho_{k})^2}\bigg\}^2dsdt\\
&=\lambda^{2(2a+1)/D}\{\E(\e_0^2)\}^2\sum_{k_1=1}^\infty\sum_{k_2=1}^\infty\frac{1}{(1+\lambda\rho_{k_1})^2(1+\lambda\rho_{k_2})^2}\int_0^1\int_0^1\phi_{k_1}(s)\phi_{k_2}(s)\phi_{k_1}(t)\phi_{k_2}(t)dsdt\\
&\leq\lambda^{2(2a+1)/D}\{\E(\e_0^2)\}^2\sum_{k_1=1}^\infty\sum_{k_2=1}^\infty\frac{\|\phi_{k_1}\|_{L^2}^2\,\|\phi_{k_2}\|_{L^2}^2}{(1+\lambda\rho_{k_1})^2(1+\lambda\rho_{k_2})^2}\\
&=\lambda^{2(2a+1)/D}\{\E(\e_0^2)\}^2\bigg\{\sum_{k=1}^\infty\frac{\|\phi_{k}\|_{L^2}^2}{(1+\lambda\rho_{k})^2}\bigg\}^2=O(\lambda^{(2a+1)/D})\,.
\end{align*}

For the second term $I_2$, by the Cauchy-Schwarz inequality, we have
\begin{align*}
&\Bigg(\sum_{\ell=1}^{+\infty}\E\big\{(\e_{0}-\e_{0,\ell})\e_{-\ell}\big\}\,\E\Big[\big\{\tau_{\lambda}(X_{0})(s)-\tau_{\lambda}(X_{0,\ell})(s)\big\}\times\tau_{\lambda}(X_{-\ell})(t)\Big]\Bigg)^2\\
&\leq \Bigg(\sum_{\ell=1}^{+\infty}\E\big\{(\e_{0}-\e_{0,\ell})\e_{-\ell}\big\}\,\Big[\E\big\{\tau_{\lambda}(X_{0})(s)-\tau_{\lambda}(X_{0,\ell})(s)\big\}^2\Big]^{1/2}\times\Big[\E\{\tau_{\lambda}(X_{-\ell})(t)\}^2\Big]^{1/2}\Bigg)^2\\
&\leq\Bigg\{\sum_{\ell=1}^{+\infty} \E(\e_{0}-\e_{0,\ell})^2 \,\E(\e_{0}^2) \Bigg\}\times\Bigg[\sum_{\ell=1}^{+\infty}\E\big\{\tau_{\lambda}(X_{0})(s)-\tau_{\lambda}(X_{0,\ell})(s)\big\}^2\Bigg]\times\E\{\tau_{\lambda}(X_{0})(t)\}^2\\
&=\E(\e_0)^2\times\Bigg\{\sum_{\ell=1}^{+\infty}\E(\e_{0}-\e_{0,\ell})^2\Bigg\}\times\Bigg[\sum_{\ell=1}^{+\infty}\E\big\{\tau_{\lambda}(X_{0})(s)-\tau_{\lambda}(X_{0,\ell})(s)\big\}^2\Bigg]\times\E\{\tau_{\lambda}(X_{0})(t)\}^2\,.
\end{align*}
Therefore, observing \eqref{1} and Assumption~\ref{a:m}, we deduce from the above equation that
\begin{align*}
I_2&\leq\lambda^{2(2a+1)/D}\,\E(\e_0)^2\times\Bigg\{\sum_{\ell=1}^{+\infty}\E(\e_{0}-\e_{0,\ell})^2\Bigg\}\\
&\qquad\times\int_0^1\Bigg[\sum_{\ell=1}^{+\infty}\E\big\{\tau_{\lambda}(X_{0})(s)-\tau_{\lambda}(X_{0,\ell})(s)\big\}^2\Bigg]ds\times\int_0^1\E\{\tau_{\lambda}(X_{0})(t)\}^2dt\\
&=\lambda^{2(2a+1)/D}\,\E(\e_0)^2\Bigg\{\sum_{\ell=1}^{+\infty}\E(\e_{0}-\e_{0,\ell})^2\Bigg\}\Bigg[\sum_{\ell=1}^{+\infty}\E\big\|\tau_{\lambda}(X_{0})-\tau_{\lambda}(X_{0,\ell})\big\|_{L^2}^2\Bigg]\,\E\|\tau_{\lambda}(X_0)\|_{L^2}^2\\
&\leq c\,\E(\e_0)^2\times\Bigg\{\sum_{\ell=1}^{+\infty}\E(\e_{0}-\e_{0,\ell})^2\Bigg\}\times\E\|X_0\|_{L^2}^2\times\Bigg(\sum_{\ell=1}^{+\infty}\E\|X_{0}-X_{0,\ell}\|_{L^2}^2\Bigg)<\infty\,.
\end{align*}

For the third term $I_3$, by the Cauchy-Schwarz inequality, \eqref{1} and Assumption~\ref{a:m}, we deduce that
\begin{align*}
I_3&\leq\lambda^{2(2a+1)/D}\,\big(\E\|\tau_{\lambda}(X_0)\|_{L^2}^2\big)^2\times\bigg[\sum_{\ell=1}^{+\infty}\E\big\{(\e_{0}-\e_{0,\ell})\e_{-\ell}\big\}\bigg]^2\\
&\leq \lambda^{2(2a+1)/D}\,\big(\E\|\tau_{\lambda}(X_0)\|_{L^2}^2\big)^2\times\E(\e_0)^2\times\Bigg\{\sum_{\ell=1}^{+\infty}\E(\e_{0}-\e_{0,\ell})^2\Bigg\}\\
&\leq c\,\big(\E\|X_0\|_{L^2}^2\big)^2\times\E(\e_0)^2\times\Bigg\{\sum_{\ell=1}^{+\infty}\E(\e_{0}-\e_{0,\ell})^2\Bigg\}<\infty\,.
\end{align*}
In conclusion, we deduce from \eqref{intcu} that $\sup_{\lambda>0}\int_0^1\int_0^1 \{C_{U,\lambda}(s,t)\}^2dsdt<\infty$, which completes the proof.
\end{proof}

\subsection{Proof of Theorem~\ref{thm:2.1}}\label{proof:thm:2.1}
We first deal with the bias term $ W_\lambda(\beta_0)$ in \eqref{bahadur}. Observing \eqref{wphi}, we deduce that
\begin{align*}
W_\lambda(\beta_0)=\sum_{k=1}^\infty V(\beta,\phi_{k})\,W_\lambda(\phi_{k})=\lambda\,\sum_{k=1}^\infty \frac{\rho_{k}\,\phi_{k}\,V(\beta_0,\phi_{k})}{1+\lambda\rho_{k}}\, ,
\end{align*}
and, using Assumption~\ref{a3.3}, we conclude
\begin{align*}
\|W_\lambda(\beta_0)\|_K&=\lambda\,\bigg\{\sum_{k=1}^\infty\frac{\rho_{k}^2\,V^2(\beta_0,\phi_{k})}{1+\lambda\rho_{k}}\bigg\}^{1/2}\leq\lambda\,\bigg\{\sum_{k=1}^\infty \rho_k^2\,V^2(\beta_0,\phi_k)\bigg\}^{1/2}=O(\lambda)\,.
\end{align*}
Note that by Lemma~\ref{lem:3.1} in Section~\ref{app:aux:lem} and Assumption~\ref{a:rate},
\begin{align}\label{bias}
\sqrt n\lambda^{(2a+1)/(2D)}\|W_\lambda(\beta_0)\|_{L^2}&\leq c\sqrt n\lambda^{(2a+1)/(4D)}\|W_\lambda(\beta_0)\|_{K}\notag\\
&=O\big(\sqrt n\lambda^{1+(2a+1)/(4D)}\big)=o(1)\,.
\end{align}
Next, applying Theorem~\ref{thm:bahadur} and Lemma~\ref{lem:3.1} in Section~\ref{app:aux:lem}, we find
\begin{align}\label{t}
&\sup_{\nu\in[\nu_0,1]}\,\bigg\|\sqrt n\lambda^{(2a+1)/(2D)}\bigg[\nu\{\hat\beta_{n,\lambda}(\cdot,\nu)-\beta_0+W_\lambda(\beta_0)\}-\frac{1}{n}\sum_{i=1}^{\lfloor n\nu\rfloor}\e_i\tau_{\lambda}(X_i)\bigg]\bigg\|_{L^2}^2\notag\\
&=n\,\lambda^{(2a+1)/D}\sup_{\nu\in[\nu_0,1]}\,\bigg\|\nu\{\hat\beta_{n,\lambda}(\cdot,\nu)-\beta_0+W_\lambda(\beta_0)\}-\frac{1}{n}\sum_{i=1}^{\lfloor n\nu\rfloor}\e_i\tau_{\lambda}(X_i)\bigg\|_{L^2}^2\notag\\
&\leq c_K\,n\,\lambda^{(2a+1)/(2D)}\sup_{\nu\in[\nu_0,1]}\,\bigg\|\nu\{\hat\beta_{n,\lambda}(\cdot,\nu)-\beta_0+W_\lambda(\beta_0)\}-\frac{1}{n}\sum_{i=1}^{\lfloor n\nu\rfloor}\e_i\tau_{\lambda}(X_i)\bigg\|_{K}^2\notag\\
&= O_p\big(n\,\lambda^{(2a+1)/(2D)}\,v_n^2\big)\notag\\
&=O_p\Big\{n\,\lambda^{(2a+1)/(2D)}\times n^{-1}\lambda^{-2\varsigma}\big(\lambda+n^{-1}\lambda^{-(2a+1)/(2D)}\big)(\log n)\Big\}\notag\\
&=O_p\Big\{\lambda^{-2\varsigma+(2a+1)/(2D)}\big(\lambda+n^{-1}\lambda^{-(2a+1)/(2D)}\big)(\log n)\Big\}\notag\\
&=O_p\Big\{\big(\lambda^{-2\varsigma+(2D+2a+1)/(2D)}+n^{-1}\lambda^{-2\varsigma}\big)(\log n)\Big\}=o_p(1)\,,
\end{align}
%
%\begin{align}\label{t}
%&\sup_{\nu\in[\nu_0,1]}\,\bigg\|\sqrt n\lambda^{(2a+1)/(2D)}\bigg[\{\hat\beta_{n,\lambda}(\cdot,\nu)-\beta_0+W_\lambda(\beta_0)\}-\frac{1}{n}\sum_{i=1}^{\lfloor n\nu\rfloor}\e_i\tau_{\lambda}(X_i)\bigg]\bigg\|_{L^2}^2\notag\\
%%
%&=\sup_{\nu\in[\nu_0,1]}\,\bigg\|\{\hat\beta_{n,\lambda}(\cdot,\nu)-\beta_0+W_\lambda(\beta_0)\}-\frac{1}{n}\sum_{i=1}^{\lfloor n\nu\rfloor}\e_i\tau_{\lambda}(X_i)\bigg\|_{L^2}^2\notag\\
%%
%&\leq c_K\,\lambda^{-(2a+1)/(2D)}\sup_{\nu\in[\nu_0,1]}\,\bigg\|\{\hat\beta_{n,\lambda}(\cdot,\nu)-\beta_0+W_\lambda(\beta_0)\}-\frac{1}{n}\sum_{i=1}^{\lfloor n\nu\rfloor}\e_i\tau_{\lambda}(X_i)\bigg\|_{K}^2\notag\\
%%
%&= O_p\big(n\,\lambda^{(2a+1)/(2D)}\,v_n\big)=o_p(1)\,.
%\end{align}
% \HDM{{\bf correct?} 
where we used  Assumption~\ref{a:rate} in the last step.
% } 
By Theorem~\ref{thm:wip}, there exists a 
%sequence of 
Gaussian process
% $\{\Gamma_n(s,\nu)\}_{s\in[0,1],\nu\in[\nu_0,1]}$ in $\mathcal F$ defined in \eqref{f}, that are i.i.d.~copies of
$\{\Gamma(s,\nu)\}_{s\in[0,1],\nu\in[\nu_0,1]}$ in $\mathcal F$ defined in  the set \eqref{f} such that
\begin{align*}
%\sup_{\nu\in[\nu_0,1]}\bigg\|\frac{1}{\sqrt n}\sum_{i=1}^{\lfloor n\nu\rfloor}U_i-\Gamma_n(\cdot,\nu)\bigg\|_{L^2}^2=\sup_{\nu\in[\nu_0,1]}\bigg\|n^{-1/2}\lambda^{-(2a+1)/(2D)}\sum_{i=1}^{\lfloor n\nu\rfloor}\e_i\,\tau_{\lambda}(X_i)-\Gamma_n(\cdot,\nu)\bigg\|_{L^2}^2=o_p(1)\,.\\
\sup_{\nu\in[0,1]}\,\bigg\|n^{-1/2}\lambda^{(2a+1)/(2D)}\sum_{i=1}^{\lfloor n\nu\rfloor}\e_i\tau_{\lambda}(X_i)-\Gamma(\cdot,\nu)\bigg\|_{L^2}^2=\sup_{\nu\in[0,1]}\,\bigg\|\frac{1}{\sqrt n}\sum_{i=1}^{\lfloor n\nu\rfloor}U_i-\Gamma(\cdot,\nu)\bigg\|_{L^2}^2=o_p(1)\,.
\end{align*}
Combining the above finding with \eqref{bias} and \eqref{t} yields
\begin{align}\label{gamma}
&\sup_{\nu\in[\nu_0,1]}\,\bigg\|\sqrt n\lambda^{(2a+1)/(2D)}\nu\big\{\hat\beta_{n,\lambda}(\cdot,\nu)-\beta_0\big\}-\Gamma(\cdot,\nu)\bigg\|_{L^2}^2\notag\\
&\leq3\sup_{\nu\in[\nu_0,1]}\,\bigg\|\sqrt n\lambda^{(2a+1)/(2D)}\bigg[\nu\{\hat\beta_{n,\lambda}(\cdot,\nu)-\beta_0+W_\lambda(\beta_0)\}-\frac{1}{n}\sum_{i=1}^{\lfloor n\nu\rfloor}\e_i\tau_{\lambda}(X_i)\bigg]\bigg\|_{L^2}^2\notag\\
&\quad+3\sup_{\nu\in[\nu_0,1]}\,\bigg\|\sqrt n\lambda^{(2a+1)/(2D)}\nu W_\lambda(\beta_0)\bigg\|_{L^2}^2\notag\\
&\quad+3\sup_{\nu\in[0,1]}\,\bigg\|n^{-1/2}\lambda^{(2a+1)/(2D)}\sum_{i=1}^{\lfloor n\nu\rfloor}\e_i\tau_{\lambda}(X_i)-\Gamma(\cdot,\nu)\bigg\|_{L^2}^2=o_p(1)\,.
\end{align}
Recall  the definition of  $C_{U}$ in Assumption~\ref{a34}, and let $\{\kappa_j\}_{j=1}^\infty$ and $\{\psi_j\}_{j=1}^\infty$ denote the eigenvalues and eigenfunctions of $C_{U}$, respectively, that is,
\begin{align}\label{cueigen}
C_{U}(s,t)=\sum_{j=1}^\infty\kappa_j\,\psi_j(s)\,\psi_j(t)\,,\qquad \int_0^1 C_{U}(s,t)\,\psi_j(s)\,ds=\kappa_j\,\psi_j(t)\,.
\end{align} 
Following Theorem~1.1 in \cite{berkes2013}, we have
\begin{align*}
\Gamma(s,\nu)=\sum_{j=1}^\infty\sqrt{\kappa_j}\,\psi_j(s)\,W_j(\nu)~\qquad s\in[0,1]\,,\ \nu\in[\nu_0,1]\,,
\end{align*}
where the $W_j$'s are i.i.d.~standard Brownian motions on $[0,1]$. Note that $\E\big\{\sup_{\nu\in[\nu_0,1]}W_j^2(\nu)\big\}<\infty$, so that
\begin{align}\label{intgamma}
\E\bigg\{\sup_{\nu\in[0,1]}\|\Gamma(\cdot,\nu)\|_{L^2}^2\bigg\}\leq\sum_{j=1}^\infty\kappa_j\,\E\bigg\{\sup_{\nu\in[0,1]}W_j^2(\nu)\bigg\}<\infty\,.
\end{align}
Furthermore, observing \eqref{gamma}, we deduce from direct calculations that
\begin{align}\label{1234}
&\hat\G_n(\nu)=\sqrt n\lambda^{(2a+1)/(2D)}\nu^2 \int_0^1\big\{\hat\beta_{n,\lambda}^{\,2}(s, \nu)- \beta_0^{2}(s)\big\}\,ds\notag\\
&=\sqrt{n}\lambda^{(2a+1)/(2D)}\Bigg[\nu^2\int_0^1\big\{\hat\beta_{n,\lambda}(s, \nu)- \beta_0(s)\big\}^{2}\, ds+2\nu\int_0^1\nu\big\{\hat\beta_{n,\lambda}(s, \nu)- \beta_0(s)\big\}\,\beta_0(s)\, ds\Bigg]\notag\\
&=2 \nu\int_0^1  \beta_0(s)\, \Gamma(s,\nu)\,ds+I_{1,n}(\nu)+I_{2,n}(\nu)+I_{3,n}(\nu)+I_{4,n}(\nu)\,,
%\\
%%
%&= 2\nu \int_0^1  \beta_0(s) \Gamma_{n}(s, \nu) ds
\end{align}
where
\begin{align*}
I_{1,n}(\nu)&=\frac{1}{\sqrt{n}\lambda^{(2a+1)/(2D)}} \int_0^1\bigg[\sqrt n\lambda^{(2a+1)/(2D)}\nu \big\{\hat\beta_{n,\lambda}(s,\nu)-\beta_0(s)\big\}-\Gamma(s, \nu)\bigg]^{2}\, ds\,,\\
I_{2,n}(\nu)&=\frac{1}{\sqrt{n}\lambda^{(2a+1)/(2D)}} \int_0^1 \Gamma^{2}(s, \nu)\, ds\,, \\
I_{3,n}(\nu)&=\frac{2}{\sqrt{n}\lambda^{(2a+1)/(2D)}} \int_0^1\bigg[\sqrt n\lambda^{(2a+1)/(2D)}\nu\big\{\hat\beta_{n,\lambda}(s,\nu)-\beta_0(s)\big\}-\Gamma(s, \nu)\bigg] \Gamma(s, \nu)\, ds \,,\\
I_{4,n}(\nu)&=2 \nu \int_0^1 \beta_0(s)\bigg[\sqrt n\lambda^{(2a+1)/(2D)}\nu\big\{\hat\beta_{n,\lambda}(s,\nu)-\beta_0(s)\big\}-\Gamma(s, \nu)\bigg]\,ds\,.
\end{align*}
Note that \eqref{intgamma} implies that $\sup_{\nu\in[0,1]}\|\Gamma(\cdot,\nu)\|_{L^2}^2=O_p(1)$. Therefore, observing that $n\lambda^{(2a+1)/D}\to\infty$ as $n\to\infty$ (see Assumption~\ref{a:rate}),  \eqref{gamma} and the Cauchy-Schwarz inequality, it follows that
\begin{align*}
\sup_{\nu\in[\nu_0,1]}\big\{|I_{1,n}(\nu)|+|I_{2,n}(\nu)|+|I_{3,n}(\nu)|+|I_{4,n}(\nu)|\big\}=o_p(1)\,
\end{align*}
 as $n\to\infty$.
%$I_1+I_2+I_3+I_4=o_p(1)$, uniformly in $\nu\in[\nu_0,1]$, so that, 
Consequently, in view of \eqref{1234}, we have that, 
\begin{align}\label{main}
\sup_{\nu\in[\nu_0,1]}\,\bigg|\hat\G_n(\nu)-2 \nu\int_0^1  \beta_0(s)\, \Gamma(s,\nu)\,ds\bigg|=o_p(1)\,.
\end{align}
%uniformly in $\nu\in[\nu_0,1]$,
%\begin{align}
%\hat\G_n(\nu)&=\sqrt{n}\lambda^{(2a+1)/(2D)} \int_0^1\big\{\hat\beta_{n,\lambda}^{\,2}(s, \nu)-\nu^{2} \beta_0^{2}(s)\big\}\, ds\notag\\
%%
%&=2 \nu\int_0^1  \beta_0(s)\, \Gamma(s,\nu)\,ds+o_p(1)\,.
%\end{align}
This proves the finite-dimensional convergence, that is, for any $k\in\mathbb N_+$ and $\nu_1,\ldots,\nu_k\in[\nu_0,1]$,
\begin{align}\label{finite}
\big(\hat\G_n(\nu_1),\ldots,\hat\G_n(\nu_k)\big)\converged\bigg(2 \nu_1\int_0^1  \beta_0(s)\, \Gamma(s,\nu_1)\,ds\,,\ldots,\,2 \nu_k\int_0^1  \beta_0(s)\, \Gamma(s,\nu_k)\,ds\bigg)\,.
\end{align}
Next, we shall show the tightness of the process $\{\hat\G_n(\nu)\}_{\nu\in[\nu_0,1]}$. To achieve this, we shall show that the process $\{\hat\G_n(\nu)\}_{\nu\in[\nu_0,1]}$ is asymptotically uniformly equicontinuous in probability
(see Lemma~1.5.7 in \citealp{vaart1996}).
%, i.e., for every $e,\eta>0$, there exists a $\delta>0$ such that
%\begin{align*}
%\limsup_{n\to\infty}\,\P\bigg\{\sup_{|\nu_1-\nu_2|<\delta}|\hat\G_n(\nu_1)-\hat\G_n(\nu_2)|>e\bigg\}<\eta
%\end{align*}
By the Cauchy-Schwarz inequality and \eqref{main}, we deduce that
\begin{align}\label{01}
&\sup_{\substack{|\nu_1-\nu_2|<\delta\\\nu_1,\nu_2\in[\nu_0,1]}}|\hat\G_n(\nu_1)-\hat\G_n(\nu_2)|\notag\\
&\leq\sup_{\substack{|\nu_1-\nu_2|<\delta\\\nu_1,\nu_2\in[\nu_0,1]}} \bigg|2 \nu_1\int_0^1  \beta_0(s)\, \Gamma(s,\nu_1)\,ds-2 \nu_2\int_0^1  \beta_0(s)\, \Gamma(s,\nu_2)\,ds\bigg|\notag\\
&\hspace{2cm}+2\sup_{\nu\in[\nu_0,1]}\bigg|\hat\G_n(\nu)-2 \nu\int_0^1  \beta_0(s)\, \Gamma(s,\nu)\,ds\bigg|\notag\\
&\leq\sup_{\substack{|\nu_1-\nu_2|<\delta\\\nu_1,\nu_2\in[\nu_0,1]}} \bigg\{2|\nu_1-\nu_2|\times\bigg|\int_0^1  \beta_0(s)\, \Gamma(s,\nu_1)\,ds\bigg|\bigg\}\notag\\
&\hspace{2cm}+2\sup_{\substack{|\nu_1-\nu_2|<\delta\\\nu_1,\nu_2\in[\nu_0,1]}}\bigg|\int_0^1\beta_0(s)\{\Gamma(s,\nu_1)-\Gamma(s,\nu_2)\}ds\bigg|+o_p(1)\notag\\
&\leq2\delta\,\|\beta_0\|_{L^2}\sup_{\nu\in[\nu_0,1]}\bigg|\int_0^1\Gamma^2(s,\nu)\,ds\bigg|^{1/2}\notag\\
&\hspace{2cm}+2\|\beta_0\|_{L^2}\sup_{\substack{|\nu_1-\nu_2|<\delta\\\nu_1,\nu_2\in[0,1]}}\bigg|\int_0^1\{\Gamma(s,\nu_1)-\Gamma(s,\nu_2)\}^2\,ds\bigg|^{1/2}+o_p(1)\,.
\end{align}
By Lemma~2.1 in \cite{berkes2013}, we have
\begin{align}\label{02}
\sup_{\nu\in[0,1]}\int_0^1\Gamma^2(s,\nu)\,ds<\infty\qquad\as
\end{align}
In addition, in view of \eqref{cueigen}, note that the $\psi_j$'s are orthogonal in $L^2([0,1])$, so that
\begin{align}\label{03}
&\sup_{\substack{|\nu_1-\nu_2|<\delta\\\nu_1,\nu_2\in[0,1]}}\int_0^1\{\Gamma(s,\nu_1)-\Gamma(s,\nu_2)\}^2\,ds\notag\\
&=\sup_{\substack{|\nu_1-\nu_2|<\delta\\\nu_1,\nu_2\in[0,1]}}\int_0^1\bigg[\sum_{j=1}^\infty\sqrt{\kappa_j}\,\psi_j(s)\,\{W_j(\nu_1)-W_j(\nu_2)\}\bigg]^2\,ds\notag\\
&=\sup_{\substack{|\nu_1-\nu_2|<\delta\\\nu_1,\nu_2\in[0,1]}}\,\sum_{j=1}^\infty\kappa_j\,\{W_j(\nu_1)-W_j(\nu_2)\}^2\notag\\
&\leq\sup_{\substack{|\nu_1-\nu_2|<\delta\\\nu_1,\nu_2\in[0,1]}}\,\{W_1(\nu_1)-W_1(\nu_2)\}^2\,\sum_{j=1}^\infty\kappa_j=o_p(1)\,,
\end{align}
where the last step is due to the modulus of continuity
of Brownian motions and the fact that $\sum_{j=1}^\infty\kappa_j<\infty$. Therefore, combining \eqref{01}--\eqref{03}, we deduce that, for any $e>0$,
\begin{align*}
\lim_{\delta\downarrow0}\,\limsup_{n\to\infty}\,\P\Bigg\{\sup_{\substack{|\nu_1-\nu_2|<\delta\\\nu_1,\nu_2\in[\nu_0,1]}}|\hat\G_n(\nu_1)-\hat\G_n(\nu_2)|>e\Bigg\}=0\,.
\end{align*}
This proves the tightness of the process $\{\hat\G_n(\nu)\}_{\nu\in[\nu_0,1]}$. Together with \eqref{finite}, by Lemma~1.5.4 in \cite{vaart1996}, this implies that
\begin{align*}
\{\hat\G_n(\nu)\}_{\nu\in[\nu_0,1]} \weakconverge \bigg\{2\nu \int_0^1  \beta_0(s)\, \Gamma(s, \nu)\, ds\bigg\}_{\nu\in[\nu_0,1]}\hspace{1cm}\text{in }\ell^\infty([\nu_0,1])\,.
\end{align*}
In addition, observing \eqref{cueigen}, we have for  the Gaussian process $\{\Gamma(s,\nu)\}_{s\in[0,1],\nu\in[\nu_0,1]}$,
\begin{align*}
&\cov\{\Gamma(s_1, \nu_1),\Gamma(s_2, \nu_2) \}=\cov\bigg\{\sum_{j_1=1}^\infty\sqrt{\kappa_{j_1}}\,\psi_{j_1}(s)\,W_{j_1}(\nu_1),\sum_{{j_2}=1}^\infty\sqrt{\kappa_{j_2}}\,\psi_{j_2}(s)\,W_{j_2}(\nu_2)\bigg\}\\
&=\cov\{W_j(\nu_1),W_j(\nu_2)\}\sum_{j=1}^\infty\kappa_j\,\psi_j(s_1)\,\psi_j(s_2)=(\nu_1\wedge\nu_2)\,C_{U}(s_1,s_2)\,,
\end{align*}
where   $C_{U}$ is defined in Assumption~\ref{a34} and 
$s_1,s_2\in[0,1]$ and $\nu_1,\nu_2\in[\nu_0,1]$.
Hence, we deduce that
\begin{align*}
%\operatorname{Cov}\left\{\int_0^1 \beta_0(s_1) \Gamma(s_1, \nu_1) ds_1, \int_0^1 \beta_0(s_2) \Gamma(s_2, \nu_2) d s_2\right\}=(\nu_1 \wedge\nu_2) \int_0^1\int_0^1\beta_0(s_1)\beta_0(s_2)C(s_1,s_2)ds_1ds_2
&\cov\left\{\int_0^1 \beta_0(s_1)\, \Gamma(s_1, \nu_1) \,ds\,, \int_0^1 \beta_0(s_2) \Gamma(s_2, \nu_2) d s_2\right\}\\
&=\int_0^1\int_0^1\cov\{\Gamma(s_1, \nu_1),\Gamma(s_2, \nu_2) \}\,\beta_0(s_1)\,\beta_0(s_2)\,ds_1\,ds_2\\
&=(\nu_1 \wedge\nu_2) \int_0^1\int_0^1C_{U}(s_1,s_2)\,\beta_0(s_1)\beta_0(s_2)\,ds_1\,ds_2=(\nu_1 \wedge\nu_2)\,\sigma_d^2\,.
\end{align*}
where $\sigma_d^2$ is defined in \eqref{sigmad2}. This implies that
\begin{align*}
\int_0^1 \beta_0(s)\, \Gamma(s, \nu) \,ds \stackrel{d.}{=}2\sigma_d\,\BB(\nu)\,,
\end{align*}
where $\BB$ denotes a standard Brownian motion. Hence, combining the above finding with \eqref{main} yields
\begin{align*}
%\bigg\{\sqrt{n}\lambda^{(2a+1)/(4D)}\int_0^1\big\{\hat\beta^{\,2}_{n,\lambda}(s,\nu)-\nu^2\beta^2_0(s)\big\}\,ds\bigg\}_{\nu\in[\nu_0,1]}
\{\hat\G_n(\nu)\}_{\nu\in[\nu_0,1]}\weakconverge \{2\sigma_d\,\nu\,\BB(\nu)\}_{\nu\in[\nu_0,1]}
\qquad \text{in }\ell^\infty([\nu_0,1])\,,
\end{align*}
which completes the proof.
%\begin{align*}
%\bigg\{\sqrt{n}\lambda^{(2a+1)/(2D)} \int_0^1\big\{\hat\beta_{n,\lambda}^{\,2}(s, \nu)-\nu^{2} \beta_0^{2}(s)\big\} ds\bigg\}_{\nu\in[\nu_0,1]} 
%%\weakconverge \bigg\{2 \int_0^1 \nu \beta_0(s) \Gamma_{n}(s, \nu) ds\bigg\}_{\nu\in[\nu_0,1]}\stackrel{d.}{=}
%\weakconverge\big\{2\sigma_d\,\nu\,\BB(\nu)\big\}_{\nu\in[\nu_0,1]}\,,
%\end{align*}

\subsection{Proof of Theorem~\ref{thm:pivot}}\label{proof:thm:pivot}

Observing $\hat\G_n(\nu)$ defined in \eqref{gnnu}, we have
\begin{align*}
&\sqrt{n}\lambda^{(2a+1)/(2D)}\hat\VV_n\\
&=\sqrt{n}\lambda^{(2a+1)/(2D)}\bigg[\int_{\nu_0}^1\bigg|\nu^2\int_0^1\big\{\hat\beta^{\,2}_{n,\lambda}(s,\nu)-\hat\beta^{\,2}_{n,\lambda}(s,1)\big\}\,ds\bigg|^2\,\omega(d\nu)\bigg]^{1/2}\\
&=\sqrt{n}\lambda^{(2a+1)/(2D)}\bigg[\int_{\nu_0}^1\bigg|\nu^2\int_0^1\big\{\hat\beta^{\,2}_{n,\lambda}(s,\nu)-\beta_0^2(s)\big\}\,ds-\nu^2\int_0^1\big\{\hat\beta^{\,2}_{n,\lambda}(s,1)-\beta_0^2(s)\big\}\,ds\bigg|^2\,\omega(d\nu)\bigg]^{1/2}\\
%
%&=\sqrt{n}\lambda^{(2a+1)/(2D)}\bigg[\int_0^1\bigg|\int_0^1\big\{\hat\beta^{\,2}_{n,\lambda}(s,\nu)-\nu^2\beta_0^2(s)\big\}\,ds\bigg|^2\,\omega(d\nu)\bigg]^{1/2}+o_p(1)\\
%
&=\bigg\{\int_{\nu_0}^1\big|\hat\G_n(\nu)-\nu^2\,\hat\G_n(1)\big|^2\,\omega(d\nu)\bigg\}^{1/2}\,.
\end{align*}
In addition, for $\hat\TT_n$ defined in \eqref{tn},
\begin{align*}
&\sqrt{n}\lambda^{(2a+1)/(2D)}(\hat\TT_n-d_0)=\sqrt{n}\lambda^{(2a+1)/(2D)}\int_0^1\big\{\hat\beta^{\,2}_{n,\lambda}(s,1)-\beta_0^2(s)\big\}\,ds=\hat\G_n(1)\,.
\end{align*}
Therefore, by the continuous mapping theorem we obtain that
\begin{align*}
&\sqrt{n}\lambda^{(2a+1)/(2D)}\big((\hat\TT_n-d_0)\,,\hat\VV_n\big)=\Bigg(\hat\G_n(1)\,, \bigg\{\int_{\nu_0}^1\big|\hat\G_n(\nu)-\nu^2\,\hat\G_n(1)\big|^2\,\omega(d\nu)\bigg\}^{1/2}\Bigg)\,.
\end{align*}
Observing that the map from $\ell^\infty([\nu_0,1])$ to $\mathbb R$ defined by
\begin{align*}
\mathbb H\mapsto\frac{\mathbb H(1)}{\scaleobj{1.2}{\big\{}\int_{\nu_0}^1\big|\mathbb H(\nu)-\nu^2\,\mathbb H(1)\big|^2\,\omega(d\nu)\scaleobj{1.2}{\big\}^{\scaleobj{0.83}{1/2}}}}
\end{align*}
is continuous as long as $\int_{\nu_0}^1\big|{\mathbb H}(\nu)-\nu^2\,{\mathbb H}(1)\big|^2\omega(d\nu)\neq0$. Since $\sigma_d^2\neq0$, we therefore deduce from Theorem~\ref{thm:2.1} and the continuous mapping theorem that
\begin{align*}
\frac{\hat\TT_n-d_0}{\hat\VV_{n}}\converged\frac{2\sigma_d\,\BB(1)}{2\sigma_d\scaleobj{1.2}{\big\{}\int_{\nu_0}^1|\nu\,\BB(\nu)-\nu^2\,\BB(1)|^2\,\omega(d\nu)\scaleobj{1.2}{\big\}^{\scaleobj{0.83}{1/2}}}}=\frac{\BB(1)}{\scaleobj{1.2}{\big\{}\int_{\nu_0}^1|\nu\,\BB(\nu)-\nu^2\,\BB(1)|^2\,\omega(d\nu)\scaleobj{1.2}{\big\}^{\scaleobj{0.83}{1/2}}}}=\mathbb W\,,
\end{align*}
which completes the proof.

\subsection{Proof of Theorem~\ref{thm:one}}\label{proof:thm:one}

When $d_0=\int_0^1|\beta_0(t)|^2dt=0$, we have $\hat\TT_n=\hat\TT_n-d_0=o_p(1)$ and $\hat\VV_n=o_p(1)$, which implies that $\hat\TT_n- \mathcal Q_{1-\alpha}(\WW)\hat\VV_n=o_p(1)$, so that
\begin{align*}
\lim_{n\to\infty}\P\big\{\hat\TT_n> \mathcal Q_{1-\alpha}(\WW)\hat\VV_n+\Delta\big\}=\lim_{n\to\infty}\P\big\{\hat\TT_n- \mathcal Q_{1-\alpha}(\WW)\hat\VV_n>\Delta\big\}=0\,.
\end{align*}
When $0<\int_0^1|\beta_0(t)|^2dt<\Delta$, we have
\begin{align*}
\lim_{n\to\infty}\P\big\{\hat\TT_n> \mathcal Q_{1-\alpha}(\WW)\hat\VV_n+\Delta\big\}=\lim_{n\to\infty}\P\bigg\{\frac{\hat\TT_n-d_0}{\hat\VV_n}> \mathcal Q_{1-\alpha}(\WW)+\frac{\sqrt{n}\lambda^{(2a+1)/(2D)}(\Delta-d_0)}{\sqrt{n}\lambda^{(2a+1)/(2D)}\hat\VV_n}\bigg\}\,.
\end{align*}
Note that $\sqrt{n}\lambda^{(2a+1)/(2D)}\hat\VV_n=O_p(1)$ and $\sqrt{n}\lambda^{(2a+1)/(2D)}\to+\infty$ as $n\to\infty$ according to Assumption~\ref{a:rate}, so that $\sqrt{n}\lambda^{(2a+1)/(2D)}(\Delta-d_0)\to+\infty$. Hence, the result in \eqref{eq:rele} in the case where $\int_0^1|\beta_0(t)|^2dt<\Delta$ follows.

When $d_0=\int_0^1|\beta_0(t)|^2dt=\Delta$, we have
\begin{align*}
\lim_{n\to\infty}\P\big\{\hat\TT_n> \mathcal Q_{1-\alpha}(\WW)\hat\VV_n+\Delta\big\}=\lim_{n\to\infty}\P\bigg\{\frac{\hat\TT_n-d_0}{\hat\VV_n}> \mathcal Q_{1-\alpha}(\WW)\bigg\}=\alpha\,.
\end{align*}

When $d_0=\int_0^1|\beta_0(t)|^2dt>\Delta$, we have $\sqrt{n}\lambda^{(2a+1)/(2D)}(\Delta-d_0)\to-\infty$ as $n\to\infty$, so that
\begin{align*}
\P\big\{\hat\TT_n> \mathcal Q_{1-\alpha}(\WW)\hat\VV_n+\Delta\big\}=\P\bigg\{\frac{\hat\TT_n-d_0}{\hat\VV_n}> \mathcal Q_{1-\alpha}(\WW)+\frac{\sqrt{n}\lambda^{(2a+1)/(2D)}(\Delta-d_0)}{\sqrt{n}\lambda^{(2a+1)/(2D)}\hat\VV_n}\bigg\}\to0\,,
\end{align*}
which completes the proof.

\subsection{Proof of the results in Section~\ref{sec:two}}\label{app:two}

\subsubsection{Remaining assumptions for Theorem~\ref{thm:gnt}}\label{app:a:two}

For $j=1,2$, let $C_{X,j}(s,t)=\cov\{X_{j,0}(s),X_{j,0}(t)\}$ denote the covariance function of the predictor of the $j$-th sample. For any $\beta_1,\beta_2\in\H$ and for $j=1,2$, define
\begin{align*}%\label{vj}
V_j(\beta_1,\beta_2)=\int_0^1\int_0^1C_{X,j}(s,t)\,\beta_1(s)\,\beta_2(t)\,ds\,dt\,.
\end{align*}
For each $j=1,2$ and for $\beta_1,\beta_2\in\H$, let
\begin{align*}
\l\beta_1,\beta_2\r_j=V_j(\beta_1,\beta_2)+\lambda_j J(\beta_1,\beta_2)
\end{align*}
define an inner product on $\H$ and let $\|\cdot\|_j$ denotes its corresponding norm.

\begin{assumption}\label{a1t}
The covariance functions $C_{X,1}$ and $C_{X,2}$ are continuous on $[0,1]^2$. For any $\gamma\in L^2([0,1])$ and for each $j=1,2$, $\int_0^1 C_{X,j}(s,s')\gamma(s)ds\equiv0$ implies that $\gamma\equiv0$.  
\end{assumption}

\begin{assumption}\label{a201t}
There exists a sequence of functions $\{\phi_{j,k}\}_{j=1,2;k\geq1}$ in $\H$, such that $\Vert \phi_{j,k}\Vert_{\infty}\leq c\, k^{a_j}$ for any $k\geq 1$ and $j=1,2$, and
\begin{align*}
V_j(\phi_{j,k},\phi_{j,k'})=\delta_{kk'}\,,\qquad J(\phi_{j,k},\phi_{j,k'})=\rho_{j,k}\,\delta_{kk'}\,,
\end{align*}
where $a_j\geq 0$, $c>0$ are constants, $\delta_{kk'}$ is the Kronecker delta and $\rho_{j,k}$ is such that $\rho_{j,k}\asymp k^{2D_j}$ for some constant $D_j>a_j+1/2$. Furthermore, for $j=1,2$, any $\beta\in\H$ admits the expansion $\beta=\sum_{k=1}^\infty V_j(\beta,\phi_{j,k})\phi_{j,k}$ with convergence in $\H$ with respect to the norm $\Vert\cdot\Vert_j$.
\end{assumption}

\begin{assumption}\label{a:subgt}~

\begin{enumerate}[label={\rm(\ref*{a:subgt}.\arabic*)},ref={\rm\ref*{a:subgt}.\arabic*},series=a3t,nolistsep,leftmargin=1.6cm]
\item\label{a3.1t} There exists a constant $\varpi>0$ such that $\max_{j=1,2}\E\{\exp(\varpi\|X_{j,0}\|_{L^2}^2)\}<\infty$.

\item\label{a3.2t} 
For any $\beta\in\H$, $\E\big(\l X_{j,0},\beta\r_{L^2}^4\big)\leq c_0\big\{\E\big(\l X_{j,0},\beta\r_{L^2}^2\big)\big\}^2$, for some constant $c_0>0$ and for $j=1,2$.

\item\label{a3.3t}The true slope functions $\beta_1$ and $\beta_2$ are such that $\sum_{k=1}^\infty \rho_{1,k}^2V_1^2(\beta_1,\phi_{1,k})<\infty$ and $\sum_{k=1}^\infty \rho_{2,k}^2V_2^2(\beta_2,\phi_{2,k})<\infty$ 
%$$\sum_{k=1}^\infty \rho_{1,k}^2V_1^2(\beta_1,\phi_{1,k})<\infty\,,\qquad\sum_{k=1}^\infty \rho_{2,k}^2V_2^2(\beta_2,\phi_{2,k})<\infty\,.$$
\item For $(s,t)\in[0,1]^2$ and $j=1,2$, the limit $C_{U,j}(s,t)=\lim_{\lambda\downarrow0}C_{U,\lambda,j}(s,t)$ exists.

\end{enumerate}
%\begin{align*}
%\E\big(\l X_0,\beta\r_{L^2}^4\big)\leq c_0\big\{\E\big(\l X_0,\beta\r_{L^2}^2\big)\big\}^2
%\end{align*}
%\begin{align*}
%\E\{\exp(\varpi\|X_0\|_{L^2}^2)\}<\infty\,.
%\end{align*}
\end{assumption}

\begin{assumption}\label{a:ratet}
For the constants $a_1,a_2,D_1,D_2$ in Assumption~\ref{a201t}
and the regularization parameters
$\lambda_1$ and $\lambda_2$ in \eqref{betat},  for $j=1,2$, $\lambda_j=o(1)$, $n_j^{-1}\lambda_j^{-(2a_j+1)/D_j}=o(1)$, $ n_j\lambda_j^{2+(2a_j+1)/(2D_j)}=o(1)$. In addition, for $j=1,2$, $n_j^{-1}\lambda_j^{-2\varsigma_j}\log n=o(1)$ and $\lambda_j^{-2\varsigma_j+(2D_j+2a_j+1)/(2D_j)}\log n=o(1)$  as $n\to\infty$, where $\varsigma_j=(2D_j-2a_j-1)/(4D_jm)+(a_j+1)/(2D_j)>0$.
\end{assumption}

\begin{assumption}\label{a:mt}~
%[$m$-approximability]
For all $i\in\mathbb Z$, $(X_{j,i},Y_{j,i})$ is generated by the model \eqref{model01} and follows the following assumptions.
\begin{enumerate}[label={\rm(\ref*{a:mt}.\arabic*)},ref={\rm\ref*{a:mt}.\arabic*},series=Qbetat,nolistsep,leftmargin=1.6cm]

\item For all $i\in\mathbb Z$ and $j=1,2$,
$X_{j,i}=g_j(\ldots,\xi_{j,i-1},\xi_{j,i})$ and $\e_{j,i}=h_j(\ldots,\eta_{j,i-1},\eta_{j,i})$, for some deterministic measurable functions $g_j:\mathcal S^{\infty}\mapsto L^2([0,1])$ and $h_j:\mathbb{R}\mapsto\mathbb{R}$, where $\mathcal S$ is some measurable space,  $\xi_{j,i}=\xi_{j,i}(t,\upomega)$ is jointly measurable in $(t,\upomega)$, for $i\in\mathbb Z$ and $j=1,2$. The $\xi_{j,i}$'s and the $\eta_{j,i}$'s are i.i.d.

\item For any $s\in[0,1]$ and $j=1,2$, $\E\{X_{j,0}(s)\}=\E(\e_{j,0})=0$. For some $\delta\in(0,1)$ and $j=1,2$, $\E|\e_{j,0}|^{2+\delta}<\infty$.

%\item For some $\delta\in(0,1)$, $\E\|X_0\|_{L^2}^{2+\delta}<\infty$ and . 

\item For $j=1,2$, the sequences $ \{X_{j,i}\}_{i\in\mathbb Z}$ and $\{\e_{j,i}\}_{i\in\mathbb Z}$ can be approximated by $\ell$-dependent sequences $\{X_{j,i,\ell}\}_{i,\ell\in\mathbb Z}$ and $\{\e_{j,i,\ell}\}_{i,\ell\in\mathbb Z}$, respectively, in the sense that, for some $\kappa>2+\delta$,
\begin{align*}%\label{a0t}
\sum_{\ell=1}^\infty\big(\E\|X_{j,i}-X_{j,i,\ell}\|_{L^2}^{2+\delta}\big)^{1/\kappa}<\infty\,,\qquad\sum_{\ell=1}^\infty\big(\E|\e_{j,i}-\e_{j,i,\ell}|^{2+\delta}\big)^{1/\kappa}<\infty\,.
\end{align*}
Here, $X_{j,i,\ell}=g(\xi_{j,i},\xi_{j,i-1},\ldots,\xi_{j,i-\ell+1},\boldsymbol{\xi}_{j,i,\ell}^*)$ and $\e_{j,i,\ell}=h(\eta_{j,i},\eta_{j,i-1},\ldots,\eta_{j,i-\ell+1},\boldsymbol{\eta}_{j,i,\ell}^*)$, where $\boldsymbol{\xi}_{j,i,\ell}^*=(\xi^*_{j,i,\ell,i-\ell},\xi^*_{j,i,\ell,i-\ell-1},\ldots)$ and $\boldsymbol{\eta}_{j,i,\ell}^*=(\eta^*_{j,i,\ell,i-\ell},\eta^*_{j,i,\ell,i-\ell-1},\ldots)$ and where the $\xi^*_{j,i,\ell,k}$'s and the $\eta^*_{j,i,\ell,k}$'s are independent copies of $\xi_0$ and $\eta_0$, and are independent of $\{\xi_{j,i}\}_{i\in\mathbb Z}$ and $\{\eta_{j,i}\}_{i\in\mathbb Z}$, respectively.

\end{enumerate}

\end{assumption}

\subsubsection{Proof of Theorem~\ref{thm:gnt}}\label{proof:thm:gnt}

For $\beta_1,\beta_2\in\mathcal{H}$, let $W_{\lambda_j}$ denote the linear self-adjoint operator such that $\l W_{\lambda_j}\beta_1,\beta_2\r_j=\lambda_j J(\beta_1,\beta_2)$.
%{\bf Step 1.} For each sample, let $W_{\lambda_1}(\beta_1)$ and $W_{\lambda_2}(\beta_2)$ denote the bias terms as $W_\lambda(\beta_0)$ in \eqref{bahadur} in the one-sample problem. 
Let $\|\cdot\|_j$ define the norm corresponding to the inner product $\l\cdot,\cdot\r_j$ defined in \eqref{lrj}.
For $j=1,2$, let
\begin{align*}
v_{n_j}=n_j^{-1/2}\lambda_j^{-(2D_j-2a_j-1)/(4D_jm)-(a_j+1)/(2D_j)}(\lambda_j^{1/2}+n_j^{-1/2}\lambda_j^{-(2a_j+1)/(4D_j)})(\log n_j)^{1/2}\,.
\end{align*}
\noindent{\bf Step 1.} By Theorem~\ref{thm:bahadur}, we have, for $j=1,2$,
\begin{align*}
&\sup_{\nu\in[\nu_0,1]}\,\bigg\Vert\nu\Big\{\hat\beta_{n_j,\lambda_j}(\cdot,\nu)-\beta_1+W_{\lambda_j}(\beta_j)\Big\}-\frac{1}{n_j}\sum_{i=1}^{\lfloor n_j\nu \rfloor}\e_{j,i}\,\tau_j(X_{j,i})\bigg\Vert_j=O_p(v_{n_j})\,.
\end{align*}
In addition, observing \eqref{bias}, for $j=1,2$, $\sqrt n_j\lambda_j^{(2a_j+1)/(2D_j)}\|W_{\lambda_j}(\beta_j)\|_{L^2}=o(1)$. Then, by Lemma~\ref{lem:3.1} and Assumption~\ref{a:ratet},
\begin{align*}
&\sup_{\nu\in[\nu_0,1]}\,\bigg\Vert\nu\Big\{\hat\beta_{n_1,n_2}(\cdot,\nu)-(\beta_1-\beta_2)\Big\}-\frac{1}{n_1}\sum_{i=1}^{\lfloor n_1\nu \rfloor}\e_{1,i}\,\tau_1(X_{1,i})+\frac{1}{n_2}\sum_{i=1}^{\lfloor n_2\nu \rfloor}\e_{2,i}\,\tau_2(X_{2,i})\bigg\Vert_{L^2}\\
&=O_p\big(\lambda_1^{-(2a_1+1)/(4D_1)}v_{n_1}+\lambda_2^{-(2a_2+1)/(4D_2)}v_{n_2}\big)=o_p(1)\,.
\end{align*}
%\begin{align*}
%&\sup_{\nu\in[\nu_0,1]}\,\bigg\Vert\nu\Big\{\hat\beta_{n_1,n_2}(\cdot,\nu)-(\beta_1-\beta_2)+W_{\lambda_1}(\beta_1)+W_{\lambda_2}(\beta_2)\Big\}\\
%&\hspace{3cm}-\frac{1}{n_1}\sum_{i=1}^{\lfloor n_1\nu \rfloor}\e_{1,i}\,\tau_1(X_{1,i})+\frac{1}{n_2}\sum_{i=1}^{\lfloor n_2\nu \rfloor}\e_{2,i}\,\tau_2(X_{2,i})\bigg\Vert_{L^2}\\
%%
%&=O_p\big(\lambda_1^{-(2a_1+1)/(4D_1)}v_{n_1}+\lambda_2^{-(2a_2+1)/(4D_2)}v_{n_2}\big)=o_p(1)\,.
%\end{align*}
\noindent{\bf Step 2.} By Theorem~\ref{thm:wip}, there exists a Gaussian process $\{\tilde\Gamma(s,\nu)\}_{s\in[0,1],\nu\in[0,1]}$ in $\mathcal F$ defined in \eqref{f}, with covariance function
\begin{align*}
\cov\big\{\tilde\Gamma(s_1,\nu_1),\tilde\Gamma(s_2,\nu_2)\big\}=(\nu_1\wedge\nu_2)\big\{C_{U,1}(s_1,s_2)+\gamma\, C_{U,2}(s_1,s_2)\big\}\,,
\end{align*}
such that
\begin{align*}
\sup_{\nu\in[0,1]}\,\Bigg\|\frac{1}{\sqrt{n_1}\lambda_1^{(2a_1+1)/(2D_1)}}\sum_{i=1}^{\lfloor n_1\nu \rfloor}\e_{1,i}\,\tau_1(X_{1,i})+\frac{1}{\sqrt{n_2}\lambda_2^{(2a_2+1)/(2D_2)}}\sum_{i=1}^{\lfloor n_2\nu \rfloor}\e_{2,i}\,\tau_2(X_{2,i})&-\tilde\Gamma(\cdot,\nu)\Bigg\|_{L^2}^2\\
&=o_p(1)\,.
\end{align*}

\noindent{\bf Step 3.} Following the proof of Theorem~\ref{thm:2.1}, we deduce that
\begin{align*}
\sup_{\nu\in[\nu_0,1]}\bigg|\hat\G_{n_1,n_2}(\nu)-2\nu\int_0^1\{\beta_1(s)-\beta_2(s)\}\,\tilde\Gamma(s,\nu)\,ds\bigg|=o_p(1)\,.
\end{align*}
In addition, for the $\sigma_{1,2}$ defined in \eqref{sigma12},
\begin{align*}
2\nu\int_0^1\{\beta_1(s)-\beta_2(s)\}\,\tilde\Gamma(s,\nu)\,ds\stackrel{d.}{=}2\sigma_{1,2}\,\nu\,\BB(\nu)\,.
\end{align*}
Moreover, the tightness of the process $\{\hat\G_{n_1,n_2}(\nu)\}_{\nu\in[\nu_0,1]}$ can be proved by following the proof of Theorem~\ref{thm:2.1} in Section~\ref{proof:thm:2.1}. The proof is therefore complete.

\subsection{Proof of Theorem~\ref{thm:gnf}}\label{app:thm:gnf}

It follows from Assumption~\ref{a201f} that, for any $\beta\in\H_f$ and $k,\ell\geq1$,
$$
\l\beta,\phi_{k\ell}\r_f=\sum_{k',\ell'=1}^\infty V_f(\beta,\phi_{k'\ell'})\l\phi_{k\ell},\phi_{k'\ell'}\r_f=(1+\lambda\rho_{k\ell})V_f(\beta,\phi_{k\ell})
$$
so that
\begin{align}\label{expansionf}
\beta=\sum_{k,\ell=1}^\infty V_f(\beta,\phi_{k\ell})\,\phi_{k\ell}=\sum_{k,\ell=1}^\infty \frac{\l\beta,\phi_{k\ell}\r_f}{1+\lambda\rho_{k\ell}}\,\phi_{k\ell}\,.
\end{align}
%In addition, the reproducing kernel $K_f$ is given by
%\begin{align}\label{kst2f}
%K_f\{(s_1,t_1), (s_2,t_2)\}=\sum_{k,\ell=1}^\infty \frac{\phi_{k\ell}(s_1,t_1)\,\phi_{k\ell}(s_2,t_2)}{1+\lambda\rho_{k\ell}}\,.
%\end{align}
For $\beta_1,\beta_2\in\mathcal{H}_f$, let $W_\lambda^f$ denote the linear self-adjoint operator such that $\l W^f_\lambda\beta_1,\beta_2\r_f=\lambda J_f(\beta_1,\beta_2)$. By definition, 
the functions $\{\phi_{k\ell}\}_{k,\ell\geq 1}$ in Assumption~\ref{a201f} satisfy $\l W_\lambda^f\phi_{k\ell},\phi_{k'\ell'}\r_f=\lambda J_f(\phi_{k\ell},\phi_{k'\ell'})=\lambda\rho_{k\ell}\,\delta_{kk'}\,\delta_{\ell\ell'}$, so that in view of \eqref{expansionf},
\begin{align}\label{wphif}
W_\lambda^f(\phi_{k\ell})=\sum_{k',\ell'}\frac{\l W_\lambda^f(\phi_{k\ell}),\phi_{k'\ell'}\r_K}{1+\lambda\rho_{k'\ell'}}\phi_{k'\ell'}=\frac{\lambda\,\rho_{k\ell}\,\phi_{k\ell}}{1+\lambda\rho_{k\ell}}\,.
\end{align}
The following lemma establishes the uniform Bahadur representation for function-on-function linear regression. The proof follows similar arguments as the proof of Theorem~\ref{thm:bahadur} of this article and Theorem~3.1 in \cite{dettetang}, and is therefore omitted for the sake of brevity.

\begin{lemma}\label{lem:vn}
Suppose the assumptions of Theorem~\ref{thm:gnf} are satisfied. Then, for any fixed (but arbitrary) $\nu_0\in(0,1]$, we have
\begin{align*}
&\sup_{\nu\in[\nu_0,1]}\,\bigg\Vert\nu\Big\{\hat\beta_{n,\lambda}(\,\cdot\,;\nu)-\beta_0+W_\lambda^f(\beta_0)\Big\}-\frac{1}{n}\sum_{i=1}^{\lfloor n\nu \rfloor}\tau_\lambda^f (X_{i}\otimes\e_i)\bigg\Vert_f=O_p(\tilde v_{n})\,,
%=o_p(n^{-1/2}\,\lambda^{-(2a+1)/(4D)})\,,
%&\hspace{6cm}+\nu \big\{W_{\lambda_1}^{(1)}(\beta_1)-W_{\lambda_2}^{(2)}(\beta_2)\big\}
\end{align*}
where $\tilde v_{n}=n^{-1/2}\lambda^{-\varsigma_f}\big(\lambda^{1/2}+n^{-1/2}\lambda^{-(2a_f+1)/(4D_f)}\big)(\log n)^{1/2}$, and $W_\lambda^f$ is defined in \eqref{wphif}.
% denotes the linear self-adjoint operator such that $\l W^f_\lambda\beta_1,\beta_2\r_f=\lambda J_f(\beta_1,\beta_2)$, for any $\beta_1,\beta_2\in\H_f$.
\end{lemma}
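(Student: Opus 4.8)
The plan is to reproduce, in the tensor Sobolev space $\H_f$, the two–stage argument used for Theorem~\ref{thm:bahadur}: first a uniform consistency rate for the sequential estimator in the $\|\cdot\|_f$–norm (the analogue of Lemma~\ref{lem:hatbeta}), and then a linearization of the score that isolates the leading stochastic term $n^{-1}\sum_{i=1}^{\lfloor n\nu\rfloor}\tau_\lambda^f(X_i\otimes\e_i)$. Writing $r_i(\beta)(t)=Y_i(t)-\int_0^1X_i(s)\beta(s,t)\,ds$ for the residual function, the relevant score operator associated with the inner product $\l\cdot,\cdot\r_f$ is
\begin{equation*}
S_{n,\lambda,\nu}^f(\beta)=-\frac{1}{\lfloor n\nu\rfloor}\sum_{i=1}^{\lfloor n\nu\rfloor}\tau_\lambda^f\big(X_i\otimes r_i(\beta)\big)+W_\lambda^f(\beta),
\end{equation*}
so that $\hat\beta_{n,\lambda}(\cdot;\nu)$ solves $S_{n,\lambda,\nu}^f(\beta)=0$ and $S_{n,\lambda,\nu}^f(\beta_0)=-\lfloor n\nu\rfloor^{-1}\sum_i\tau_\lambda^f(X_i\otimes\e_i)+W_\lambda^f(\beta_0)$.

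First I would compute the population derivative. Taking expectations of $\mathcal D^2$ of the loss in \eqref{betaf} and using $V_f$ in \eqref{vf} gives $\l\mathcal D S_{\lambda,\nu}^f(\beta)\beta_1,\beta_2\r_f=V_f(\beta_1,\beta_2)+\lambda J_f(\beta_1,\beta_2)=\l\beta_1,\beta_2\r_f$, whence $\mathcal D S_{\lambda,\nu}^f(\beta)=id$ on $\H_f$, exactly as in \eqref{prop:id}. Because the model is linear the third derivative vanishes, so the population equation has the unique root $\beta_{\lambda,\nu}=\beta_0-\E\{S_{n,\lambda,\nu}^f(\beta_0)\}$, and the bias is controlled by $\|\beta_{\lambda,\nu}-\beta_0\|_f=\|W_\lambda^f(\beta_0)\|_f=O(\lambda^{1/2})$ via Cauchy--Schwarz together with the penalty bound $\sum_{k,\ell}\rho_{k\ell}^2V_f^2(\beta_0,\phi_{k\ell})<\infty$ in Assumption~\ref{a3.3f}, mirroring \eqref{deltalambdabeta}. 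The stochastic part $\|\hat\beta_{n,\lambda}(\cdot;\nu)-\beta_{\lambda,\nu}\|_f$ is then handled by the Banach fixed–point argument: the map $F_{n,\nu}(\beta)=\beta-S_{n,\lambda,\nu}^f(\beta_{\lambda,\nu}+\beta)$ splits, as in \eqref{th}, into an exactly vanishing term coming from linearity, a centred empirical–process term $I_{2,n,\nu}$, and the noise term $S_{n,\lambda,\nu}^f(\beta_{\lambda,\nu})$. Restricting to the truncation event $\EE_n(c)$ (finite by Assumption~\ref{a3.1f}) and invoking the function–on–function concentration inequality from \cite{dettetang} shows $F_{n,\nu}$ is a uniform contraction on a ball of radius $\asymp\lambda^{1/2}+n^{-1/2}\lambda^{-(2a_f+1)/(4D_f)}$, which yields the analogue of Lemma~\ref{lem:hatbeta}.

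With the rate available, I would linearize exactly as in \eqref{co}--\eqref{coo}: using $\mathcal D S_{\lambda,\nu}^f=id$ and $S_{n,\lambda,\nu}^f(\hat\beta)=0$, the quantity $\Delta_\nu\beta=\hat\beta_{n,\lambda}(\cdot;\nu)-\beta_0+W_\lambda^f(\beta_0)$ is such that $\nu\Delta_\nu\beta-\lfloor n\nu\rfloor^{-1}\sum_i\tau_\lambda^f(X_i\otimes\e_i)$ equals a centred empirical process $\tilde H_{n,\nu}^f(\Delta_\nu\beta)$ evaluated at the already–controlled deviation $\Delta_\nu\beta$, plus a truncation remainder that is $o(\tilde v_n)$ by the argument in \eqref{8}. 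I would then rescale $\Delta_\nu\beta$ to unit $L^2$–norm, so that it lies in the class $\{\beta\in\H_f:\|\beta\|_{L^2}\le1,\,J_f(\beta,\beta)\le p_n\}$ with $p_n\asymp\lambda^{(-2D_f+2a_f+1)/(2D_f)}$, apply the maximal inequality over this class, and convert the supremum over $\nu$ into a running maximum at the cost of a factor $\nu_0^{-1/2}$, as in \eqref{tildehh}. Collecting exponents reproduces $\tilde v_n=n^{-1/2}\lambda^{-\varsigma_f}(\lambda^{1/2}+n^{-1/2}\lambda^{-(2a_f+1)/(4D_f)})(\log n)^{1/2}$, the conditions of Assumption~\ref{a:ratef} entering exactly where Assumption~\ref{a:rate} is used in \eqref{i1nrate}.

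The main obstacle is the maximal inequality for the empirical process in the two–dimensional tensor space $\H_f$, i.e.\ the analogue of the auxiliary Lemma~\ref{lem:hnbeta}. This requires the embedding estimates $\|\beta\|_{L^2}+\|\beta\|_\infty\lesssim\lambda^{-(2a_f+1)/(4D_f)}\|\beta\|_f$ and the kernel–trace bound $\sum_{k,\ell}\|\phi_{k\ell}\|_{L^2}^2/(1+\lambda\rho_{k\ell})\lesssim\lambda^{-(2a_f+1)/(2D_f)}$, both of which must be read off from the double–indexed eigenstructure $\rho_{k\ell}\asymp(k\ell)^{2D_f}$, $\|\phi_{k\ell}\|_\infty\le c(k\ell)^{a_f}$ of Assumption~\ref{a201f} and the tensor form $\phi_{k\ell}=x_{k\ell}\otimes\eta_\ell$; these are precisely the estimates carried out in \cite{dettetang}. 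A secondary point is the noise term: because the error is white noise in $t$ (Assumption~\ref{wn}), the leading process is $\tau_\lambda^f(X_i\otimes\e_i)$, and bounding it in the style of \eqref{11} requires the $L^2$–$m$–approximability of $\{X_i\otimes\e_i\}$ (the analogue of Lemma~\ref{lem:m}, using the independence of $\{X_i\}$ and $\{\e_i\}$ and Assumption~\ref{a:mf}) together with the finiteness of the long–run covariance $C_{f,\lambda}$ (the analogue of Lemma~\ref{lem:l2}).
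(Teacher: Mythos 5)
Your proposal is correct and follows essentially the same route as the paper, which omits the proof of Lemma~\ref{lem:vn} precisely because it is the verbatim adaptation of the argument for Theorem~\ref{thm:bahadur} (score operator with $\mathcal D S^f_{\lambda,\nu}=id$, Banach fixed-point contraction, rescaling into the class $\{\beta:\|\beta\|_{L^2}\le 1,\,J_f(\beta,\beta)\le p_n\}$, and the maximal inequality) to the tensor space $\H_f$, with the required embedding and trace bounds taken from \cite{dettetang}. You also correctly identify the two genuinely new ingredients — the tensor-space analogue of Lemma~\ref{lem:hnbeta} and the $m$-approximability of $\{X_i\otimes\e_i\}$ — so nothing essential is missing.
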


Let
\begin{align}\label{ff}
\mathcal F_f=\bigg\{g:[0,1]^2\times[0,1]\to\mathbb{R}\,\Big|\sup_{\nu\in[0,1]}\int_0^1\int_0^1|g(s,t;\nu)|^2\,ds\,dt<\infty\bigg\}\,.
\end{align}
The following lemma establishes the weak invariance principle in the context of function-on-function linear regression. The proof is in line with the proof of Theorem~\ref{thm:wip} and is therefore omitted for the sake of brevity.
% for the partial sums
%\begin{align*}
%\frac{1}{\sqrt{n}\lambda^{(2a_f+1)/(2D_f)}}\sum_{i=1}^{\lfloor n\nu \rfloor}\tau_\lambda^f (X_i\otimes \e_{i})\,.
%\end{align*}
% of $\tau_\lambda^f (X_i\otimes \e_{i})$.

\begin{lemma}\label{lem:vn2}
Suppose the assumptions of Theorem~\ref{thm:gnf} are satisfied. Then, there exists a Gaussian process $\{\Gamma_f(s,t;\nu)\}_{(s,t)\in[0,1]^2,\nu\in[0,1]}$ in $\mathcal F_f$ defined in \eqref{ff}, with covariance function 
\begin{align*}
\cov\big\{\Gamma_f(s_1,t_1;\nu_1)\,,\Gamma_f(s_2,t_2;\nu_2)\big\}=(\nu_1\wedge\nu_2)\,C_f\{(s_1,t_1),(s_2,t_2)\}\,,
\end{align*}
for $C_f$ in Assumption~\ref{a34f}, such that
\begin{align*}
\sup_{\nu\in[0,1]}\,\Bigg\|\frac{1}{\sqrt{n}\lambda^{(2a_f+1)/(2D_f)}}\sum_{i=1}^{\lfloor n\nu \rfloor}\tau_\lambda^f (X_i\otimes \e_{i})&-\Gamma_f(\cdot;\nu)\Bigg\|_{L^2}^2=o_p(1)\,.
\end{align*}
\end{lemma}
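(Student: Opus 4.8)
The plan is to mirror the two-step argument used to establish Theorem~\ref{thm:wip}, but now working in the separable Hilbert space $L^2([0,1]^2)$ rather than $L^2([0,1])$. To this end, define for $i\in\mathbb Z$ the random elements
$$U_i^f=\lambda^{(2a_f+1)/(2D_f)}\,\tau_\lambda^f(X_i\otimes\e_i)\in L^2([0,1]^2),$$
so that the process to be studied is exactly $n^{-1/2}\sum_{i=1}^{\lfloor n\nu\rfloor}U_i^f$ and $C_{f,\lambda}$ is precisely its lag-summed covariance kernel. The key preliminary estimate I would establish is the function-on-function analogue of inequality~(1) in the proof of Lemma~\ref{lem:m}, namely a bound of the form
$$\|\tau_\lambda^f(u\otimes v)\|_{L^2}\le c\,\lambda^{-(2a_f+1)/(2D_f)}\,\|u\|_{L^2}\,\|v\|_{L^2}$$
valid uniformly in $\lambda>0$; this follows from the $L^2$-versus-$\|\cdot\|_f$ norm comparison and the boundedness of $\tau_\lambda^f$ into $\H_f$, which are the analogues in the function-on-function setting of the auxiliary Lemmas~\ref{lem:s4} and \ref{lem:3.1} and are available from \cite{dettetang}.

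First I would verify, exactly as in Lemma~\ref{lem:m}, that $\{U_i^f\}_{i\in\mathbb Z}$ is $L^2$-$m$-approximable uniformly in $\lambda>0$ with approximating sequence $U_{i,\ell}^f=\lambda^{(2a_f+1)/(2D_f)}\tau_\lambda^f(X_{i,\ell}\otimes\e_{i,\ell})$. Mean-zeroness and the uniform $(2+\delta)$-moment bound are immediate from the preliminary estimate together with the independence of $\{X_i\}$ and $\{\e_i\}$ and the moment conditions in Assumptions~\ref{a3.1f} and \ref{a:mf}. For the approximation error I would use the bilinear splitting
$$X_i\otimes\e_i-X_{i,\ell}\otimes\e_{i,\ell}=(X_i-X_{i,\ell})\otimes\e_i+X_{i,\ell}\otimes(\e_i-\e_{i,\ell}),$$
apply the linearity of $\tau_\lambda^f$ and the preliminary estimate, and then invoke the two $m$-approximability series in Assumption~\ref{a:mf}; the powers of $\lambda$ cancel precisely as in the scalar case, which is what makes the bound uniform in $\lambda$.

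Second, I would prove the analogue of Lemma~\ref{lem:l2}, that is $\sup_{\lambda>0}\int_{[0,1]^4}\{C_{f,\lambda}\}^2<\infty$. Here I would split $C_{f,\lambda}$ into its lag-zero term and the tail $\sum_{\ell\ge1}$; for the tail I would approximate the lag-$0$ factor $\tau_\lambda^f(X_0\otimes\e_0)$ by its $\ell$-dependent version $\tau_\lambda^f(X_{0,\ell}\otimes\e_{0,\ell})$, whose independence from $\tau_\lambda^f(X_{-\ell}\otimes\e_{-\ell})$ (using independence of the predictor and error processes and the vanishing means) kills the covariance, leaving a remainder controlled by the $m$-approximability coefficients by the same telescoping that produces the terms $I_1,I_2,I_3$ in Lemma~\ref{lem:l2}. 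The lag-zero contribution I would bound by expanding $\tau_\lambda^f(X_0\otimes\e_0)$ in the eigenbasis $\{\phi_{k\ell}\}$ of Assumption~\ref{a201f}, reducing it to $\sum_{k,\ell}\|\phi_{k\ell}\|_{L^2}^2/(1+\lambda\rho_{k\ell})^2=O(\lambda^{-(2a_f+1)/D_f})$, whose product with the prefactor $\lambda^{2(2a_f+1)/D_f}$ stays bounded. The white-noise structure of the error (Assumption~\ref{wn}) enters only through $\E\,\l\e_0,\eta_\ell\r_{L^2}^2=\sigma_\e^2$ and does not obstruct square-integrability, since the penalty embedded in $\tau_\lambda^f$ smooths $X_0\otimes\e_0$ back into $\H_f$.

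With both ingredients in place, the weak invariance principle for $L^2$-$m$-approximable sequences in a separable Hilbert space (Theorem~1.1 in \cite{berkes2013}), applied in $L^2([0,1]^2)$, yields a mean-zero Gaussian process $\Gamma_f$ in $\mathcal F_f$ whose covariance is $(\nu_1\wedge\nu_2)$ times the limiting kernel $C_f$ of Assumption~\ref{a34f}, together with the asserted uniform $L^2$-approximation; the identification of the covariance follows from $\lim_{\lambda\downarrow0}C_{f,\lambda}=C_f$. I expect the main obstacle to be the uniform-in-$\lambda$ control in the two lemmas: one must ensure that every constant produced by the preliminary operator estimate and by the moment bounds is free of $\lambda$, so that the normalizing powers $\lambda^{(2a_f+1)/(2D_f)}$ cancel exactly and the tensor-product process $\{X_i\otimes\e_i\}$ inherits the $m$-approximability of its factors without an inflating factor as $\lambda\downarrow0$.
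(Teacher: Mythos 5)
Your proposal is correct and follows essentially the same route as the paper, which omits the proof precisely because it is "in line with the proof of Theorem~\ref{thm:wip}": establish $L^2$-$m$-approximability of $U_i^f=\lambda^{(2a_f+1)/(2D_f)}\tau_\lambda^f(X_i\otimes\e_i)$ (the analogue of Lemma~\ref{lem:m}, via the bilinear splitting and the operator bound from the function-on-function analogues of Lemmas~\ref{lem:s4} and \ref{lem:3.1}), show the uniform-in-$\lambda$ square-integrability of $C_{f,\lambda}$ (the analogue of Lemma~\ref{lem:l2}), and then invoke Theorem~1.1 of \cite{berkes2013} in $L^2([0,1]^2)$. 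Your outline supplies exactly the details the authors chose to suppress, including the correct attention to uniformity in $\lambda$ of all constants.
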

In addition, it can be shown that $\|W_\lambda^f(\beta_0)\|_{L^2}=o\big(n^{-1/2}\lambda^{-(2a_f+1)/(2D_f)}\big)$. Therefore, it can be deduced from Lemmas~\ref{lem:vn} and \ref{lem:vn2} that
\begin{align}\label{v1}
\sup_{\nu\in[\nu_0,1]}\bigg|\hat\G_{n_1,n_2}(\nu)-2\nu\int_0^1\int_0^1\beta_0(s,t)\,\tilde\Gamma(s,t;\nu)\,ds\bigg|=o_p(1)\,.
\end{align}
Moreover, for the $\sigma_{f}$ defined in \eqref{sigmaf}, we have
\begin{align}\label{v2}
2\nu\int_0^1\int_0^1\beta_0(s,t)\,\tilde\Gamma(s,t;\nu)\,ds\,dt\stackrel{d.}{=}2\sigma_{f}\,\nu\,\BB(\nu)\, ,
\end{align}
and following the arguments in the proof of Theorem~\ref{thm:2.1} it can be shown that the process $\{\hat\G_{f}(\nu)\}_{\nu\in[\nu_0,1]}$ is tight. 
The proof is therefore complete in view of \eqref{v1} and \eqref{v2}.

%\subsection{Proof of Theorem~\ref{thm:f}}\label{proof:thm:f}

\newpage

\section{Auxiliary lemmas}\label{app:aux:lem}

\begin{lemma}\label{lem:s4}
Under Assumptions~\ref{a1} and \ref{a201}, there exists a constant $c>0$ such that, for any $x\in L^2([0,1])$,  $\|\tau_{\lambda}(x)\|_K^2\leq c\,\lambda^{-(2a+1)/(2D)}\|x\|_{L^2}^2$ and $\E\|\tau_{\lambda}(x)\|_K^2\leq c\,\lambda^{-1/(2D)}$.
\end{lemma}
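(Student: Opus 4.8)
The plan is to reduce both bounds to a single series identity via the simultaneous diagonalization in Assumption~\ref{a201}, and then to estimate the resulting series by comparison with an integral. First I would use the expansion $\tau_\lambda(x)=\sum_{k\geq1}\frac{\l x,\phi_k\r_{L^2}}{1+\lambda\rho_k}\phi_k$ from \eqref{tau} together with the orthogonality relation $\l\phi_k,\phi_{k'}\r_K=(1+\lambda\rho_k)\delta_{kk'}$ to compute
\[
\|\tau_\lambda(x)\|_K^2=\sum_{k=1}^\infty\frac{\l x,\phi_k\r_{L^2}^2}{(1+\lambda\rho_k)^2}\,(1+\lambda\rho_k)=\sum_{k=1}^\infty\frac{\l x,\phi_k\r_{L^2}^2}{1+\lambda\rho_k}\,.
\]
Both claimed bounds then follow from estimating this series, the only difference being how the numerator $\l x,\phi_k\r_{L^2}^2$ is controlled.

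For the first (deterministic) bound I would apply the Cauchy--Schwarz inequality, $\l x,\phi_k\r_{L^2}^2\leq\|x\|_{L^2}^2\,\|\phi_k\|_{L^2}^2$, together with the growth estimate $\|\phi_k\|_{L^2}\leq\|\phi_k\|_\infty\leq c\,k^a$ from Assumption~\ref{a201}, which gives $\|\tau_\lambda(x)\|_K^2\leq c^2\|x\|_{L^2}^2\sum_k\frac{k^{2a}}{1+\lambda\rho_k}$. Using $\rho_k\asymp k^{2D}$ and comparing the sum to $\int_0^\infty\frac{t^{2a}}{1+\lambda t^{2D}}\,dt$, the substitution $u=\lambda^{1/(2D)}t$ extracts the factor $\lambda^{-(2a+1)/(2D)}$ and leaves $\int_0^\infty\frac{u^{2a}}{1+u^{2D}}\,du$, which is finite precisely because $D>a+1/2$ makes the integrand decay like $u^{2a-2D}$ with $2D-2a>1$. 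This yields $\sum_k\frac{k^{2a}}{1+\lambda\rho_k}=O(\lambda^{-(2a+1)/(2D)})$ and hence the first inequality.

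For the second bound the expectation is to be read over the random predictor, that is $x=X_0$ (this is how the estimate is invoked later; see e.g.\ \eqref{e}). The key simplification here is that, by Assumption~\ref{a201},
\[
\E\,\l X_0,\phi_k\r_{L^2}^2=\int_0^1\int_0^1 C_X(s,t)\,\phi_k(s)\,\phi_k(t)\,ds\,dt=V(\phi_k,\phi_k)=1\,.
\]
Taking expectations term by term (legitimate by Tonelli's theorem, as all summands are nonnegative) gives $\E\|\tau_\lambda(X_0)\|_K^2=\sum_k\frac{1}{1+\lambda\rho_k}$, and the same integral comparison as above, now involving $\int_0^\infty\frac{du}{1+u^{2D}}$ (finite since $2D>1$), produces the rate $O(\lambda^{-1/(2D)})$.

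The only genuine work is the series-to-integral comparison, and the main point to handle carefully will be passing from the sum to the integral uniformly in $\lambda$, given that $\rho_k\asymp k^{2D}$ holds only up to constants. I would therefore bracket $\rho_k$ between $c_1 k^{2D}$ and $c_2 k^{2D}$, bound each summand by a monotone function of $k$ so that the integral test applies directly, and let the convergence conditions $D>a+1/2$ (respectively $2D>1$) absorb the finite limiting integrals into the generic constant $c$.
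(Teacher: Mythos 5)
Your proposal is correct and follows essentially the same route as the paper's proof: expand $\tau_\lambda(x)$ in the diagonalizing basis to get $\|\tau_\lambda(x)\|_K^2=\sum_k\l x,\phi_k\r_{L^2}^2/(1+\lambda\rho_k)$, then use Cauchy--Schwarz with $\|\phi_k\|_{L^2}\leq c\,k^a$ for the first bound and $\E\l X,\phi_k\r_{L^2}^2=V(\phi_k,\phi_k)=1$ for the second. The only difference is that you spell out the series-to-integral comparison yielding $\sum_k k^{2a}/(1+\lambda\rho_k)=O(\lambda^{-(2a+1)/(2D)})$ and $\sum_k 1/(1+\lambda\rho_k)=O(\lambda^{-1/(2D)})$, which the paper simply asserts; your added detail is accurate and correctly identifies $D>a+1/2$ as the condition making the limiting integrals finite.
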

\begin{proof}
Recall from \eqref{tau} that $\tau_{\lambda}(x)=\sum_{k=1}^\infty(1+\lambda\rho_{k})^{-1}\phi_{k}\l x,\phi_{k}\r_{L^2}$. Therefore,
\begin{align*}
\|\tau_{\lambda}(x)\|_K^2&=\bigg\l\sum_{k=1}^\infty\frac{\l x,\phi_{k}\r_{L^2}}{1+\lambda\rho_{k}}\phi_{k},\sum_{k'=1}^\infty\frac{\l x,\phi_{k'}\r_{L^2}}{1+\lambda\rho_{k'}}\phi_{k'}\bigg\r_K=\sum_{k=1}^\infty\frac{\l x,\phi_k\r_{L^2}^2}{1+\lambda\rho_k}\\
&\leq \|x\|_{L^2}^2\sum_{k=1}^\infty\frac{\|\phi_k\|_{L^2}^2}{1+\lambda\rho_k}\leq c\, \lambda^{-(2a+1)/(2D)}\|x\|_{L^2}^2\,.
\end{align*}
In addition, by Assumption~\ref{a201}, $\E\big(\l X,\phi_k\r_{L^2}^2\big)=1$, so that
\begin{align*}
\E\|\tau_{\lambda}(X)\|_K^2=\sum_{k=1}^\infty\frac{\E\big(\l X,\phi_k\r_{L^2}^2\big)}{1+\lambda\rho_k}=\sum_{k=1}^\infty\frac{1}{1+\lambda\rho_k}\leq c\, \lambda^{-1/(2D)}\,.
\end{align*}

\end{proof}

\begin{lemma}[Lemma~3.1 in \citealp{shang2015}]\label{lem:3.1}
Under Assumptions~\ref{a1} and \ref{a201}, there exists a constant $c_K>0$ such that for any $\beta\in\H$, $\|\beta\|_{L^2}^2\leq c_K\lambda^{-(2a+1)/(2D)}\|\beta\|_K^2$.
\end{lemma}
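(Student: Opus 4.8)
The plan is to work entirely in the coordinate system furnished by the simultaneous diagonalization in Assumption~\ref{a201}. Writing an arbitrary $\beta\in\H$ as $\beta=\sum_{k\geq1}b_k\phi_k$ with $b_k=V(\beta,\phi_k)$, the orthogonality relations $V(\phi_k,\phi_{k'})=\delta_{kk'}$ and $J(\phi_k,\phi_{k'})=\rho_k\delta_{kk'}$ give at once
\[
\|\beta\|_K^2=V(\beta,\beta)+\lambda J(\beta,\beta)=\sum_{k\geq1}b_k^2\,(1+\lambda\rho_k).
\]
This identifies $\|\cdot\|_K$ with a weighted $\ell^2$-norm of the coefficient sequence, so the whole task reduces to controlling $\|\beta\|_{L^2}$ by this weighted sequence.

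First I would bound the $L^2$-norm by the triangle inequality together with the growth condition $\|\phi_k\|_\infty\leq c\,k^a$ from Assumption~\ref{a201}, using $\|\phi_k\|_{L^2}\leq\|\phi_k\|_\infty$ on $[0,1]$, and then insert the factor $(1+\lambda\rho_k)^{1/2}$ before applying the Cauchy--Schwarz inequality:
\[
\|\beta\|_{L^2}\leq\sum_{k\geq1}|b_k|\,\|\phi_k\|_{L^2}\leq c\sum_{k\geq1}|b_k|\,k^a,\qquad
\|\beta\|_{L^2}^2\leq c^2\,\|\beta\|_K^2\sum_{k\geq1}\frac{k^{2a}}{1+\lambda\rho_k}.
\]
It then remains to show that $\sum_{k\geq1}k^{2a}/(1+\lambda\rho_k)=O\bigl(\lambda^{-(2a+1)/(2D)}\bigr)$, which would yield the claim with a suitable constant $c_K$. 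Using $\rho_k\asymp k^{2D}$ I would replace $1+\lambda\rho_k$ by $1+c_1\lambda k^{2D}$ up to constants and compare the sum with $\int_0^\infty x^{2a}(1+c_1\lambda x^{2D})^{-1}\,dx$; the substitution $u=\lambda^{1/(2D)}x$ produces exactly the factor $\lambda^{-(2a+1)/(2D)}$ times $\int_0^\infty u^{2a}(1+c_1u^{2D})^{-1}\,du$, and this integral is finite precisely because $2D-2a>1$, i.e.\ $D>a+1/2$. Since the integrand is unimodal rather than monotone, the comparison requires splitting off its maximum, which is of order $\lambda^{-2a/(2D)}=o\bigl(\lambda^{-(2a+1)/(2D)}\bigr)$ as $\lambda\downarrow0$ and hence negligible.

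The one point requiring care is that Assumption~\ref{a201} only guarantees convergence of the expansion $\beta=\sum_k b_k\phi_k$ in the $\|\cdot\|_K$-norm, so I would first derive the displayed inequalities for the partial sums $\beta_N=\sum_{k=1}^N b_k\phi_k$, where all identities are exact and all sums are finite. The resulting bound $\|\beta_N-\beta_M\|_{L^2}^2\leq c_K\lambda^{-(2a+1)/(2D)}\|\beta_N-\beta_M\|_K^2$ shows that $\{\beta_N\}$ is Cauchy in $L^2$, so its $L^2$-limit coincides with $\beta$ and the inequality passes to the limit. I expect this passage to the limit, together with the convergence of the series in the previous step, to be the only genuinely delicate points; the rest is a direct diagonalization computation.
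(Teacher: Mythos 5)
Your argument is correct, and it is worth noting that the paper itself offers no proof of this statement: it is imported verbatim as Lemma~3.1 of \cite{shang2015}, so there is no in-paper argument to compare against. Your diagonalization route is essentially the standard one behind that cited result, and the key quantitative ingredient you isolate, namely $\sum_{k\geq1}k^{2a}/(1+\lambda\rho_k)=O\bigl(\lambda^{-(2a+1)/(2D)}\bigr)$ via the substitution $u=\lambda^{1/(2D)}x$ and the convergence of $\int_0^\infty u^{2a}(1+c_1u^{2D})^{-1}\,du$ under $D>a+1/2$, is exactly the bound the paper uses implicitly elsewhere (for instance in the proof of Lemma~\ref{lem:s4}, where $\sum_k\|\phi_k\|_{L^2}^2/(1+\lambda\rho_k)\leq c\lambda^{-(2a+1)/(2D)}$ is asserted without comment). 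Your handling of the unimodal integrand, splitting off a maximum of order $\lambda^{-a/D}=o\bigl(\lambda^{-(2a+1)/(2D)}\bigr)$, is also right. The only step I would tighten is the passage to the limit: convergence of the partial sums $\beta_N$ in $\|\cdot\|_K$ and in $L^2$ does not by itself identify the two limits, since $V$ need not dominate the $L^2$ inner product. The clean fix is to invoke the fact (stated in Section~\ref{sec:rkhs}) that $(\H,\langle\cdot,\cdot\rangle_K)$ is an RKHS, so $\|\beta_N-\beta\|_K\to0$ forces pointwise convergence $\beta_N(t)\to\beta(t)$, and Fatou's lemma then transfers the partial-sum inequality $\|\beta_N\|_{L^2}^2\leq c_K\lambda^{-(2a+1)/(2D)}\|\beta_N\|_K^2\leq c_K\lambda^{-(2a+1)/(2D)}\|\beta\|_K^2$ directly to $\beta$. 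With that one-line repair your proof is complete.
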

Recall from \eqref{cxe} the long-run covariance function of $\{ X_i\e_i\}_{i\in\mathbb Z}$ given by
\begin{align*}
C_{X\e}(s,t)=\sum_{\ell=-\infty}^{+\infty}\cov\{\e_0X_0(s),\e_\ell X_\ell(t)\}\,.
\end{align*}
Lemmas~\ref{lem:cxe} and \ref{lem:cxe2} below shows that $C_{X\e}\in L^2([0,1]^2)$ and $\int_0^1 C_{X\e}(t,t)dt<\infty$.

\begin{lemma}\label{lem:cxe}
Under Assumption~\ref{a:m}, we have $C_{X\e}\in L^2([0,1]^2)$.
\end{lemma}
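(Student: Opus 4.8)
The plan is to exploit the independence of the error process $\{\e_i\}$ and the predictor process $\{X_i\}$ (noted in the remark following Assumption~\ref{a:m}) to factor each summand, and then to control the resulting series through the $m$-approximability conditions of Assumption~\ref{a:m}. For $\ell\in\mathbb Z$ set $\gamma_\ell(s,t)=\cov\{\e_0 X_0(s),\e_\ell X_\ell(t)\}$. Since the two processes are independent and centred, $\E\{\e_0 X_0(s)\}=0$, and therefore $\gamma_\ell(s,t)=r_\ell\,R_\ell(s,t)$ where $r_\ell=\E(\e_0\e_\ell)$ and $R_\ell(s,t)=\E\{X_0(s)X_\ell(t)\}$. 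By Minkowski's inequality for the $L^2([0,1]^2)$ norm it then suffices to show that $\sum_{\ell=-\infty}^{\infty}\|\gamma_\ell\|_{L^2}=\sum_{\ell}|r_\ell|\,\|R_\ell\|_{L^2}<\infty$, since this yields absolute convergence of the defining series in the complete space $L^2([0,1]^2)$, whence its sum $C_{X\e}$ lies in $L^2([0,1]^2)$.

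First I would dispose of the easy factor. A pointwise Cauchy--Schwarz bound gives $|R_\ell(s,t)|\le\{\E X_0(s)^2\}^{1/2}\{\E X_\ell(t)^2\}^{1/2}$, so by stationarity $\|R_\ell\|_{L^2}\le\E\|X_0\|_{L^2}^2=:c_X<\infty$, the finiteness following from Assumption~\ref{a3.1}. The term $\ell=0$ is also harmless, since $\gamma_0=\E(\e_0^2)\,C_X$ and $C_X$ is continuous, hence bounded, on $[0,1]^2$ by Assumption~\ref{a1}. Thus the whole problem reduces to showing $\sum_{\ell\ge1}|r_\ell|<\infty$, the sum over $\ell\le-1$ being identical by stationarity.

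The decay of $r_\ell$ is where the $m$-approximability of the errors enters, following the device already used in the proof of Lemma~\ref{lem:l2}. For $\ell\ge1$ I would write $r_\ell=\E(\e_0\e_{-\ell})$ by stationarity and decompose $\e_0=(\e_0-\e_{0,\ell})+\e_{0,\ell}$; because $\e_{0,\ell}$ depends only on $\eta_0,\dots,\eta_{-\ell+1}$ together with independent copies, while $\e_{-\ell}$ depends on $\{\eta_j:j\le-\ell\}$, the two are independent, and since $\E(\e_{0,\ell})=\E(\e_0)=0$ one gets $r_\ell=\E\{(\e_0-\e_{0,\ell})\e_{-\ell}\}$. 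Cauchy--Schwarz then yields $|r_\ell|\le\{\E(\e_0-\e_{0,\ell})^2\}^{1/2}\{\E(\e_0^2)\}^{1/2}$.

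It remains to sum these bounds, which is the main (though routine) obstacle: Assumption~\ref{a:m} controls $\sum_\ell(\E|\e_0-\e_{0,\ell}|^{2+\delta})^{1/\kappa}$ with $\kappa>2+\delta$, whereas I need summability of $\{\E(\e_0-\e_{0,\ell})^2\}^{1/2}$. Writing $b_\ell=(\E|\e_0-\e_{0,\ell}|^{2+\delta})^{1/\kappa}$, Lyapunov's inequality gives $\{\E(\e_0-\e_{0,\ell})^2\}^{1/2}\le(\E|\e_0-\e_{0,\ell}|^{2+\delta})^{1/(2+\delta)}=b_\ell^{\kappa/(2+\delta)}$. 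Since $\kappa/(2+\delta)>1$ and $b_\ell\to0$ (being the terms of a convergent series), one has $b_\ell^{\kappa/(2+\delta)}\le b_\ell$ for all large $\ell$, so that $\sum_{\ell\ge1}\{\E(\e_0-\e_{0,\ell})^2\}^{1/2}\le\sum_{\ell\ge1}b_\ell^{\kappa/(2+\delta)}<\infty$. Combining the three steps gives $\sum_\ell\|\gamma_\ell\|_{L^2}\le\|\gamma_0\|_{L^2}+2c_X\sum_{\ell\ge1}|r_\ell|<\infty$, which completes the proof.
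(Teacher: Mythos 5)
Your proof is correct, and it rests on the same key device as the paper's: the coupling identity $\E(\e_0\e_{-\ell})=\E\{(\e_0-\e_{0,\ell})\e_{-\ell}\}$ obtained from the independence of $\e_{0,\ell}$ and $\e_{-\ell}$ (the paper's display \eqref{p1}), followed by Cauchy--Schwarz and the observation that the $m$-approximability sums in Assumption~\ref{a:m} control the resulting quantities after a Lyapunov/eventually-less-than-one argument (a step the paper also uses implicitly, since Assumption~\ref{a:m} bounds $(2+\delta)$-th moments raised to the power $1/\kappa$ rather than second moments directly). Where you genuinely diverge is in the organization. The paper bounds $\int\int C_{X\e}^2$ by expanding the square of the series into $2I_1+8I_2$ and applies the approximation decomposition to \emph{both} factors of $\cov\{\e_0X_0(s),\e_{-\ell}X_{-\ell}(t)\}$, then uses Cauchy--Schwarz across the series to pair $\sum_\ell\E(\e_0-\e_{0,\ell})^2$ with $\sum_\ell\E\{X_0(s)-X_{0,\ell}(s)\}^2$. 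You instead prove absolute convergence of $\sum_\ell\gamma_\ell$ in the Banach space $L^2([0,1]^2)$ via Minkowski, factor $\gamma_\ell=r_\ell R_\ell$ using the full independence of the two processes, bound $\|R_\ell\|_{L^2}\le\E\|X_0\|_{L^2}^2$ uniformly, and extract all the decay from the scalar sequence $|r_\ell|$. This is more economical: you need the approximation trick only for the errors, and absolute convergence immediately yields membership in $L^2$ without manipulating squared series. One small caveat applies equally to both arguments: the finiteness of $\E\|X_0\|_{L^2}^2$ is not literally a consequence of Assumption~\ref{a:m} alone (you invoke Assumption~\ref{a3.1} for it, which is not listed in the lemma's hypotheses), but the paper's own proof makes the same tacit use of this moment bound, so this is a shared imprecision in the statement rather than a gap in your reasoning.
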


\begin{proof}
Note that by Assumption~\ref{a:m}, for $\ell\geq1$, $\e_{0,\ell}$ and $\e_{-\ell}$ are independent; $ X_{0,\ell}$ and $ X_{-\ell}$ are independent. Since $\E(X)\equiv0$, we find that, for $\ell\geq1$,
\begin{align*}
\E\{X_0(s)\,X_\ell(t)\}&=\E\Big[\big\{X_{0}(s)-X_{0,\ell}(s)\big\}\,X_{-\ell}(t)\Big]+\E\big\{X_{0,\ell}(s)\,X_{-\ell}(t)\big\}\\
&=\E\Big[\big\{X_{0}(s)-X_{0,\ell}(s)\big\}\,X_{-\ell}(t)\Big]\,.
\end{align*}
We combine the above equation and \eqref{p1} and deduce that, for $\ell\geq1$,
\begin{align}\label{ss}
&\cov\big\{\e_{0}\,  X_{0}(s)\,,\e_{-\ell} \, X_{-\ell}(t)\big\}=\E\big\{(\e_{0}-\e_{0,\ell})\e_{-\ell}\big\}\,\E\Big[\big\{ X_{0}(s)- X_{0,\ell}(s)\big\}\times X_{-\ell}(t)\Big]\,.
\end{align}
Therefore, we find from the above equation that
\begin{align}\label{intcue}
&\int_0^1\int_0^1\{C_{X\e}(s,t)\}^2dsdt\notag\\
&= \int_0^1\int_0^1\bigg[\cov\big\{\e_0\,  X_0(s)\,,\e_0 \, X_0(t)\big\}+2\sum_{\ell=1}^{+\infty}\cov\big\{\e_0\,  X_0(s)\,,\e_{-\ell} \, X_{-\ell}(t)\big\}\bigg]^2dsdt\notag\\
&=\int_0^1\int_0^1\bigg(\E(\e_0^2)\,\E\big\{ X_0(s)\times X_0(t)\big\}\notag\\
&\hspace{2cm}+2\sum_{\ell=1}^{+\infty}\E\big\{(\e_{0}-\e_{0,\ell})\e_{-\ell}\big\}\,\E\Big[\big\{ X_{0}(s)- X_{0,\ell}(s)\big\}\times X_{-\ell}(t)\Big]\bigg)^2dsdt\notag\\
&\leq2I_1+8I_2\,,
\end{align}
where
\begin{align*}
&I_1= \{\E(\e_0^2)\}^2\int_0^1\int_0^1\Big[\E\big\{ X_0(s)\times X_0(t)\big\}\Big]^2dsdt\,,\\
&I_2= \int_0^1\int_0^1\Bigg(\sum_{\ell=1}^{+\infty}\E\big\{(\e_{0}-\e_{0,\ell})\e_{-\ell}\big\}\,\E\Big[\big\{ X_{0}(s)- X_{0,\ell}(s)\big\}\times X_{-\ell}(t)\Big]\Bigg)^2dsdt\,.
\end{align*}
For the first term $I_1$, we have $I_1=\{\E(\e_0^2)\}^2\times\|C_X\|_{L^2}^2<\infty$. For the second term $I_2$, by the Cauchy-Schwarz inequality, we find
\begin{align}\label{p2}
&\Bigg(\sum_{\ell=1}^{+\infty}\E\big\{(\e_{0}-\e_{0,\ell})\e_{-\ell}\big\}\,\E\Big[\big\{ X_{0}(s)- X_{0,\ell}(s)\big\}\times X_{-\ell}(t)\Big]\Bigg)^2\notag\\
&\leq \Bigg(\sum_{\ell=1}^{+\infty}\E\big\{(\e_{0}-\e_{0,\ell})\e_{-\ell}\big\}\,\Big[\E\big\{ X_{0}(s)- X_{0,\ell}(s)\big\}^2\Big]^{1/2}\times\Big[\E\{ X_{0}(t)\}^2\Big]^{1/2}\Bigg)^2\notag\\
&=\Bigg(\sum_{\ell=1}^{+\infty}\E\big\{(\e_{0}-\e_{0,\ell})\e_{-\ell}\big\}\,\Big[\E\big\{ X_{0}(s)- X_{0,\ell}(s)\big\}^2\Big]^{1/2}\Bigg)^2\times\E\{ X_{0}(t)\}^2\notag\\
&\leq\Bigg[\sum_{\ell=1}^{+\infty}\E\big\{(\e_{0}-\e_{0,\ell})\e_{-\ell}\big\}^2\Bigg]\times\Bigg[\sum_{\ell=1}^{+\infty}\E\big\{ X_{0}(s)- X_{0,\ell}(s)\big\}^2\Bigg]\times\E\{ X_{0}(t)\}^2\notag\\
&\leq\Bigg\{\sum_{\ell=1}^{+\infty}\E(\e_{0}-\e_{0,\ell})^2\times\E(\e_{0}^2)\Bigg\}\times\Bigg[\sum_{\ell=1}^{+\infty}\E\big\{ X_{0}(s)- X_{0,\ell}(s)\big\}^2\Bigg]\times\E\{ X_{0}(t)\}^2\notag\\
&=\E(\e_0^2)\times\Bigg(\sum_{\ell=1}^{+\infty}\E|\e_{0}-\e_{0,\ell}|^2\Bigg)\times\Bigg[\sum_{\ell=1}^{+\infty}\E\big\{ X_{0}(s)- X_{0,\ell}(s)\big\}^2\Bigg]\times\E\{ X_{0}(t)\}^2\,.
\end{align}
%Note that by \eqref{a0} in Assumption~\ref{a:m},
%\begin{align*}
%&\sum_{\ell=1}^{+\infty}\E|\e_{0}-\e_{0,\ell}|^2\leq \sum_{\ell=1}^{+\infty}\big(\E|\e_{0}-\e_{0,\ell}|^{2+\delta}\big)^{2/(2+\delta)}<\infty\,;\\
%%
%&\sum_{\ell=1}^{+\infty}\E\|X_{0}-X_{0,\ell}\|_{L^2}^2\leq\sum_{\ell=1}^{+\infty}\Big(\E\|X_{0}-X_{0,\ell}\|_{L^2}^{2+\delta}\Big)^{2/(2+\delta)}<\infty\,.
%\end{align*}
Therefore, by Assumption~\ref{a:m}, we deduce from the above equation that
\begin{align*}
I_2&\leq \,\E(\e_0)^2\times\Bigg(\sum_{\ell=1}^{+\infty}\E|\e_{0}-\e_{0,\ell}|^2\Bigg)\\
&\qquad\times\int_0^1\Bigg[\sum_{\ell=1}^{+\infty}\E\big\{ X_{0}(s)- X_{0,\ell}(s)\big\}^2\Bigg]ds\times\int_0^1\E\{ X_{0}(t)\}^2dt\\
&= \,\E(\e_0)^2\times\Bigg(\sum_{\ell=1}^{+\infty}\E|\e_{0}-\e_{0,\ell}|^2\Bigg)\times\Bigg(\sum_{\ell=1}^{+\infty}\E\|X_{0}-X_{0,\ell}\|_{L^2}^2\Bigg)\times\E\| X_0\|_{L^2}^2<\infty\,.
\end{align*}
%For the third term $I_3$, by Assumption~\ref{a:m} and the Cauchy-Schwarz inequality, we deduce that
%\begin{align*}
%I_3&\leq \,\big(\E\|X_0\|_{L^2}^2\big)^2\,\bigg[\sum_{\ell=1}^{+\infty}\E\big\{(\e_{0}-\e_{0,\ell})\e_{-\ell}\big\}\bigg]^2\\
%%
%&\leq \,\big(\E\|X_0\|_{L^2}^2\big)^2\,\bigg[\sum_{\ell=1}^{+\infty}\big\{\E(\e_{0}-\e_{0,\ell})^2\big\}^{1/2}(\E\e^2_{0})^{1/2}\big\}\bigg]^2\\
%%
%&\leq  \,\big(\E\|X_0\|_{L^2}^2\big)^2\times\E(\e_0)^2\times\Bigg\{\sum_{\ell=1}^{+\infty}\big(\E|\e_{0}-\e_{0,\ell}|^2\big)^{1/2}\Bigg\}^2<\infty\,,
%\end{align*}
%where in the last step we used the fact that
%\begin{align*}
%\sum_{\ell=1}^{+\infty}\big(\E|\e_{0}-\e_{0,\ell}|^2\big)^{1/2}\leq \sum_{\ell=1}^{+\infty}\big(\E|\e_{0}-\e_{0,\ell}|^{2+\delta}\big)^{1/(2+\delta)}<\infty
%\end{align*}
%in view of \eqref{a0} in Assumption~\ref{a:m}.
In conclusion, we deduce from \eqref{intcue} that $\int_0^1\int_0^1 \{C_{X\e}(s,t)\}^2dsdt<\infty$, which completes the proof.

\end{proof}

\begin{lemma}\label{lem:cxe2}
Under Assumption~\ref{a:m}, we have $\int_0^1 C_{X\e}(s,s)ds<\infty$.
\end{lemma}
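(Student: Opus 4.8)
The plan is to evaluate the diagonal integral directly, reusing the decomposition and the factorization identity \eqref{ss} already established in the proof of Lemma~\ref{lem:cxe}. First I would note that, by strict stationarity, $\cov\{\e_0 X_0(s),\e_\ell X_\ell(s)\}=\cov\{\e_0 X_0(s),\e_{-\ell}X_{-\ell}(s)\}$ for every $\ell\ge 1$ and $s\in[0,1]$, so that
\begin{align*}
\int_0^1 C_{X\e}(s,s)\,ds=\int_0^1\Var\{\e_0 X_0(s)\}\,ds+2\int_0^1\sum_{\ell=1}^{+\infty}\cov\big\{\e_0 X_0(s),\e_{-\ell}X_{-\ell}(s)\big\}\,ds\,.
\end{align*}
It then suffices to bound the two terms on the right separately.

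For the first term, since the error process $\{\e_i\}$ is independent of $\{X_i\}$ (see the remark following Assumption~\ref{a5.1}) and $\E(\e_0)=0$, I would use $\E\{\e_0 X_0(s)\}=0$ together with $\E\{\e_0^2 X_0^2(s)\}=\E(\e_0^2)\,\E\{X_0^2(s)\}$, whence
\begin{align*}
\int_0^1\Var\{\e_0 X_0(s)\}\,ds=\E(\e_0^2)\,\E\|X_0\|_{L^2}^2<\infty\,,
\end{align*}
using $\E(\e_0^2)<\infty$ from Assumption~\ref{a5.2} and $\E\|X_0\|_{L^2}^2<\infty$.

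For the second term, I would set $t=s$ in the identity \eqref{ss} and integrate in $s$ (using Fubini), which gives, for $\ell\ge 1$,
\begin{align*}
\int_0^1\cov\big\{\e_0 X_0(s),\e_{-\ell}X_{-\ell}(s)\big\}\,ds=\E\big\{(\e_0-\e_{0,\ell})\e_{-\ell}\big\}\,\E\big\l X_0-X_{0,\ell},\,X_{-\ell}\big\r_{L^2}\,.
\end{align*}
Taking absolute values and applying the Cauchy--Schwarz inequality to each factor yields $|\E\{(\e_0-\e_{0,\ell})\e_{-\ell}\}|\le(\E(\e_0-\e_{0,\ell})^2)^{1/2}(\E\e_0^2)^{1/2}$ and $|\E\l X_0-X_{0,\ell},X_{-\ell}\r_{L^2}|\le(\E\|X_0-X_{0,\ell}\|_{L^2}^2)^{1/2}(\E\|X_0\|_{L^2}^2)^{1/2}$, where I used stationarity to replace $\e_{-\ell}$ and $X_{-\ell}$ by $\e_0$ and $X_0$. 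A further application of Cauchy--Schwarz over the index $\ell$ then bounds the sum by
\begin{align*}
(\E\e_0^2)^{1/2}(\E\|X_0\|_{L^2}^2)^{1/2}\Bigg(\sum_{\ell=1}^{+\infty}\E(\e_0-\e_{0,\ell})^2\Bigg)^{1/2}\Bigg(\sum_{\ell=1}^{+\infty}\E\|X_0-X_{0,\ell}\|_{L^2}^2\Bigg)^{1/2}\,,
\end{align*}
which is finite by the $m$-approximability condition in Assumption~\ref{a:m}, exactly as in the final step of the proof of Lemma~\ref{lem:cxe}. Combining the two bounds completes the proof.

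The computation is entirely routine once the diagonal decomposition is in place, and it runs parallel to the proof of Lemma~\ref{lem:cxe} (with $t=s$ and a single integration replacing the double integral). The only point requiring care, and the main albeit minor obstacle, is the absolute convergence of the series; this is handled by the two summability bounds $\sum_\ell\E(\e_0-\e_{0,\ell})^2<\infty$ and $\sum_\ell\E\|X_0-X_{0,\ell}\|_{L^2}^2<\infty$ used in Lemma~\ref{lem:cxe}, which follow from Assumption~\ref{a:m} after passing from the $(2+\delta)$-moments to second moments via the Lyapunov inequality and comparing the exponents $2/(2+\delta)$ and $1/\kappa$.
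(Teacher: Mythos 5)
Your proposal is correct and follows essentially the same route as the paper: the same split of $\int_0^1 C_{X\e}(s,s)\,ds$ into the variance term $\E(\e_0^2)\,\E\|X_0\|_{L^2}^2$ plus twice the sum over $\ell\geq1$, the same factorization identity \eqref{ss} with $t=s$, and the same Cauchy--Schwarz bounds in $s$ and over $\ell$ leading to the identical final bound via the summability conditions of Assumption~\ref{a:m}. The only (immaterial) difference is that you integrate the identity in $s$ before applying Cauchy--Schwarz whereas the paper bounds pointwise first; your explicit remark on passing from $(2+\delta)$-moments to second moments via Lyapunov is a detail the paper leaves implicit.
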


\begin{proof}
By the arguments similar to the ones used to obtain \eqref{ss}, it follows that, for $\ell\geq1$,
\begin{align*}
\cov\big\{\e_{0}\,  X_{0}(s)\,,\e_{-\ell} \, X_{-\ell}(s)\big\}&=\E\big\{(\e_{0}-\e_{0,\ell})\e_{-\ell}\big\}\,\E\Big[\big\{ X_{0}(s)- X_{0,\ell}(s)\big\}\times X_{-\ell}(s)\Big]\,.
\end{align*}
Therefore, following the proof of Lemma~\ref{lem:cxe}, we deduce that
\begin{align*}
&\int_0^1C_{X\e}(s,s)ds= \int_0^1\bigg[\cov\big\{\e_0\,  X_0(s)\,,\e_0 \, X_0(s)\big\}+2\sum_{\ell=1}^{+\infty}\cov\big\{\e_0\,  X_0(s)\,,\e_{-\ell} \, X_{-\ell}(s)\big\}\bigg]ds\notag\\
&= \int_0^1\bigg(\E(\e_0^2)\,\E\big\{ X_0(s)\big\}^2+2\sum_{\ell=1}^{+\infty}\E\big\{(\e_{0}-\e_{0,\ell})\e_{-\ell}\big\}\,\E\Big[\big\{ X_{0}(s)- X_{0,\ell}(s)\big\}  X_{-\ell}(s)\Big]\bigg)ds\notag\\
&=\E(\e_0^2)\,\E\|X_0\|_{L^2}^2+2\int_0^1\bigg(\sum_{\ell=1}^{+\infty}\E\big\{(\e_{0}-\e_{0,\ell})\e_{-\ell}\big\}\,\E\Big[\big\{ X_{0}(s)- X_{0,\ell}(s)\big\}  X_{-\ell}(s)\Big]\bigg)ds\notag\\
&\leq\E(\e_0^2)\,\E\|X_0\|_{L^2}^2\\
&\quad+2\int_0^1\Bigg(\E(\e_0^2)\times\Bigg(\sum_{\ell=1}^{+\infty}\E|\e_{0}-\e_{0,\ell}|^2\Bigg)\times\Bigg[\sum_{\ell=1}^{+\infty}\E\big\{ X_{0}(s)- X_{0,\ell}(s)\big\}^2\Bigg]\times\E\{ X_{0}(s)\}^2\Bigg)^{1/2}ds\,,
\end{align*}
where in the last step we applied \eqref{p2} by taking $t=s$. Therefore, by the Cauchy-Schwarz inequality and Assumption~\ref{a:m}, we conclude from the above equation that
\begin{align*}
&\int_0^1C_{X\e}(s,s)ds\leq\E(\e_0^2)\,\E\|X_0\|_{L^2}^2\\
&\quad+2\{\E(\e_0)^2\}^{1/2} \Bigg(\sum_{\ell=1}^{+\infty}\E|\e_{0}-\e_{0,\ell}|^2\Bigg)^{1/2}\int_0^1\Bigg(\Bigg[\sum_{\ell=1}^{+\infty}\E\big\{ X_{0}(s)- X_{0,\ell}(s)\big\}^2\Bigg]\times\E\{ X_{0}(s)\}^2\Bigg)^{1/2}ds\\
&\leq \E(\e_0^2)\,\E\|X_0\|_{L^2}^2+2\{\E(\e_0)^2\}^{1/2} \bigg(\sum_{\ell=1}^{+\infty}\E|\e_{0}-\e_{0,\ell}|^2\bigg)^{1/2}\big(\E\| X_0\|_{L^2}^2\big)^{1/2}\bigg(\sum_{\ell=1}^{+\infty}\E\|X_{0}-X_{0,\ell}\|_{L^2}^2\bigg)^{1/2}\\
&<\infty\,,
\end{align*}
which concludes the proof.

\end{proof}

% \HDM{{\bf check rewriting}

Lemma~\ref{lem:hnbeta} below is used to prove Theorem~\ref{thm:bahadur}. For its statement recall 
the definition of $H_{n,k}$ in  \eqref{hnk2}.
% }
%
%from \eqref{hnk} that, for the event $\EE_n(c)$ defined in \eqref{enc} and for any $\beta\in\H$ and for $1\leq j\leq n$, let
%\begin{align}\label{hnk2}
%&g(X_i,\beta)=
%\tau_{\lambda}(X_i)\int_0^1\beta(s,\cdot)X_i(s)ds\,,\label{gxbeta}\\
%
%&H_{n,k}(\beta)=\frac{1}{\sqrt{n}}\sum_{i=1}^{k}\Bigg(\tau_{\lambda}(X_i)\int_0^1\beta(s)X_i(s)ds\,\one\{\EE_n(c)\}-\E\bigg[\tau_{\lambda}(X_i)\int_0^1\beta(s)X_i(s)ds\,\one\{\EE_n(c)\}\bigg]\Bigg)\,.
%\label{hnbeta}
%\end{align}

\begin{lemma}\label{lem:hnbeta}
For $p_n\geq 1$, let $\mathcal F_{p_n}=\{\beta\in\H:\Vert\beta\Vert_{L^2}\leq 1,J(\beta,\beta)\leq p_n\}$. Then, under Assumptions~\ref{a1}--\ref{a:subg}, as $n\to\infty$,
%\begin{align}
%&\max_{1\leq j\leq n}\,\sup_{\beta\in\mathcal{F}_{p_n}}\,\Vert H_{n,k}(\beta)\Vert_K=O_p\big(...\big)\,.
%\end{align}
\begin{align*}
&\max_{1\leq k\leq n}\,\sup_{\beta\in\mathcal{F}_{p_n}}\,\frac{\Vert H_{n,k}(\beta)\Vert_K}{p_n^{1/(2m)}\Vert\beta\Vert_{L^2}^{(m-1)/m}+n^{-1/2}}=O_p\big(\lambda^{-1/(2D)}\log n\big)^{1/2}\,.
\end{align*}

\end{lemma}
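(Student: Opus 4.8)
The plan is to treat $H_{n,k}(\beta)$ as a partial-sum empirical process, indexed jointly by the time horizon $1\le k\le n$ and by the Sobolev ball $\mathcal F_{p_n}$, and to control its supremum by combining a maximal exponential inequality (in $k$) with a chaining argument (in $\beta$). \textbf{Reduction to a bounded process.} First I would work throughout on the event $\EE_n(c)$ in \eqref{enc}; since $\P\{\EE_n^{\rm c}(c)\}=o(n^{-2})$, the complement is negligible uniformly in $k$ and $\beta$. On $\EE_n(c)$ we have $\|X_i\|_{L^2}\le c\log n$, so by Lemma~\ref{lem:s4} and the Cauchy--Schwarz inequality each summand obeys the envelope bound $\|\tau_{\lambda}(X_i)\langle\beta,X_i\rangle_{L^2}\|_K\le c\,\lambda^{-(2a+1)/(4D)}\|X_i\|_{L^2}^2\,\|\beta\|_{L^2}\le c\,\lambda^{-(2a+1)/(4D)}(\log n)^2\|\beta\|_{L^2}$. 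Because $\|\cdot\|_K$ is a Hilbert-space norm, this puts us in a position to apply a Hilbertian Bernstein/exponential inequality to the centered partial sums.

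\textbf{Pointwise tail bound.} For a fixed $\beta$, I would first bound the variance proxy $\E\|H_{n,k}(\beta)\|_K^2$. Expanding the square and using stationarity together with the weak dependence of $\{X_i\}$ (so that the off-diagonal covariances are summable), one gets $\E\|H_{n,k}(\beta)\|_K^2\le c\,\frac{k}{n}\,\E\{\|\tau_{\lambda}(X_0)\|_K^2\langle\beta,X_0\rangle_{L^2}^2\}$, and the last expectation is $\le c\,\lambda^{-1/(2D)}\|\beta\|_{L^2}^2$ by Lemma~\ref{lem:s4}, Assumption~\ref{a3.2} and Cauchy--Schwarz. Feeding this variance bound and the envelope above into a Bernstein inequality, and using a maximal inequality over $1\le k\le n$ (Ottaviani's inequality exploits the monotone partial-sum structure), yields for each fixed $\beta$ a bound of order $(\lambda^{-1/(2D)}\log n)^{1/2}\|\beta\|_{L^2}$ for $\max_{k}\|H_{n,k}(\beta)\|_K$ on a set of probability $1-n^{-A}$ for any prescribed $A$. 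The single power $\log n$ under the root is exactly the price of this maximization and the accompanying union bound.

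\textbf{Uniformity over $\mathcal F_{p_n}$ and the interpolation.} The remaining and most delicate step is to upgrade the fixed-$\beta$ bound to a supremum over $\mathcal F_{p_n}$ with the correct normalization. Here I would use the metric entropy of the Sobolev ball: since $\mathcal F_{p_n}=\{\|\beta\|_{L^2}\le1,\ J(\beta,\beta)\le p_n\}$ is (up to scaling) a ball of radius $\asymp p_n^{1/2}$ in the order-$m$ Sobolev seminorm, its $L^2$-covering numbers satisfy $\log N(\delta,\mathcal F_{p_n},\|\cdot\|_{L^2})\le c\,(p_n^{1/2}/\delta)^{1/m}$. Running Dudley's chaining against the sub-exponential increments from the previous step---the entropy integral converges because $1/m<2$---and balancing the variance scaling (which is linear in $\|\beta\|_{L^2}$) against the entropy produces precisely the interpolation factor $p_n^{1/(2m)}\|\beta\|_{L^2}^{(m-1)/m}$; the additive $n^{-1/2}$ in the denominator absorbs the residual discretization and envelope term. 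An equivalent and perhaps more transparent route is peeling: decompose $\beta=\beta_{\le N}+\beta_{>N}$ along the eigenfunctions $\{\phi_k\}$, control the tail via $\|\beta_{>N}\|\lesssim \rho_N^{-1}p_n$ and the finite-dimensional head by a union bound over an $N$-dimensional net, and optimize in $N$; this again yields the exponents $1/(2m)$ on $p_n$ and $(m-1)/m$ on $\|\beta\|_{L^2}$.

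\textbf{Main obstacle.} The crux is the simultaneous handling of two couplings: the maximal-in-$k$ exponential inequality for Hilbert-space-valued, weakly dependent partial sums, and the chaining over the Sobolev ball, all while keeping the two distinct parameters cleanly separated---the RKHS effective dimension $\lambda^{-1/(2D)}$ coming from $\tau_{\lambda}$ (governed by $D$) and the entropy exponent $p_n^{1/(2m)}$ coming from the order-$m$ penalty $J$. Ensuring that the dependence bookkeeping does not destroy the $O(k/n)$ variance scaling, and that the chaining reproduces the exact interpolation exponents $(m-1)/m$ and $1/(2m)$, is where the real work lies.
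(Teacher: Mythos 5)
Your overall architecture (restriction to $\EE_n(c)$, an exponential tail bound for the Hilbert-valued partial sums, a union bound over $k$ producing the $\log n$, and chaining/peeling over the Sobolev ball with entropy exponent $1/m$) matches the paper's, and your identification of where the factors $\lambda^{-1/(2D)}$, $\log n$ and $p_n^{1/(2m)}\Vert\beta\Vert_{L^2}^{(m-1)/m}$ come from is correct. The gap is in the concentration step. You propose to bound $\E\Vert H_{n,k}(\beta)\Vert_K^2$ via summable autocovariances and then feed this into ``a Bernstein inequality'' for the centered sums. But a Bernstein-type exponential inequality with the correct variance proxy for $m$-approximable, Hilbert-space-valued summands is not an off-the-shelf tool and is nowhere justified in your sketch; a second-moment bound alone gives only Chebyshev-type polynomial tails, which cannot survive the union bound over $n$ values of $k$ together with Dudley chaining over $\mathcal F_{p_n}$. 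This is not a cosmetic omission: it is precisely the obstacle the paper's proof is designed to circumvent.

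What the paper does instead is condition on $\mathcal X_n=\{\Vert\tau_{\lambda}(X_i)\Vert_K\}_{i=1}^n$ and apply Pinelis's exponential inequality for martingales in $2$-smooth Banach spaces (Theorem 3.5 in Pinelis, 1994) to the increments $H_{n,k}(\beta_1)-H_{n,k}(\beta_2)$, whose summands on $\EE_n(c)$ admit the envelope $c\,(\log n)\Vert\beta_1-\beta_2\Vert_{L^2}\Vert\tau_{\lambda}(X_i)\Vert_K$. This yields a conditionally sub-Gaussian bound $2\exp\{-x^2/(2W_{n,k}^2\Vert\beta_1-\beta_2\Vert_{L^2}^2)\}$ in which the ``variance'' is the crude random sum of squares $W_{n,k}^2=n^{-1}\sum_{i=1}^k\Vert\tau_{\lambda}(X_i)\Vert_K^2$ rather than the true long-run variance; no mixing- or $m$-approximability-based concentration is needed at all. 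The dependence of the data is then paid for only once, at the very end, by restricting to the event $\mathcal A_n=\{W_{n,n}^2\le c_3\lambda^{-1/(2D)}\}$, whose probability tends to one by Markov's inequality since $\E(W_{n,n}^2)\lesssim\lambda^{-1/(2D)}$ by Lemma~\ref{lem:s4}. After that, the chaining/peeling over $\mathcal F_{p_n}$ (following Lemma~3.4 of Shang and Cheng, 2015) and the union bound over $k$ proceed exactly as you describe. If you replace your unproved Bernstein step with this conditional Pinelis device, your proof goes through; as written, the central inequality you rely on is missing.
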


\begin{proof}

%\subsection{Proof of Lemma~\ref{lem:hnbeta}}\label{app:proof:hnbeta}
The proof of Lemma~\ref{lem:hnbeta} follows a modified argument of the proof of Lemma~3.4 in \cite{shang2015}. For any $x\in L^2([0,1])$, let $g(x,\beta)=\tau_{\lambda}(x)\int_0^1\beta(s)x(s)ds$.
%\begin{align*}
%g(x,\beta)=\tau_{\lambda}(x)\int_0^1\beta(s)x(s)ds
%\end{align*}
We have
\begin{align*}
H_{n,k}(\beta_1)-H_{n,k}(\beta_2)&=\frac{1}{\sqrt n}\sum_{i=1}^{j}\Big(g(X_i,\beta_1-\beta_2)\times\one\{\EE_n(c)\}-\E\big[g(X_i,\beta_1-\beta_2)\times\one\{\EE_n(c)\}\big]\Big)\,.
%
%&=\frac{1}{\sqrt n}\sum_{i=1}^{j}\Big[g(X_i,\beta_1-\beta_2)-\E\big\{g(X_i,\beta_1-\beta_2)\big\}\Big]
\end{align*}
Note that, on the event $\EE_n(c)$ defined in \eqref{enc},
\begin{align*}
\Big|\l\beta_1-\beta_2,X_i\r_{L^2}\,\one\{\EE_n(c)\}-\E\big[\l\beta_1-\beta_2,X_i\r_{L^2}\,\one\{\EE_n(c)\}\big]\Big|\leq c\,\|\beta_1-\beta_2\|_{L^2}\,.
\end{align*}
Hence, we deduce that
\begin{align*}
&\Big\|g(X_i,\beta_1-\beta_2)\times\one\{\EE_n(c)\}-\E\big[g(X_i,\beta_1-\beta_2)\times\one\{\EE_n(c)\}\big]\Big\|_K^2\\
%
%&=\bigg\|\tau_{\lambda}(X_i)\int_0^1\{\beta_1(s)-\beta_2(s)\}X_i(s)ds\,\one\{\EE_n(c)\}-\E\bigg[\tau_{\lambda}(X_i)\int_0^1\{\beta_1(s)-\beta_2(s)\}X_i(s)ds\,\one\{\EE_n(c)\}\bigg]\bigg\|_K^2\\
%
&\leq 2(\log n)^2\times\|\beta_1-\beta_2\|^2_{L^2}\times\|\tau_{\lambda}(X_i)\|_K^2\,.
\end{align*}
%\begin{align*}
%&\big\|H_{n,k}(\beta_1)-H_{n,k}(\beta_2)\big\|_K\\
%%
%&=\bigg\|\frac{1}{\sqrt n}\sum_{i=1}^{j}\Bigg(\tau_{\lambda}(X_i)\int_0^1\{\beta_1(s)-\beta_2(s)\}X_i(s)ds-\E\bigg[\tau_{\lambda}(X_i)\int_0^1\{\beta_1(s)-\beta_2(s)\}X_i(s)ds\bigg]\Bigg)\bigg\|_K
%\end{align*}
For $1\leq k\leq n$, let $W_{n,k}^2=n^{-1}\sum_{i=1}^k\|\tau_{\lambda}(X_i)\|_K^2$ and $\mathcal X_n=\{\|\tau_{\lambda}(X_i)\|_K\}_{i=1}^n$. 
%Note that $W_{n,k}^2\leq W_{n,n}^2$.
By Lemma~\ref{lem:s4}, $\E(W_{n,k}^2)\leq(k/n)\E\|\tau_{\lambda}(X_i)\|_K^2\leq c\,\lambda^{-1/(2D)}$.
%\begin{align*}
%\E(W_{n,k}^2)\leq(k/n)\E\|\tau_{\lambda}(X_i)\|_K^2\leq c\,\lambda^{-1/(2D)}\,.
%\end{align*}
%\begin{align*}
%%W_{n,k}^2=\frac{1}{n}\sum_{i=1}^k\Big[\|X_i\|_{L^2}\times\|\tau_{\lambda}(X_i)\|_K+\E\big\{\|X_i\|_{L^2}\times\|\tau_{\lambda}(X_i)\|_K\big\}\Big]^2
%W_{n,k}^2=\frac{1}{n}\sum_{i=1}^k\|\tau_{\lambda}(X_i)\|_K^2\,.
%\end{align*}
%%$W_{n,k}^2=n^{-1}\sum_{i=1}^k\|\tau_{\lambda}(X_i)\|_K^2$ 
%and 
%%$W_j^2=j^{-1}\sum_{i=1}^j\|\tau_{\lambda}(X_i)\|_K^2$
% $\E(W_{n,k}^2)=(k/n)\E\|\tau_{\lambda}(X)\|_K^2\leq c\,\lambda^{-1/(2D)}$. 
By Theorem 3.5 in \cite{pinelis1994}, for any $1\leq j\leq n$, for any $\beta_1,\beta_2\in\H$ and for $1\leq j\leq n$,
\begin{align}
&\P\left\{\Vert H_{n,k}(\beta_1)-H_{n,k}(\beta_2)\Vert_K\geq x\,\big|\mathcal{X}_n \right\}\notag\\
%
%&=\P\bigg\{\bigg\|\frac{1}{\sqrt n}\sum_{i=1}^{j}\Big[g(X_i,\beta_1-\beta_2)-\E\big\{g(X_i,\beta_1-\beta_2)\big\}\Big]\bigg\|_K\geq x\,\big|\mathcal{X}_n \bigg\}\\
%
&=\P\bigg\{\bigg\|\frac{1}{\sqrt k}\sum_{i=1}^{k}\Big[g(X_i,\beta_1-\beta_2)-\E\big\{g(X_i,\beta_1-\beta_2)\big\}\Big]\bigg\|_K\geq \sqrt{n/k}\,x\,\big|\mathcal{X}_n \bigg\}\notag\\
&\leq2\exp\left(-\frac{nk^{-1}x^2}{2k^{-1}\sum_{i=1}^k\|\tau_{\lambda}(X_i)\|_K^2\,\Vert\beta_1-\beta_2\Vert_{L^2}^2}\right)\leq 2\exp\left(-\frac{x^2}{2\,W_{n,k}^2\,\Vert\beta_1-\beta_2\Vert_{L^2}^2}\right)\,.
%%
%&\leq2\exp\left(-\frac{x^2}{2\{n^{-1}\sum_{i=1}^j\|\tau_{\lambda}(X_i)\|_K^2\}\,\Vert\beta_1-\beta_2\Vert_{L^2}^2}\right)\leq 2\exp\left(-\frac{x^2}{2\,W_{n,k}^2\,\Vert\beta_1-\beta_2\Vert_{L^2}^2}\right)\,.
%%
%&\leq2\exp\left(-\frac{nx^2}{2\{\sum_{i=1}^n\|\tau_{\lambda}(X_i)\|_K^2\}\,\Vert\beta_1-\beta_2\Vert_{L^2}^2}\right)\\
%%
\label{p}
\end{align}
%Therefore, we deduce that
%\begin{align*}
%&\P\left\{\max_{1\leq k\leq n}\Vert H_{n,k}(\beta_1)-H_{n,k}(\beta_2)\Vert_K\geq x\,\big|\mathcal{X}_n \right\}\\
%%
%&\leq \sum_{k=1}^n\P\left\{\Vert H_{n,k}(\beta_1)-H_{n,k}(\beta_2)\Vert_K\geq x\,\big|\mathcal{X}_n \right\}=2n\exp\left(-\frac{x^2}{2\,W_{n,k}^2\,\Vert\beta_1-\beta_2\Vert_{L^2}^2}\right)\,,
%\end{align*}
%
%For a random variable $Z$, let $\Vert Z\Vert_\Psi=\inf\big\{c>0:\E\{\Psi(|Z|/c)|\mathcal{X}_n\}\leq1\big\}$ denote its Orlicz norm conditional on $\mathcal{X}_n$, with $\Psi(x)=\exp(x^2)-1$. 
Following the proof of Lemma~3.4 in \cite{shang2015} (see p.~13 of \citealp{shang2015b}), we deduce that, 
%Then, by Lemma 8.1 in \cite{kosorok2007}, we have, for $1\leq j\leq n$,
%\begin{align*}
%\Big\Vert \Vert H_{n,k}(\beta_1)-H_{n,k}(\beta_2)\Vert_K\Big\Vert_\Psi\leq c\,W_{n,k}\,\Vert\beta_1-\beta_2\Vert_{L^2}\,.
%\end{align*}
%
%Let $N(\delta,\mathcal{F}_{p_n},\Vert\cdot\Vert_{L^2})$ denote the $\delta$-covering number of $\mathcal{F}_{p_n}$ w.r.t.~the $L^2$-norm. Since $p_n\geq 1$ for $n$ large enough and $J(p_n^{1/2}\beta,p_n^{1/2}\beta)=p_nJ(\beta,\beta)$, we have $\mathcal{F}_{p_n}\subset p_n^{1/2}\mathcal{F}_{1}$. Hence,
%\begin{align*}
%\log N(\delta,\mathcal{F}_{p_n},\Vert\cdot\Vert_{L^2})&\leq \log N(\delta,p_n^{1/2}\mathcal{F}_{1},\Vert\cdot\Vert_{L^2})\\
%%
%&\leq \log N(p_n^{-1/2}\delta,\mathcal{F}_{1},\Vert\cdot\Vert_{L^2})\leq c\,(p_n^{-1/2}\delta)^{-1/m}\,,
%\end{align*}
%%where in the last step we used the result 
%%%of the bracketing entropy of Sobolev spaces on a unit cube in $\mathbb{R}^2$ 
%%in \cite{birman1967}.
%By Lemma 8.2 and Theorem~8.4 in \cite{kosorok2007},
%\begin{align*}
%&\left\Vert\sup_{\beta_1,\beta_2\in\F_{p_n},\,\Vert\beta_1-\beta_2\Vert_{L^2}\leq\delta}\Vert H_{n,k}(\beta_1)-H_{n,k}(\beta_2)\Vert_K\right\Vert_{\Psi}\\
%%
%&\leq c\,W_{n,k}\bigg[\int_0^\delta\sqrt{\log\{1+N(\eta,\mathcal{F}_{p_n},\Vert\cdot\Vert_{L^2})\}}\,d\eta+\delta\sqrt{\log\{1+N^2(\delta,\mathcal{F}_{p_n},\Vert\cdot\Vert_{L^2})\}}\,\bigg]\\
%%
%&\leq c_1W_{n,k}\,p_n^{1/(4m)}\delta^{1-1/(2m)}\,,
%\end{align*}
%for some absolute constant $c_1>0$. Since $H_{n,k}(0)=0$, by Lemma~8.1 in \cite{kosorok2007}, we have, 
for any $1\leq k\leq n$,
\begin{align*}
\P\left\{\sup_{\beta\in\mathcal{F}_{p_n},\,\Vert\beta\Vert_{L^2}\leq\delta}\Vert H_{n,k}(\beta)\Vert_K\geq x\,\big|\mathcal{X}_n\right\}\leq 2\exp\big(-c_1^{-2}W_{n,k}^{-2}\, p_n^{-1/(2m)}\delta^{-2+1/m}x^2\big)\,.
\end{align*}
% Note that $\beta\in\mathcal F_{p_n}$ implies $\|\beta\|_{L^2}\leq 2$.
Taking $\gamma=1-1/(2m)$, $b_n=\sqrt{n}\,p_n^{1/(4m)}$, $\theta_n=b_n^{-1}$, $Q_n=\lfloor-\log_2\theta_n+\gamma-1\rfloor$ and $T_n=c_2(\lambda^{-1/(2D)}\log n)^{1/2}$, for some constant $c_2>0$ to be specified below, yields that
\begin{align*} 
&\P\left\{\max_{1\leq k\leq n}\,\sup_{\beta\in\mathcal{F}_{p_n},\,\Vert\beta\Vert_{L^2}\leq2}\,\frac{\sqrt{n}\Vert H_{n,k}(\beta)\Vert_K}{b_n\Vert\beta\Vert_{L^2}^{\gamma}+1}\geq T_n\,\big|\mathcal{X}_n\right\}\notag\\
&\leq\sum_{k=1}^n\P\left\{\sup_{\beta\in\mathcal{F}_{p_n},\,\Vert\beta\Vert_{L^2}\leq\theta_n^{1/\gamma}}\sqrt{n}\Vert H_{n,k}(\beta)\Vert_K\geq T_n\,\big|\mathcal{X}_n\right\}\notag\\
&\hspace{1cm}+\sum_{k=1}^n\sum_{j=0}^{Q_n}\P\left\{\sup_{\beta\in\mathcal{F}_{p_n},\,(\theta_n2^{j})^{1/\gamma}\leq\Vert\beta\Vert_{L^2}\leq(\theta_n2^{j+1})^{1/\gamma}}\,\frac{\sqrt{n}\Vert H_{n,k}(\beta)\Vert_K}{b_n\Vert\beta\Vert_{L^2}^{\gamma}+1}\geq T_n\,\big|\mathcal{X}_n\right\}\notag\\
&\leq\sum_{k=1}^n\P\left\{\sup_{\beta\in\mathcal{F}_{p_n},\,\Vert\beta\Vert_{L^2}\leq\theta_n^{1/\gamma}}\sqrt{n}\Vert H_{n,k}(\beta)\Vert_K\geq T_n\,\big|\mathcal{X}_n\right\}\notag\\
&\hspace{1cm}+\sum_{k=1}^n\sum_{j=0}^{Q_n}\P\left\{\sup_{\beta\in\mathcal{F}_{p_n},\,\Vert\beta\Vert_{L^2}\leq(\theta_n2^{j+1})^{1/\gamma}}\,\sqrt{n}\Vert H_{n,k}(\beta)\Vert_K\geq (b_n\theta_n2^j+1)T_n\,\big|\mathcal{X}_n\right\}\notag\\
&\leq 2\sum_{k=1}^n\exp\big(-c_1^{-2}W_{n,k}^{-2}\, p_n^{-1/(2m)}\theta_n^{(-1+1/m)/\gamma}n^{-1}T_n^2\big)\notag\\
&\hspace{1cm}+2\sum_{k=1}^n\sum_{j=0}^{Q_n}\exp\Big\{-c_1^{-2}W_{n,k}^{-2}\, p_n^{-1/m}(\theta_n2^{j+1})^{(-1+1/m)/\gamma}(b_n\theta_n2^j+1)^2n^{-1}T_n^2\Big\}\notag\\
&\leq2\sum_{k=1}^n\exp\big(-c_1^{-2}W_{n,k}^{-2}\,T_n^2\big)+2(Q_n+1)\sum_{k=1}^n\exp\big(-c_1^{-2}W_{n,k}^{-2}\,T_n^2/4\big)\notag\\
&\leq 2(Q_n+2)\sum_{k=1}^n\exp\big(-c_1^{-2}W_{n,k}^{-2}\,T_n^2/4\big)\leq 2(Q_n+2)\exp\big(\log n-c_1^{-2}W_{n,n}^{-2}\,T_n^2/4\big)\,.
\end{align*}
Denote the event $
\mathcal{A}_n=\{W_{n,n}^2\leq c_3 \lambda^{-1/(2D)}\}$ for some constant $c_3>0$. Since $\E(W_{n,n}^2)\leq c\lambda^{-1/(2D)}$, we have that, for $c_3$ large enough, $\P(\mathcal{A}_n)\to1$ as $n\to\infty$. On the event $\mathcal{A}_n$, by taking $c_2>2c_1c_3^{-1/2}$, as $n\to\infty$,
\begin{align*}
2(Q_n+2)\exp\big(\log n-c_1^{-2}W_{n,n}^{-2}\,T_n^2/4\big)\leq 2(Q_n+2)\exp\big\{\log n-c_1^{-2}c_2^2c_3\log n/4\big\}=o(1)\,,
\end{align*}
which together with \eqref{p} completes the proof.

\end{proof}

\baselineskip=13pt
{\centering

}

\end{document}